\documentclass[french,11pt,letterpaper]{amsbook}
\usepackage{amsmath}
\usepackage{times}
\usepackage[french,english]{babel}

\usepackage{array,multirow,makecell}
\setcellgapes{1pt}
\makegapedcells

\usepackage{amsthm}
\usepackage{amssymb}

\usepackage{mathdots}

\usepackage{amscd}

\usepackage{color}
\usepackage{verbatim}
\usepackage{stmaryrd}

\usepackage[all,cmtip]{xy} 
\usepackage{tikz-cd}

\usepackage{aliascnt}

\usepackage{amsmath}
\usepackage{amsfonts}
\usepackage{amssymb}
\usepackage{graphicx}

\newcommand{\resettheoremcounters}{%
  \setcounter{theorem}{0}
}

\usepackage[T1]{fontenc}
\usepackage[utf8]{inputenc}
\makeatletter
\newenvironment{abstracts}{%
  \ifx\maketitle\relax
    \ClassWarning{\@classname}{Abstract should precede
      \protect\maketitle\space in AMS document classes; reported}%
  \fi
  \global\setbox\abstractbox=\vtop \bgroup
    \normalfont\Small
    \list{}{\labelwidth\z@
      \leftmargin3pc \rightmargin\leftmargin
      \listparindent\normalparindent \itemindent\z@
      \parsep\z@ \@plus\p@
      
      \itemsep\medskipamount
    }%
}{%
  \endlist\egroup
  \ifx\@setabstract\relax \@setabstracta \fi
}

\newcommand{\abstractin}[1]{%
  \otherlanguage{#1}%
  \item[\hskip\labelsep\scshape\abstractname.]%
}
\makeatother

\usepackage[plainpages=false,colorlinks,hyperindex,pdfpagemode=None,bookmarksopen,linkcolor=red,citecolor=blue,urlcolor=blue]{hyperref}
\usepackage{pdflscape}

\usepackage[OT2,T1]{fontenc}
\DeclareSymbolFont{cyrletters}{OT2}{wncyr}{m}{n}
\DeclareFontFamily{OT1}{rsfs}{}

\newcommand{\Hyp}{\Upsilon}

\newcommand{\codim}{\mathrm{codim}}

\newcommand{\SL}{\mathrm{SL}}

\renewcommand{\dim}{\mathrm{dim}}
\newcommand{\tors}{\mathrm{tors}}
\newcommand{\Q}{\mathbf{Q}}

\newcommand{\C}{\mathbf{C}}

\newcommand{\R}{\mathbf{R}}
\newcommand{\G}{\mathbf{G}}

\newcommand{\SU}{\mathrm{SU}}

 \DeclareFontShape{OT1}{rsfs}{n}{it}{<-> rsfs10}{}
\DeclareMathAlphabet{\mathscr}{OT1}{rsfs}{n}{it}
\newcommand{\GL}{\mathrm{GL}}
\newcommand{\vol}{\mathrm{vol}}

\newtheorem{theorem}{Th\'eor\`eme}

\usepackage{verbatim}
\usepackage{graphicx}
\usepackage{nomencl}
\DeclareFontEncoding{OT2}{}{}

\newcommand{\Z}{\mathbf{Z}}

 \newcommand{\A}{\mathbf{A}}

\DeclareFontShape{OT1}{rsfs}{n}{it}{<-> rsfs10}{}
\DeclareMathAlphabet{\mathscr}{OT1}{rsfs}{n}{it}
\newcommand{\UU}{\mathrm{U}}
\newtheorem*{theorem*}{Théorème}
\newtheorem*{cor*}{Corollaire}

\newtheorem{proposition}[theorem]{Proposition}
\newtheorem{cor}[theorem]{Corollaire}
\newtheorem{lemma}[theorem]{Lemme}
\newtheorem{lem}[theorem]{Lemme}
\theoremstyle{definition}
\newtheorem{example}[theorem]{Exemple}
\newtheorem{definition}[theorem]{Définition}
\newtheorem{remark}[theorem]{Remarque}

\definecolor{darkgreen}{RGB}{0,150,0}

\newcommand{\SO}{\mathrm{SO}}

\numberwithin{theorem}{chapter}

\begin{document}
\title[Cocycles de groupe pour $\GL_n$ et arrangements d'hyperplans]{Cocycles de groupe pour $\GL_n$ et arrangements d'hyperplans}
\author[Bergeron, Charollois, Garcia]{Nicolas Bergeron, Pierre Charollois, Luis Garcia}
\address{DMA UMR 8553 ENS / PSL et Sorbonne Université, 75005, Paris, France}
\email{nicolas.bergeron@ens.fr}
\urladdr{https://sites.google.com/view/nicolasbergeron/accueil}

\address{Sorbonne Universit\'e, IMJ--PRG, CNRS, Univ Paris Diderot, F-75005, Paris, France}
\email{pierre.charollois@imj-prg.fr}
\urladdr{https://webusers.imj-prg.fr/~pierre.charollois/pageperso.html}

\address{Department of Mathematics, University College London, Gower Street, London WC1E 6BT, United Kingdom}
\email{l.e.garcia@ucl.ac.uk}
\urladdr{https://www.ucl.ac.uk/~ucahljg/index.htm}

\begin{abstracts}
\abstractin{french}
De nombreux auteurs, parmi lesquels Stevens \cite{Stevens}, Sczech \cite{Sczech93}, Nori \cite{Nori}, Solomon \cite{Solomon}, Hill \cite{Hill} ou plus récemment Charollois--Dasgupta--Greenbe- \\rg~\cite{CD,CDG}, Beilinson--Kings--Levin \cite{Beilinson&al} et Sharifi--Venkatesh \cite{SharifiVenkatesh}, ont construit des cocycles différents, mais apparentés, de groupes linéaires que l'on appelle habituellement ``cocycles d'Eisenstein''. L'objectif principal de ce texte est de décrire une construction topologique qui est une source commune pour tous ces cocycles. Une caractéristique intéressante de cette construction est que, partant d'une classe purement topologique, elle aboutit au monde algébrique des formes méromorphes sur des complémentaires d'hyperplans dans des produits de groupes additifs (complexes), de groupes multiplicatifs ou de (familles de) courbes  elliptiques. Cela conduit à la construction de trois types de ``cocycles de Sczech'' qui peuvent être regroupés dans le tableau suivant

\medskip
\begin{center}
\begin{tabular}{|c|c|c|c|}
\hline
{\rm Type} & {\rm Additif} & {\rm Multiplicatif} & {\rm Elliptique} \\
\hline 
{\rm Groupe}  & $\GL_n (\C)^\delta$ & $\GL_n (\Z )$ & $\GL_n (\Z)$ \\
\hline
{\rm Espace}   & $\C^n$ & $(\C^\times )^n = (\C / \Z)^n$  & $(\C / \Z +\tau \Z)^n$ \\
\hline 
$(n-1)$-{\rm cocycle} & $\mathbf{S}_{\rm aff}$ & $\mathbf{S}_{\rm mult}$ & $\mathbf{S}_{\rm ell}$ \\
\hline
\end{tabular}
\end{center}

\medskip
\noindent
Chacun de ces cocycles est à valeurs dans des formes méromorphes sur les espaces correspondants. Nous décrivons en outre explicitement ces formes méromorphes comme des polynômes homogènes de degré $n$ en des $1$-formes élémentaires des types suivants

\medskip
\begin{center}
\begin{tabular}{|c|c|c|c|}
\hline
{\rm Type} & {\rm Additif} & {\rm Multiplicatif} & {\rm Elliptique} \\
\hline 
$1$-{\rm formes} & $\frac{dz}{z}$ & $\sum_{n \in \Z} \frac{dz}{z+n}$  &  $\sum_{\lambda \in \Z + \tau \Z}  \frac{dz}{z+\lambda }$   \\
\hline
\end{tabular}
\end{center}

\medskip
\noindent
où les séries ci-dessus doivent être convenablement régularisées pour avoir un sens. Ces cocycles révèlent ainsi des relations cachées entre produits de fonctions élémentaires comme ci-dessus, relations qui sont prescrites par l'homologie des groupes linéaires.

\abstractin{english}
Many authors, among which Stevens \cite{Stevens}, Sczech \cite{Sczech93}, Nori \cite{Nori}, Solomon \cite{Solomon}, Hill \cite{Hill}, or more recently Charollois--Dasgupta--Greenberg \cite{CD,CDG}, Beilinson--Kings--Levin \cite{Beilinson&al} and Sharifi--Venkatesh \cite{SharifiVenkatesh}, have constructed different, but related, linear group cocycles that are usually referred to as ``Eisenstein cocycles.'' The main goal of this work is to describe a topological construction that is a common source for all these cocycles. One interesting feature of this construction is that, starting from a purely topological class, it leads to the algebraic world of meromorphic forms on hyperplane complements in $n$-fold products of either the (complex) additive group, the multiplicative group or a (family of) elliptic curve(s). This yields the construction of three types of ``Sczech cocycles'' that can be grouped in the following array

\medskip
\begin{center}
\begin{tabular}{|c|c|c|c|}
\hline
{\rm Type} & {\rm Additive} & {\rm Multiplicative} & {\rm Elliptic} \\
\hline 
{\rm Group}  & $\GL_n (\C)^\delta$ & $\GL_n (\Z )$ & $\GL_n (\Z)$ \\
\hline
{\rm Space}   & $\C^n$ & $(\C^\times )^n = (\C / \Z)^n$  & $(\C / \Z +\tau \Z)^n$ \\
\hline 
$(n-1)$-{\rm cocycle} & $\mathbf{S}_{\rm aff}$ & $\mathbf{S}_{\rm mult}$ & $\mathbf{S}_{\rm ell}$ \\
\hline
\end{tabular}
\end{center}

\medskip
\noindent
Each of these cocycles takes values in meromorphic $n$-forms on the corresponding space. We furthermore explicitly describe these meromorphic forms as degree $n$ homogeneous polynomials in elementary $1$-forms of the following types 

\medskip
\begin{center}
\begin{tabular}{|c|c|c|c|}
\hline
{\rm Type} & {\rm Additive} & {\rm Multiplicative} & {\rm Elliptic} \\
\hline 
$1$-{\rm forms} & $\frac{dz}{z}$ & $\sum_{n \in \Z} \frac{dz}{z+n}$  &  $\sum_{\lambda \in \Z + \tau \Z}  \frac{dz}{z+\lambda } $  \\
\hline
\end{tabular}
\end{center}

\medskip
\noindent
where the above series have to be suitably regularised to make sense. These cocycles therefore reveal hidden relations between products of elementary functions as above, relations that are prescribed by the group homology of linear groups. 
\end{abstracts}
\maketitle

\begin{otherlanguage}{french}

\tableofcontents

\chapter*{Introduction : fonctions trigonométriques et symboles modulaires}

Comme Eisenstein l'explique lui-même dans \cite{Eisenstein}, sa méthode pour construire des fonctions elliptiques s'applique de manière élégante au cas plus simple des fonctions trigonométriques. C'est par là que débute le livre que Weil \cite{Weil} consacre à ce sujet; nous suivons son exemple.

\section{La relation d'addition pour la fonction cotangente}

La méthode d'Eisenstein se base sur la considération de la série
$$\varepsilon (x) = \frac{1}{2i\pi} \sideset{}{{}^e}\sum_{m \in \Z} \frac{1}{x+m} =  \frac{1}{2i\pi} \lim_{M \to \infty}  \sum_{m = -M}^{M} \frac{1}{x+m},$$
où le symbole $\sum^e$ désigne la sommation d'Eisenstein définie par la limite de droite. 

Eisenstein démontre que\footnote{La normalisation appara\^{\i}tra plus naturellement par la suite; elle est bien évidemment liée au fait que la forme $dx/ (2i \pi x)$ possède  un résidu égal à $1$ en $0$.} $\varepsilon (x) = \frac{1}{2i} \cot \pi x$ et ce faisant retrouve la \emph{formule d'addition}, originellement découverte par Euler, selon laquelle pour tous les nombres complexes $x$ et $y$ tels que ni $x$, ni $y$, ni $x+y$ ne soient entiers, on a
\begin{equation} \label{E:addition}
\varepsilon (x) \varepsilon (y) - \varepsilon (x) \varepsilon (x+y) - \varepsilon (y) \varepsilon (x+y) = - 1/4.
\end{equation}
Le point de départ de la démonstration d'Eisenstein est une identité élémentaire entre fractions rationnelles~:
\begin{equation} \label{E:addition0}
\frac{1}{xy} - \frac{1}{x(x+y)} - \frac{1}{y (x+y)} = 0.
\end{equation}
Formellement on a en effet  
\begin{multline*}
\varepsilon (x) \varepsilon (y) - \varepsilon (x) \varepsilon (x+y) - \varepsilon (y) \varepsilon (x+y) \\ = \sum_{p,q,r} \left( \frac{1}{(x+p) (y+q)} - \frac{1}{(x+p)(x+y+r)} - \frac{1}{(y+q) (x+y+r)} \right)
\end{multline*}
où les entiers $p$, $q$ et $r$ varient dans $\Z$ tout en étant astreints à la relation $p+q-r=0$; les sommes n'étant pas absolument convergente cette décomposition n'a pas de sens, mais la relation \eqref{E:addition} se déduit d'une version régularisée de cette observation. 

Sczech \cite{Sczech92} interprète la relation d'addition \eqref{E:addition} comme une ``relation de cocycle''; nous y reviendrons. On relie d'abord cette relation aux \emph{symboles modulaires} dans le demi-plan de Poincaré $\mathcal{H}$. 

\section{Symboles modulaires}
Notons $\mathcal{H}^*$ l'espace obtenu en adjoignant à $\mathcal{H}$ les points rationnels $\mathbf{P}^1 (\Q)$ de son bord à l'infini $\mathbf{P}^1 (\R) = \R \cup \{ \infty \}$. \'Etant donné deux points distincts $r$ et $s$ dans $\mathbf{P}^1 (\Q)$, on note $\{ r,s \}$ la géodésique orientée reliant $r$ à $s$ dans $\mathcal{H}$.  

Soit $\Delta$ le groupe abélien engendré par les symboles $\{ r, s \}$ et soumis aux relations engendrées par   
$$\{ r , s \} + \{ s, r \} = 0 \quad \mbox{et} \quad \{r,s \} + \{ s,t \} + \{ t,r \} =0 .$$
On appelle \emph{symbole modulaire} l'image d'un symbole $\{r,s \}$ dans $\Delta$; on la note $[r,s]$. Manin \cite{Manin} observe que $\Delta$ est engendré par les symboles \emph{unimodulaires}, c'est à dire les $[r,s]$ avec $r=a/c$ et $s=b /d$ tels que $ad-bc = 1$, dont la géodésique associée $\{r,s\}$ est une arête de la triangulation de Farey représentée ci-dessous.

L'action de $\SL_2 (\Z)$ sur $\mathcal{H}$ par homographies se prolonge en une action sur $\mathcal{H}^*$ et induit une action naturelle sur $\Delta$ de sorte que 
$$g \cdot [\infty , 0 ] = [a/c , b/d],  \quad  \mbox{pour tout }  g = \left( \begin{smallmatrix} a & b \\ c & d \end{smallmatrix} \right) \in \SL_2 (\Z).$$
Notons 
$$\overline{\mathcal{M}} (\C^2 / \Z^2 ) = \mathcal{M} (\C^2 / \Z^2) / \C \cdot \mathbf{1}$$
le quotient de l'espace des fonctions méromorphes et $\Z^2$-périodiques sur $\C^2$ par le sous-espace des fonctions constantes. L'action linéaire de $\SL_2 (\Z)$ sur $\C^2$ induit une action sur $\C^2 / \Z^2$, et donc sur $\overline{\mathcal{M}} (\C^2 / \Z^2 )$, et l'observation suivante est simplement une reformulation de la relation d'addition \eqref{E:addition}.
 
\medskip
\noindent
{\bf Observation.} {\it L'application $\mathbf{c} : \Delta \to \overline{\mathcal{M}} (\C^2 / \Z^2)$ définie sur les symboles \emph{unimodulaires} par 
$$\mathbf{c} ([r,s]) (x, y) = \epsilon (dx-by) \epsilon (-cx + ay), \quad  \mbox{pour } r= a/c, \ s = b/d, \ ad-bc=1, $$ 
est bien définie et $\SL_2 (\Z)$-équivariante. }

\medskip

On dit alors que $\mathbf{c}$ est un \emph{symbole modulaire à valeurs dans $\overline{\mathcal{M}} (\C^2 / \Z^2)$}. 

\begin{center}
\includegraphics[width=0.9\textwidth]{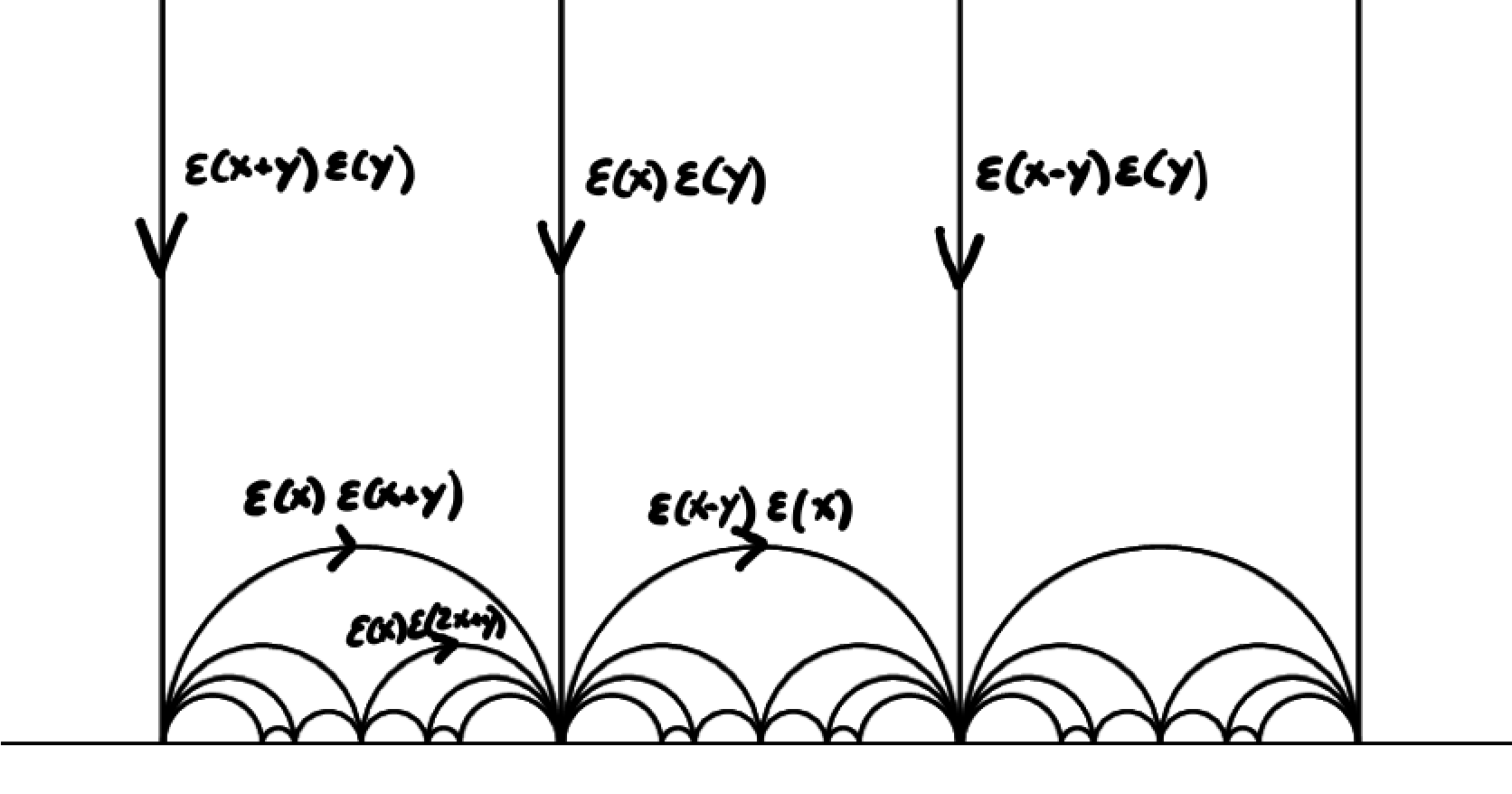}
\end{center}

Cette remarque élémentaire place la relation d'addition dans un nouveau contexte. Elle suggère d'étudier l'action des opérateurs de Hecke sur $\mathbf{c}$. 

\section{Opérateurs de Hecke}

Les actions du groupe $\SL_2 (\Z)$ sur $\Delta$ et sur $\C^2/ \Z^2$ s'étendent naturellement au monoïde $M_2 (\Z)^\circ = M_2 (\Z) \cap \GL_2 (\Q)$. 
Cela munit $\mathrm{Hom} (\Delta , \overline{\mathcal{M}} (\C^2 / \Z^2))$ d'une action à droite qui étend celle de $\SL_2 (\Z)$~:
$$\phi_{| g} ([r,s]) (x,y) = \phi (g \cdot [r,s]) \left( g \cdot \left( \begin{smallmatrix} x \\ y \end{smallmatrix} \right) \right),$$ 
$( \phi \in \mathrm{Hom} (\Delta , \overline{\mathcal{M}} (\C^2 / \Z^2)), \ g \in M_2 (\Z)^\circ ).$
L'espace 
$$\mathrm{Hom} (\Delta , \overline{\mathcal{M}} (\C^2 / \Z^2) )^{\SL_2 (\Z)}$$
des symboles modulaires à valeurs dans $\overline{\mathcal{M}} (\C^2 / \Z^2)$ hérite alors d'une action à droite de l'algèbre de Hecke associée à la paire $(M_2 (\Z)^\circ , \SL_2 (\Z))$~: étant donné un élément $g \in M_2 (\Z)^\circ$  on décompose la double classe de $g$ par $\SL_2 (\Z)$ en une union finie de classes à gauche
$$\SL_2 (\Z ) g \SL_2 (\Z) = \bigsqcup_j \SL_2 (\Z) g_j.$$
L'opérateur de Hecke associé à $g$ opère sur un symbole modulaire $\phi$ à valeurs dans $\overline{\mathcal{M}} (\C^2 / \Z^2)$ par 
$$\mathbf{T}(g) \phi = \sum_j \phi_{| g_j}.$$

\medskip
\noindent
{\it Exemples.} Notons $\mathbf{T}_p$ l'opérateur de Hecke associé à la matrice $\left( \begin{smallmatrix} p & 0 \\ 0 & 1 \end{smallmatrix} \right)$. 
Pour $p=2$, on a 
$$
(\mathbf{T}_2 \mathbf{c}) ([\infty , 0]) (x,y) = \varepsilon (2x) \varepsilon (y) + \varepsilon (x) \varepsilon (2y) + \varepsilon (2x) \varepsilon (x+y)  + \varepsilon (2y) \varepsilon (x+y).
$$
On laisse au lecteur le plaisir coupable de vérifier que, si $2x$, $2y$ et $x+y$ ne sont pas des entiers, on a 
\begin{multline*}
\varepsilon (2x) \varepsilon (y) + \varepsilon (x) \varepsilon (2y) + \varepsilon (2x) \varepsilon (x+y)  + \varepsilon (2y) \varepsilon (x+y) \\ - 2 \varepsilon (2x) \varepsilon (2y) - \varepsilon (x) \varepsilon (y) = 1/4
\end{multline*}
et donc que le symbole modulaire $\mathbf{c}$, à valeurs dans $\overline{\mathcal{M}} (\C^2 / \Z^2)$, est annulé par l'opérateur 
$$\mathbf{T}_2 - 2[2]^* -1,$$
où l'on note $[m]^*$ le tiré en arrière par l'application $\C^2/ \Z^2  \to \C^2 / \Z^2$ induite par la multiplication par $m$, autrement dit $(x,y) \mapsto (mx,my)$.

Pour $p=3$ et $p=5$, on a respectivement   
\begin{multline*}
(\mathbf{T}_3 \mathbf{c}) ([\infty , 0]) (x,y) = \varepsilon (3x) \varepsilon (y) + \varepsilon (x) \varepsilon (3y)  + \varepsilon (3x) \varepsilon (x+y)  \\ + \varepsilon (3y) \varepsilon (x+y) + \varepsilon (3x) \varepsilon (y-x)  + \varepsilon (3y) \varepsilon (x-y).
\end{multline*}
et 
\begin{multline*}
(\mathbf{T}_5 \mathbf{c}) ([\infty , 0]) (x,y) = \varepsilon (5x) \varepsilon (y) + \varepsilon (x) \varepsilon (5y)  + \varepsilon (5x) \varepsilon (x+y)  + \varepsilon (5y) \varepsilon (x+y) \\ + \varepsilon (5x) \varepsilon (y-2x)  + \varepsilon (5y) \varepsilon (x+2y)   - \varepsilon (y-2x) \varepsilon ( x+2y) 
 + \varepsilon (5x) \varepsilon (y+2x)  \\ + \varepsilon (5y) \varepsilon (x+3y) + \varepsilon (y+2x) \varepsilon ( x+3y) + \varepsilon (5x) \varepsilon (y-x) + \varepsilon (x-y) \varepsilon (5y) .
\end{multline*}
Au prix de fastidieux calculs on peut là encore vérifier que dans chacun de ces cas la relation 
\begin{equation} \label{E:HeckeTrig}
(\mathbf{T}_p - p[p]^* -1) \mathbf{c} =0
\end{equation}
est satisfaite.

\section{Un théorème et quelques questions}

Il est naturel de conjecturer que les relations \eqref{E:HeckeTrig} sont vérifiées pour tout nombre $p$ premier. Un tel énoncé rappelle une conjecture de Busuioc \cite{Busuioc} et Sharifi \cite{Sharifi}; la démonstration récente que Sharifi et Venkatesh \cite{SharifiVenkatesh} en ont donnée implique aussi que les relations \eqref{E:HeckeTrig} sont vérifiées pour tout $p$ premier.\footnote{Le lien entre les travaux de Sharifi et Venkatesh et les questions abordées ici est expliqué dans le sixième paragraphe de cette introduction.}
 
En plus de cela on aimerait relever l'application $\mathbf{c}$ en un symbole modulaire --- nécessairement \emph{partiel}, au sens de la thèse de Dasgupta, voir \cite{Evil,DD} --- à valeurs dans $\mathcal{M} (\C^2 / \Z^2)$ plutôt que son quotient par les fonctions constantes. Ces deux desiderata font l'objet du théorème ci-dessous qui s'énonce plus naturellement en termes de cohomologie des groupes et requiert en partie d'augmenter le niveau. 

L'application $\overline{\mathbf{S}} : g \mapsto \mathbf{c} ([\infty , g \cdot \infty ])$ définit en effet un $1$-cocycle\footnote{La relation de cocycle s'écrit 
$$\overline{\mathbf{S}}  (hg) = \overline{\mathbf{S}} (h) + h \cdot \overline{\mathbf{S}} (g).$$} de $\SL_2 (\Z )$ à valeurs dans 
$\overline{\mathcal{M}} (\C^2 / \Z^2)$ et donc une classe de cohomologie dans 
$$H^1 (\SL_2 (\Z ) , \overline{\mathcal{M}} (\C^2 / \Z^2)).$$  
\'Etant donné un entier $N$ strictement positif, on note comme d'habitude 
$$\Gamma_0 (N) = \left\{ \left( \begin{smallmatrix} a & b \\ c & d \end{smallmatrix} \right) \in \SL_2 (\Z) \; : \; c \equiv 0 \ (\mathrm{mod} \ N) \right\}$$ le sous-groupe de $\SL_2 (\Z)$ constitué des matrices qui fixent la droite $\langle e_1 \rangle$, engendrée par le premier vecteur de la base canonique de $\Z^2$, modulo $N$. On note enfin $\Delta^\circ_N \subset \Delta$
le sous-groupe engendré par les symboles $[r , s]$ avec $r, s \in \Gamma_0 (N) \cdot \infty \subset \mathbf{P}^1 (\Q)$. Le sous-groupe $\Delta^\circ_N$ est donc engendré par les éléments $[a/Nc , b/Nd] \in \Delta$ avec $a$ et $b$ premiers avec $N$.

On note finalement $D_N$ le groupe abélien libre engendré par les combinaisons linéaires entières formelles de diviseurs positifs de $N$ et $D_N^\circ$ le sous-groupe constitué des éléments de degré $0$, c'est-à-dire des éléments $\delta = \sum_{d | N} n_d [d]$ tels que $\sum_{d|N} n_d =0$.

\begin{theorem*}
Il existe un morphisme $\delta \mapsto \mathbf{S}_\delta$ de $D_N$ vers le groupe des $1$-cocycles de $\Gamma_0 (N)$ à valeurs dans $\mathcal{M} (\C^2 / \Z^2 )$ vérifiant les propriétés suivantes.

1. On a 
$$[\mathbf{S}_{[1]}] = [\overline{\mathbf{S}}]  \neq 0  \quad \mbox{dans} \quad H^{1} (\Gamma_0 (N),  \overline{\mathcal{M}} (\C^2 / \Z^2 ) ).$$ 

2. Pour tout entier $p$ premier ne divisant pas $N$ et pour tout $\delta \in D_N$, la classe de cohomologie de $\mathbf{S}_\delta$ dans $H^{1} (\Gamma_0 (N),  \mathcal{M} (\C^2 / \Z^2 ) )$ annule l'opérateur $\mathbf{T}_{p} - p [p]^* -  1$.

3. Pour tout entier strictement positif $N$ et pour tout $\delta = \sum_{d | N } n_d [d]$ dans $D_N^\circ$, le $1$-cocycle $\mathbf{S}_\delta$ est cohomologue à un cocycle explicite $\mathbf{S}_{\delta}^*$ défini par 
\begin{multline} \label{Sexplicit}
\mathbf{S}_{\delta}^*\left( \begin{array}{cc} a & * \\ Nc & * \end{array} \right) \\ = \left\{ \begin{array}{ll} 
0 & \mbox{si } c=0, \\
\sum_{d | N} \frac{n_d }{d' c} \sum_{j \ {\rm mod} \ d' c} \varepsilon \left( \frac{1}{d' c} (y+j) \right) \varepsilon \left( dx - \frac{a}{d' c} (y+j) \right) & \mbox{sinon},
\end{array} \right.
\end{multline}
avec $dd'=N$.
\end{theorem*}

\medskip
\noindent
{\it Remarque.} Chaque application $\mathbf{S}_\delta^*$ définit en fait un symbole modulaire \emph{partiel} dans
$$\mathrm{Hom} (\Delta_N^\circ  ,  \mathcal{M} (\C^2 / \Z^2 ))^{\Gamma_0 (N)}.$$
On le définit sur un symbole $[\infty , a/Nc]$ par $\mathbf{S}^*_\delta \left( \begin{smallmatrix} a & u \\ Nc & v \end{smallmatrix} \right)$ où $u$ et $v$ sont tels que $av-Ncu=1$.  

\medskip

Ce texte est consacré à une vaste généralisation de ce théorème. Le but est de répondre aux questions suivantes~:
\begin{enumerate}
\item Que dire des produits de $n$ fonctions cotangentes lorsque $n \geq 3$ ?
\item Y a-t-il des résultats analogues pour des fonctions elliptiques ou, plus simplement, pour des fractions rationnelles ?
\end{enumerate}
Les réponses à ces questions sont énoncées au chapitre \ref{C:2} qui est une sorte de deuxième introduction dans laquelle les résultats généraux sont énoncés. Avant cela, dans le premier chapitre, on commence par détailler la construction de classes de cohomologie, pour des sous-groupes de $\GL_n$, qui généralisent la classe de $\mathbf{S}_{[1]}$ et ont toutes une origine topologique commune. Les classes que nous construisons sont à coefficients dans une partie de la cohomologie d'arrangements d'hyperplans dans des produits $A^n$ où $A$ est isomorphe au groupe multiplicatif ou à une courbe elliptique. Un point important, pour démontrer le théorème ci-dessus ou les résultats annoncés dans le chapitre \ref{C:2} est ensuite de montrer que cette partie de la cohomologie d'arrangements d'hyperplans peut être représentée par des formes méromorphes sur $A^n$. C'est l'objet du théorème \ref{P:Brieskorn} dont la démonstration occupe le chapitre \ref{S:OrlikSolomon}. Le calcul explicite des classes ainsi obtenues occupe le reste de l'ouvrage. 

\medskip

Dans les deux derniers paragraphes de cette introduction on relie les cocycles $\mathbf{S}_\delta$, et leurs généralisations annoncées, à des objets plus classiques en théorie des nombres.

\section{\'Evaluation, terme constant, morphismes de Dedekind--Rademacher} Le slogan suivant, à retenir, distingue les avantages respectifs des cocycles $\mathbf{S}_\delta$ et $\mathbf{S}_\delta^*$.  
\begin{quote}
{\it Les cocycles notés $\mathbf{S}$ peuvent être \emph{évalués} en des points de torsion alors que les cocycles notés $\mathbf{S}^*$ peuvent eux être \emph{calculés} en le point générique.} 
\end{quote} 
Nous verrons en effet que l'on peut contrôler l'ensemble des points $P \in \C^2 / \Z^2$ en lesquels les fonctions méromorphes $\mathbf{S}_\delta (g)$ sont régulières.  Après évaluation en de tels points de torsion de $\C^2/\Z^2$ bien choisis, on retrouve alors des résultats plus classiques. 

\medskip
\noindent
{\it Exemple.} Les fonctions dans l'image de $\mathbf{S}_{[1]}$ sont régulières en les points $(j/N,0)$, pour tous $j \in \{1 , \ldots , N-1 \}$. L'application 
\begin{equation} \label{E:cocDR}
\Psi_{N} : \Gamma_0 (N ) \to \C; \quad g \mapsto  \sum_{j=1}^{N-1} \mathbf{S}_{[1]} (g ) (j/N, 0)
\end{equation}
définit donc un morphisme de groupes. On retrouve ainsi 
un multiple du morphisme de Dedekind--Rademacher \cite{Rademacher,Mazur} donné par 
\begin{equation} \label{E:DR}
\Phi_N \left( \begin{array}{cc} a & b \\ N c & d \end{array} \right) = \left\{ \begin{array}{ll}
(N -1)b/d & \mbox{si } c=0 \\
\frac{(N -1)(a+d)}{Nc} + 12 \cdot \mathrm{sign}(c) \cdot D^N \left( \frac{a}{N |c|} \right) & \mbox{si } c \neq 0,
\end{array} \right.
\end{equation}
où, en notant $D : \Q \to \Q$ la \emph{somme de Dedekind} usuelle
$$D(a/c) = \frac{1}{c} \sum_{j=1}^{c-1}\varepsilon \left( \frac{j }{c} \right) \varepsilon \left( - \frac{j a}{c} \right) \quad \mbox{pour } c>0 \quad \mbox{et} \quad (a,c)=1,$$
on a $D^N (x) = D(x) - D(N x)$. De l'expression de $\mathbf{S}_{[1]}$ que l'on donnera au chapitre~\ref{C:7}, on peut plus précisément déduire --- comme dans \cite[\S 11]{Takagi} --- que 
$$12 \cdot \Psi_N =  \Phi_N .$$

\medskip

Plutôt que d'évaluer les fonctions méromorphes $\mathbf{S}^*_\delta (g)$ on peut considérer leur terme constant en $0$. Le théorème du paragraphe précédent implique alors le corollaire qui suit.

\begin{cor*} 
Soit $\delta \in D_N^\circ$ tel que $\sum_{d | N} n_d d =0$ et $D^\delta (x) = \sum_{d | N} n_d D (dx)$. Alors l'application $\Psi_\delta : \Gamma_0 (N) \to \C$ donnée par 
\begin{equation} \label{E:DD}
\Psi_\delta \left( \begin{array}{cc} a & * \\ N c & * \end{array} \right) = \left\{ \begin{array}{ll}
0 & \mbox{si } c=0 \\
\mathrm{sign}(c) \cdot D^\delta \left( \frac{a}{N |c|} \right) & \mbox{si } c \neq 0,
\end{array} \right.
\end{equation}
définit un morphisme de groupes.
\end{cor*}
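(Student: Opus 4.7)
L'énoncé découle du point 3 du théorème précédent, en composant le cocycle $\mathbf{S}_\delta^*$ avec une application $\Gamma_0(N)$-équivariante ``terme constant en $(0,0)$'' à valeurs dans $\C$.

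\textbf{Stratégie cohomologique.} Un $1$-cocycle d'un groupe à valeurs dans un module trivial est exactement un morphisme de groupes. Le plan est donc de construire une application linéaire $\mathrm{ct}_0$, définie au moins sur le sous-espace de $\mathcal{M}(\C^2/\Z^2)$ engendré par les $\mathbf{S}_\delta^*(g)$ pour $g \in \Gamma_0(N)$, à valeurs dans $\C$ muni de l'action triviale, et $\Gamma_0(N)$-équivariante. La composition $\Psi_\delta := \mathrm{ct}_0 \circ \mathbf{S}_\delta^*$ sera alors automatiquement un morphisme $\Gamma_0(N) \to \C$.

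\textbf{Calcul explicite.} Fixons $g = \left(\begin{smallmatrix} a & * \\ Nc & * \end{smallmatrix}\right)$ avec $c \neq 0$ et examinons la formule \eqref{Sexplicit}. Les termes indexés par $j \not\equiv 0 \pmod{d'c}$ sont réguliers à l'origine --- puisque $\det g = 1$ implique $\gcd(a, d'c) = 1$ --- et leurs valeurs en $(x,y)=(0,0)$ se somment en
$$\sum_{d \mid N} \tfrac{n_d}{d'c} \sum_{j=1}^{d'c-1} \varepsilon\!\left(\tfrac{j}{d'c}\right) \varepsilon\!\left(-\tfrac{aj}{d'c}\right) \;=\; \sum_{d \mid N} n_d\, D\!\left(\tfrac{ad}{Nc}\right) \;=\; D^\delta\!\left(\tfrac{a}{Nc}\right).$$
Le terme $j=0$, à savoir $\tfrac{n_d}{d'c}\varepsilon(y/(d'c))\varepsilon(dx - ay/(d'c))$, possède un pôle double à l'origine ; on définit son terme constant comme le coefficient constant de son développement de Laurent itéré, d'abord en $y$ puis en $x$. À partir du développement $\varepsilon(z) = \tfrac{1}{2i\pi z} + \tfrac{i\pi}{6}\, z + O(z^3)$, un calcul direct fournit la valeur $-\tfrac{a\, n_d\, d}{12\, Nc}$. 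L'hypothèse cruciale $\sum_{d \mid N} n_d d = 0$ annule alors exactement la somme sur $d$ de ces contributions. Par imparité de $D^\delta$ on obtient $\Psi_\delta(g) = \mathrm{sign}(c) \cdot D^\delta(a/(N|c|))$ ; le cas $c=0$ est trivial puisque $\mathbf{S}_\delta^*(g) = 0$ d'après \eqref{Sexplicit}.

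\textbf{Difficulté principale.} Le point central de la preuve est la justification rigoureuse de l'équivariance de $\mathrm{ct}_0$ sous $\Gamma_0(N)$. Certes l'origine est fixée par l'action linéaire, mais le développement de Laurent itéré n'est pas a priori invariant sous $\GL_2(\Z)$ car l'action de $g$ mélange précisément les coordonnées $(x,y)$. Deux voies semblent possibles pour surmonter cet obstacle : soit reformuler $\mathrm{ct}_0$ de façon intrinsèque, par exemple via un accouplement avec une classe en cohomologie locale à l'origine ou via un résidu itéré canoniquement choisi ; soit vérifier directement, en exploitant la forme particulière des termes apparaissant dans $\mathbf{S}_\delta^*$ et l'hypothèse $\sum n_d d = 0$, que l'extraction est indépendante des choix sur le sous-espace engendré par les $\mathbf{S}_\delta^*(g)$. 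Une fois ce point technique résolu, la conclusion est immédiate par la stratégie cohomologique décrite ci-dessus.
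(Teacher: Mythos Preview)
Your approach follows exactly the one-sentence indication the paper gives before the corollary: ``on peut consid\'erer leur terme constant en $0$''. Your explicit computation is correct, including the key observation that $\sum_{d\mid N} n_d d = 0$ annihilates the contribution of the $j=0$ terms in the iterated Laurent expansion and leaves precisely the Dedekind sum $D^\delta(a/Nc)$.

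The gap you isolate --- the $\Gamma_0(N)$-equivariance of the constant-term extraction --- is genuine: the functions $\mathbf{S}_\delta^*(g)$ do have a pole at the origin (the leading singular part is proportional to $\sum_d n_d d'$, which need not vanish under the stated hypotheses), and an iterated Laurent constant term depends a priori on the ordering of the coordinates. The paper does not close this gap either; it simply asserts that the theorem implies the corollary and then remarks that $12\Psi_\delta$ coincides with the Dedekind--Rademacher morphism of Darmon--Dasgupta \cite{DD}, who verify the homomorphism property directly. A more self-contained route, implicit in the later Proposition~\ref{P:DRgen}, is to use the cohomologous cocycle $\mathbf{S}_\delta$ instead: under $\delta\in D_N^\circ$ its values are regular at the origin (the support of $D_\delta$ avoids $0$ in its first coordinate, and $\Gamma_0(N)$ preserves this), so honest evaluation at $0$ is available and manifestly equivariant, yielding a morphism. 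One would then still need to match that morphism with your explicit formula, which amounts to showing that the coboundary relating $\mathbf{S}_\delta$ and $\mathbf{S}_\delta^*$ has vanishing constant term at $0$ --- a version of the same issue, but now localized to a single fixed function $m$ rather than to the whole $\Gamma_0(N)$-orbit.
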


On notera que le morphisme $12 \Psi_\delta$ est à valeurs entières et qu'il coïncide avec le \emph{morphisme de Dedekind--Rademacher modifié} $\Phi_\delta$ de Darmon et Dasgupta \cite{DD}.

\section{Relations avec les travaux de Kato et de Sharifi--Venkatesh}

Kato \cite{Kato} construit des unités de Siegel sur les courbes modulaires $Y_1 (N)$ à partir de fonctions theta sur la courbe elliptique universelle $E$ au-dessus de $Y_1 (N)$. 

Il découle en effet de \cite[Proposition 1.3]{Kato} qu'étant donné un entier strictement positif $m$ premier à $6N$, il existe une fonction theta ${}_m \theta$ dans $\Q (E)^\times$ qui est une unité en dehors des points de $m$-torsion. La fonction ${}_m \theta$ est caractérisée par son diviseur dans $E[m]$ et des relations de distribution associées aux applications de multiplication par des entiers relativement premiers à $m$. Les unités de Siegel sont alors obtenues en tirant en arrière sur $Y_1 (N)$ ces fonctions theta par des sections de $N$-torsion. 

Sharifi et Venkatesh \cite{SharifiVenkatesh} considèrent des analogues des fonctions ${}_m \theta$. Leur méthode permet en fait de construire des $1$-cocycles sur des sous-groupes $\Gamma$ de $\GL_2 (\Z)$ à valeurs dans les groupes de $K$-théorie de degré $2$ des corps de fonctions de $\C^2 / \Z^2$ ou du carré $E^2$ de la courbe elliptique universelle. Les $1$-cocycles du premier type sont des applications de la forme $\Gamma \to K_2 (\Q (\C^2 / \Z^2))$. 
En les composant avec 
$$K_2 (\Q (\C^2 / \Z^2)) \to \mathcal{M} (\C^2 / \Z^2); \quad \{ f , g \} \mapsto \frac{d \log (f) \wedge d \log (g)}{dx \wedge dy},$$
où $x$ et $y$ sont les deux coordonnées de $\C^2$, on obtient des $1$-cocycles dont on peut vérifier qu'ils sont cohomologues aux cocycles du théorème énoncé ci-dessus. Le fait que les relations \eqref{E:HeckeTrig} sont bien vérifiées pour tout nombre premier $p$ découle alors de \cite[Lemma 4.2.9]{SharifiVenkatesh}. 

Un aspect intéressant de notre construction est que nous obtenons ces cocycles à partir d'une classe purement topologique; l'émergence de fonctions méromorphes se déduit au final d'un théorème ``de type Brieskorn'' qui permet de représenter certaines classes de cohomologie singulière par des formes méromorphes. La construction est suffisamment maniable pour nous permettre de considérer plus généralement l'action de $\GL_n (\Z)$ sur $\C^n / \Z^n$ ou sur $E^n$. 

Les $1$-cocycles du second type chez Sharifi et Venkatesh sont des applications de la forme $\Gamma \to K_2 (\Q (E^2))$. Comme pour les unités de Siegel, on peut tirer en arrière ces cocycles par des sections de torsion. On obtient ainsi des homomorphismes du premier groupe d'homologie de $\Gamma$ vers le $K_2$ d'une courbe modulaire. Goncharov et Brunault \cite{Gon1,Bru1,Bru2} avaient déjà construit de tels homomorphismes en associant explicitement à certains symboles modulaires des symboles de Steinberg d'unités de Siegel. Obtenir ces morphismes par spécialisation d'un $1$-cocycle à valeurs dans $K_2 (\Q (E^2))$ permet de montrer que ces homomorphismes sont Hecke-équivariants. 

En composant un $1$-cocycle $\Gamma \to K_2 (\Q (E^2))$ avec le symbole différentiel $\partial \log \wedge \partial \log$ puis en tirant le résultat en arrière par une section de torsion, on obtient un homomorphisme du premier groupe d'homologie de $\Gamma$ vers les formes modulaires de poids $2$ sur $Y_1(N)$. \'Etendu aux symboles modulaires (partiels), ce morphisme associe à un tel symbole une ``zeta modular form'' au sens de \cite[Section 4]{Kato}. 

Cette construction s'étend là encore à l'action de $\GL_n (\Z)$ sur le produit de $n$ courbes elliptiques universelles; voir chapitre \ref{C:2}. Les cocycles ainsi obtenus révèlent des relations cachées entre des produits de fonctions elliptiques  classiques, relations gouvernées par l'homologie de sous-groupes de congruence dans $\GL_n (\Z)$. On peut tirer de cela un certain nombres de conséquences arithmétiques \cite{Takagi,ColmezNous}; d'autres conséquences sont en préparation.

\section{Remerciements}

Ce texte est l'aboutissement d'une réflexion entamée il y a quelques années avec Akshay Venkatesh. Plusieurs idées sont issues de discussions avec lui (ainsi que l'impulsion d'écrire en français), un grand merci à lui pour sa générosité.

De toute évidence, ce travail doit beaucoup aux idées initiées et développées par Robert Sczech. Nous profitons de cette occasion pour lui exprimer notre gratitude.

P.C. remercie Samit Dasgupta pour lui avoir posé une question étincelle il y a 10 ans.

L.G. remercie l'IHES et le soutien de l'ERC de Michael Harris durant les premiers temps de ce projet.

N.B. tient à remercier Olivier Benoist pour de nombreuses discussions autour du chapitre 3. 

Merci à Emma Bergeron pour les dessins.

Enfin, c'est un plaisir de remercier Henri Darmon, Clément Dupont, Javier Fresan, Peter Xu et les rapporteurs anonymes pour leurs nombreux commentaires.

\numberwithin{equation}{chapter}

\numberwithin{section}{chapter}

\resettheoremcounters

\chapter{Construction de cocycles : aspects topologiques} \label{C:1}

\section{Résumé}
Le cocycle $\mathbf{S}$ discuté en introduction est relié aux ``cocycles d'Eisenstein'' qui interviennent sous différentes formes dans la littérature, par exemple dans \cite{Sczech93,Nori,CD,CDG}. Un cocycle très proche est en fait explicitement considéré par Sczech dans \cite{Sczech92,Sczech93}.

Le premier but de ce texte est de donner une construction générale de cocycles de ``type Sczech'' et de montrer qu'ils ont une source topologique commune. La méthode utilisée consiste à relever certaines classes de cohomologie dans $H^{2n-1}(X^*)$, où $X$ est un $G$-espace et $X^*$ est égal à $X$ privé d'un nombre fini de ses points, en des classes de cohomologie {\'e}quivariante dans $H_{G}^{2n-1}(X^* )$. Elle rappelle la méthode proposée par Quillen \cite{Quillen} pour calculer la cohomologie d'un groupe linéaire sur un corps fini. 

En pratique on considère essentiellement trois cas~:
\begin{itemize}
\item Le cas \emph{additif} (ou \emph{affine}). Dans ce cas $X = \C^n$, l'espace épointé $X^*$ est égal à $X$ privé du singleton $D=\{ 0 \}$ et $G = \GL_n (\C)^\delta$, le groupe des  transformations lin{\'e}aires de $\mathbf{C}^n$, consid{\'e}r{\'e} comme un groupe discret.
\item Le cas \emph{multiplicatif} (ou \emph{trigonométrique}). Dans ce cas $X = \C^n / \Z^n$, l'espace épointé $X^*$ est égal à $X$ privé d'un cycle $D$ de degré $0$ constitué de points de torsion et $G$ est un sous-groupe de $\GL_n (\Z)$ qui préserve $D$. 
\item Le cas \emph{elliptique}. Dans ce cas $X=E^n$, où est $E$ une courbe elliptique ou une famille de courbes elliptiques, l'espace épointé $X^*$ est égal à $X$ privé d'un cycle $D$ de degré $0$ constitué de points de torsion et $G$ est un sous-groupe d'indice fini de $\GL_n (\Z)$, ou $\GL_n (\mathcal{O})$ si $E$ est à multiplication par $\mathcal{O}$, qui préserve $D$. 
\end{itemize}
Dans chacun de ces cas l'action naturelle (linéaire) de $G$ sur $X$ donne lieu à un fibré d'espace total 
$$EG \times_G X$$
au-dessus de l'espace classifiant $BG=EG/G$. L'isomorphisme de Thom associe alors à $D$ une classe dans 
$H^{2n} ( EG \times_G X , EG \times_G X^* )$. 
La construction de Borel identifie ce groupe au groupe de cohomologie {\it équivariante}  $H_G^{2n} (X , X^*)$. On renvoie à l'annexe \ref{A:A} pour plus de détails sur la cohomologie équivariante; elle généralise à la fois la cohomologie des groupes et la cohomologie usuelle. La construction de Borel permet de retrouver les propriétés usuelles, comme par exemple associer une suite exacte longue à une paire de $G$-espaces. Dans la suite on considère 
$$[D] \in H_G^{2n} (X , X^*)$$
et la suite exacte longue associée à la paire $(X,X^*)$~: 
\begin{equation} \label{suiteexacte0}
H_G^{2n-1} (X ) \to H_G^{2n-1} ( X^*) \to H_G^{2n} (X , X^*) \to H_G^{2n} (X).
\end{equation}
L'origine topologique de nos cocycles repose alors sur le fait suivant~:

\medskip
\noindent
{\bf Fait.} {\it La classe $[D] \in H_G^{2n} (X , X^*)$ admet un relevé (privilégié) 
$$E[D] \in H_{G}^{2n-1}(X^*).$$
}

\medskip

Nous démontrons ce fait au cas par cas dans les paragraphes qui suivent. Dans le cas affine il résulte du fait qu'un fibré vectoriel complexe plat possède une classe d'Euler rationnelle triviale, alors que dans le cas elliptique on le déduit d'un théorème de Sullivan qui affirme que la classe d'Euler rationnelle d'un fibré vectoriel à groupe structural contenu dans $\SL_n (\Z)$ est nulle. 

L'étape suivante part d'une remarque générale~: supposons que $Y/\C$ soit une vari\'et\'e {\em affine}, de dimension $n$, sur laquelle opère un groupe $G$, et supposons donnée une classe de cohomologie équivariante $\alpha \in H_G^{2n-1}(Y(\C))$. Puisque $H^i(Y(\C))$ s'annule pour 
$i  > n$, la suite spectrale pour la cohomologie \'equivariante donne une application
$$H_G^{2n-1}(Y(\C)) \rightarrow H^{n-1}(G, H^{n}(Y(\C))).$$
Elle permet donc d'associer à $\alpha$ une classe de cohomologie du groupe $G$.

Dans les cas qui nous intéressent la variété $X^*$ n'est pas affine, mais on peut restreindre la classe $E[D]$ \`a un ouvert affine $U$.  On voudrait aussi que $U$ soit invariant par $G$; mais un tel $U$ n'existe pas. Dans le cas additif où $X = \C^n$, on peut toutefois formellement prendre $U := `` \C^n - \bigcup_{\ell} \ell^{-1}(0)$''. Plus précisément, étant donné un ensemble fini $L$ de fonctionnelles affines, on pose 
$U_L = \C^n - \cup_{\ell \in L} \ell^{-1}(0)$. En regardant $U$ comme la limite inverse des $U_L$, on associe à $E[D]$ une classe dans le groupe 
$$H^{n-1}(G, \varinjlim_{L} H^j(U_L )).$$ 

La dernière étape de notre construction consiste à représenter $\varinjlim_{L} H^j(U_L)$ par des formes méromorphes. Dans le cas affine cela résulte d'un théorème célèbre de Brieskorn \cite{Brieskorn}~: 
$$\varinjlim_{L} H^j(U_L )  = \begin{cases} 0, \ j > n, \\ \Omega^n_{\mathrm{aff}}, \ j = n, \end{cases} $$
où $\Omega^n_{\mathrm{aff}} \subset \Omega^n_{\mathrm{mer}} (\C^n )$ est une sous-algèbre de formes méromorphes, voir Définition~\ref{def:Omegamer}.
Le chapitre \ref{S:OrlikSolomon} est consacré à la démonstration d'un résultat de ce type dans les cas multiplicatif et elliptique. 

En admettant pour l'instant ce théorème ``de type Brieskorn'', on consacre le présent chapitre à détailler la construction esquissée ci-dessus. Elle conduit à des classes 
$$\mathbf{S}[D] \in H^{n-1} (G , \Omega^n_{\rm mer} (X )).$$

\section{Le cocycle additif}

\subsection{Une classe de cohomologie \'equivariante}  \label{S21}

Soit $G= \GL_n (\C)^\delta$, le groupe des  transformations lin{\'e}aires  de $\mathbf{C}^n$, consid{\'e}r{\'e} comme un groupe discret.

La repr{\'e}sentation linéaire\footnote{On identifie donc $\C^n$ à l'espace des vecteurs colonnes.} 
$$G \rightarrow \GL (\C^n ); \quad g \mapsto ( z \in \C^n \mapsto gz)$$
donne lieu {\`a} un fibr{\'e} vectoriel $\mathcal{V}$, d'espace total $EG \times_G \C^n$, sur l'espace classifiant $BG$. On renvoie à l'annexe \ref{A:A} pour des rappels sur les espaces classifiants, les espaces simpliciaux et la cohomologie équivariante. Rappelons juste ici que si $X$ est un espace topologique muni d'une action continue de $G$, on a 
$$H^*_G (X) = H^* (EG \times_G X) .$$
Lorsque $X$ est contractile, ce groupe se réduit à $H^* (BG)=H^* (G)$, la cohomologie du groupe $G$. 

On peut considérer la classe de Thom du fibré $\mathcal{V}$~:
$$u \in H^{2n}_G ( \C^n , \C^n - \{ 0 \} )$$
à coefficients dans $\C$. Dans la suite exacte
$$\xymatrix{
H_G^{2n-1} (\C^n - \{ 0 \}) \ar[r] & H_G^{2n} (\C^n , \C^n - \{ 0 \} ) \ar[r]^{ \quad \quad c} & H_{G}^{2n} (\C^n )},$$
l'image de $u$ par l'application $c$ est la classe de Chern \'equivariante
$$c_{2n}(\mathcal{V}) \in H^{2n}(BG, \C),$$
qui est nulle parce que $\mathcal{V}$ est plat.  On peut donc relever la classe $u$ en une classe dans $H_G^{2n-1} (\C^n - \{ 0 \})$.

\medskip
\noindent
{\it Remarque.} Ce relev\'e n'est pas unique, mais on peut consid\'erer la suite exacte 
$$H^{2n-1} (G) \to H^{2n-1}_G (\C^n - \{ 0 \} ) \to H^{2n-1} (\C^n -\{ 0 \})$$
associée à la fibration $\mathcal{V}^* \to BG$, où $\mathcal{V}^*$ désigne le complémentaire de la section nulle dans $\mathcal{V}$, 
Chaque relev\'e de $u$ dans $H_G^{2n-1} (\C^n - \{ 0 \})$ s'envoie sur la classe fondamentale dans $H^{2n-1} (\C^n -\{ 0 \})$. 

\medskip

Le quotient $EG \times_G (\C^n - \{ 0 \})$ est une \emph{variété simpliciale}, c'est-à-dire un ensemble semi-simplicial dont les $m$-simplexes 
$$(EG \times_G (\C^n - \{ 0 \}))_m = (EG_m \times (\C^n - \{ 0 \}) / G$$
sont des variétés et dont les applications de faces et de dégénérescences sont lisses. La \emph{réalisation grossière} $\| EG \times_G (\C^n - \{ 0 \}) \|$ de cette variété simpliciale est l'espace topologique 
$$\| EG \times_G (\C^n - \{ 0 \}) \| = \sqcup_{m \geq 0} \Delta_m \times (EG \times_G (\C^n - \{ 0 \}))_m / \sim.$$
Ici $\Delta_m$ désigne le $m$-simplexe standard et les identifications sont données par 
\begin{equation*}
(\sigma^i (t) , x) \sim (t , \sigma_i (x)), \quad t \in \Delta_{m-1} , \ x \in (EG \times_G (\C^n - \{ 0 \}))_m, \ i \in \{ 0 , \ldots , m \},
\end{equation*}
où $\sigma^i : \Delta_{m-1} \to \Delta_m$ désigne l'inclusion de la $i$-ème face et $\sigma_i :  (EG \times_G (\C^n - \{ 0 \}))_m \to (EG \times_G (\C^n - \{ 0 \}))_{m-1}$ est l'application de face correspondante. On renvoie à l'annexe \ref{A:A} pour plus de détails sur ces objets. 

Retenons que l'on a une projection continue naturelle de la réalisation grossière vers la réalisation géométrique de $EG \times_G (\C^n - \{ 0 \})$ et que cette application est une équivalence d'homotopie. En pratique nous travaillerons avec la réalisation grossière. En tirant en arrière le relevé de $u$ on obtient la proposition suivante.

\begin{proposition} \label{P4}
La classe de Thom $u$ admet un relev\'e dans 
$$H^{2n-1} ( \| EG \times_G (\C^n - \{ 0 \}) \| ).$$
\end{proposition}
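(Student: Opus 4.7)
Mon plan consiste essentiellement à réunir les observations faites dans les paragraphes précédents. Tout d'abord, j'utiliserais la suite exacte longue en cohomologie équivariante
$$H^{2n-1}_G(\C^n - \{0\}) \to H^{2n}_G(\C^n, \C^n - \{0\}) \xrightarrow{c} H^{2n}_G(\C^n) = H^{2n}(BG, \C),$$
dans laquelle $c(u)$ s'identifie, par construction, à la classe de Chern équivariante maximale $c_{2n}(\mathcal{V})$. Pour obtenir un relevé de $u$ il suffit donc d'établir l'annulation de cette classe; c'est là le point substantiel. J'invoquerais la platitude de $\mathcal{V}$ : comme $G = \GL_n(\C)^\delta$ est discret, la représentation tautologique munit $\mathcal{V}$ d'une connexion plate canonique, et le théorème de Chern--Weil entraîne alors que toutes les classes de Chern de $\mathcal{V}$ s'annulent à coefficients dans $\C$. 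Par exactitude, la classe $u$ se relève donc en une classe $\tilde u \in H^{2n-1}_G(\C^n - \{0\})$.

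Il ne resterait plus qu'à transférer ce relevé à la réalisation grossière. Par définition, $H^{*}_G(\C^n - \{0\}) = H^*(EG \times_G (\C^n - \{0\}))$, où la cohomologie est prise sur la réalisation géométrique de la variété simpliciale sous-jacente. L'équivalence d'homotopie naturelle, rappelée juste avant l'énoncé, entre cette réalisation géométrique et la réalisation grossière $\|EG \times_G (\C^n - \{0\})\|$ induit un isomorphisme des groupes de cohomologie correspondants; l'image de $\tilde u$ par cet isomorphisme fournit la classe recherchée.

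Le seul véritable obstacle est l'annulation rationnelle de $c_{2n}(\mathcal{V})$, mais c'est un résultat standard sur les fibrés plats complexes, le reste de l'argument étant purement formel (exactitude, comparaison de modèles d'espaces classifiants). On notera au passage que ce relevé n'est pas unique, comme le signale la remarque qui précède, ce qui ne posera pas de problème pour les constructions ultérieures puisque seul le fait de l'existence sera exploité.
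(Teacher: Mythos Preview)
Votre démonstration est correcte et suit exactement la stratégie du texte : l'annulation de $c_{2n}(\mathcal{V})$ par platitude de $\mathcal{V}$, puis le relèvement via la suite exacte longue, et enfin le transfert à la réalisation grossière par l'équivalence d'homotopie rappelée juste avant l'énoncé. Il n'y a rien à ajouter.
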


{\it Via} la théorie de Chern--Weil et les travaux de Mathai et Quillen \cite{MathaiQuillen}, nous construirons au  paragraphe~\ref{S:61} du chapitre \ref{S:6} un relevé {\rm privilégié} de $u$ représenté par une forme différentielle.

\subsection{Effacer les hyperplans} \label{S:222}

\`A tout \'el\'ement $g\in G$ de premier vecteur ligne $v \in \C^n - \{ 0 \} $, on associe une forme linéaire 
$$e_1^* \circ g : \C^n \to \C; \quad z \mapsto v z.$$

Pour tout $(k+1)$-uplet $(g_0 , \ldots , g_k ) \in G^{k+1}$, on note 
$$U (g_0 , \ldots , g_k) = \{ z \in \C^n \; : \; \forall j \in \{ 0 , \ldots , k \}, \ e_1^* (g_j z) \neq 0  \}.$$
C'est un ouvert de $\C^n$ qui est égal au complémentaire d'un arrangement d'hyperplans~:
\begin{equation} \label{hypComp}
U (g_0 , \ldots , g_k) = \C^n -  \cup_j H_j, \quad H_j = \mathrm{ker} (e_1^* \circ g_j ).
\end{equation}

L'action du groupe $G$ sur $\C^n$ préserve l'ensemble de ces ouverts~:
$$g \cdot U (g_0 , \ldots , g_k) = U (g_0 g^{-1} , \ldots , g_k g^{-1}).$$
Comme les variétés \eqref{hypComp} sont affines de dimension $n$, elles n'ont pas de cohomologie en degré $>n$. Nous montrons dans l'annexe \ref{A:A} qu'il correspond alors à la classe de cohomologie fournie par la proposition \ref{P4} une classe dans 
$$H^{n-1} ( G , \lim_{\substack{\rightarrow \\ H_j}} H^n (\C^n - \cup_j H_j )).$$

\subsection{Une classe de cohomologie à valeurs dans les formes méromorphes}
\'Etant donné une forme linéaire $\ell$ sur $\C^n$, on définit une forme différentielle méromorphe sur $\C^n$ par la formule
\begin{equation}
\omega_\ell =  \frac{1}{2i \pi} \frac{d\ell }{\ell }.
\end{equation}
Pour tout $g \in G$, on a 
\begin{equation} \label{E:relg}
g^* \omega_{ g \cdot \ell} = \omega_{\ell}.
\end{equation}

D'après un théorème célèbre de Brieskorn \cite[Lemma 5]{Brieskorn}, confirmant une conjecture d'Arnold, l'application naturelle $\eta \mapsto [\eta]$ de la $\Z$-algèbre graduée engendrée par les formes $\omega_{\ell}$ et l'identité vers la cohomologie singulière à coefficients entiers de \eqref{hypComp} est un isomorphisme d'algèbre. Cela justifie d'introduire la définition suivante dans notre contexte.

\begin{definition} \label{def:Omegamer}
Soit 
$$\Omega_{\rm aff}= \bigoplus_{p=0}^n \Omega_{\rm aff}^p$$ 
la $\Z$-algèbre graduée de formes différentielles méromorphes sur $\C^n$ engendrée par les formes $\omega_{\ell}$, avec $\ell \in (\C^n)^\vee - \{0 \}$, et par l'identité en degré $0$. 
\end{definition}

Le th\'eor\`eme de Brieskorn implique que l'application naturelle 
$$\Omega_{\rm aff} \to \lim_{\substack{\rightarrow \\ H_{j}}} H^\bullet (\C^n - \cup_{j} H_{j} )$$
est un isomorphisme. Finalement, on a démontré~:

\begin{proposition} \label{P:Sa}
La classe de cohomologie fournie par la proposition \ref{P4} induit une classe 
\begin{equation} \label{E:Sa}
S_{\rm aff} \in H^{n-1} (G , \Omega_{\rm aff}^n ).
\end{equation}
\end{proposition}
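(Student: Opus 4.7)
The plan is to chain together the three steps that have already been set up in Sections 2.1--2.3. Starting from the lift $\widetilde u \in H^{2n-1}(\|EG \times_G (\C^n - \{0\})\|)$ furnished by Proposition \ref{P4}, the goal is to produce an element of $H^{n-1}(G, \Omega^n_{\rm aff})$ via (i) restriction to $G$-stable families of affine opens, (ii) a spectral sequence degeneration forced by affine dimension, and (iii) Brieskorn's identification of limit cohomology with $\Omega_{\rm aff}^n$.

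First I would make precise the restriction procedure of Section 2.2. The $k$-simplices of the simplicial space $EG \times_G (\C^n - \{0\})$ take the form $G^{k+1} \times_G (\C^n - \{0\})$, and for each ordered tuple $(g_0,\ldots,g_k)$ the open $U(g_0,\ldots,g_k)$ (the common non-vanishing locus of the linear forms $e_1^*\circ g_j$) provides a canonical inclusion into $\C^n - \{0\}$. These opens assemble into a simplicial sub-object compatible with faces and degeneracies, and by equation \eqref{E:relg} the resulting transition maps are $G$-equivariant. Restricting $\widetilde u$ yields a coherent family of classes. Next I would invoke the standard spectral sequence computing the cohomology of this semi-simplicial space, whose $E_1^{p,q}$ term involves $H^q(U(g_0,\ldots,g_p))$. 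Since each $U(g_0,\ldots,g_p)$ is the complement of a finite affine hyperplane arrangement, hence a Stein variety of complex dimension $n$, its cohomology vanishes for $q > n$. Consequently a class in total degree $2n-1$ can only contribute in bidegree $(p,q) = (n-1,n)$, which is precisely the edge row corresponding to group cohomology. Passing to the colimit over finite arrangements $L$ of linear forms, I obtain a class in $H^{n-1}(G, \varinjlim_L H^n(\C^n - \bigcup_j H_j))$.

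Finally, Brieskorn's theorem \cite{Brieskorn}, recalled just before Definition \ref{def:Omegamer}, identifies the right-hand side with $\Omega_{\rm aff}^n$; the $G$-equivariance of this identification again follows from \eqref{E:relg}, since $G$ acts on $\Omega_{\rm aff}^n$ by pullback. The resulting class is the desired $S_{\rm aff}$. The main technical obstacle I expect lies in the spectral sequence step: one must justify that the simplicial skeletal filtration remains compatible after passing to the direct limit over arrangements, so that the edge homomorphism genuinely produces a class in group cohomology with coefficients in the colimit rather than merely an element of an associated graded. These compatibilities are precisely what Appendix \ref{A:A} is designed to establish; assuming those formal properties of equivariant cohomology, the proof reduces to the concatenation described above.
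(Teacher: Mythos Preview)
Your proposal is correct and follows essentially the same route as the paper: the proposition is stated immediately after the three-step narrative of \S\S1.2.1--1.2.3 (lift from Proposition~\ref{P4}, restriction to the affine opens $U(g_0,\ldots,g_k)$ and the edge map coming from the vanishing of $H^q$ for $q>n$ via Appendix~\ref{A:A}, then Brieskorn's identification), and your reconstruction reproduces exactly that argument. Your only imprecision is the phrase ``can only contribute in bidegree $(n-1,n)$'': what the affine vanishing actually gives is that $E_\infty^{p,q}=0$ for $q>n$, so $H^{2n-1}$ surjects onto $E_\infty^{n-1,n}$, which in turn injects into $E_2^{n-1,n}=H^{n-1}(G,H^n)$ because all incoming differentials have source in the vanishing range---this yields the edge map you want, and it is precisely the content of the zig-zag construction in Appendix~\ref{A:A} that you invoke.
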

Nous donnons deux représentants explicites de cette classe de cohomologie au chapitre suivant.

\section{Les cocycles multiplicatif et elliptique} \label{S:23}

On considère plus généralement une famille lisse $A \to S$ de groupes algébriques commutatifs dont les fibres sont connexes et de dimension $1$. Dans les cas multiplicatif et elliptique, chaque fibre est un groupe abélien isomorphe au groupe multiplicatif $\G_m$ dont le groupe des points complexes est isomorphe à $\C^\times = \C / \Z$, {\it via} l'application 
$$ \C  \to \C^\times ; \quad  z \mapsto q_z = e(z) := e^{2i\pi z},$$
ou à une courbe elliptique. Soit $T \to S$ le produit fibré de $n$ copies de $A$ au-dessus de $S$. Le groupe $\GL_n (\Z)$ opère sur $T$ par multiplication matricielle~: on voit un élément $\mathbf{a} \in T$ comme un vecteur colonne $\mathbf{a}=(a_1 , \ldots , a_n)$ où chaque $a_i \in A$, et un élément  $g \in G$ envoie $\mathbf{a}$ sur $g\mathbf{a}$. Soit $G$ un sous-groupe de $\GL_n (\Z)$.

\begin{definition}
Soit $c$ un entier supérieur à $1$. Un \emph{cycle invariant de $c$-torsion} sur $T$ est une combinaison linéaire formelle à coefficients entiers de sections de $c$-torsion de $T$ qui est invariante par $G$, autrement dit un élément 
$$D \in H_G^0 (T[c]).$$
On dit de plus que $D$ est \emph{de degré $0$} si la somme de ses coefficients est égale à $0$.
\end{definition}

\medskip
\noindent
{\it Exemple.} Lorsque $A$ est une famille de courbes elliptiques, l'élément 
$$[T[c] - c^{2n} \{ 0 \} ] \in H_G^0 (T[c])$$ 
est un cycle invariant de $c$-torsion de degré $0$.

\medskip

L'isomorphisme de Thom induit un isomorphisme
$$H_G^0 (T[c]) \to H_G^{2n} (T , T - T[c]);$$
on pourra se référer à \cite[Section 2]{Takagi} pour plus de détails sur cet isomorphisme et la deuxième partie du lemme ci-dessous.
 
Considérons maintenant la suite exacte longue de la paire $(T , T-T[c])$~: 
\begin{equation} \label{suiteexacte}
H_G^{2n-1} (T ) \to H_G^{2n-1} ( T - T[c]) \to H_G^{2n} (T , T - T[c]) \stackrel{\delta}{\to} H_G^{2n} (T).
\end{equation}

\medskip

\begin{lem} \label{L:Sul}

{\rm (1)} Dans le cas multiplicatif, l'image dans $H_G^{2n} (T)$ d'un cycle invariant de $c$-torsion $D$ sur $T$ est {\rm rationnellement} nulle.

{\rm (2) (Sullivan \cite{Sullivan})} Dans le cas elliptique, un cycle invariant de $c$-torsion $D$ sur $T$ est de degré $0$ si et seulement si son image dans $H_G^{2n} (T)$, par l'application $\delta$ de la suite exacte \eqref{suiteexacte}, est {\rm rationnellement} nulle. Plus précisément, si $D$ est de degré $0$ son image est nulle dans $H_G^{2n} (T , \Z [1/c])$. 
\end{lem}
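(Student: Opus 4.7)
Le plan est d'identifier l'application composée
\[
H_G^0(T[c]) \xrightarrow{\sim} H_G^{2n}(T, T - T[c]) \xrightarrow{\delta} H_G^{2n}(T)
\]
avec le morphisme de Gysin équivariant $i_*$ le long de l'inclusion $i : T[c] \hookrightarrow T$, puis d'analyser la classe $i_*(D)$ à l'aide de la fibration $\pi_T : T \to S$. On exploitera le fait que, dans les deux cas envisagés, le fibré équivariant $EG \times_G \Lie(T) \to BG$ est \emph{plat}, sa trivialisation par l'action linéaire de $G \subset \GL_n (\Z)$ sur $\C^n$ provenant d'une représentation à valeurs discrètes.

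Pour (1), la fibre topologique $T_s \simeq (\C / \Z)^n$ est homotopiquement équivalente à $(S^1)^n$, donc $H^{2n}(T_s) = 0$. Comme $T \to S$ n'est pas propre, on ne dispose pas d'une pousse-en-avant le long de $\pi_T$ ; en revanche $EG \times_G T \simeq B(\Z^n \rtimes G)$, et la classe $i_*(D)$ s'y calcule rationnellement en fonction des classes de Chern du fibré plat $EG \times_G \C^n$. Par la théorie de Chern--Weil, ces classes s'annulent rationnellement, ce qui conclut.

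Pour (2), le morphisme $\pi_T$ est propre ; la relation $\pi_{T,*} \circ i_* = \pi_*$, où $\pi : T[c] \to S$ est l'application structurelle étale finie, entraîne $\pi_{T,*}(i_*(D)) = \deg(D) \in H_G^0(S) = \Q$, ce qui fournit la direction nécessaire. Réciproquement, si $\deg(D) = 0$, la classe $i_*(D)$ est dans le noyau de $\pi_{T,*}$, et la suite spectrale de Leray pour $T \to EG \times_G T \to BG$ ramène son annulation à celle des classes de Chern rationnelles du fibré plat $\Lie(T)$ sur $BG$, qui s'annulent par Chern--Weil et, pour la classe d'Euler, par le théorème de Sullivan cité dans l'introduction. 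Pour le raffinement à coefficients $\Z[1/c]$, on utilise le revêtement fini étale $[c] : T \to T$ de degré $c^{2n}$ de noyau $T[c]$, et la relation $[c]^* ((0_T)_*(1)) = i_*([T[c]])$, qui permet de contrôler précisément les dénominateurs.

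Le principal obstacle est la direction de suffisance de (2) : il faut annuler rationnellement \emph{tous} les gradués de $i_*(D)$ dans la filtration de Leray, et non uniquement le terme de plus haut degré. Cela combine le théorème de Sullivan pour la classe d'Euler et des résultats de Borel sur la cohomologie rationnelle stable de $\GL_n (\Z)$ pour les classes de Chern intermédiaires.
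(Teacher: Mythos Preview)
Your plan contains a genuine gap and is considerably more complicated than necessary; both issues stem from not exploiting the multiplication-by-$c$ map as the \emph{central} tool rather than only as an afterthought for the $\Z[1/c]$ refinement.

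The gap is in your treatment of the sufficiency in (2). You propose to kill all graded pieces of $i_*(D)$ in the Leray filtration by combining Sullivan's theorem for the Euler class with ``résultats de Borel sur la cohomologie rationnelle stable de $\GL_n(\Z)$'' for the intermediate classes. Borel's results, however, are \emph{stable}: they describe $H^*(\GL_n(\Z);\Q)$ only in a range depending on $n$, and say nothing for a fixed (possibly small) $n$ or for an arbitrary finite-index subgroup $G\subset\GL_n(\Z)$. They are therefore inapplicable here, and you have no mechanism to annihilate the intermediate graded pieces.

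The paper's argument avoids the Leray filtration entirely by using the finite cover $[c]:T\to T$. At the level of cycles one has $[c]_*(D)=(\deg D)\cdot\{0\}$. In the elliptic case with $\deg D=0$ this gives $[c]_*([D])=0$ in $H_G^{2n}(T)$; since $[c]^*$ acts on $H^q$ of the fibre by $c^q$ and hence is invertible on $H_G^*(T)$ over $\Z[1/c]$, the relation $[c]_*[c]^*=c^{2n}$ makes $[c]_*$ invertible over $\Z[1/c]$ as well, so $[D]=0$ in $H_G^{2n}(T,\Z[1/c])$. This yields both the rational vanishing and the $\Z[1/c]$ refinement in one stroke --- no spectral sequence, no Chern classes, no Borel. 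You already invoke $[c]$ for the denominator control; the point is that this same observation \emph{is} the proof of the ``if'' direction.

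For (1) the paper uses the same reduction: $[c]_*([D])=(\deg D)\cdot[\{0\}]$, so it suffices to treat $D=\{0\}$. There one pulls back by the zero section to obtain the Euler class of the normal bundle, which in the multiplicative case is the flat \emph{complex} bundle $EG\times_G\C^n\to BG$ and therefore has vanishing rational Euler class. Your sketch for (1) gestures at Chern classes but the assertion that ``$i_*(D)$ s'y calcule rationnellement en fonction des classes de Chern'' is not justified as stated; the reduction to $\{0\}$ via $[c]_*$ is what makes this precise.
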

\begin{proof} 1. Commençons par considérer le cas où $D = \{0\}$. On veut montrer que son image $[0]$ dans $H_G^{2n} (T)$ est nulle. Puisque cette image est contenue dans le noyau du morphisme 
$$H_G^{2n} (T) \to H_G^{2n} (T- \{ 0 \})$$
induit par l'application de restriction, il suffit de montrer que son tiré en arrière par la section nulle $e=0^* [0]$ dans $H^{2n} (BG)$ est rationnellement nul. Par définition $e$ est la classe d'Euler du fibré normal de $\{0\}$ dans $EG \times_G T$ au-dessus de $BG$. Dans le cas multiplicatif il est isomorphe au fibré
$$EG \times_G \C^n \to BG$$
qui est complexe\footnote{Ce n'est plus vrai dans le cas elliptique car $E$ peut varier au-dessus de $S$. Dans ce cas on obtient un fibré en $\R^{2n}$ qui, même plat, peut avoir une classe d'Euler non nulle.} et plat. Les classes de Chern de ce fibré sont donc nulles et donc la classe d'Euler $e$ aussi. Ainsi l'image de $\{0\}$ dans $H_G^{2n} (T)$ est bien triviale.
 
Considérons maintenant le cas général où $D$ est un cycle de $c$-torsion. Son image $[c]_* (D)$, par l'application $[c]$ de multiplication dans les fibres, est égale au cycle $\{0\}$. On vient donc de montrer que l'image de la classe de $[c]_* (D)$ dans $H_G^{2n} (T)$ est nulle. L'application $[c]: T \to T$ étant un revêtement fini de degré $c^n$, le morphisme $[c]_*: H_G^{2n} (T) \to H_G^{2n} (T)$ est rationnellement injectif. L'image de $D$ dans $H_G^{2n} (T)$ est donc aussi (rationnellement) nulle.

2. Voir par exemple \cite[Lemma 9]{Takagi} pour plus de détails.
\end{proof}

Soit $D$ un cycle invariant de $c$-torsion $D$ sur $T$ que l'on supposera de plus de degré $0$ dans le cas où $A$ est une famille de courbes elliptiques. On peut alors relever $D$ en un élément de $H_G^{2n-1} (T - T[c])$. Toutefois, en général ce relevé n'est pas uniquement déterminé; l'ambiguïté est précisément $H_G^{2n-1} (T)$. On réduit cette ambiguïté en considérant la multiplication dans les fibres par un entier $s$, voir \cite{Faltings}. La multiplication dans les fibres induit une application propre $[s] : T \to T$ qui induit à son tour une application image directe $[s]_*$ en cohomologie (équivariante). 

En supposant de plus $s$ premier à $c$, on a $[s]^{-1} T[c] = T[sc]$. L'immersion ouverte
$$j : T - T[sc]  \to T -T[c]$$
induit un morphisme 
$$j^* : H^\bullet (T-T[c]) \to H^\bullet (T-T[sc] ).$$
Avec un léger (abus de notation, on notera simplement $[s]_*$ la composition 
$$\xymatrix{
H^\bullet (T-T[c]) \ar[r]^{j^*} & H^\bullet (T-T[sc] ) \ar[r]^{[s]_*} & H^\bullet (T-T[c])}$$
de l'application de restriction à $T-T[sc]$ par l'application d'image directe en cohomologie, et de même en cohomologie équivariante. On définit de même une application 
$$[s]_* : H^\bullet (T , T - T[c]) \to H^\bullet (T , T - T[c])$$
en cohomologie et aussi en cohomologie équivariante.

Noter que, puisque $s$ est premier à $c$, on a 
$$[s]_* ( [T[c] - c^{2n} \{ 0 \} ] ) = [T[c] - c^{2n} \{ 0 \} ].$$
En général, quitte à augmenter $s$, on peut supposer que $[s]_* (D) = D$. Cela motive la définition suivante. 
 
\begin{definition} \label{Def1.7}
Soit
$$H_G^\bullet (T-T[c])^{(1)} \subset H_G^\bullet (T-T[c])$$
l'intersection, pour tout entier $s>1$ premier à $c$, des sous-espaces caractéristiques de $[s]_*$ associées à la valeur propre $1$, c'est-à-dire le sous-espace des classes de cohomologie complexes qui sont envoyées sur $0$ par une puissance de $[s]_*-1$. 
\end{definition}
On définirait de même $H_G^\bullet  (T)^{(1)}$, $H_G^\bullet ( T , T - T[c])^{(1)}$, et leurs analogues $H^\bullet (T-T[c])^{(1)}$ en cohomologie usuelle. 

Comme dans le cas affine, la construction de Borel permet de calculer la cohomologie équivariante de $T$, resp. $T-T[c]$, comme cohomologie d'un espace fibré au-dessus de $BG$ de fibre $T$, resp. $T-T[c]$. On en déduit des suites spectrales compatibles à l'action de $[s]_*$~: 
\begin{equation} \label{SST}
H^p (G , H^q (T )) \Longrightarrow H^{p+q}_G (T) 
\end{equation}
et 
\begin{equation} \label{SSTm}
H^p (G , H^q (T -T[c] )) \Longrightarrow H^{p+q}_G (T-T[c]).
\end{equation}
Dans le cas elliptique, les fibres sont compactes et $H_G^k  (T)^{(1)} = \{ 0 \}$ si $k < 2n$. Les valeurs propres de $[s]_*$ sont donc des puissances $s^j$, avec $j>1$. Il en résulte que l'on peut projeter un relevé de $D$ sur le sous-espace propre associé à la valeur $1$ dans $H_G^{2n-1} (T - T[c])$; on renvoie à \cite[\S 3.2]{Takagi} pour les détails. On obtient ainsi que le cycle $D$ possède un relevé canonique dans $H_G^{2n-1} (T - T[c])^{(1)}$. 

Dans le cas multiplicatif il n'est plus vrai que $D$ possède un relevé canonique, le relevé n'est défini que modulo $H_G^{2n-1} (T)^{(1)}$. 

Comme expliqué en introduction, on voudrait maintenant restreindre cette classe à un ``ouvert affine $G$-invariant''. Un tel ouvert n'existant pas dans $T$, on considère là encore les réalisations géométriques des espaces simpliciaux correspondants. 

Le but du prochain paragraphe est de montrer, en procédant comme dans le cas additif, les deux théorèmes qui suivent. Dans les deux cas on note $\Omega_{\rm mer} (T)$ l'algèbre graduée des formes différentielles méromorphes sur $T$ et $\Omega (T)$ la sous-algèbre constituée des formes partout holomorphes sur $T$.

\begin{theorem} \label{T:cocycleM}
Supposons que $A$ soit une famille de groupes multiplicatifs. Alors, tout cycle $G$-invariant $D$ donne lieu à une classe 
$$S_{\rm mult} [D] \in H^{n-1} (G , \Omega^n_{\rm mer} (T))$$
qui est uniquement déterminée par $D$ modulo $\Omega^n (T)$.
\end{theorem}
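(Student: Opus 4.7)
Le plan consiste à mimer la construction développée dans le cas additif aux paragraphes \ref{S21}--\ref{S:222}, en l'adaptant au cas multiplicatif. Je pars du cycle $G$-invariant $D$ et j'applique l'isomorphisme de Thom pour obtenir une classe $[D] \in H_G^{2n}(T, T - T[c])$. D'après le Lemme \ref{L:Sul}(1), l'image de $[D]$ dans $H_G^{2n}(T)$ est rationnellement nulle, de sorte que la suite exacte longue \eqref{suiteexacte} fournit un relevé $E[D] \in H_G^{2n-1}(T - T[c])$, défini modulo l'image de $H_G^{2n-1}(T)$.

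Pour rendre ce relevé le plus canonique possible, je considère les opérateurs $[s]_*$ pour $s$ premier à $c$. Quitte à supposer $s$ suffisamment grand, on a $[s]_*(D) = D$ et je peux projeter $E[D]$ sur le sous-espace caractéristique $H_G^{2n-1}(T - T[c])^{(1)}$ associé à la valeur propre $1$, au sens de la Définition \ref{Def1.7}. Le relevé ainsi obtenu est canonique modulo $H_G^{2n-1}(T)^{(1)}$. Ensuite, comme au paragraphe \ref{S:222}, je restreins cette classe aux ouverts affines $U_L \subset T$ obtenus en retirant les sous-tores de codimension $1$ définis par les formes linéaires $e_1^* \circ g_j$; pour assurer la compatibilité $G$-équivariante, on passe à la réalisation géométrique des espaces simpliciaux correspondants. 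Les variétés $U_L$ étant affines de dimension $n$, leur cohomologie s'annule en degré $>n$, et la suite spectrale équivariante \eqref{SSTm} fournit une classe dans $H^{n-1}(G, \varinjlim_L H^n(U_L))$. Enfin, le théorème de type Brieskorn \ref{P:Brieskorn} du chapitre \ref{S:OrlikSolomon} identifie cette limite inductive à une sous-algèbre de $\Omega^n_{\rm mer}(T)$, d'où la classe $S_{\rm mult}[D] \in H^{n-1}(G, \Omega^n_{\rm mer}(T))$ annoncée.

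La difficulté principale, qui fournit l'assertion d'unicité, est de contrôler précisément l'image de l'ambiguïté $H_G^{2n-1}(T)^{(1)}$ après restriction aux $U_L$ et via l'identification de Brieskorn. Comme $H^\bullet(T, \C)$ est engendré, pour $T$ produit de $n$ copies du groupe multiplicatif, par les $1$-formes invariantes partout holomorphes $dq_i/q_i$, la suite spectrale \eqref{SST} donne des représentants des classes de $H_G^{2n-1}(T)^{(1)}$ à partir de telles formes holomorphes. Il reste à vérifier la compatibilité entre la projection sur le sous-espace caractéristique pour la valeur propre $1$ de $[s]_*$, les applications de restriction aux $U_L$ et l'identification de Brieskorn; cela garantit que l'ambiguïté résultante atterrit bien dans $\Omega^n(T) \subset \Omega^n_{\rm mer}(T)$, établissant ainsi l'unicité de $S_{\rm mult}[D]$ modulo $\Omega^n(T)$.
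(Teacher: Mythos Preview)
Your approach is essentially the one taken in the paper: lift $[D]$ via Lemma~\ref{L:Sul}, project onto the $(1)$-eigenspace for the dilations, restrict to affine hyperplane complements, and invoke Theorem~\ref{P:Brieskorn}; the uniqueness argument via $H_G^{2n-1}(T)^{(1)} \to H^{n-1}(G, H^n(T)^{(1)}) \cong H^{n-1}(G,\Omega^n(T))$ is also the paper's. Two small points deserve care. First, the opens you describe---removing only the kernels of the characters $e_1^* \circ g_j$---are not contained in $T - T[c]$, so the restriction of $E[D]$ to them is not yet defined; you must remove the full preimages $\chi_{e_1 g_j}^{-1}(A[c])$ (all torsion translates of each kernel), as the paper does in \eqref{E:arrgtHyp}. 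Second, the paper removes the hyperplanes for \emph{all} coordinate functionals $e_1,\ldots,e_n$ rather than just $e_1$; this is needed in the elliptic case to force affineness via Lemma~\ref{affine}, but in the multiplicative case your simplification is harmless since $T=\mathbf{G}_m^n$ is already affine and any hypersurface complement in it remains affine.
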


\begin{theorem} \label{T:cocycleE}
Supposons que $A$ soit une famille de courbes elliptiques. Alors, tout cycle $G$-invariant $D$ de degré $0$ donne lieu à une classe  
$$S_{\rm ell} [D] \in H^{n-1} (G , \Omega^n_{\rm mer} (T) )$$
qui est uniquement déterminée par $D$.
\end{theorem}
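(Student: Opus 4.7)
The plan is to adapt the four-step strategy employed in the additive case (Proposition~\ref{P:Sa}). First I would apply the Thom isomorphism to the $G$-invariant $c$-torsion cycle $D$, producing $[D] \in H_G^{2n}(T, T-T[c])$. Since $D$ has degree~$0$, the elliptic part of Sullivan's result (Lemme~\ref{L:Sul}~(2)) shows that its image in $H_G^{2n}(T)$ is rationally zero, so the long exact sequence~\eqref{suiteexacte} of the pair $(T, T-T[c])$ immediately furnishes a rational lift $E[D] \in H_G^{2n-1}(T-T[c])$.

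To make this lift \emph{canonical}, I would exploit the operators $[s]_*$ for $s$ premier à $c$. Since $T$ has compact fibres $E^n$ over $S$, the spectral sequence~\eqref{SST} together with the fact that $[s]_*$ acts on $H^q(E^n)$ by multiplication by $s^{2n-q}$ shows that on $H_G^k(T)$ all eigenvalues of $[s]_*$ are of the form $s^j$ with $j \geq 1$ whenever $k<2n$. Hence $H_G^k(T)^{(1)}=0$ for $k<2n$, and projecting onto the generalised $1$-eigenspace (as sketched right after Définition~\ref{Def1.7} and detailed in \cite[\S 3.2]{Takagi}) produces a \emph{unique} canonical lift $E[D] \in H_G^{2n-1}(T-T[c])^{(1)}$. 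This is the feature that will deliver the uniqueness clause of the theorem.

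Next I would transpose \S\ref{S:222} to the present setting: to each $(k+1)$-uplet $(g_0,\ldots,g_k) \in G^{k+1}$ associate an ``affine'' open $U(g_0,\ldots,g_k) \subset T$ obtained by removing the translates of the sub-tori cut out by the characters $e_1^* \circ g_j$ coming from the first rows of the $g_j$; these opens form an inverse system whose total complement is $G$-invariant only in the limit. Restricting $E[D]$ to them and using that their cohomology vanishes above degree $n$, the spectral sequence for equivariant cohomology collapses enough to yield a group-cohomology class in
$$H^{n-1}\bigl(G,\; \varinjlim_L H^n(U_L)\bigr).$$

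The final and decisive step is to identify this inductive limit with a suitable subalgebra of $\Omega_{\rm mer}^n(T)$: this is precisely the content of the Brieskorn-type theorem~\ref{P:Brieskorn} proved in Chapitre~\ref{S:OrlikSolomon}, which I would invoke as a black box. Applying it delivers the sought-after class $S_{\rm ell}[D] \in H^{n-1}(G, \Omega_{\rm mer}^n(T))$. The hardest part of the whole argument is really this last identification for elliptic ``hyperplane'' arrangements; everything else is a formal consequence of the additive blueprint combined with Sullivan's rational vanishing theorem for the equivariant Euler class.
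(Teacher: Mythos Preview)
Your overall strategy matches the paper's proof in \S1.4 essentially step for step---Thom class, Sullivan vanishing (Lemme~\ref{L:Sul}(2)), canonical lift via the $[s]_*$-eigenspace projection, passage to affine opens, and finally Théorème~\ref{P:Brieskorn}. The uniqueness argument via $H_G^k(T)^{(1)}=0$ for $k<2n$ is exactly what the paper does.

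There is, however, a genuine gap in your third step. You propose to remove only the hyperplanes cut out by the \emph{first-row} characters $e_1^*\circ g_j$, transposing \S\ref{S:222} verbatim. In the additive case this works because $\C^n$ minus a single hyperplane is already affine. In the elliptic case it fails: by Lemme~\ref{affine} (and its restatement as Lemme~\ref{affineb}), the complement $T-\Hyp$ is affine \emph{if and only if} $\Hyp$ contains $n$ hyperplanes whose associated vectors in $\Z^n$ are linearly independent. For a single $g_0$ (the case $k=0$) your open removes hyperplanes all parallel to $\ker(e_1^*\circ g_0)$, hence is \emph{not} affine, and its cohomology does not vanish above degree $n$. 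Concretely $E^n$ minus a single abelian-subvariety translate has nonzero $H^{2n-1}$, so the spectral-sequence collapse you need does not occur.

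The paper fixes this by enlarging the arrangement: the open $U(\mathbf{g})$ in \eqref{E:arrgtHyp} removes the kernels of \emph{all} the characters $\chi_{e_i g_j}$ for $i=1,\ldots,n$ and $j=0,\ldots,k$, not just $i=1$. Since the $e_1,\ldots,e_n$ give $n$ independent directions, Lemme~\ref{affine} then applies for every $k\ge 0$. This is precisely the point flagged at the start of Chapitre~\ref{S:chap10} as one of the three differences with the multiplicative case, and it is why Théorème~\ref{T:ell} requires an entire $n$-uplet $\chi=(\chi_1,\ldots,\chi_n)$ of primitive morphisms rather than a single $\chi_0$ as in Théorème~\ref{T:mult}.
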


\medskip
\noindent
{\it Remarque.} La construction permet en outre de montrer que si $\mathbf{a} \in T-T[c]$ est $G$-invariant alors $S_{\rm mult} [D]$, resp. $S_{\rm ell} [D]$, est cohomologue à une classe de cohomologie à valeurs dans les formes régulières en $\mathbf{a}$. Le point de vue topologique décrit ci-dessus mène ainsi naturellement à la construction de classes de cohomologie ``à la Sczech'' comme celle évoquée en introduction. 

\medskip

Sous certaines conditions supplémentaires sur le cycle $D$, nous décrivons des représentants explicites des classes $S_{\rm mult} [D]$ et $S_{\rm ell} [D]$ dans le chapitre suivant.

\section[Démonstration des théorèmes 1.7 et 1.8]{Démonstration des théorèmes \ref{T:cocycleM} et \ref{T:cocycleE}}

\subsection{Arrangement d'hyperplans trigonométriques ou elliptiques}
On fixe un groupe algébrique $A$, isomorphe au groupe multiplicatif ou à une courbe elliptique. Soit $n$ un entier naturel et soit $T=A^n$.

On appelle {\it fonctionnelle affine} toute application $\chi: T \rightarrow A$ de la forme
$$t_0 + \mathbf{a} \mapsto \chi_0(\mathbf{a})$$
où $t_0$ est un élément de $T_{\tors}$ et $\chi_0 : A^n \rightarrow A$ un morphisme de la forme
$\mathbf{a} = (a_1, \dots, a_n) \mapsto \sum r_i a_i$
où les $r_i$ sont des entiers.
 
On dit que $\chi$ est {\it primitif} si les coordonnées  
$ (r_1, \dots, r_n) \in \mathbf{Z}^n$ de $\chi_0$ sont premières entre elles dans leur ensemble. Dans ce cas le lieu d'annulation de $\chi$ est un translaté de l'ensemble
$$\mathrm{ker}(\chi_0) := \{(a_1, \dots, a_n) \in A^n \; : \; \sum r_i a_i = 0\}.$$
Soit $\mathbf{v}_1, \dots, \mathbf{v}_{n-1}$ une base du sous-module de $\Z^n$ orthogonal au vecteur $\mathbf{r}$. Les 
$\mathbf{v}_i$ définissent une application
$$A^{n-1} \longrightarrow A^n$$
qui est un isomorphisme sur son image $\mathrm{ker}(\chi_0)$. 
(On peut en effet se ramener au cas où $(r_1, \dots, r_n) = (0,\dots, 0, 1)$.) 

\begin{definition}
On appelle \emph{hyperplan} le lieu d'annulation (ou abusivement ``noyau'') d'une fonctionnelle affine primitive. 
Un \emph{arrangement d'hyperplans} $\Hyp$ est un fermé de Zariski dans $T$ réunion d'hyperplans. La taille $\# \Hyp$ est le nombre d'hyperplans distincts de cet arrangement. 
\end{definition}

Noter que de manière équivalente, un hyperplan est l'image d'une application  
$A^{n-1} \rightarrow T$ linéaire relativement à un morphisme $A^{n-1} \rightarrow A^n$ induit par une matrice entière de taille $(n-1) \times n$.

\begin{lemma} \label{affine}
Si $\Hyp$ contient $n$ fonctionnelles affines $\chi$ dont les vecteurs associés $\mathbf{r} \in \Z^n$ sont linéairement indépendants alors 
le complémentaire $T-\Hyp$ est affine. 

Lorsque $A$ est une courbe elliptique, c'est même une équivalence. 
\end{lemma}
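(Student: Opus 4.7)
Le plan est de traiter séparément l'implication directe et la réciproque, en ramenant l'affinité à l'amplitude d'un diviseur sur $T$.

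Pour l'implication directe, je commence par considérer le morphisme $\phi = (\chi_1, \dots, \chi_n) : T \to A^n$ défini par les $n$ fonctionnelles affines dont les vecteurs associés $\mathbf{r}_1, \dots, \mathbf{r}_n$ sont supposés linéairement indépendants. La matrice entière qu'ils forment étant inversible sur $\Q$, ce morphisme est une isogénie (composée avec une translation), donc est fini. D'autre part, $A - \{0\}$ est affine dans les deux cas (complémentaire d'un point dans $\G_m$, ou dans la courbe projective lisse $E$), si bien que $(A-\{0\})^n$ l'est aussi. Il en résulte que $\phi^{-1}((A-\{0\})^n) = T - (H_1 \cup \dots \cup H_n)$ est affine, comme image réciproque d'un ouvert affine par un morphisme fini. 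Il reste à retrancher les hyperplans supplémentaires $H_{n+1}, \dots, H_m$. Dans le cas multiplicatif, $T = \G_m^n$ est affine et son anneau de coordonnées $\C[z_1^{\pm 1}, \dots, z_n^{\pm 1}]$ est factoriel, donc chaque hyperplan est un diviseur principal et le complémentaire d'une réunion finie de diviseurs principaux dans une variété affine reste affine. Dans le cas elliptique, le diviseur $\sum_{i=1}^n \{z_i = 0\}$ est ample sur $E^n$ (produit extérieur de diviseurs de degré $1$), et son image réciproque $\sum_{i=1}^n H_i$ demeure ample par $\phi$ fini. Ajouter les diviseurs effectifs $H_{n+1}, \dots, H_m$ préserve l'amplitude, si bien que $\sum_{j=1}^m H_j$ est un diviseur ample de support $\Hyp$, dont le complémentaire $T - \Hyp$ est donc affine.

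Pour la réciproque dans le cas elliptique, le plan est d'invoquer le critère classique suivant~: un ouvert d'une variété projective $T$ est affine si et seulement si son complémentaire est le support d'un diviseur effectif ample. Il s'agit alors de traduire l'amplitude du diviseur $D = \sum_i H_i$ en une condition sur les $\mathbf{r}_i$. Supposons pour simplifier que $E$ n'a pas de multiplication complexe (le cas CM se traite de manière analogue); alors le groupe de Néron--Severi $\mathrm{NS}(E^n)_{\Q}$ s'identifie, via la polarisation principale produit, à l'espace des matrices symétriques $n\times n$ à coefficients rationnels, et la classe de l'hyperplan $H_{\mathbf{r}}$ correspond à la matrice de rang un et semi-définie positive $\mathbf{r}\mathbf{r}^T$ --- puisque $\mathcal{O}(H_{\mathbf{r}}) \cong \mathbf{r}^{*}\mathcal{O}_E(0)$ et que la polarisation d'un tiré en arrière s'obtient par composition avec le dual. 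La classe de $D$ est donc $\sum_i \mathbf{r}_i \mathbf{r}_i^T$; cette somme est définie positive --- équivalemment, la polarisation associée est ample --- si et seulement si les $\mathbf{r}_i$ engendrent $\Q^n$, autrement dit si et seulement si $n$ d'entre eux sont linéairement indépendants.

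L'obstacle principal est la réciproque, qui repose d'une part sur le critère cohomologique d'affinité en termes d'amplitude, et d'autre part sur l'identification explicite de la classe d'un hyperplan $H_{\mathbf{r}}$ à la matrice $\mathbf{r}\mathbf{r}^T$ dans $\mathrm{NS}(E^n)_{\Q}$, qui permet ensuite la traduction géométrique~: amplitude d'un tel diviseur somme $\Longleftrightarrow$ positive définition de la forme associée $\Longleftrightarrow$ $n$ vecteurs $\mathbf{r}_i$ linéairement indépendants.
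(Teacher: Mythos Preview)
Your forward direction is correct and in fact more carefully argued than the paper's own proof. The paper uses the same finite-map idea --- the $n$ independent functionals define a finite morphism $T \to A^n$, and the preimage of the affine variety $(A-\{0\})^n$ is affine --- but does not explicitly address the hyperplanes of $\Hyp$ beyond the chosen $n$; you do. One small point worth making precise: ``adding effective divisors preserves amplitude'' is not true on an arbitrary projective variety (adding a large multiple of a $(-1)$-curve on a blown-up surface destroys ampleness). It holds here because on an abelian variety every effective divisor is nef --- one can translate it off any given curve --- and ample plus nef is ample.

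Your reverse direction, however, has a genuine gap. The criterion you invoke --- that an open subset of a projective variety is affine \emph{if and only if} its complement supports an effective ample divisor --- is \emph{false} as stated. Only the implication ``ample support $\Rightarrow$ affine complement'' is standard; the converse fails already on surfaces (Zariski and Goodman produced irreducible curves that are not ample yet have affine complement, and Goodman's theorem shows that in general one must allow a blow-up along the boundary). So this cannot be used as a black box. Your N\'eron--Severi computation is correct, but by itself it only characterises when $\Hyp$ supports an ample divisor, not when $T-\Hyp$ is affine.

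The paper's argument for the converse is far more direct and sidesteps this issue: if the $\mathbf{r}_i$ do not span $\Q^n$, choose a nonzero $v \in \Z^n$ orthogonal to all of them; then $a \mapsto p + v\cdot a$ (for a suitable basepoint $p \in T-\Hyp$) embeds the complete curve $E$ into $T-\Hyp$, which therefore cannot be affine. This is precisely what your NS computation detects --- the kernel vector $v$ of $\sum_i \mathbf{r}_i\mathbf{r}_i^\top$ gives the direction of such an embedded elliptic curve --- but the paper draws the geometric conclusion directly, without passing through any ampleness criterion.
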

\proof
S'il existe $n$ fonctionnelles affines dans $\Hyp$ dont les vecteurs de $\Z^n$ associés sont linéairement indépendants alors ces fonctionnelles définissent une 
application finie $T \rightarrow A^n$ et $\Hyp$ est la pré-image de la réunion des axes de coordonnées dans $A^n$. Mais $(A-\{0\})^n$ est affine, et la pré-image d'une variété affine par une application finie est encore affine. 

Maintenant, si $A$ est une courbe elliptique et que l'espace engendré par les vecteurs des fonctionnelles affines qui définissent $\Hyp$ est un sous-module propre de $M \subset \Z^n$ alors la donnée d'un point de $T-\Hyp$ et d'un vecteur de $\Z^n$ orthogonal à $M$ définit un plongement 
$$A \longrightarrow T - \Hyp.$$ 
Comme $A$ n'est pas affine, l'espace $T-\Hyp$ ne l'est pas non plus.
\qed

\subsection[Cohomologie des arrangements d'hyperplans]{Opérateurs de dilatation et cohomologie des arrangements d'hyperplans} \label{S:dil}

On appelle {\em application de dilatation} toute application $[s] : T \rightarrow T$ associée à un entier $s>1$ et de la forme
$$[s] : \mathbf{a} \mapsto s \mathbf{a}.$$
L'image d'un hyperplan par une application de dilatation est encore un hyperplan. 

Tout hyperplan est l'image d'un sous-groupe par un translation par point de \emph{torsion} dans $T$. \'Etant donné un arrangement d'hyperplans, on peut donc trouver une application de dilatation $[s]$ qui préserve cet arrangement, c'est-à-dire telle que $[s] \Hyp \subset \Hyp$.  

Puisque $[s]$ préserve $\Hyp$, on a une immersion ouverte 
$$j :  T - [s]^{-1} \Hyp  \to T -  \Hyp .$$
La dilatation $[s]$ induit une application de $T-[s]^{-1}\Hyp$ vers $T-\Hyp$ qui est à fibres finies. En abusant légèrement des notations on note $[s]_*$ la composition 
$$\xymatrix{
H^\bullet (T-\Hyp) \ar[r]^-{j^*} & H^\bullet (T-[s]^{-1} \Hyp) \ar[r]^-{[s]_*} & H^\bullet (T-\Hyp)}$$
de l'application de restriction à $T-[s]^{-1} \Hyp$ par l'application d'image directe de $[s]$ en cohomologie.

On peut alors poser l'analogue suivant de la définition \ref{Def1.7}. 
\begin{definition} \label{D:sep1}
Soit 
$$H^*(T-\Hyp, \C)^{(1)} \subset H^*(T-\Hyp, \C)$$
l'intersection, pour tout entier $s>1$ tel que la dilatation $[s]$ préserve $\Hyp$, des sous-espaces caractéristiques de $[s]_*$ associé à la valeur propre $1$, c'est-à-dire le sous-espace des classes de cohomologie complexes qui sont envoyées sur $0$ par une puissance de $[s]_*-1$. 
\end{definition}

\subsection{Démonstration des théorèmes \ref{T:cocycleM} et \ref{T:cocycleE}}

On se place maintenant dans le cas où la famille lisse $A \to S$ de groupes algébriques commutatifs est soit simplement $A = \mathbf{G}_m$ ou une famille de courbes elliptiques au-dessus d'une courbe modulaire. 

\`A tout vecteur ligne $v\in \Z^n$ dont les coordonnées sont premières entre elles dans leur ensemble, il correspond la fonctionnelle linéaire primitive
$$\chi_{v} : T \to A; \ \mathbf{a}  \mapsto v \mathbf{a}.$$

Il découle du lemme \ref{L:Sul} que, sous les hypothèses des théorèmes \ref{T:cocycleM} et \ref{T:cocycleE}, le cycle $D$ se relève en un élément de  
$$H_G^{2n-1} (T-T[c]) = H^{2n-1} (EG \times_G (T-T[c]))$$
et donc de 
$$H^{2n-1} (\| EG \times_G (T-T[c]) \| ).$$

\`A tout $(k+1)$-uplet $\mathbf{g} \in (EG)_k$ on associe un ouvert 
\begin{equation*} \label{U}
U (\mathbf{g})  = \left\{ \mathbf{a} \in  T \; \Bigg| \; \begin{array}{l} \forall j \in \{ 0 , \ldots , k\}, \ \forall i \in \{ 0 , \ldots , n\}, \\ \chi_{e_i g_j } (\mathbf{a}) \notin A[c] \end{array} \right\} .
\end{equation*}
C'est le complémentaire d'un arrangement d'hyperplans dans $T$~:
\begin{equation} \label{E:arrgtHyp}
U (\mathbf{g})  = T - \cup_{j=0}^k \cup_{i=1}^n \cup_{a \in A[c]} \chi_{e_i g_j }^{-1} (a).
\end{equation}

L'action du groupe $G$ préserve l'ensemble de ces ouverts~:
$$h \cdot U (\mathbf{g}) = U (\mathbf{g} h^{-1}).$$
Il découle par ailleurs du lemme \ref{affine} que les variétés \eqref{E:arrgtHyp} sont affines; elles n'ont par conséquent pas de cohomologie en degré $>n$. Comme expliqué dans l'annexe~\ref{A:A}, il correspond à tout élément dans $H^{2n-1}_G (T -T[c])$ une classe de cohomologie dans 
$$H^{n-1} (G , \lim_{\substack{\rightarrow \\ \Xi }} H^n (T - \cup_{\chi \in \Xi} \cup_{a \in A[c]} \chi^{-1} (a))),$$
où $\Xi$ désigne l'ensemble des translatés par $G$ des morphismes $\chi_{e_1}, \ldots , \chi_{e_n}$. Noter que toute classe dans $H^n ( U (\mathbf{g}))$ définie un élément de la limite inductive. En pratique, nos cocycles seront représentés par des formes régulières sur des $U (\mathbf{g})$.

L'élément de $H^{2n-1}_G (T -T[c])$ que nous considérons appartient à $H^{2n-1}_G (T -T[c])^{(1)}$, on obtient donc en fait une classe de cohomologie dans 
$$H^{n-1} (G , \lim_{\substack{\rightarrow \\ \Xi }} H^n (T - \cup_{\chi \in \Xi} \cup_{a \in A[c]} \chi^{-1} (a))^{(1)}).$$
Il nous reste à représenter 
$$\lim_{\substack{\rightarrow \\ \Xi }} H^n (T - \cup_{\chi \in \Xi} \cup_{a \in A[c]} \chi^{-1} (a))^{(1)},$$
par des formes méromorphes. C'est l'objet du chapitre \ref{S:OrlikSolomon} dans lequel nous démontrons un théorème ``à la Brieskorn'' dans ce contexte, cf. Théorème \ref{P:Brieskorn}. 

Finalement le cycle $D$ donne donc lieu à un élément de $H^{n-1} (G , \Omega^n_{\rm mer} (T) )$. Dans le cas elliptique cet élément est uniquement déterminé. Ce n'est pas vrai dans le cas multiplicatif. Alors $T$ est elle-même affine et on en déduit un diagramme commutatif
$$\xymatrix{
H_G^{2n-1} (T)^{(1)} \ar[d] \ar[r] & H_G^{2n-1} (T-T[c] )^{(1)} \ar[d] \ar[r] & H_G^{2n} (T , T - T[c])^{(1)} \\ 
H^{n-1} (G , H^n (T)^{(1)} ) \ar[r] & H^{n-1} (G ,  \Omega^n_{\rm mer} (T))). & 
}$$
La classe associée à $D$ dans $H^{n-1} (G , \Omega^n_{\rm mer} (T) )$ n'est donc déterminée qu'à un élément de $H^{n-1} (G , H^n (T)^{(1)} )$ près. En invoquant encore une fois le théorème~\ref{P:Brieskorn} on identifie cette indétermination à un élément de 
$H^{n-1} (G , \Omega^n (T) )$. 

Pour conclure, notons que la remarque qui suit les théorèmes  \ref{T:cocycleM} et \ref{T:cocycleE} se démontre en partant non plus des hyperplans 
$\chi_{e_j}^{-1} (A[c])$, pour $j \in \{1, \ldots , n \}$, translatés par les éléments de $G$ mais de $n$ hyperplans ne passant pas par $\mathbf{a}$, ce que l'on peut faire de manière $G$-équivariante puisque $\mathbf{a}$ est $G$-invariant.

\chapter{Énoncés des principaux résultats : cocycles explicites} \label{C:2}

\resettheoremcounters

\numberwithin{equation}{chapter}

Dans ce chapitre on décrit dans chacun des trois cas (affine, multiplicatif, elliptique) des cocycles explicites représentants les classes de cohomologie construites au chapitre précédent. Les démonstrations des résultats énoncés ici feront l'objet des chapitres suivants. 

\section[Le cas affine]{Le cas affine : symboles modulaires universels et algèbre de Orlik--Solomon} 

Un théorème célèbre de Orlik et Solomon \cite{OrlikSolomon} fournit une présentation, par générateurs et relations, de l'algèbre graduée $\Omega_{\rm aff}$ engendrée par les formes $\omega_{\ell}$ et l'identité. En particulier dans $\Omega^n_{\rm aff}$ l'ensemble des relations, entre les monômes de degré $n$, est engendré par  
\begin{enumerate}
\item $\omega_{\ell_1} \wedge \ldots \wedge  \omega_{\ell_{n}} = 0$ si $\det (\ell_1 , \ldots , \ell_{n} ) = 0$, et 
\item $\sum_{i=0}^n (-1)^i \omega_{\ell_0} \wedge \ldots \wedge \widehat{\omega_{\ell_i}} \wedge \ldots \wedge \omega_{\ell_n} = 0$, pour tous $\ell_0 , \ldots , \ell_n$ dans $(\C^n)^\vee  - \{ 0 \}$.
\end{enumerate}
Le fait que les relations ci-dessus soient effectivement vérifiées dans $\Omega^n_{\rm aff}$ n'est pas difficile, il est par contre remarquable qu'elles engendrent \emph{toutes} les relations. 

Dans ce paragraphe on commence par expliquer que le fait que les relations soient vérifiées donne naturellement lieu à un cocycle de $G= \GL_n (\C)^\delta$ à valeurs dans $\Omega^n_{\rm aff}$. On énonce alors un théorème qui relie ce cocycle à celui construit au chapitre \ref{C:1}. Finalement on explique que ce cocycle est la spécialisation d'un symbole modulaire universel.

\subsection{Un premier cocycle explicite}

De manière générale, si $X$ est un ensemble muni d'une action transitive de $G$, si $M$ est un $G$-module, si $F : X^n \to M$ est une fonction $G$-équivariante vérifiant 
$$\sum_{i=0}^{n} (-1)^i F (x_0 , \ldots , \widehat{x}_i , \ldots , x_{n} ) = 0,$$
et si $x$ est un point dans $X$, alors 
$$f_x (g_1 , \ldots , g_{n} ) := F(g_1^{-1} x , \ldots , g_{n}^{-1} x )$$
définit un $(n-1)$-cocycle du groupe $G$ à valeurs dans $M$. De plus, la classe de cohomologie représentée par ce cocycle ne dépend pas de $x$.

En appliquant ce principe général à 
$$X = (\C^n )^\vee -\{0 \}, \quad M = \Omega^n_{\rm aff}, \quad F (\ell_1 , \ldots , \ell_{n}) = \omega_{\ell_1} \wedge \ldots \wedge \omega_{\ell_{n}} \quad  \mbox{et} \quad  x=e_1^*,$$
on obtient un $(n-1)$-cocycle homogène 
\begin{equation} \label{cocycleSa}
\mathbf{S}_{\rm aff} : G^{n} \to \Omega_{\rm aff}^n; \quad (g_1 , \ldots , g_{n} ) \mapsto \omega_{\ell_1} \wedge \ldots \wedge \omega_{\ell_{n}},
\end{equation}
où $\ell_j (z) = e_1^*(g_j z)$. 

\medskip
\noindent
{\it Remarque.} Le cocycle ainsi construit est homogène à droite, ce qui se traduit donc par la relation
\begin{equation} \label{invSa}
g^* \mathbf{S}_{\rm aff} (g_1 g^{-1} , \ldots , g_{n} g^{-1}) = \mathbf{S}_{\rm aff} (g_1 , \ldots , g_{n} ) ,
\end{equation}
qui découle de l'équation (\ref{E:relg}). 

\medskip

\begin{theorem} \label{T:Sa} 
Le cocycle $\mathbf{S}_{\rm aff}$ représente la classe (non nulle) 
$$S_{\rm aff} \in H^{n-1} (G , \Omega_{\rm aff}^n )$$ 
de la proposition \ref{P:Sa}. 
\end{theorem}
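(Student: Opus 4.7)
First I would check that formula \eqref{cocycleSa} genuinely defines a $G$-equivariant $(n-1)$-cocycle. The equivariance \eqref{invSa} is immediate from \eqref{E:relg}. The cocycle relation, via the general principle recalled just before \eqref{cocycleSa}, reduces to verifying
\begin{equation*}
\sum_{i=0}^{n} (-1)^i \omega_{\ell_0} \wedge \cdots \wedge \widehat{\omega_{\ell_i}} \wedge \cdots \wedge \omega_{\ell_n} = 0
\end{equation*}
for any linear forms $\ell_0,\ldots,\ell_n$ on $\C^n$. This is exactly the Orlik--Solomon relation (2) recalled at the beginning of the section, which, as emphasized there, does hold in $\Omega_{\rm aff}^n$. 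By the same general principle, the class of $\mathbf{S}_{\rm aff}$ in $H^{n-1}(G,\Omega_{\rm aff}^n)$ is independent of the choice of base point $e_1^*$.

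Next I would compare this class with the topologically constructed class $S_{\rm aff}$. The strategy is to pick an explicit de Rham representative of the privileged lift $E[0]\in H^{2n-1}(\|EG\times_G(\C^n-\{0\})\|)$ of the Thom class --- precisely the Mathai--Quillen form promised in Section~\ref{S:61}. Restricting this representative to each affine open $U(g_0,\ldots,g_{n-1})$ and running the spectral sequence \eqref{SST} for the Borel fibration, one arrives at a class in $H^{n-1}(G,H^n(U))$; the Brieskorn theorem recalled before Definition~\ref{def:Omegamer} then identifies $H^n(U)$ with the appropriate piece of $\Omega_{\rm aff}^n$. Following the edge map through this filtration translates evaluation on an $(n-1)$-simplex $[g_1|\cdots|g_n]$ of $EG/G$ into integration of the Mathai--Quillen form against the corresponding simplicial cell in $EG\times_G U(g_1,\ldots,g_n)$.

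The computational core of the argument --- and what I expect to be the main obstacle --- is showing that this integral is exactly $\omega_{\ell_1}\wedge\cdots\wedge\omega_{\ell_n}$ with $\ell_j = e_1^*\circ g_j$. Conceptually, the reason this must be so is that the Mathai--Quillen angular form is a transgression of the Euler form, its ``polar part'' along a hyperplane $\ell^{-1}(0)$ is precisely $\omega_\ell = \frac{1}{2i\pi}\frac{d\ell}{\ell}$ (the Cauchy kernel on the transversal line), and the iterated face maps of the simplicial model of $EG\times_G U(g_0,\ldots,g_n)$ successively pick up one such factor per index $j$. To execute this rigorously I would either perform an explicit asymptotic computation of the Mathai--Quillen form along a suitable limit, or --- more cleanly --- identify both the topological class and $[\mathbf{S}_{\rm aff}]$ by their residues along the codimension-$n$ strata of the hyperplane arrangement, which in both cases compute to $\pm 1$ on the appropriate ``flag'' of nested hyperplanes associated with $(g_1,\ldots,g_n)$. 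Since the Orlik--Solomon algebra is detected by these flag residues, the two cocycles must be cohomologous. Non-vanishing of the class is then a byproduct: $\mathbf{S}_{\rm aff}(\mathrm{id},g_2,\ldots,g_n)$ already evaluates to a nonzero wedge in $\Omega_{\rm aff}^n$ on any generic tuple, and no coboundary can cancel its iterated residue along a generic flag.
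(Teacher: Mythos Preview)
Your general architecture --- represent the equivariant lift by the Mathai--Quillen form $\rho^*\eta$, push through the simplicial edge map, invoke Brieskorn --- is exactly the paper's. But the two places where you gesture are precisely where the work is.

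\textbf{The integral.} You propose either an asymptotic expansion or a residue argument; neither is executed, and the paper does something genuinely different. It does not compute the integral over the interior simplices $\rho_{n-1}(\Delta_{n-1}\times\{\mathbf g\})\subset X$ directly. Instead it constructs a homotopy to ``ideal'' simplices $\Delta(\mathbf q)$ whose vertices lie on the boundary of a compactification of the symmetric space $X$ (Tits for $\mathbf{S}_{\rm aff}^*$, Satake for $\mathbf{S}_{\rm aff}$), and uses the decay estimates of Chapter~\ref{C:5} to show that $\eta$ extends by zero across that boundary. Stokes then reduces everything to integrating $\eta$ over $\Delta(\mathbf q)$. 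Over such a simplex the hermitian bundle splits metrically as a sum of line bundles, and the integral factorises into $n$ one-variable integrals, each producing a single $d\ell/\ell$ (Proposition~\ref{P34}). For $\mathbf{S}_{\rm aff}$ specifically, it is the Satake retraction onto the boundary point $|e_1^*(\cdot)|^2$ that picks out exactly the forms $\ell_j=e_1^*\circ g_j$. Your ``polar part'' heuristic captures why one \emph{expects} each factor, but it does not explain why the integral localises or why cross-terms vanish; that is what the compactification and the metric splitting accomplish, and they occupy most of Chapters~\ref{C:4}--\ref{S:6}.

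\textbf{Non-vanishing.} This is a genuine gap. That $\mathbf{S}_{\rm aff}(g_1,\ldots,g_n)$ is a nonzero element of $\Omega^n_{\rm aff}$ for generic $g_i$ says nothing about the cohomology class: the coefficients form a highly nontrivial $G$-module and coboundaries $\delta b$ can take arbitrary nonzero values. ``No coboundary can cancel its iterated residue along a generic flag'' is an assertion, not an argument --- you have not produced a cycle in $H_{n-1}(G,-)$ against which to pair, nor explained why your iterated residue descends from cocycles to cohomology. The paper does not attempt any such abstract argument; it cites Sczech's explicit evaluation of $\mathbf{S}_{\rm aff}$ on a concrete $(n-1)$-cycle built from unipotent matrices \cite[Theorem~3]{Sczech93}, which produces a nonzero number.
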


\subsection{Immeuble de Tits et symboles modulaires universels} \label{S:ARuniv}

Considérons maintenant l'immeuble de Tits $\mathbf{T}_n$. C'est le complexe simplicial dont les sommets sont les sous-espaces propres non-nuls de $\C^n$ et dont les simplexes sont les drapeaux de sous-espaces propres. Rappelons (voir \S \ref{S:Tits}) que l'immeuble de Tits s'identifie naturellement au bord à l'infini de l'espace symétrique associé à $\GL_n (\C)$ dans la compactification géodésique. D'après le théorème de Solomon--Tits \cite{SolomonTits}, l'immeuble de Tits $\mathbf{T}_n$ a le type d'homotopie d'un bouquet de $(n-2)$-sphères. Son homologie réduite en degré $n-2$ est appelé module de Steinberg de $\C^n$; on note donc 
$$\mathrm{St} (\C^n ) = \widetilde{H}_{n-2} (\mathbf{T}_n ).$$
Ash et Rudolph décrivent un ensemble explicite de générateurs de $\mathrm{St} (\C^n )$, appelés {\it symboles modulaires universels}, de la manière suivante~: soient $v_1 , \ldots , v_n$ des vecteurs non nuls de $\C^n$. Identifiant le bord de la première subdivision barycentrique $\Delta_{n-1} '$ du $(n-1)$-simplexe standard au complexe simplicial dont les sommets sont les sous-ensembles propres non vides de $\{ 1 , \ldots , n \}$, on associe aux vecteurs $v_1 , \ldots , v_n$ l'application simpliciale 
\begin{equation} \label{app1}
\partial \Delta_{n-1} ' \to \mathbf{T}_n
\end{equation}
qui envoie chaque sommet $I \subsetneq \{ 1 , \ldots , n \}$ de $\partial \Delta_{n-1} '$ sur le sommet $\langle v_i \rangle_{i \in I}$ de $\mathbf{T}_n$. Le symbole modulaire universel $[v_1 , \ldots , v_n] \in \mathrm{St}(\C^n)$ est alors défini comme l'image de la classe fondamentale de $\partial \Delta_{n-1} '$ par l'application \eqref{app1}. D'après \cite[Prop. 2.2]{AshRudolph} le symbole $[v_1 , \ldots , v_n]$ vérifie les relations suivantes.
\begin{enumerate}
\item Il est anti-symétrique (la transposition de deux vecteurs change le signe du symbole).
\item Il est homogène de degré $0$~: pour tout $a \in \C^*$, on a $[ a v_1 , \ldots , v_n] =[v_1 , \ldots , v_n]$.
\item On a $[v_1 , \ldots , v_n]=0$ si $\det (v_1 , \ldots , v_n)=0$.
\item Si $v_0 , \ldots, v_{n}$ sont $n+1$ vecteurs de $\C^n$, on a
$$\sum_{j=0}^n (-1)^j [ v_0 , \ldots , \widehat{v}_j , \ldots , v_n] =0 .$$
\item Si $g \in \GL_n (\C)$, alors $[g v_1 , \ldots , g v_n ] = g \cdot [v_1 , \ldots , v_n]$, où le point désigne l'action naturelle de $\GL_n (\C)$ sur $\mathrm{St}(\C^n)$.
\end{enumerate}
D'après \cite[Prop. 2.3]{AshRudolph} les symboles modulaires universels engendrent $\mathrm{St}(\C^n)$. Kahn et Sun \cite[Corollary 2]{KahnSun} montrent que les relations ci-dessus fournissent en fait une {\it présentation} de $\mathrm{St}(\C^n)$.

Comme pour les symboles modulaires classiques discutés en introduction, étant donné un $G$-module $M$ une application $G$-équivariante
$\Phi : \mathrm{St}(\C^n) \to M$ induit un $(n-1)$-cocycle de $G$ à coefficients dans $M$~:
$$(g_1 , \ldots , g_n ) \mapsto \Phi (g_1^{-1} e_1 , \ldots , g_n^{-1} e_1).$$ 

\subsection{Un deuxième cocycle explicite}

Dans le cas affine notre principal résultat est le suivant.

\begin{theorem} \label{T:Sa} 
L'application
\begin{multline} \label{E:applAff}
\mathrm{St}(\C^n) \to \Omega^n_{\rm aff};  \\ \quad [v_1 , \ldots , v_n] \mapsto \left\{ \begin{array}{ll}
0 & \mbox{si } \det (v_1 , \ldots , v_n)=0, \\
\omega_{v_1^*} \wedge \ldots \wedge \omega_{v_n^*} & \mbox{sinon}, \end{array} \right.
\end{multline}
où, dans le deuxième cas, $(v_1^* , \ldots , v_n^*)$ désigne la base duale à $(v_1 , \ldots , v_n)$, induit un $(n-1)$-cocycle $\mathbf{S}_{\rm aff}^*$ qui représente encore la classe (non nulle) $S_{\rm aff}$ dans $H^{n-1} (G , \Omega_{\rm aff}^n )$. 
\end{theorem}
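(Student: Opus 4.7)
The plan is in three stages.

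First I would show that the formula defines a well-defined $\GL_n(\C)^\delta$-equivariant map $\Phi \colon \mathrm{St}(\C^n) \to \Omega^n_{\rm aff}$. By the Kahn--Sun presentation cited in \S\ref{S:ARuniv}, well-definedness is equivalent to verifying that the assignment $[v_1, \ldots, v_n] \mapsto \omega_{v_1^*} \wedge \cdots \wedge \omega_{v_n^*}$ respects the Ash--Rudolph relations (1)--(5). Antisymmetry (1), scaling homogeneity (2) (via $\omega_{c\ell} = \omega_\ell$), vanishing on degenerate configurations (3), and $\GL_n$-equivariance (5) all follow directly from the definitions. The crux is the alternating sum relation (4). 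Given $n+1$ vectors $v_0, \ldots, v_n$, observe that for each pair $i \neq j$ the dual basis element $v_i^{(j)*}$ with respect to the basis $B_j = \{v_k : k \neq j\}$ is a scalar multiple of a canonical linear form $\eta_{ij}$ characterized by vanishing on $\{v_k : k \neq i, j\}$. Since $\omega$ is scale-invariant, $\omega_{v_i^{(j)*}} = \omega_{\eta_{ij}}$ depends only on the unordered pair $\{i,j\}$, and the identity to verify becomes $\sum_{j=0}^n (-1)^j \bigwedge_{i \neq j} \omega_{\eta_{ij}} = 0$ in $\Omega^n_{\rm aff}$. I would establish this by iterated application of the Orlik--Solomon three-term relations: for $n=2$ it reduces to a single Orlik--Solomon identity, and for general $n$ one either inducts on $n$ or, using the $\GL_n$-equivariance already established, reduces to a standard configuration such as $v_k = e_k$ for $k \geq 1$ and $v_0 = e_1 + \cdots + e_n$, where the combinatorics become tractable.

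Once $\Phi$ is well-defined, the universal-modular-symbol principle of \S\ref{S:ARuniv} applied with base point $e_1 \in \C^n$ automatically produces a homogeneous $(n-1)$-cocycle $\mathbf{S}^*_{\rm aff}$, whose coboundary identity is precisely relation (4).

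The last step is to show that the class of $\mathbf{S}^*_{\rm aff}$ in $H^{n-1}(G, \Omega^n_{\rm aff})$ equals $S_{\rm aff}$. Here I would use the preceding theorem asserting $[\mathbf{S}_{\rm aff}] = S_{\rm aff}$. The natural route is to observe that the same Ash--Rudolph verification shows $F \colon [\ell_1, \ldots, \ell_n] \mapsto \omega_{\ell_1} \wedge \cdots \wedge \omega_{\ell_n}$ also descends to a $G$-equivariant map $\mathrm{St}((\C^n)^\vee) \to \Omega^n_{\rm aff}$, that $\mathbf{S}_{\rm aff}$ is its universal-modular-symbol cocycle with base point $e_1^* \in (\C^n)^\vee$, and that $\Phi = F \circ D$ for the $G$-equivariant dual-basis isomorphism $D \colon \mathrm{St}(\C^n) \to \mathrm{St}((\C^n)^\vee)$ sending $[v_1, \ldots, v_n] \mapsto [v_1^*, \ldots, v_n^*]$. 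By naturality of the universal-modular-symbol class under $G$-equivariant isomorphisms of Steinberg modules, the two cocycles represent the same class; alternatively, one may exhibit an explicit $(n-2)$-cochain realizing $\mathbf{S}^*_{\rm aff} - \mathbf{S}_{\rm aff}$ as a coboundary in the bar resolution by a direct computation.

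The principal obstacle is the verification of the alternating sum relation (4) in Stage~1. Although the formulation via the canonical forms $\eta_{ij}$ is clean, the combinatorial identity in the Orlik--Solomon algebra is not a single three-term relation for $n \geq 3$ but rather an iterated combination thereof, and organising this computation uniformly in $n$ is the main technical task.
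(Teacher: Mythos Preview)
Your approach is genuinely different from the paper's. The paper (Chapter~\ref{S:6}, \S\ref{S:demTSa}) does not deduce Theorem~2.2 from Theorem~2.1; it proves both simultaneously by differential--geometric means. The equivariant class $S_{\rm aff}$ is represented by the simplicial form $\rho^*\eta$ built from the Mathai--Quillen form $\eta$; integrating this form over geodesic simplices computed via the Tits compactification yields $\mathbf{S}^*_{\rm aff}$ (Corollary~\ref{C38}, via Proposition~\ref{P34}), whereas using the convex Satake compactification and a dual retraction yields $\mathbf{S}_{\rm aff}$. Brieskorn's theorem then converts the resulting real forms into holomorphic ones. The duality between the two cocycles is thus explained geometrically by the duality between the Tits and Satake boundaries (see the Remark closing Chapter~\ref{S:6}), not algebraically via Steinberg modules.

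Your Stage~1 is viable, and in fact simpler than you suggest: once you know that $F:\mathrm{St}((\C^n)^\vee)\to\Omega^n_{\rm aff}$ is well-defined (immediate, since Ash--Rudolph relation~(4) for covectors is exactly Orlik--Solomon relation~(2)), well-definedness of $\Phi=F\circ D$ reduces to well-definedness of $D$, and this follows from the annihilator isomorphism $W\mapsto W^\perp$ of Tits buildings, up to a sign coming from complementation on $\partial\Delta'_{n-1}$.

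There is, however, a real gap in Stage~3. The ``naturality of the universal-modular-symbol class'' you invoke holds when the two cocycles are related through a $G$-equivariant map of $G$-sets $\psi:X\to Y$ with $F_X=F_Y\circ\psi^n$: then $f^{F_X}_x=f^{F_Y}_{\psi(x)}$ and independence of base point finishes. But $D:\mathrm{St}(\C^n)\to\mathrm{St}((\C^n)^\vee)$ is \emph{not} induced by any pointwise $G$-map $\C^n\to(\C^n)^\vee$; the dual basis $(v_i^*)$ depends on all of $v_1,\ldots,v_n$ at once. Consequently there is no automatic reason that $D_*$ takes the universal class $u_V\in H^{n-1}(G,\mathrm{St}(V))$ to $u_{V^\vee}$. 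To salvage your route you would have to prove $D_*(u_V)=\pm u_{V^\vee}$ directly, for instance via the obstruction-theoretic description of Appendix~\ref{A:B} together with the annihilator isomorphism of Tits buildings, and then pin down the sign. Neither step is carried out in your proposal, and the ``explicit coboundary'' alternative you mention is no less work.
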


Le cocycle $\mathbf{S}_{\rm aff}^*$ est explicitement donné par
\begin{equation} \label{cocycleSa*}
\mathbf{S}_{\rm aff}^* : G^{n} \to \Omega_{\rm aff}^n; \quad (g_1 , \ldots , g_{n} ) \mapsto \omega_{\ell_1} \wedge \ldots \wedge \omega_{\ell_{n}},
\end{equation}
où cette fois $\ell_j$ est une forme linéaire sur $\C^n$ de noyau $\langle g_1^{-1} e_1 , \ldots , \widehat{g_j^{-1} e_1} , \ldots , g_{n}^{-1} e_1 \rangle$ (et identiquement nulle si les $g_j^{-1} e_1$ ne sont pas en position générale). Il découle encore immédiatement de l'équation \eqref{E:relg} au chapitre \ref{C:1}  que $\mathbf{S}_{\rm aff}^*$ est homogène à droite, autrement dit qu'il vérifie la relation \eqref{invSa}. Il est par contre un peu moins évident qu'il définisse bien un cocycle. Les deux cocycles $\mathbf{S}_{\rm aff}$ et $\mathbf{S}_{\rm aff}^*$ sont considérés par Sczech dans une note non publiée \cite{Sczechprepub}; le cocycle $\mathbf{S}_{\rm aff}$ est le point de départ d'un article important de Sczech \cite{Sczech93} sur lequel nous reviendrons. Nous lui préférons $\mathbf{S}_{\rm aff}^*$ précisément parce qu'il provient de \eqref{E:applAff}. 

Du fait que $\mathbf{S}_{\rm aff}^*$ provienne de l'application \eqref{E:applAff} on peut penser à ce cocycle comme à une classe de cohomologie \emph{relative} au bord de Tits. Nous donnerons un sens rigoureux à cela au cours de la démonstration que nous détaillons au chapitre \ref{S:6}. Notre démarche pour démontrer le théorème \ref{T:Sa} consiste à partir de la description topologique (\ref{E:Sa}) de $S_{\rm aff}$ et d'exhiber un représentant explicite grâce à la théorie de Chern--Weil. Outre qu'il permet de montrer que les cocycles $\mathbf{S}_{\rm aff}$ et $\mathbf{S}_{\rm aff}^*$ sont cohomologues et trouvent leur origine dans le relevé dans $H_G^{2n-1} (\C^n -\{ 0 \})$ de la classe fondamentale de $\C^n - \{ 0 \}$, l'avantage de ce point de vue est qu'il se généralise naturellement dans les cas multiplicatif et elliptique que nous discutons dans les paragraphes qui suivent. 

\medskip
{\it Remarques.} 1. On peut reformuler le résultat de Ash--Rudolph \cite[Prop. 2.2]{AshRudolph} cité plus haut de la manière suivante~:
\begin{quote}
L'application qui à $(g_1 , \ldots , g_n ) \in G^n$ associe le symbole modulaire $[g_1^{-1} e_1 , \ldots , g_n^{-1} e_1]$ dans $\mathrm{St} (\C^n)$ est un $(n-1)$-cocycle homogène. 
\end{quote}
Dans l'annexe \ref{A:B} on donne une démonstration topologique de cette assertion qui réalise la classe de cohomologie associée comme une classe d'obstruction; on pourra comparer cette manière de voir avec \cite{SharifiVenkatesh}.  

2. L'application 
$$\phi : \mathrm{St} (\C^n ) \to \mathrm{St}( (\C^n )^\vee )$$
qui à un symbole $[v_1 , \ldots , v_n]$ associe $0$ si $\det (v_1 , \ldots , v_n)= 0$, et $[v_1^* , \ldots , v_n^*]$ sinon, est un isomorphisme $G$-équivariant.

Le cocycle $\mathbf{S}_{\rm aff}$ se déduit alors de l'application $G$-équivariante 
\begin{equation} \label{R:Sa}
\mathbf{S}_{\rm aff}^* \circ \phi^{-1} : \mathrm{St} ((\C^n )^\vee ) \to \Omega_{\rm aff}^n; \quad [\ell_1 , \ldots , \ell_n ] \mapsto \omega_{\ell_1} \wedge \ldots \wedge \omega_{\ell_n}.
\end{equation}

3. Les applications \eqref{R:Sa} et \eqref{E:applAff} sont $G$-équivariantes et surjectives. Or Andrew Putman et Andrew Snowden \cite{Putman} ont récemment démontré l'irréductibilité de la représentation de Steinberg $\mathrm{St} (\C^n)$. Les applications \eqref{R:Sa} et \eqref{E:applAff} sont donc des isomorphismes. Il s'en suit que les relations dans $\Omega^n_{\rm aff}$ sont engendrées par les images 
des relations de Ash--Rudolph; on retrouve donc que les relations de Orlik et Solomon engendrent {\it toutes} les relations entre les formes $\omega_\ell$ dans $\Omega^n_{\rm aff}$.

\section[Le cas multiplicatif]{Le cas multiplicatif : formes différentielles trigonométriques et symboles modulaires} \label{S:2-2}

Considérons maintenant l'algèbre graduée $\Omega_{\rm mer} = \Omega_{\rm mer} ((\C^\times )^n)$ des formes méromorphes sur le produit de $n$ copies du groupe multiplicatif $\C^\times$ que l'on identifie au quotient $\C/ \Z$ {\it via} l'application 
$$ \C / \Z \to \C^\times; \quad z \mapsto q=e^{2i\pi z}.$$
Rappelons que la fonction $\varepsilon (z) = \frac{1}{2i} \cot (\pi z)$ est égale à la somme --- régularisée au sens de Kronecker --- de la série 
$$\frac{1}{2i \pi} \sum_{m \in \Z} \frac{1}{z+m}.$$
\'Etant $\Z$-périodique la forme $\varepsilon (z) dz$ définit bien une forme méromorphe sur $\C/ \Z$. {\it Via} l'identification $\C / \Z \cong \C^\times$ rappelée ci-dessus, on a 
$$\varepsilon (z) dz = \frac{1}{2i\pi} \frac{dq}{q-1} - \frac{1}{4i \pi} \frac{dq}{q} \quad \mbox{et} \quad dz = \frac{1}{2i\pi} \frac{dq}{q}.$$
On note finalement $\overline{\Omega}_{\rm mer}$ le quotient de $\Omega_{\rm mer} ((\C^\times )^n)$ par la sous-algèbre engendrée par les formes régulières 
$$dz_j = \frac{1}{2i\pi}  \frac{dq_j}{q_j} \quad (j \in \{ 1 , \ldots , n \} ).$$
L'action (à gauche) des matrices $n \times n$ sur $\C^n$ (identifiées aux vecteurs colonnes) induit une action du monoïde $M_n (\Z )$
sur $\C^n / \Z^n$ et donc une action du groupe $\SL_n (\Z)$. On commence par rappeler dans ce contexte les définitions des opérateurs de Hecke puis, comme dans le cas additif on décrit un premier cocycle explicite avant de faire le lien avec les symboles modulaires.

\subsection{Opérateurs de Hecke} 

Soit $\Gamma$ un sous-groupe d'indice fini de $\SL_n (\Z)$ et soit $S$ un sous-monoïde de $M_n (\Z )^\circ = M_n (\Z ) \cap \GL_n (\Q)$ contenant $\Gamma$. 
L'action, \emph{à droite}, de $M_n (\Z )^\circ$ sur $\Omega_{\rm mer}^n$, par tiré en arrière, induit une action de l'algèbre de Hecke associée au couple $(S, \Gamma)$ sur $H^{n-1} (\Gamma ,\Omega_{\rm mer}^n)$~: une double classe 
$$\Gamma a \Gamma \quad \mbox{avec} \quad a \in S$$
induit un opérateur --- dit de Hecke --- sur $H^{n-1} (\Gamma ,\Omega_{\rm mer}^n)$, noté $\mathbf{T}(a)$, que l'on décrit comme suit~: 
on décompose la double classe
$$\Gamma a \Gamma  = \sqcup_j \Gamma a_j $$
où l'union est finie. Pour tout $g \in \Gamma$ on peut donc écrire  
$$a_j g^{-1}  = (g^{(j)})^{-1} a_{\sigma (j)}  \quad \mbox{avec} \quad \sigma \mbox{ permutation et } g^{(j)} \in \Gamma.$$ 
\'Etant donné un cocycle (homogène à droite) $c : \Gamma^n \to \Omega^n_{\rm mer}$, on pose alors
$$\mathbf{T} (a) c (g_1 , \ldots , g_n) = \sum_j a_{j}^*c (g_1^{(j)} , \ldots , g_n^{(j)});$$
c'est encore un cocycle homogène (à droite) et on peut montrer que sa classe de cohomologie est indépendante du choix des $a_j$; voir \cite{RhieWhaples}.  

Noter par ailleurs qu'un élément $a \in S$ induit une application $[a] : \C^n / \Z^n \to \C^n / \Z^n$. Notons $\mathrm{Div}_\Gamma$ le groupe (abélien) constitué des combinaisons linéaires entières formelles $\Gamma$-invariantes de points de torsion dans $\C^n / \Z^n$. Dans la suite on note $[\Gamma a \Gamma]$ l'application 
$$[\Gamma a \Gamma] = \sum_j [a_j] :  \mathrm{Div}_\Gamma \to \mathrm{Div}_\Gamma.$$

\subsection{Cocycles multiplicatifs I} 

\'Etant donné un élément $D \in \mathrm{Div}_\Gamma$, la théorie de Chern--Weil nous permettra de construire, au chapitre \ref{S:chap9}, des  représentants suffisamment explicites de la classe de cohomologie $S_{\rm mult} [D]  \in H^{n-1} (\Gamma , \Omega_{\rm mer}^n )$ du théorème \ref{T:cocycleM} pour démontrer le théorème qui suit au \S \ref{S:3.2.2}.

\begin{theorem} \label{T:mult}
Soit $\chi_0 : \C^{n} / \Z^n \to \C / \Z$ un morphisme primitif. Il existe une application linéaire 
$$\mathrm{Div}_\Gamma \to C^{n-1} (\Gamma , \Omega_{\rm mer}^n )^{\Gamma}; \quad D \mapsto \mathbf{S}_{\rm mult, \chi_0} [D]$$ telle que 
\begin{enumerate}
\item chaque cocycle $\mathbf{S}_{\rm mult , \chi_0} [D]$ représente $S_{\rm mult} [D]  \in H^{n-1} (\Gamma , \Omega_{\rm mer}^n )$; 
\item chaque forme différentielle méromorphe $\mathbf{S}_{\rm mult , \chi_0} [D] (g_1 , \ldots , g_n)$ est régulière en dehors des hyperplans affines passant par un point du support de $D$ et dirigés par $\mathrm{ker} ( \chi_0 \circ g_j)$ pour un $j \in \{1 , \ldots , n \}$;
\item pour tout entier $s >1$, on a 
$$\mathbf{S}_{\rm mult , \chi_0} [[s]^*D] = [s]^* \mathbf{S}_{\rm mult , \chi_0}[D] \quad \mbox{\emph{(relations de distribution)} et}$$  
\item pour tout $a \in S$, 
$$\mathbf{T} (a) \mathbf{S}_{\rm mult , \chi_0} [D]  = \mathbf{S}_{\rm mult , \chi_0} [ [\Gamma a \Gamma]^* D] \quad \mbox{dans} \quad H^{n-1} (\Gamma , \Omega_{\rm mer}^n ).$$
\end{enumerate}
\end{theorem}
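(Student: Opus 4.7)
I would construct $\mathbf{S}_{\rm mult, \chi_0}[D]$ explicitly as a sum of wedge products of Eisenstein-regularised $1$-forms and then verify that it realises the topological class $S_{\rm mult}[D]$ from Theorem~\ref{T:cocycleM}. For a primitive morphism $\chi : \C^n/\Z^n \to \C/\Z$ and a torsion point $t$, set $\omega_{\chi,t}$ to be the meromorphic $1$-form obtained by pulling back $\varepsilon(z - \chi(t))\, dz$ along $\chi$; it is regular away from the hyperplane $\chi^{-1}(\chi(t))$, which passes through $t$ and is directed by $\ker(\chi)$. The candidate homogeneous cochain is then
$$\mathbf{S}_{\rm mult, \chi_0}[D](g_1, \ldots, g_n) := \sum_{t \in \mathrm{supp}(D)} D(t)\, \omega_{\chi_0 \circ g_1, t} \wedge \cdots \wedge \omega_{\chi_0 \circ g_n, t},$$
which is linear in $D$ and for which the pole locus control (2) is manifest.

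Next I would check that this formula really defines an element of $C^{n-1}(\Gamma, \Omega_{\rm mer}^n)^{\Gamma}$. The $\Gamma$-equivariance is the multiplicative analogue of (\ref{E:relg}): for $g \in \Gamma$ one has $g^* \omega_{\chi \circ g^{-1}, g \cdot t} = \omega_{\chi, t}$, and combined with the $\Gamma$-invariance of $D$ this yields the right equivariance property (\ref{invSa}). The cochain identity, after fixing $t$, reduces to an Orlik--Solomon style relation among the $\omega_{\chi_0 \circ g_j, t}$ on the hyperplane arrangement through $t$, exactly as in the additive case treated via (\ref{cocycleSa}). The distribution property (3) follows from the classical Eisenstein distribution relation $\sum_{k \bmod s} \varepsilon((z+k)/s) = \varepsilon(z)$ applied fibrewise, and Hecke-equivariance (4) is a direct coset-decomposition computation, mirroring the Hecke manipulation performed in the introduction for the trigonometric case.

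The main obstacle is property (1), namely identifying this explicit cocycle with the topological class $S_{\rm mult}[D]$ of Theorem~\ref{T:cocycleM}. My strategy is to use the Mathai--Quillen/Chern--Weil recipe to produce a canonical smooth primitive of the equivariant Thom form on $\|EG \times_G (T - T[c])\|$, restrict it to the affine opens $U(\mathbf{g})$ of (\ref{E:arrgtHyp}) cut out by the translates of $\chi_0$, and pass to the limit using the spectral sequence (\ref{SSTm}). The key point is that the projection onto the $[s]_*$-eigenspace $H^n(-)^{(1)}$ of Definition~\ref{D:sep1} corresponds on the level of differential forms precisely to Eisenstein regularisation, so that the Brieskorn-type theorem \ref{P:Brieskorn} represents the resulting cohomology class by exactly the wedge product written above. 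Matching these two representatives cleanly is the delicate part; the freedom modulo $\Omega^n(T)$ allowed by Theorem~\ref{T:cocycleM} provides the slack that absorbs the non-canonical choice of lift, so that only the dependence on $\chi_0$ (and not on further auxiliary data) remains in the final formula.
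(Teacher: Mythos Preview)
Your proposed explicit formula is not a cocycle, and this is precisely the phenomenon the introduction of the paper is built around. Take $n=2$, $\chi_0=e_1^*$, $D=\{0\}$; your cochain evaluated on $(g_0,g_1,g_2)$ with $\ell_j=e_1^* g_j$ has coboundary
\[
\sum_{i=0}^{2}(-1)^{i}\,\omega_{\ell_0}\wedge\widehat{\omega_{\ell_i}}\wedge\omega_{\ell_2}
=\bigl(\varepsilon(x)\varepsilon(y)-\varepsilon(x)\varepsilon(x+y)-\varepsilon(y)\varepsilon(x+y)\bigr)\,dx\wedge dy
=-\tfrac14\,dx\wedge dy
\]
by Euler's addition formula~\eqref{E:addition}. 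The Orlik--Solomon relation you invoke holds for the rational $1$-forms $d\ell/\ell$ but fails for their Eisenstein-periodised avatars $\varepsilon(\ell)\,d\ell$; there is always a nonzero holomorphic correction. This is exactly why the explicit wedge-product cocycle $\mathbf{S}^*_{\rm mult}[D]$ of Theorem~\ref{T:mult2} requires the hypothesis $D\in\mathrm{Div}_\Gamma^\circ$ (to kill those corrections via the smoothing identity~\eqref{E:8.smoothing}), a hypothesis absent from Theorem~\ref{T:mult}.

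The paper's proof of Theorem~\ref{T:mult} (in \S\ref{S:3.2.2}) does \emph{not} produce an explicit closed formula for $\mathbf{S}_{\rm mult,\chi_0}[D]$. It starts from the adelic Eisenstein form $E_\psi(\varphi_f)^0$, pulls back along $\rho$ to get a closed simplicial form $\mathcal{E}_\psi(\varphi_f)$ on $E\Gamma\times(\C^n/\Z^n-\mathrm{supp}\,D)$, restricts to the affine opens $U(\mathbf{g})$ cut out by the $\chi_0\circ\gamma_i$-hyperplanes, and then runs the abstract zig-zag procedure of Annexe~\ref{A:A} (choosing primitives $\beta^{(m)}$ at each stage) to produce a genuine cocycle with values in $\varinjlim H^n(U)^{(1)}$; the Brieskorn-type Theorem~\ref{P:Brieskorn} then identifies this target with $\Omega^n_{\rm mer}$. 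The holomorphic defects you would encounter are absorbed in those primitives, not cancelled by a miraculous identity. Properties~(2) and~(3) come from the shape of the opens $U(\mathbf{g})$ and the built-in $[s]_*$-equivariance of the construction; property~(4) is Proposition~\ref{P:hecke1} applied through the adelic Hecke formalism, together with~\eqref{E:DetT}.
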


\medskip
\noindent
{\it Exemple.} Lorsque $\Gamma = \SL_n (\Z)$ et $S = M_n (\Z )^\circ$, l'algèbre de Hecke est engendrée par les opérateurs 
$$\mathbf{T}^{(k)}_p = \mathbf{T} (a^{(k)}_p) \quad \mbox{avec}  \quad a^{(k)}_p=\mathrm{diag} (\underbrace{p, \ldots , p}_{k} , 1 , \ldots , 1 ),$$
où $p$ est un nombre premier et $k$ un élément de $\{ 1 , \ldots , n-1 \}$.
Maintenant, le tiré en arrière de $D_0 = \{ 0 \}$ par l'application $[\Gamma a^{(k)}_p \Gamma]$ est supporté sur l'ensemble de tous les points de $p$-torsion comptés avec multiplicité $\left( \begin{smallmatrix} n-1 \\ k-1 \end{smallmatrix} \right)_p$ sauf $0$ compté avec multiplicité $\left( \begin{smallmatrix} n \\ k \end{smallmatrix} \right)_p$.\footnote{Ici 
$$ \left( \begin{smallmatrix} n \\ k \end{smallmatrix} \right)_p = \frac{(p^n-1) \cdots (p^{n-k+1} -1)}{(p^k-1) \cdots (p-1)} = \frac{(p^n-1)(p^n-p) \cdots (p^n-p^{k-1})}{(p^k-1) (p^k-p) \cdots (p^k - p^{k-1})}$$
est le coefficient $p$-binomial de Gauss, égal au nombre de sous-espaces vectoriels de dimension $k$ dans $\mathbf{F}_p ^n$.} On en déduit que 
$$[\Gamma a^{(k)}_p \Gamma]^* D_0 = \left( \begin{smallmatrix} n-1 \\ k-1 \end{smallmatrix} \right)_p [p]^*D_0 + \left( \left( \begin{smallmatrix} n \\ k \end{smallmatrix} \right)_p - \left( \begin{smallmatrix} n-1 \\ k-1 \end{smallmatrix} \right)_p \right) D_0$$
et donc que la classe de cohomologie de $\mathbf{S}_{\rm mult, \chi_0} [D_0 ]$ annule l'opérateur 
$$\mathbf{T}^{(k)}_p - \left( \begin{smallmatrix} n-1 \\ k-1 \end{smallmatrix} \right)_p [p]^* -   \left( \begin{smallmatrix} n \\ k \end{smallmatrix} \right)_p + \left( \begin{smallmatrix} n-1 \\ k-1 \end{smallmatrix} \right)_p.$$
On retrouve ainsi les deux premiers points du théorème énoncé en introduction, le lien avec le cocycle $\mathbf{S}$ est plus précisément que pour $n=2$,
$$\mathbf{S}_{\rm mult , e_1^*} [D_0] (1 , g) = \mathbf{S}_{[1]} (g^{-1}) dx \wedge dy ,$$
où $x$ et $y$ sont les coordonnées, abscisse et ordonnée, dans $\C^2 / \Z^2$. 

\medskip

\subsection{Symboles modulaires}

Soit 
$$\Delta_n (\Z ) \subset \mathrm{St} (\C^n)$$
le sous-groupe abélien engendré par les symboles 
$$[h] = [v_1 , \ldots , v_n] \quad \mbox{où} \quad h = ( v_1 | \cdots | v_n ) \in M_n (\Z)^\circ .$$
On appelle \emph{symbole modulaire} tout élément $[h] = [v_1 , \ldots , v_n]$ de $\Delta_n (\Z)$. 

Par définition le $\Z$-module $\Delta_n (\Z)$ est égal au quotient de $\Z [M_n (\Z)^\circ ]$ par les relations (1), (2), (3) et (4) de Ash--Rudolph. On note $I_n \subset \Z [\SL_n (\Z)]$ le sous-module engendré par les éléments 
\begin{equation} \label{E:eltIdeal}
[h] + [hR] , \quad [h] + (-1)^n [hP] \quad \mbox{et} \quad [h] + [hU] + [hU^2] \quad (h \in \SL_n (\Z) ),
\end{equation}
avec
\begin{equation*}
R=(-e_2 |e_1 | e_3 | \cdots | e_n),  \quad P = (e_2 | e_3 | \cdots | e_n | (-1)^{n+1} e_1)  
\end{equation*}
et 
$$U= (-e_1 - e_2 | e_1 | e_3 | \cdots | e_n ).$$
Bykovskii \cite{Bykovskii} démontre que 
\begin{equation} \label{E:byk}
\Delta_n (\Z) = \Z [ \SL_n (\Z) ] / I_n .
\end{equation}
Un sous-groupe d'indice fini $\Gamma$ dans $\SL_n (\Z)$ opère naturellement à gauche sur $\Delta_n (\Z)$ et sur $\Z[\SL_n (\Z )]$, et \eqref{E:byk} est une identité entre $\Z [\Gamma]$-modules.

Dans l'introduction on a exhibé, dans le cas $n=2$, un premier lien entre les structures de $\Delta_2 (\Z)$ et $\Omega^2_{\rm mer}$.
La situation générale, où $n$ est arbitraire, est plus subtile; elle fait l'objet du théorème qui suit. Commençons par naturellement étendre les définitions vues en introduction. 

Le monoïde $M_n (\Z )^\circ$ opère \emph{à droite} sur $\mathrm{Hom} (\Delta_n (\Z) , \Omega_{\rm mer}^n )$ par 
$$\phi_{|g} ([v_1 , \ldots , v_n]) =  g^* \phi ([gv_1 , \ldots , gv_n ]) \quad (\phi \in \mathrm{Hom} (\Delta_n (\Z) , \Omega_{\rm mer}^n ), \ g \in M_n (\Z )^\circ).$$
Cette action induit en particulier une action de $\SL_n (\Z)$ et, étant donné un sous-groupe d'indice fini $\Gamma \subset \SL_n (\Z)$, on appelle \emph{symbole modulaire} pour $\Gamma$ à valeurs dans $\Omega_{\rm mer}^n$ un élément de 
$$\mathrm{Hom} (\Delta_n (\Z) , \Omega_{\rm mer}^n )^{\Gamma }.$$

Soit maintenant $C$ un sous-ensemble $S$-invariant\footnote{Et donc aussi $\Gamma$-invariant.} de vecteurs non-nuls dans $\Z^n$. On note $\Delta_C \subset \Delta_n (\Z)$ le sous-groupe engendré par les $[v_1 , \ldots , v_n]$, où chaque $v_j$ appartient à $C$.

\begin{definition} 
Un \emph{symbole modulaire partiel} sur $C$ pour $\Gamma$ à valeurs dans $\Omega^n_{\rm mer}$ est un élément de 
$$\mathrm{Hom} (\Delta_C , \Omega^n_{\rm mer} )^\Gamma.$$
\end{definition}
Soit $v_0 \in C$. Un symbole modulaire partiel $\phi$ donne lieu à un $(n-1)$-cocycle à valeurs dans $\Omega^n_{\rm mer}$
$$c_\phi  : (g_1 , \ldots , g_n) \mapsto \phi (g_1^{-1} v_0 , \ldots , g_n^{-1} v_0 )$$
dont la classe de cohomologie ne dépend pas du choix de $v_0$ dans $C$; dans la suite on prendra toujours $v_0 = e_1$. Les opérateurs de Hecke opèrent sur les symboles modulaires partiels par
$$\mathbf{T} (a) \phi = \sum_j \phi_{|a_j} \quad \left( a \in S , \ \phi \in \mathrm{Hom} (\Delta_C , \Omega^n_{\rm mer} )^\Gamma \right),$$
de sorte que 
$$\mathbf{T} (a) [c_\phi] = [c_{\mathbf{T} (a) \phi} ] \in H^{n-1} (\Gamma , \Omega^n_{\rm mer}) .$$

\subsection{Cocycles multiplicatifs II}

Un élément de $\mathrm{Div}_\Gamma$ est une fonction $\Gamma$-invariante $D : \Q^n / \Z^n \to \Z$ dont le support est contenu dans un réseau de $\Q^n$. 

\begin{definition}
Soit $\mathrm{Div}_\Gamma^{\circ}$ le noyau du morphisme  
$$\mathrm{Div}_\Gamma \to \Z [ (\Q/ \Z)^{n-1}]$$
induit par la projection sur les $n-1$ dernières coordonnées. 
\end{definition}

Lorsque $D \in \mathrm{Div}_\Gamma^{\circ}$ on peut représenter la classe $S_{\rm mult } [D]$ par un cocycle complètement explicite à valeurs dans $\Omega_{\rm mer}^n$, déduit d'un symbole modulaire partiel; c'est l'objet du théorème suivant que l'on démontre au \S \ref{S:8.3.3}. Pour simplifier les expressions on identifie, {\it via} la multiplication par la $n$-forme invariante $dz_1 \wedge \cdots \wedge dz_n$, l'espace $\Omega_{\rm mer}^n$ à $\mathcal{M} (\C^n / \Z^n)$, l'espace des fonctions méromorphes sur $\C^n / \Z^n$. 
 
\begin{theorem} \label{T:mult2}
Soit $\Gamma$ un sous-groupe d'indice fini dans $\GL_n (\Z)$ et $C = \Gamma \cdot \Z e_1 \subset \Z^n$. L'application linéaire 
$$\mathrm{Div}_\Gamma^\circ \to \mathrm{Hom} (\Delta_C , \mathcal{M} (\C^n / \Z^n))^\Gamma ; \quad D \mapsto \mathbf{S}^*_{\rm mult} [D],$$
où
\begin{multline*}
\mathbf{S}^*_{\rm mult} [D] : [v_1 , \ldots , v_n] \mapsto \frac{1}{\det h} \sum_{w \in \Q^n / \Z^n} D(w) \cdot \\ \sum_{\substack{\xi \in \Q^n/\Z^n \\ h \xi  = w \ (\mathrm{mod} \ \Z^n)}} \varepsilon (v_1^* + \xi_1 ) \cdots \varepsilon (v_{n}^* + \xi_n) ,
\end{multline*}
avec $h = ( v_1 | \cdots | v_n) \in M_n (\Z)^\circ$, est bien définie et vérifie les propriétés suivantes. 
\begin{enumerate}
\item Pour tout $D \in \mathrm{Div}_\Gamma^\circ$, le cocycle associé à $\mathbf{S}_{\rm mult}^* [D]$ représente la classe de cohomologie $S_{\rm mult} [D] \in H^{n-1} (\Gamma , \Omega^n_{\rm mer} )$.
\item Pour tout $a \in M_n (\Z)^\circ$ préservant $C$, on a 
$$\mathbf{T} (a) (\mathbf{S}^*_{\rm mult } [D] ) = \mathbf{S}^*_{\rm mult } [ [\Gamma a \Gamma]^* D].$$
\end{enumerate}
\end{theorem}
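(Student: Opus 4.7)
Mon plan se décompose en trois étapes : (a) vérifier que la formule est bien définie sur $\Delta_C$ et $\Gamma$-équivariante ; (b) identifier la classe de cohomologie obtenue à $S_{\rm mult}[D]$ ; (c) établir la Hecke-équivariance.

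Pour (a), il s'agit de vérifier que la formule, initialement définie pour $h \in M_n(\Z)^\circ$, satisfait les relations d'Ash--Rudolph et passe donc au quotient $\Delta_n(\Z)$, avant de se restreindre à $\Delta_C$. L'antisymétrie résulte de ce que la transposition $v_i \leftrightarrow v_j$ ne change que le signe de $\det h$, les permutations simultanées des $v_k^*$ et des $\xi_k$ laissant le produit $\varepsilon(v_1^* + \xi_1) \cdots \varepsilon(v_n^* + \xi_n)$ inchangé. L'homogénéité unimodulaire découle de l'imparité de $\varepsilon$ et du changement de variable $\xi_1 \mapsto -\xi_1$. Le cas dégénéré est vide puisque la formule n'est définie que pour $h$ inversible. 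La relation de bord
$$\sum_{j=0}^n (-1)^j \mathbf{S}^*_{\rm mult}[D]([v_0,\ldots,\widehat{v}_j,\ldots,v_n]) = 0$$
constitue le cœur de la difficulté : je compte l'établir en explicitant chaque terme comme une somme sur les relèvements $\xi$ d'un même élément $w$ du support de $D$, puis en invoquant une généralisation à $n$ variables de l'identité rationnelle \eqref{E:addition0}. L'hypothèse $D \in \mathrm{Div}_\Gamma^\circ$ est précisément ce qui permet d'éliminer les divergences conditionnelles qui apparaissent dans cette décomposition, via la régularisation d'Eisenstein, exactement comme dans le cas $n=2$ détaillé en introduction. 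La $\Gamma$-équivariance suit alors d'un changement de variable $w \mapsto \gamma^{-1} w$ dans la somme extérieure et de la $\Gamma$-invariance de $D$.

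Pour (b), je comparerais $\mathbf{S}^*_{\rm mult}[D]$ au représentant $\mathbf{S}_{\rm mult, e_1^*}[D]$ produit au théorème \ref{T:mult} par la théorie de Chern--Weil ; une limite adiabatique explicite de la forme de Mathai--Quillen intervenant dans la construction de ce dernier fournirait un $(n-2)$-cobord reliant les deux cocycles (analyse reportée au chapitre \ref{S:chap9}). Pour (c), la Hecke-équivariance est essentiellement formelle : une décomposition $\Gamma a \Gamma = \sqcup_j \Gamma a_j$ et un changement de variable $\xi \mapsto a_j \xi$ dans chaque terme réorganisent $\mathbf{T}(a) \mathbf{S}^*_{\rm mult}[D]$ en $\mathbf{S}^*_{\rm mult}[[\Gamma a \Gamma]^* D]$.

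L'obstacle principal est donc la relation de bord d'Ash--Rudolph en rang $n$. Elle généralise la relation d'addition d'Euler, et encode --- au niveau des formes méromorphes --- les relations cohomologiques prescrites par l'immeuble de Tits ; c'est elle qui fera intervenir de façon cruciale le calcul précis des représentants de Chern--Weil au chapitre \ref{S:chap9} ainsi que le résultat ``à la Brieskorn'' du chapitre \ref{S:OrlikSolomon}.
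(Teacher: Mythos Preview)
Your plan inverts the logic of the paper's proof, and this inversion is the main gap. You propose to \emph{first} verify the Ash--Rudolph boundary relation directly on the explicit formula (your step (a)), and \emph{then} identify the resulting class with $S_{\rm mult}[D]$ (step (b)). The paper does the opposite: the cocycle property is never verified by hand on the formula, but is instead a \emph{consequence} of the topological construction.

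Concretely, the paper starts from the closed simplicial form $\mathcal{E}_\psi(\varphi_f) = \rho^* E_\psi(\varphi_f)^0$ representing the equivariant class (Proposition~\ref{P:50}). A Stokes-type homotopy argument (Theorem~\ref{T8.8}), relating the simplicial cocycle $\int_{\Delta_k}\mathcal{E}_\psi(\varphi_f)$ to the integrals over the Tits-boundary symbols $\int_{\Delta'_{n-1}} E_\psi(\varphi_f,\mathbf{q})^0$, shows that the latter already defines a cocycle cohomologous to the former. The explicit formula then drops out of the evaluation in Proposition~\ref{P34bis}, and the Brieskorn-type theorem~\ref{P:Brieskorn} lets one replace the real form $\mathrm{Re}(\varepsilon\, d\ell)$ by its holomorphic counterpart $\varepsilon\, d\ell$ (Theorem~\ref{T8.8cor}). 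In this approach the ``relation d'addition généralisée'' you are aiming for is never proved as an identity between products of cotangents; it is subsumed in the closedness of $E_\psi(\varphi_f)$ together with Stokes' theorem on the simplices $\Delta_k \times [0,1]$.

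Your direct route is not wrong in principle --- it is essentially Sczech's original approach, and the paper notes that a variant is carried out in Hao Zhang's thesis in the elliptic case --- but it is substantially harder, does not by itself produce the link with the topological class $S_{\rm mult}[D]$, and your own final paragraph concedes as much by deferring the boundary relation to the Chern--Weil computations of Chapter~\ref{S:chap9}. If you are going to use those computations anyway, you may as well adopt the paper's order: construct the cocycle topologically, then read off the formula. Your treatment of (c) is fine; the paper likewise observes that Hecke-equivariance is either an elementary calculation or follows from Proposition~\ref{P:hecke1}.
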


\medskip
\noindent
{\it Remarque.} On peut vérifier sur les formules que les relations de distributions 
$$\mathbf{S}^*_{\rm mult } [[s]^*D] = [s]^* \mathbf{S}^*_{\rm mult } [D],$$ 
pour tout entier $s>1$ et pour tout $D \in \mathrm{Div}_\Gamma^\circ$, sont encore satisfaites. On peut de même montrer que pour tous 
$g_1, \ldots , g_n \in \Gamma$, la fonction méromorphe 
$$\mathbf{S}^*_{\rm mult } [D] (g_1 , \ldots , g_n)$$ 
est régulière en dehors des hyperplans affines passant par un point du support de $D$ et dirigés par 
$$\langle g_1^{-1} e_1, \ldots , \widehat{g_j^{-1} e_1} , \ldots , g_n^{-1} e_1 \rangle \quad \mbox{pour un } j \in \{ 1 , \ldots , n \}.$$
On pourrait bien sûr ici remplacer $e_1$ par n'importe quel vecteur primitif $v_0$, à condition de prendre $C=\Gamma \cdot v_0$. 

\medskip
\noindent
{\it Exemple.} Soit $N$ un entier strictement positif. Considérons le groupe $\Gamma$ constitué des matrices de $\SL_n (\Z)$ qui fixent la droite $\langle e_1 \rangle$, engendrée par le premier vecteur de la base canonique de $\Z^n$, modulo $N$; dans la suite on note ce groupe $\Gamma_0 (N,n)$ ou simplement $\Gamma_0 (N)$ s'il n'y a pas d'ambiguïté sur la dimension. \`A toute combinaison linéaire formelle $\delta = \sum_{d | N} n_d [d]$ de diviseurs positifs de $N$ on associe 
$$D_\delta = \sum_{d | N } n_d \sum_{j=0}^{d-1} \left[ \frac{j}{d} e_1 \right]  ;$$
c'est un élément de $\mathrm{Div}_{\Gamma_0 (N)}$ qui appartient à $\mathrm{Div}_{\Gamma_0 (N)}^\circ$ si et seulement si on a $\sum_{d | N } n_d d =0$. 

Par définition $D_\delta = \sum_{d | N } n_d  \pi_d^* D_0 $, où $D_0$ désigne toujours l'élément de $\mathrm{Div}_{\Gamma_0 (N)}$ de degré $1$ supporté en $0$ et $\pi_d$ désigne la matrice diagonale $\mathrm{diag} (d, 1 , \ldots , 1)$. On a donc 
$$\mathbf{S}_{\rm mult , \chi_0} [D_\delta ] = \sum_{d | N } n_d  \mathbf{S}_{\rm mult , \chi_0} [\pi_d^* D_0 ] .$$
Noter que $D_0$ est invariant par le sous-groupe $\pi_d \Gamma_0 (N) \pi_d^{-1}$ de $\SL_n (\Z)$. 

On verra que pour $g_1 , \ldots , g_n \in \Gamma_0 (N)$ et pour tout diviseur $d$ de $N$, les cocycles
$$\mathbf{S}_{\rm mult , \chi_0} [\pi_d^* D_0 ] (g_1 , \ldots , g_n) \quad \mbox{et} \quad  \pi_d^* \mathbf{S}_{\rm mult , \chi_0} [D_0 ] (\pi_d g_1 \pi_d^{-1} , \ldots , \pi_d g_n \pi_d^{-1})$$
représentent la même classe de cohomologie dans $H^{n-1} (\Gamma , \overline{\Omega}^n_{\rm mer})$
et donc qu'il en est de même pour  
$$\mathbf{S}_{\rm mult , \chi_0} [D_\delta ] (g_1 , \ldots , g_n) \quad \mbox{et} \quad  \sum_{d | N } n_d  \pi_d^* \mathbf{S}_{\rm mult , \chi_0} [D_0 ] (\pi_d g_1 \pi_d^{-1} , \ldots , \pi_d g_n \pi_d^{-1}) .$$

\medskip
\noindent
{\it Remarque.} On retrouve le théorème énoncé en introduction en prenant $n=2$ et 
$$\mathbf{S}_{\delta} (g^{-1}) =  \mathbf{S}_{\rm mult , e_1^* } [D_{\delta^\vee} ] (1 ,  g ) \quad \mbox{et} \quad \mathbf{S}^*_{\delta} (g^{-1}) = \mathbf{S}^*_{\rm mult } [D_{\delta^\vee} ] (1 ,  g ),$$
avec $\delta^\vee = \frac{1}{N} \sum_{d |N } n_d d' [d]$. 

\medskip

Le théorème \ref{T:mult2} implique que le cocycle $\mathbf{S}_{\rm mult , \chi_0} [D_\delta ]$, qui a l'avantage d'être régulier en la plupart des points de torsion, est cohomologue au cocycle qui se déduit du symbole modulaire partiel $\mathbf{S}_{\rm mult}^* [D_\delta ]$ et dont l'avantage est d'avoir une expression simple en le point générique. Un calcul direct permet en effet de vérifier qu'il associe à un élément $[v_1 , \ldots , v_n ] \in \Delta_C$, où les $n-1$ dernières coordonnées des vecteurs $v_j $ sont toutes divisibles par $N$, l'expression 
$$\sum_{d | N } n_d  d \pi_d^* \mathbf{c} (v_1^{(d)} , \ldots , v_n^{(d)} ),$$
où $v_j^{(d)}$ désigne le vecteur de $\Z^n$ obtenu à partir de $v_j$ en divisant par $d$ ses $n-1$ dernières coordonnées et 
$$\mathbf{c} (v_1 , \ldots , v_n ) = \frac{1}{\det h} \sum_{\substack{\xi \in \Q^n/\Z^n \\ h \xi  \in \Z^n }} \varepsilon (v^*_1 + \xi_1 ) \cdots \varepsilon (v^*_{n} + \xi_n ) , $$
avec toujours $h= ( v_1 | \cdots | v_n)$.

\subsection{Cocycles de Dedekind--Rademacher généralisés}

Soit $N$ un entier strictement positif et $\delta = \sum_{d | N} n_d [d]$ une combinaison linéaire formelle entière de diviseurs positifs de $N$ comme dans l'exemple précédent. Puisque $D_\delta$ est $\Gamma_0 (N)$-invariant, il lui correspond un cocycle (régulier) $\mathbf{S}_{\rm mult , e_1^*} [D_\delta ]$ du groupe $\Gamma_0 (N)$. On note 
\begin{equation*}
\mathbf{\Psi}_\delta : \Gamma_0 (N)^n \to \mathcal{M} (\C^n / \Z^n); \quad (g_1 , \ldots , g_n ) \mapsto \mathbf{S}_{\rm mult , e_1^*} [D_\delta ] (g_1 , \ldots , g_n ).
\end{equation*}
C'est un $(n-1)$-cocycle du groupe $\Gamma_0 (N)$ à valeurs dans les fonctions méromorphes sur $\C^n / \Z^n$. 

Sous l'hypothèse que $\delta$ est de degré $0$, c'est-à-dire $\sum_{d | N} n_d =0$, les points de $D_\delta$ ont tous une première coordonnée non nulle dans $\frac{1}{N} \Z / \Z$. Il découle donc du théorème \ref{T:mult} que l'image de $\mathbf{\Psi}_\delta$ est contenue dans les fonctions régulières en $0$. 

\begin{proposition} \label{P:DRgen}
L'application
$$\Phi_\delta : \Gamma_0 (N )^n \to \C; \quad (g_1 , \ldots , g_n ) \mapsto \left[ \mathbf{\Psi}_\delta (g_1 , \ldots , g_n ) \right] (0) $$ 
définit un $(n-1)$-cocycle (à valeurs scalaires) qui représente une classe de cohomologie non-nulle $[\Phi_\delta] \in H^{n-1} (\Gamma_0 (N) , \Q)$ telle que 
\begin{enumerate}
\item la classe $d_n [\Phi_\delta ]$, où $2d_n$ désigne le dénominateur du $n$-ème nombre de Bernoulli, est entière, et
\item pour tout nombre premier $p$ qui ne divise pas $N$ et pour tout $k \in \{ 0 , \ldots , n-1 \}$, on a
$$\mathbf{T}_p^{(k)}   [\Phi_\delta ] = \left( \left( \begin{smallmatrix} n-1 \\ k-1 \end{smallmatrix} \right)_p  +   \left( \begin{smallmatrix} n \\ k \end{smallmatrix} \right)_p - \left( \begin{smallmatrix} n-1 \\ k-1 \end{smallmatrix} \right)_p \right) \cdot [\Phi_\delta].$$
\end{enumerate}
\end{proposition}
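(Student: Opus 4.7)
The plan is to deduce Proposition \ref{P:DRgen} from Theorem \ref{T:mult} by carefully analyzing the behavior of $\mathbf{\Psi}_\delta = \mathbf{S}_{\rm mult, e_1^*}[D_\delta]$ at the origin of $\C^n/\Z^n$.

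First I verify that $\Phi_\delta$ is a well-defined $(n-1)$-cocycle. Under the degree-zero hypothesis $\sum_{d\mid N} n_d = 0$, the contribution of $j=0$ to $D_\delta = \sum_{d\mid N} n_d \sum_{j=0}^{d-1}[(j/d)e_1]$ cancels, so the support of $D_\delta$ lies in the locus $\{z : z_1 \not\equiv 0 \pmod{\Z}\}$. Since $\Gamma_0(N)$ preserves this support, for every $g_j \in \Gamma_0(N)$ and every point $P$ in the support of $D_\delta$ the image $g_j P$ also has nonzero first coordinate modulo $\Z$; by Theorem \ref{T:mult}(2) this shows that none of the affine hyperplanes of poles of $\mathbf{\Psi}_\delta(g_1,\ldots,g_n)$ passes through $0$, so this form is regular at $0$. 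As $0$ is a fixed point of the $\Gamma_0(N)$-action on $\C^n/\Z^n$, evaluation at $0$ is $\Gamma_0(N)$-equivariant, and the composition is a scalar cocycle.

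For the rationality and integrality assertion (1), the idea is to expand the explicit Chern--Weil representative of $\mathbf{\Psi}_\delta$ near $0$. The fundamental $1$-form in the multiplicative case is $\pi \cot(\pi z)\, dz = 2i\pi\, \varepsilon(z)\, dz$, whose Laurent expansion at $z=0$ is
$$\varepsilon(z) = \frac{1}{2i\pi z} - \frac{1}{2i\pi}\sum_{k \geq 1} \frac{(2i\pi)^{2k} B_{2k}}{(2k)!} z^{2k-1}.$$
Extracting the constant term of the product $\mathbf{\Psi}_\delta(g_1,\ldots,g_n)$ at $0$ amounts to a residue computation: the polar parts cancel by the degree-zero condition on $\delta$, leaving products of values $\varepsilon(t)$ at nonzero torsion points $t \in \frac{1}{N}\Z/\Z$ multiplied by a single Bernoulli factor of weight $n$. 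Distribution properties of $\varepsilon$ ensure that such a product, multiplied by the appropriate power of $2i\pi$, is rational, while the Bernoulli contribution produces exactly the denominator $2d_n$ of $B_n$, divided by $2$ through the normalisation $dz_1 \wedge \cdots \wedge dz_n$. This yields the integrality of $d_n[\Phi_\delta]$. Non-triviality of $[\Phi_\delta]$ is then obtained by specialisation: for $n=2$ and an appropriate $\delta$ of degree zero the class coincides, up to a nonzero rational factor, with the classical Dedekind--Rademacher morphism discussed in the introduction, whose non-triviality is well known; the general case follows by restricting to a suitable parabolic subgroup of $\Gamma_0(N)$ where $\Phi_\delta$ factors through the $(n-1)$-dimensional construction.

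Finally, for the Hecke relation (2), Theorem \ref{T:mult}(4) gives $\mathbf{T}_p^{(k)}\mathbf{\Psi}_\delta = \mathbf{S}_{\rm mult, e_1^*}\bigl[[\Gamma a_p^{(k)}\Gamma]^*D_\delta\bigr]$ in cohomology. The calculation sketched in the example following Theorem \ref{T:mult} shows that
$$[\Gamma a_p^{(k)}\Gamma]^* D_0 = \tbinom{n-1}{k-1}_p [p]^* D_0 + \Bigl(\tbinom{n}{k}_p - \tbinom{n-1}{k-1}_p\Bigr) D_0,$$
and writing $D_\delta = \sum_d n_d \pi_d^* D_0$ and using that $\pi_d$ commutes with Hecke operators at primes $p \nmid N$, the same identity holds with $D_0$ replaced by $D_\delta$. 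Combining this with the distribution relation of Theorem \ref{T:mult}(3), which identifies $\mathbf{S}_{\rm mult, e_1^*}[[p]^*D_\delta]$ with $[p]^*\mathbf{\Psi}_\delta$, and observing that $[p]^*$ acts trivially on $\Phi_\delta$ since $[p](0) = 0$, one obtains the eigenvalue relation in the stated form. The main obstacle I foresee is the integrality part of (1), which demands a delicate combinatorial bookkeeping of the Bernoulli and $(2i\pi)$-factors appearing in the Laurent expansion and a verification that the sole denominator contribution is $d_n$.
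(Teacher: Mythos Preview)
Your treatment of the cocycle property and of the Hecke relation~(2) is essentially the paper's own: both are immediate consequences of Theorem~\ref{T:mult}, together with the computation of $[\Gamma a_p^{(k)}\Gamma]^*D_0$ carried out in the example following that theorem. The observation that evaluation at $0$ intertwines $[p]^*$ with the identity is exactly what is needed to pass from the relation in $H^{n-1}(\Gamma,\Omega^n_{\rm mer})$ to the scalar eigenvalue relation.

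The genuine gap is in your approach to~(1). You propose to expand ``the explicit Chern--Weil representative of $\mathbf{\Psi}_\delta$'' near $0$ as a product of cotangent series and read off a Bernoulli denominator. But $\mathbf{\Psi}_\delta=\mathbf{S}_{{\rm mult},e_1^*}[D_\delta]$ is \emph{not} given by such a product: Theorem~\ref{T:mult} only asserts existence and lists properties, and the explicit cotangent formula \eqref{E:87} is for the cohomologous cocycle $\mathbf{S}^*_{\rm mult}[D]$, which requires $D\in\mathrm{Div}^\circ_\Gamma$ (hence $\sum n_d d=0$), a strictly stronger hypothesis than the degree-zero condition $\sum n_d=0$ assumed here. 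Even granting an explicit formula, your residue argument is too loose: the claim that ``a single Bernoulli factor of weight $n$'' survives is unsubstantiated, and the values $\varepsilon(j/N)=\tfrac{1}{2i}\cot(\pi j/N)$ are not individually rational, so rationality of the resulting scalar requires a genuine argument (of Dedekind-sum type) that you do not supply.

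The paper does not attempt a direct computation for~(1). It observes (end of Chapter~\ref{S:chap9}) that the remaining content of Proposition~\ref{P:DRgen} beyond Theorem~\ref{T:mult} is precisely the integrality of $d_n[\Phi_\delta]$, and defers this to~\cite{Takagi}. The link is made by noting that over the real symmetric space the fibre $\C^n$ splits metrically as $\R^n\oplus i\R^n$, so that the Mathai--Quillen form factors as $\varphi_{\R^n}\wedge\varphi_{i\R^n}$; applying the theta distribution to the first factor, evaluating the second at $0$, and contracting by $\partial_{y_1}\wedge\cdots\wedge\partial_{y_n}$ recovers the degree-$(n-1)$ Eisenstein series of~\cite{Takagi}. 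The integrality then follows from \cite[\S 10, Remark p.~354]{Takagi}, with the caveat (spelled out in the paper) that the argument there applies because the zero section is fixed by multiplication by every positive integer. This is a genuinely different mechanism from a local Laurent expansion, and you would need to either reproduce that topological argument or find another route to control the denominator.
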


Les cocycles $\Phi_\delta$ sont des généralisations à $\SL_n (\Z)$ des cocycles de Dedekind--Rademacher pour $\SL_2 (\Z )$. Si $F$ est un corps de nombres totalement réel $F$ de degré $n$ au-dessus de $\Q$ et $L$ est un idéal fractionnaire de $F$, on peut considérer le groupe $U$ des unités totalement positives de $\mathcal{O}_F$ préservant $L$. Le choix d'une identification de $L$ avec $\Z^n$, et donc de $F$ avec $\Q^n$, permet de plonger $U$ dans $\GL_n (\Z)$ {\it via} la représentation régulière de $U$ sur $L$. Dans le dernier paragraphe du chapitre \ref{S:chap9} on montre que l'évaluation de $\Psi_\delta$ sur la classe fondamentale dans $H_{n-1} (U , \Z)$ est égale à la valeur en $0$ d'une stabilisation d'une fonction zeta partielle de $F$. Le théorème \ref{T:mult2} permet alors de retrouver les expressions de ces valeurs comme sommes de Dedekind explicites obtenues par Sczech \cite{Sczech93}; voir aussi \cite{CD,CDG}.

\subsection{Lien avec le cocycle de Sczech}

Les cocycles $\mathbf{S}^*_{\rm mult} [D]$ ($D \in \mathrm{Div}_\Gamma^\circ$) sont très proches du cocycle de Sczech \cite[Corollary p. 598]{Sczech93}. Il y a toutefois quelques différences notables~:
\begin{enumerate}
\item Par souci de simplicité nous nous sommes restreints ici\footnote{Mais pas dans \cite{ColmezNous} motivé par l'étude de toutes les valeurs critiques des fonctions $L$ correspondantes.} à ne considérer que de la cohomologie à coefficients triviaux et non tordue. De sorte qu'en prenant les notations de Sczech on a $P=1$ et $v=0$.
\item L'hypothèse que $D \in \mathrm{Div}_\Gamma^\circ$ conduit à une stabilisation du cocycle de Sczech qui fait disparaitre, au prix d'une augmentation du niveau, son paramètre $Q$ mais n'est défini qu'en un point \emph{générique} de la fibre $\C^n / \Z^n$.
\end{enumerate}

\section[Le cas elliptique]{Le cas elliptique : formes différentielles elliptiques et symboles modulaires}

Considérons maintenant une courbe elliptique $E$ au-dessus d'une base $Y$ et l'algèbre graduée $\Omega_{\rm mer} (E^n)$ des formes méromorphes sur le produit symétrique 
$$E^n = E \times_Y \cdots \times_Y E$$ 
de $n$ copies de $E$.  

Une matrice entière induit, par multiplication à gauche, une application $E^n \to E^n$. On en déduit donc là encore une action du semi-groupe $M_n (\Z)^\circ = M_n (\Z ) \cap \GL_n (\Q)$ sur $\Omega_{\rm mer} (E^n)$. 

\subsection{Opérateurs de Hecke} 

Soit $\Gamma$ un sous-groupe d'indice fini de $\SL_n (\Z)$ et soit $S$ un sous-monoïde de $M_n (\Z )^\circ$ contenant $\Gamma$. 
L'action, \emph{à droite}, de $M_n (\Z )^\circ$ sur $\Omega_{\rm mer} (E^n)$, par tiré en arrière, induit là encore une action de l'algèbre de Hecke associée au couple $(S, \Gamma)$ sur $H^{n-1} (\Gamma ,\Omega_{\rm mer} (E^n))$. On note encore $\mathbf{T}(a)$ l'opérateur de Hecke sur $H^{n-1} (\Gamma ,\Omega_{\rm mer} (E^n))$ associé à la double classe 
$$\Gamma a \Gamma \quad \mbox{avec} \quad a \in S.$$

Noter par ailleurs qu'un élément $a \in S$ induit une application $[a] : E^n \to E^n$. 
Notons $\mathrm{Div}_\Gamma(E^n)$ le groupe (abélien) constitué des combinaisons linéaires entières formelles $\Gamma$-invariantes de points de torsion dans $E^n$. Dans la suite on note $[\Gamma a \Gamma]$ l'application 
$$[\Gamma a \Gamma] = \sum_j [a_j] :  \mathrm{Div}_\Gamma(E^n) \to \mathrm{Div}_\Gamma(E^n).$$

Supposons maintenant que la base $Y$ soit une courbe modulaire $\Lambda \backslash \mathcal{H}$ avec $\Lambda$ sous-groupe d'indice fini dans $\SL_2 (\Z)$.
\`A travers l'identification 
\begin{equation} \label{E:ident}
\mathcal{H} \times \R^2 \stackrel{\simeq}{\longrightarrow} \mathcal{H} \times \C; \quad (\tau , (u,v)) \mapsto (\tau , u \tau +v )
\end{equation}
l'action de $\Z^2$ sur $\R^2$ par translation induit une action de $\Z^2$ sur $\mathcal{H} \times \C$ par 
$$(\tau , z )  \stackrel{(m,n)}{\longmapsto} (\tau , z+ m \tau + n).$$
Le groupe $\SL_2 (\R)$ opère à gauche sur $\mathcal{H} \times \R^2$ par 
$$(\tau , (u,v)) \stackrel{g}{\longmapsto} (g \tau , (u,v) g^{-1} ).$$
\`A travers \eqref{E:ident} cette action devient
$$(\tau , z) \stackrel{\left( \begin{smallmatrix} a & b \\ c & d \end{smallmatrix} \right)}{\longmapsto} \left( \frac{a\tau +b}{c\tau +d} , \frac{z}{c\tau + d} \right).$$
La courbe elliptique $E$, et plus généralement le produit fibré $E^n$, s'obtiennent comme double quotient 
\begin{equation}
E^n= \Lambda \backslash \left[ ( \mathcal{H} \times \C^n) / \Z^{2n} \right] = \Lambda \backslash \left[ \mathcal{H} \times (\R^{2n} / \Z^{2n}) \right].
\end{equation}
Considérons maintenant un sous-monoïde $\Delta \subset M_2 (\Z )^\circ$ contenant $\Lambda$. L'action de $\Lambda$ sur \eqref{E:ident} s'étend au monoïde $\Delta$ par la formule
$$(\tau , (u,v)) \stackrel{g}{\longmapsto} (g \tau , \det (g) (u,v) g^{-1} ).$$
Cette action préserve le réseau $\Z^2$ dans $\R^2$ de sorte que tout élément $g \in \Delta$ induit une application 
$$[g] : \mathcal{H} \times \R^{2n}/ \Z^{2n} \to \mathcal{H} \times \R^{2n} /\Z^{2n}.$$

Une double classe
$$\Lambda b \Lambda \quad \mbox{avec} \quad b \in \Delta$$ 
induit alors un op\'erateur 
$$[\Lambda b \Lambda] : \Omega^n_\mathrm{mer}(E^n) \to \Omega^n_\mathrm{mer}(E^n)$$ 
lui aussi dit de Hecke, sur $\Omega_{\rm mer}^n (E^n)$ et donc aussi sur $H^{n-1} (\Gamma ,\Omega_{\rm mer} (E^n))$, que l'on notera simplement $T(b)$.

\subsection{Cocycles elliptiques I} \label{S:ellI} Soit $\mathrm{Div}_\Gamma^0 (E^n)$  le groupe des combinaisons linéaires entières formelles $\Gamma$-invariantes \emph{de degré $0$} de points de torsion dans $E^n$.

\'Etant donné un élément $D \in \mathrm{Div}_\Gamma^0 (E^n )$, la théorie de Chern--Weil nous permettra de construire, au chapitre \ref{S:chap10}, des  représentants suffisamment explicites de la classe de cohomologie $S_{\rm ell} [D]  \in H^{n-1} (\Gamma , \Omega_{\rm mer} (E^n) )$ du théorème \ref{T:cocycleE} pour démontrer le théorème suivant. 

Dans la suite on fixe $n$-morphismes primitifs linéairement indépendants
$$\chi_1 , \ldots , \chi_n : \Z^n \to \Z.$$ 
On note encore $\chi_j : \Q^n \to \Q$ les formes linéaires correspondantes et $\chi_j : E^n \to E$ les morphismes primitifs qu'ils induisent. On pose 
$$\chi = (\chi_1 , \ldots , \chi_n).$$
On démontre le théorème suivant au \S \ref{S:9.3.3}.

\begin{theorem} \label{T:ell}
Il existe une application linéaire 
$$\mathrm{Div}_\Gamma^0 (E^n ) \to C^{n-1} (\Gamma , \Omega^n_{\rm mer}(E^n) )^{\Gamma}; \quad D \mapsto \mathbf{S}_{\rm ell , \chi} [D]$$
telle que 
\begin{enumerate}
\item pour tout entier $s >1$, on a $\mathbf{S}_{\rm ell , \chi} [[s]^*D] = [s]^* \mathbf{S}_{\rm ell , \chi} [D]$ (\emph{relations de distribution});
\item chaque cocycle $\mathbf{S}_{\rm ell , \chi} [D]$ représente $S_{\rm ell} [D]  \in H^{n-1} (\Gamma , \Omega_{\rm mer}^n (E^n) )$; 
\item chaque forme différentielle méromorphe $\mathbf{S}_{\rm ell , \chi} [D] (g_1 , \ldots , g_n)$ est régulière en dehors des hyperplans affines passant par un point du support de $D$ et dirigés par $\mathrm{ker} ( \chi_i \circ g_j)$ pour $i,j \in \{1 , \ldots , n \}$; 
\item pour tout $a \in S$, 
$$\mathbf{T} (a) \left[\mathbf{S}_{\rm ell , \chi} [D] \right] = \left[\mathbf{S}_{\rm ell , \chi } [ [\Gamma a \Gamma]^* D] \right];$$
\item pour tout $b \in \Delta$, 
$$T (b) \left[ \mathbf{S}_{\rm ell , \chi } [D] \right] = \left[ \mathbf{S}_{\rm ell , \chi } [ [\Lambda b \Lambda]^* D] \right].$$
\end{enumerate}
\end{theorem}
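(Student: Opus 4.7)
The plan is to adapt the Chern--Weil/Mathai--Quillen strategy already indicated for the additive case (Proposition \ref{P4}) and developed for the multiplicative case (Theorem \ref{T:mult}) to the elliptic setting, as the chapter reference \ref{S:chap10} announces. The starting point is the canonical lift $E[D] \in H^{2n-1}_G(E^n - |D|)^{(1)}$ of the Thom class of $D$ produced at the end of \S\ref{S:23}: existence is granted by Sullivan's lemma (Lemma~\ref{L:Sul}(2)) since $D$ has degree zero, and uniqueness follows from the $(1)$-component projection discussed after Definition~\ref{Def1.7}. I would first represent $E[D]$ by a concrete equivariant differential form on $\|EG \times_G (E^n - |D|)\|$ built from a Mathai--Quillen style Thom form associated to the connection on the bundle $EG \times_G E^n \to BG$. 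This provides a preferred primitive, which is what enforces all of the naturality properties below.

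Second, I would restrict this equivariant form to the simplicial affine open cover $U(\mathbf{g})$ associated to the chosen $n$-tuple of primitive morphisms $\chi = (\chi_1, \ldots, \chi_n)$, namely
\begin{equation*}
U(\mathbf{g}) = E^n - \bigcup_{j=0}^{k} \bigcup_{i=1}^{n} \bigcup_{a \in |D|} (\chi_i \circ g_j)^{-1}(a).
\end{equation*}
The linear independence of $\chi_1, \ldots, \chi_n$ together with Lemma~\ref{affine} guarantees these are affine varieties, so the spectral sequence argument of \S\ref{S:222} yields a class in $H^{n-1}(\Gamma, \varinjlim H^n(U(\mathbf{g}))^{(1)})$. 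Applying the elliptic Brieskorn-type Theorem \ref{P:Brieskorn} identifies the target with $\Omega^n_{\rm mer}(E^n)$, and the resulting cocycle is exactly $\mathbf{S}_{\rm ell, \chi}[D]$. Properties (2) (the cocycle represents $S_{\rm ell}[D]$) and (3) (regularity away from the prescribed hyperplanes translated by $|D|$) are then built into the construction: (2) is tautological and (3) reflects the choice of cover $U(\mathbf{g})$.

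Third, I would verify the naturality statements (1), (4), and (5) by transporting the corresponding symmetries through the construction. For (1), functoriality of the Thom class under the isogeny $[s]: E^n \to E^n$ together with $[s]^* D \in \mathrm{Div}^0_\Gamma(E^n)$ gives $[s]^* E[D] = E[[s]^* D]$ in the characteristic-$1$ subspace, and the Mathai--Quillen form is chosen to be strictly $[s]$-equivariant. For (4), an element $a \in S$ acts on the simplicial scheme and permutes the covering opens $U(\mathbf{g})$; the standard averaging $\sum_j \phi_{|a_j}$ over a coset decomposition $\Gamma a \Gamma = \sqcup_j \Gamma a_j$ matches, on the cohomological side, pullback of the divisor $D$ by $[\Gamma a \Gamma]$. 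For (5), the $\Delta$-action on $\mathcal{H} \times \R^{2n}/\Z^{2n}$ described in \S 2.3 commutes with the $\GL_n$-action and preserves torsion, so the same coset-averaging argument applied to $\Lambda b \Lambda$ produces the required identity at the level of cohomology classes.

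The main technical obstacle will be step three: turning the topological statements about the Thom class and its canonical primitive into \emph{cocycle-level} identities explicit enough that the Hecke operators $\mathbf{T}(a)$ and $T(b)$ act by the expected formulas. Already in the multiplicative setting (Theorem \ref{T:mult}, whose proof is deferred to Chapter \ref{S:chap9}) this step requires the Mathai--Quillen form to be constructed so that pullback by isogenies is not merely cohomologous to, but equal to, the expected transform; here the elliptic case adds the variation of $E$ over the modular base and the action of $\Delta$ on the universal $E^n$, which must be threaded through the Chern--Weil form without destroying its compatibility with the simplicial affine restriction. Once the construction is carried out with sufficient care so that all five compatibilities hold at the form level, the theorem follows.
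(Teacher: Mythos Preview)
Your outline is essentially the same as the paper's, and the architecture is correct: lift the Thom class, restrict to the affine opens $U(\mathbf{g})$ built from $\chi$, and apply Theorem~\ref{P:Brieskorn}. Two points of emphasis differ from the paper and are worth noting.

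First, the paper does not work directly with an abstract equivariant representative of $E[D]$. It passes to the adelic group $\mathcal{G} = (\GL_n \times \SL_2) \ltimes M_{n,2}$, encodes $D$ as a $\mathcal{K}$-invariant Schwartz function $\varphi_f \in \mathcal{S}(M_{n,2}(\A_f))$, forms theta kernels $\theta_\varphi, \theta_\psi$ from the Mathai--Quillen forms on $S \times \mathcal{H} \times \C^n$, and then takes a Mellin transform to produce the Eisenstein form $E_\psi(\varphi_f)$. The degree-zero hypothesis enters here \emph{analytically}, not only through Sullivan's lemma: one needs $\widehat{\varphi}_f(0)=0$ to ensure that $\theta_{[r]^*\varphi}(\varphi_f) \to 0$ as $r \to 0$ (via Poisson summation), without which the Mellin integral has no regular value at $s=0$. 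This is listed explicitly as the first of the three modifications required in the elliptic case.

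Second, you locate the main technical obstacle in the Hecke compatibilities (4) and (5), but in the paper these are the \emph{easiest} part. The adelic framework gives an inclusion of Hecke algebras $\mathcal{H}(\GL_n(\A_f), K) \hookrightarrow \mathcal{H}(\mathcal{G}(\A_f), \mathcal{K})$ (and similarly for $\SL_2$), and the identity $\mathbf{T}_\phi \theta_\psi(\varphi_f) = \theta_\psi(T_\phi \varphi_f)$ holds at the level of theta kernels by a one-line computation (Proposition~\ref{P:hecke1}); (4) and (5) then follow formally. The genuine work lies in the construction of $E_\psi(\varphi_f)$ and in checking that the restriction to $U(\mathbf{g})$ lands in the characteristic-$1$ subspace so that Theorem~\ref{P:Brieskorn} applies. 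Your instinct that ``cocycle-level identities'' for Hecke operators are delicate is misplaced here: the paper only claims (4) and (5) in cohomology, and the adelic machinery delivers this for free.
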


\medskip
\noindent
{\it Exemple.} Soit $c$ un entier supérieur à $2$. La combinaison linéaire de points de $c$-torsion $E^n [c] - c^{2n} \{0 \}$ dans $E^n$ est de degré $0$ et invariante par $\Gamma = \SL_n (\Z)$; elle définit donc un élément $D_c \in \mathrm{Div}_\Gamma^0 (E^n)$.  L'algèbre de Hecke associée au couple $(S , \Gamma)$, avec $S = M_n (\Z )^\circ$, est engendrée par les opérateurs 
$$\mathbf{T}^{(k)}_p = \mathbf{T} (a^{(k)}_p) \quad \mbox{avec}  \quad a^{(k)}_p=\mathrm{diag} (\underbrace{p, \ldots , p}_{k} , 1 , \ldots , 1 ),$$
où $p$ est premier et $k$ appartient à $\{ 1 , \ldots , n-1 \}$.
Soit $p$ un nombre premier. La base canonique de $\C$ au-dessus du point $\tau=i \in \mathcal{H}$ fournit une base de $E[p]$ et permet d'identifier $E^n [p]$ au groupe abélien des matrices $M_{n,2} (\mathbf{F}_p)$. Le tiré en arrière de $\{ 0 \}$ par l'application $[\Gamma a^{(k)}_p \Gamma]$ est supporté sur l'ensemble de tous les points de $p$-torsion et une matrice dans $M_{n,2} (\mathbf{F}_p)$ est comptée avec multiplicité\footnote{\'Egale au nombre de $k$-plans contenant un $2$-plan donné. Noter que ce nombre est $0$ si $k \leq 1$ il n'y a alors que des matrices de rang $\leq 1$ dans le support.} $\left( \begin{smallmatrix} n-2 \\ k-2 \end{smallmatrix} \right)_p$ si elle est de rang $2$, avec multiplicité 
\footnote{\'Egale au nombre de $k$-plans contenant une droite donnée.} $\left( \begin{smallmatrix} n-1 \\ k-1 \end{smallmatrix} \right)_p$ si elle est de rang $1$ et avec multiplicité $\left( \begin{smallmatrix} n \\ k \end{smallmatrix} \right)_p$ si elle est nulle.

D'un autre côté on peut considérer la double classe $\Lambda \left( \begin{smallmatrix} p & 0 \\ 0 & 1 \end{smallmatrix} \right) \Lambda$ avec 
$$\Lambda = \SL_2 (\Z).$$ 
La pré-image de $\{ 0 \}$ par l'application induite $E^n \to E^n$ est cette fois égale à l'ensemble des matrices de rang $1$ comptées avec multiplicité $1$ et la matrice nulle comptée avec multiplicité 
$\left( \begin{smallmatrix} 2 \\ 1 \end{smallmatrix} \right)_p = p+1$. 

On en déduit que pour $p$ premier à $c$, on a 
\begin{multline*}
[\Gamma a_p^{k} \Gamma]^* D_c  = \left( \begin{smallmatrix} n-2 \\ k-2 \end{smallmatrix} \right)_p [p]^*D_c + \left( \left( \begin{smallmatrix} n-1 \\ k-1 \end{smallmatrix} \right)_p - \left( \begin{smallmatrix} n-2 \\ k-2 \end{smallmatrix} \right)_p \right)  [ \Lambda \left( \begin{smallmatrix} p & 0 \\ 0 & 1 \end{smallmatrix} \right) \Lambda ]^* D_c \\ + \left( \left( \begin{smallmatrix} n \\ k \end{smallmatrix} \right)_p -  \left( \left( \begin{smallmatrix} n-1 \\ k-1 \end{smallmatrix} \right)_p - \left( \begin{smallmatrix} n-2 \\ k-2 \end{smallmatrix} \right)_p \right) (p+1) - \left( \begin{smallmatrix} n-2 \\ k-2 \end{smallmatrix} \right)_p \right) D_c
\end{multline*}
et donc que la classe de cohomologie de $\mathbf{S}_{\rm ell , \chi} [D_c]$ annule l'opérateur
\begin{multline*}
\mathbf{T}_p^{(k)} - \left( \begin{smallmatrix} n-2 \\ k-2 \end{smallmatrix} \right)_p [p]^* - \left( \left( \begin{smallmatrix} n-1 \\ k-1 \end{smallmatrix} \right)_p - \left( \begin{smallmatrix} n-2 \\ k-2 \end{smallmatrix} \right)_p \right) T_p \\ - \left( \begin{smallmatrix} n \\ k \end{smallmatrix} \right)_p +  \left( \left( \begin{smallmatrix} n-1 \\ k-1 \end{smallmatrix} \right)_p - \left( \begin{smallmatrix} n-2 \\ k-2 \end{smallmatrix} \right)_p \right) (p+1) + \left( \begin{smallmatrix} n-2 \\ k-2 \end{smallmatrix} \right)_p.
\end{multline*}

\medskip

\subsection{Cocycles elliptiques II}

La $1$-forme différentielle $dz$ induit une trivialisation du fibré des $1$-formes relatives $\Omega^1_{E / Y}$. On utilise cette trivialisation pour identifier les sections holomorphes du fibré des formes méromorphes relatives $\Omega_{{\rm mer}, E^n / Y}^n$ à l'espace $\mathcal{M}_n (E^n)$ des fonctions $F$ méromorphes sur $(\mathcal{H} \times \C^n) / \Z^{2n}$ qui vérifie la propriété de modularité
$$F \left( \frac{a\tau +b }{c \tau +d } , \frac{z}{c\tau +d} \right) = (c\tau +d )^n F (z,\tau ) \quad \mbox{pour tout} \quad \left( \begin{smallmatrix} a & b \\ c & d \end{smallmatrix} \right) \in \Lambda .$$ 

Sous certaines hypothèses naturelles sur $D$ on peut, comme dans le cas multiplicatif, représenter la classe $S_{\rm ell} [D]$ par un cocycle complètement explicite déduit d'un symbole modulaire partiel.  Le long d'une fibre $E_\tau$ au-dessus d'un point $[\tau] \in Y$ il est naturel de remplacer la fonction $\varepsilon$ par la série d'Eisenstein
\begin{equation} 
E_1 (\tau , z) = \frac{1}{2i\pi} \sideset{}{{}^e}\sum_{\omega \in \Z + \Z \tau} \frac{1}{z+\omega} = \frac{1}{2i\pi} \lim_{M \to \infty} \sum_{m=-M}^M \left( \lim_{N \to \infty} \sum_{n=-N}^N \frac{1}{z+ m \tau + n } \right).
\end{equation}
Toutefois, deux problèmes se présentent: 
\begin{enumerate}
\item La série d'Eisenstein n'est pas périodique en $z$ de période $\Z\tau + \Z$ mais vérifie seulement
\begin{equation} \label{E1per}
E_1 ( \tau , z + 1) = E_1 (\tau , z) \quad \mbox{et} \quad E_1 (\tau , z+ \tau) = E_1 (\tau  , z ) - 1,
\end{equation}
voir \cite[III \ \S 4 \ (5)]{Weil}. 

\item La série d'Eisenstein n'est pas non plus modulaire mais vérifie seulement que pour toute matrice $\left( \begin{smallmatrix} a & b \\ c & d \end{smallmatrix} \right) \in \SL_2 (\Z)$ on a 
\begin{equation} \label{E1mod}
E_1 \left( \frac{a\tau + b}{c\tau +d} , \frac{z}{c\tau +d} \right) = (c\tau + d) E_1 (\tau , z) + cz,
\end{equation}
voir \cite[III \ \S 5 \ (7)]{Weil}.
\end{enumerate}
Pour remédier à ces deux problèmes on suppose dorénavant que $Y=Y_0 (N)$, avec $N$ entier et supérieur à $2$. La courbe $E$ est alors munie d'un sous-groupe $K \subset E[N]$ cyclique d'ordre $N$ et on peut considérer la fonction 
\begin{equation*}
E_1^{(N)} (\tau , z) =  \sum_{\xi \in K} E_1 \left( \tau , z + \xi \right) - N  E_1 (\tau , z) =  \sum_{j=0}^{N-1} E_1 \left( \tau , z + \frac{j}{N}  \right) - N  E_1 (\tau , z).
\end{equation*}
D'après \eqref{E1per} et \eqref{E1mod} cette fonction est en effet périodique et modulaire relativement au groupe $\Gamma_0 (N)$. 
La fonction $E_1^{(N)} (\tau , z)$ est associée au cycle de torsion $K - N \{ 0 \}$ dans $E$ qui est \emph{de degré $0$}. 

\medskip

Revenons maintenant au produit fibré $E^n$. On note $\mathrm{Div}_{\Gamma , K} (E^n) \subset \mathrm{Div}_\Gamma (E^n)$ le sous-groupe constitué des combinaisons linéaires entières formelles $\Gamma$-invariantes de points de torsion dans $K^n \subset E[N]^n$. Identifiant ces points de torsion au sous-groupe $\left( \frac{1}{N} \Z / \Z \right)^n \subset (\Q / \Z )^n$, on verra un élément de $\mathrm{Div}_{\Gamma , K} (E^n)$ comme une fonction $\Gamma$-invariante $D : (\Q / \Z )^n \to \Z$ dont le support est contenu dans le réseau $\left( \frac{1}{N} \Z \right)^n \subset \Q^n$. 

\begin{definition}
Soit $\mathrm{Div}_{\Gamma , K}^{\circ} (E^n)$ l'intersection des noyaux des $n$ morphismes 
$$\mathrm{Div}_{\Gamma , K} (E^n) \to \Z [(\Q /\Z)^{n-1}]$$
induits par la projection sur les $n-1$ dernières coordonnées. 
\end{definition}
Noter que les éléments de $\mathrm{Div}_{\Gamma , K}^{\circ} (E^n)$ sont de degré $0$.

\medskip
\noindent
{\it Exemple.} Supposons $\Gamma = \Gamma_0 (N,n)$. \`A toute combinaison linéaire formelle $\delta = \sum_{d | N} n_d [d]$ de diviseurs positifs de $N$ on associe 
$$D_\delta = \sum_{d | N } n_d \sum_{\xi \in K[d]} (\xi , 0 , \ldots , 0),$$
où $K[d]$ désigne l'ensemble des éléments de $d$-torsion dans $K$. La combinaison formelle de points de torsion $D_\delta$ définit un élément de $\mathrm{Div}_{\Gamma , K}^{\circ}(E^n)$ si et seulement si $\sum_{d| N} n_d d =0$. 

\medskip

Notre construction donne lieu à des classes de cohomologie à valeurs dans les formes méromorphes $\Omega_{\rm mer}^n (E^n)$. En restreignant ces formes aux fibres on obtient des classes de cohomologie à valeurs dans les sections holomorphes du fibré des formes méromorphes relatives $\Omega^n_{{\rm mer}, E^n /Y}$ et donc dans $\mathcal{M}_n (E^n )$. Le théorème suivant, que nous démontrons au \S \ref{S:9.4.2}, donne des représentants explicites de ces classes de cohomologie. 

\begin{theorem} \label{T:ellbis}
Supposons $Y=Y_0 (N)$ avec $N$ entier supérieur à $2$ et notons $K \subset E[N]$ le sous-groupe cyclique d'ordre $N$ correspondant. Soit $\Gamma$ un sous-groupe d'indice fini dans $\SL_n (\Z)$ et $C = \Gamma \cdot \Z e_1 \subset \Z^n$. L'application linéaire 
$$\mathrm{Div}_{\Gamma , K}^{\circ} (E^n) \to \mathrm{Hom} (\Delta_C , \mathcal{M}_n (E^n) )^\Gamma ; \quad D \mapsto \mathbf{S}^*_{\rm ell} [D],$$
où
\begin{equation} \label{E:S*ell} 
\mathbf{S}^*_{\rm ell} [D] : [v_1 , \ldots , v_n]  \mapsto \frac{1}{\det h} \sum_{w \in E^n} D(w)   \sum_{\substack{\xi \in E^n \\ h \xi  = w}}  E_1 (\tau ,  v_1^* +  \xi_{1})  \cdots  E_1 (\tau , v_n^* + \xi_{n})   ,
\end{equation}
avec $h = ( v_1 | \cdots | v_n) \in M_n (\Z)^\circ$, est bien définie et représente la classe de cohomologie 
$$S_{\rm ell} [D] \in H^{n-1} (\Gamma , \mathcal{M}_n (E^n) ).$$
\end{theorem}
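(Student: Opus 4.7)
The plan is to adapt the proof of Theorem~\ref{T:mult2} to the elliptic setting, substituting the Eisenstein cotangent summation $\varepsilon$ by the Eisenstein--Kronecker series $E_1$. The chief new difficulty is that $E_1$ is only quasi-periodic (\eqref{E1per}) and quasi-modular (\eqref{E1mod}); the hypothesis $D \in \mathrm{Div}_{\Gamma,K}^{\circ}(E^n)$ is engineered precisely so that all the resulting defect terms cancel upon summation.

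First I would verify that $\mathbf{S}^*_{\rm ell}[D]([v_1,\ldots,v_n])$ is well-defined as an element of $\mathcal{M}_n(E^n)$. Once representatives of the $\xi \in E^n$ with $h\xi = w$ are chosen in a fundamental domain for $\Z^n + \tau\Z^n \subset \C^n$, the expression is visibly meromorphic on $\mathcal{H} \times \C^n$; the content lies in checking $(\Z^n+\tau\Z^n)$-periodicity and the weight-$n$ modular transformation. Shifting $z$ by $\tau e_k$ modifies each $E_1(\tau, v_j^*(z)+\xi_j)$ by $-(h^{-1})_{jk}$ via~\eqref{E1per}, so after expanding the product one obtains a sum of ``lower-degree'' terms in which some $E_1$ factors have been dropped. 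Regrouping, the resulting defect is a linear combination of partial sums of $D$ along one coordinate axis; these vanish by the defining condition of $\mathrm{Div}_{\Gamma,K}^\circ(E^n)$. The modular defect from~\eqref{E1mod} is handled identically, the correction terms $cz$ producing analogous lower-degree contributions killed by the same partial-sum condition.

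Next I would establish the Ash--Rudolph relations on $\Delta_C$. Anti-symmetry in the $v_j$ and scalar homogeneity are transparent from the formula. The vanishing when $\det h = 0$ is imposed by convention and agrees with the alternating-sum identity on the degenerate boundary. The substantive point is relation~(4), the alternating $(n+1)$-term identity; it is the elliptic analogue of the Eisenstein partial-fraction identity~\eqref{E:addition0}, and, once we have rigidified $E_1$ via the periodicity established above, its proof reduces to the same Kronecker-type partial-fraction decomposition used in the proof of Theorem~\ref{T:mult2}. The $\Gamma$-equivariance follows directly from the change of variables $\xi \mapsto \gamma^{-1}\xi$, $w \mapsto \gamma^{-1}w$, combined with the $\Gamma$-invariance of $D$.

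Finally I would identify the induced cocycle with $S_{\rm ell}[D]$. The partial modular symbol $\mathbf{S}^*_{\rm ell}[D]$ yields a homogeneous $(n-1)$-cocycle
$c^*(g_1, \ldots, g_n) = \mathbf{S}^*_{\rm ell}[D]([g_1^{-1}e_1, \ldots, g_n^{-1}e_1])$ with values in $\mathcal{M}_n(E^n)$. To prove $[c^*] = S_{\rm ell}[D]$, I would compare $c^*$ with the cocycle $\mathbf{S}_{\rm ell,\chi}[D]$ of Theorem~\ref{T:ell} for the choice $\chi = (\chi_{e_1}, \ldots, \chi_{e_n})$: both are supported on the same translated hyperplane arrangement, and by the Brieskorn-type Theorem~\ref{P:Brieskorn} it suffices to match their residues along the codimension-$n$ strata, which produces exactly the factor $1/\det h$ appearing in~\eqref{E:S*ell}. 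The main obstacle is this last comparison: pairing a Chern--Weil representative (built in Chapter~\ref{S:chap10}) against the combinatorial Ash--Rudolph sum requires an elliptic analogue of Brieskorn's residue calculus and careful use of the projection onto the generalised $[s]_*=1$ eigenspace (Definition~\ref{D:sep1}), which is what ultimately pins down the class uniquely.
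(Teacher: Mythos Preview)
Your approach diverges substantially from the paper's, and the third step contains a genuine gap.

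\textbf{What the paper actually does.} The paper does \emph{not} verify the Ash--Rudolph relations combinatorially, nor does it match residues. Instead, both $\mathbf{S}^*_{\rm ell}[D]$ and $\mathbf{S}_{\rm ell,\chi}[D]$ are obtained by applying the spectral-sequence argument of Appendix~\ref{A:A} to the \emph{same} simplicial Eisenstein form $\mathcal{E}_\psi(\varphi_f)=\rho^*E_\psi(\varphi_f)$, restricted to two different families of affine opens (the $U(\mathbf{g})$ of \eqref{E:9.860} versus the $U^*(\mathbf{g})$ of \eqref{E:86}). The link is the simplicial homotopy of \S\ref{S:63}, which yields the elliptic analogue of Theorem~\ref{T8.8}: the integral $\int_{\Delta_{n-1}}\mathcal{E}_\psi(\varphi_f)$ differs from $\int_{\Delta'_{n-1}}E_\psi(\varphi_f,\mathbf{q})$ by a coboundary. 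Proposition~\ref{9.P34bis} then computes the latter integral explicitly; what comes out involves $\mathrm{Re}\big(E_1^*(\tau,\cdot)\,dz\big)$, where $E_1^*$ is the \emph{non-holomorphic} series \eqref{E*E}. Two further reductions are needed: (i) replacing $E_1^*$ by $E_1$, which uses the $\mathrm{Div}^\circ$ hypothesis exactly as in \eqref{E:8.smoothing}; and (ii) replacing $\mathrm{Re}(E_1^*\,dz)$ by $E_1^*\,dz$, which is a separate lemma showing $\mathrm{Im}(E_1^*\,dz)$ is exact on $E_\tau\setminus\{0\}$ via $d\log|\theta(\tau,z)|$. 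Only after these steps does Theorem~\ref{P:Brieskorn} enter, to pass from closed forms to meromorphic representatives.

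\textbf{Where your argument breaks.} Your steps 1--2 are plausible (the paper itself remarks that periodicity and modularity ``peut \^etre v\'erifi\'e \`a la main'', and the combinatorial cocycle verification is done in Zhang's thesis), but you misdescribe the proof of Theorem~\ref{T:mult2}: it does not proceed by a Kronecker partial-fraction identity either, but by the same geometric mechanism just described. More seriously, step~3 is not a proof. Theorem~\ref{P:Brieskorn} identifies $H^n(T-\Hyp)^{(1)}$ with a space of global logarithmic forms; it does not say that two cocycles with values in meromorphic forms are cohomologous once their polar loci and residues agree. Two such cocycles could differ by any class in $H^{n-1}(\Gamma,\Omega^n_{\rm mer})$, and nothing in your outline rules this out. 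The paper avoids this issue entirely because both cocycles are, by construction, images of the \emph{same} equivariant class $[E_\psi(\varphi_f)]\in H^{2n-1}_\Gamma(E^n-|D|)$ under the edge map; no residue comparison is needed or performed. If you want to salvage a direct approach, you would have to exhibit $c^*$ itself as a lift of $[D]$ through the long exact sequence \eqref{suiteexacte} and then invoke the uniqueness in Theorem~\ref{T:cocycleE}, but that essentially forces you back into the paper's framework.
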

Le fait que $\mathbf{S}^*_{\rm ell} [D]$ définit bien un cocycle est démontré d'une manière différente dans la thèse de Hao Zhang \cite[Theorem 4.1.6]{Zhang}. La démonstration repose sur la construction, par le second auteur, d'un $(n-1)$-cocycle du groupe $\GL_n (\Z)$ à valeurs dans les fonctions méromorphes sur $\mathcal{H} \times \C^n \times \C^n$, voir \cite{CS}. 

\medskip
\noindent
{\it Remarques.} 1. Le fait que les fonctions dans l'image de $\mathbf{S}^*_{\rm ell } [D]$, qui ne sont {\it a priori} définies que sur 
$\mathcal{H} \times \C^n $, soient en fait $\Z^{2n}$-invariantes et modulaires de poids $n$ relativement à l'action de $\Lambda$ fait partie de l'énoncé du théorème mais peut être vérifié à la main.  Comme dans le cas multiplicatif on peut aussi vérifier directement sur les formules que les relations de distribution 
$$\mathbf{S}^*_{\rm ell } [[s]^*D] = [s]^* \mathbf{S}^*_{\rm ell } [D],$$ 
pour tout entier $s>1$ et pour tout $D \in \mathrm{Div}_{\Gamma , K}^{\circ} (E^n)$, sont encore vérifiées et que pour tous 
$g_1, \ldots , g_n \in \Gamma$, la forme différentielle méromorphe $\mathbf{S}^*_{\rm ell } [D] (g_1 , \ldots , g_n)$ est régulière en dehors des hyperplans affines passant par un point du support de $D$ et dirigés par 
$$\langle g_1^{-1} e_1, \ldots , \widehat{g_j^{-1} e_1} , \ldots , g_n^{-1} e_1 \rangle \quad \mbox{pour un } j \in \{ 1 , \ldots , n \}.$$

2. Pour tout entier $m$ premier à $N$, l'opérateur de Hecke $T_m$ correspondant à la double classe 
$$\Lambda\left( \begin{array}{cc} m & 0 \\ 0 & 1 \end{array} \right)  \Lambda = \bigsqcup_{\substack{a, d >0 \\ ad=m}} \bigsqcup_{b=0}^{d-1} \Lambda \left( \begin{array}{cc} a & b \\ 0 & d \end{array} \right)$$
opère sur une fonction méromorphe $F$ du type 
$$F = E_1 (\tau , z_1 ) \ldots E_n (\tau , z_n),$$ 
par l'expression familière
$$T_m F = m^n \sum_{\substack{a, d >0 \\ ad=m}} \sum_{b=0}^{d-1} \frac{1}{d^n} E_1 \left( \frac{a\tau+b}{d} , a z_1 \right) \ldots E_n \left( \frac{a\tau+b}{d} , az_n \right).$$ 

\medskip

\medskip
\noindent
{\it Exemple.} Soit $\Gamma = \Gamma_0 (N,n)$ et soit $\delta = \sum_{d | N} n_d [d]$ une combinaison linéaire formelle de diviseurs positifs de $N$ telle que $\sum_{d| N} n_d d =0$. Après restriction aux fibres, le théorème \ref{T:ell} associe à tout $n$-uplet $\chi$ de morphismes primitifs et linéairement indépendants $\Z^n \to \Z$ un $(n-1)$-cocycle régulier $\mathbf{S}_{\rm ell , \chi } [D_\delta]$ du groupe $\Gamma$ à valeurs dans $\mathcal{M}_n (E^n )$.  Le théorème \ref{T:mult2} implique que ce cocycle est cohomologue au cocycle qui se déduit du symbole modulaire partiel $\mathbf{S}_{\rm mult}^* [D_\delta ]$ et dont un calcul permet de vérifier qu'il associe à un élément $[v_1 , \ldots , v_n ] \in \Delta_C$, où les $n-1$ dernières coordonnées des vecteurs $v_j $ sont toutes divisibles par $N$, l'expression 
$$\sum_{d | N } n_d d \pi_d^*\mathbf{E}( \tau ; v_1^{(d)} , \ldots , v_n^{(d)} )  ,$$
où $v_j^{(d)}$ désigne le vecteur de $\Z^n$ obtenu à partir de $v_j$ en divisant par $d$ ses $n-1$ dernières coordonnées et 
\begin{equation*}
\mathbf{E} ( \tau ; v_1 , \ldots , v_n ) = \frac{1}{\det h} \sum_{\substack{\xi \in E^n \\ h \xi  = 0 }} E_1 (\tau ,  v_1^* + \xi_{1}) \cdots E_1 (\tau , v_n^* + \xi_{n}) , 
\end{equation*}
avec toujours $h= ( v_1 | \cdots | v_n)$.

\subsection{Relèvement theta explicite}

Soit $N$ un entier supérieur à $4$ et soit $E$ la courbe elliptique universelle au-dessus de la courbe modulaire $Y_1 (N) = \Gamma_1 (N) \backslash \mathcal{H}$, où
$$\Gamma_1 (N) = \left\{ \left( \begin{array}{cc} a & b \\ c & d \end{array} \right) \in \SL_2 (\Z) \; : \; \left( \begin{array}{c} a \\ c \end{array} \right) \equiv \left( \begin{array}{c} 1 \\ 0 \end{array} \right) \ (\mathrm{mod} \ N ) \right\};$$
c'est l'espace des modules fin qui paramètre la donnée d'une courbe elliptique $E_\tau = \C / (\Z \tau + \Z)$ et d'un point de $N$-torsion $[1/N]$. 

Soit $\Gamma$ un sous-groupe de congruence contenu dans $\Gamma_0 (N,n)$ et $\delta = \sum_{d | N} n_d [d]$ une combinaison de diviseurs positifs de $N$ telle que $\sum_{d | N} n_d d=0$. La fonction $D_\delta$ appartient alors à $\mathrm{Div}_\Gamma^0 (E^n)$. Prenons
$$\chi = (e_1^*, e_1^* + e_2^* , \ldots , e_1^* + e_n^*).$$
Sous l'hypothèse que $\delta$ est de degré $0$, c'est-à-dire $\sum_{d | N} n_d =0$, les points de $D_\delta$ ont tous une première coordonnée non nulle dans $\frac{1}{N} \Z / \Z$. Il découle donc du théorème \ref{T:ell} que l'image de $\mathbf{S}_{\rm ell , \chi} [D_\delta ]$ est contenue dans les formes régulières en $0$. 
Après restriction de ces formes aux fibres et évaluation en $0$ on obtient une classe de cohomologie à valeurs dans l'espace $M_n (Y_1 (N))$ des formes modulaires de poids $n$ sur $Y_1 (N)$. On note $\Theta [D_\delta ] : \Gamma^n \to M_n (Y_1 (N))$ le cocycle correspondant.

\begin{theorem}
Le cocycle $\Theta [D_\delta ]$ représente une classe de cohomologie dans $H^{n-1} (\Gamma , M_n (Y_1 (N)))$ qui réalise le relèvement  theta des formes modulaires paraboliques de poids $n$ pour $Y_1 (N)$ dans la cohomologie $H^{n-1} (\Gamma , \C)$.
Le relèvement associe à une forme modulaire parabolique $f \in M_n (Y_1 (N))$ la classe de cohomologie du cocycle 
$$\Theta_{f} [D_\delta ] = \langle \Theta  [D_\delta ] , f \rangle_{\rm Petersson} : \Gamma^n \to \C$$
et pour tout entier $p$ premier ne divisant pas $N$ et pour tout $k\in \{ 1 , \ldots , n-1 \}$, on a
\begin{multline*}
\mathbf{T}_p^{(k)} \left[ \Theta_{ f} [D_\delta ] \right]   =  \left( \left( \begin{smallmatrix} n-1 \\ k-1 \end{smallmatrix} \right)_p - \left( \begin{smallmatrix} n-2 \\ k-2 \end{smallmatrix} \right)_p \right) \left[\Theta_{ T_p f} [D_\delta ] \right] \\ + \left( \left( \begin{smallmatrix} n \\ k \end{smallmatrix} \right)_p -  (p+1) \left( \left( \begin{smallmatrix} n-1 \\ k-1 \end{smallmatrix} \right)_p -   \left( \begin{smallmatrix} n-2 \\ k-2 \end{smallmatrix} \right)_p \right) \right) \left[ \Theta_{ f} [D_\delta ] \right] .
\end{multline*}
\end{theorem}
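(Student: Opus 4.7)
\emph{Plan de démonstration.} La stratégie consiste à déduire l'énoncé des théorèmes~\ref{T:ell} et~\ref{T:ellbis} déjà démontrés, en analysant le comportement de $\mathbf{S}_{\rm ell, \chi}[D_\delta]$ après restriction aux fibres et évaluation à la section nulle $z=0$. Je commencerais par vérifier que $\Theta[D_\delta]$ définit bien un cocycle à valeurs dans $M_n(Y_1(N))$: l'hypothèse $\sum_{d|N} n_d = 0$ empêche le support de $D_\delta$ de rencontrer la section nulle, de sorte que le théorème~\ref{T:ell}(3) assure la régularité de $\mathbf{S}_{\rm ell, \chi}[D_\delta]$ le long de celle-ci; la modularité de poids $n$ pour $\Gamma_1(N)$ résulte de la partie~(5) appliquée aux éléments de $\Gamma_1(N)$ qui préservent $D_\delta$, et l'holomorphie provient de la formule explicite~\eqref{E:S*ell} du théorème~\ref{T:ellbis} qui exhibe le représentant cohomologue $\mathbf{S}_{\rm ell}^*[D_\delta]$ comme combinaison linéaire finie de produits de séries d'Eisenstein $E_1(\tau,\cdot)$.

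Pour identifier $\langle \Theta[D_\delta], f \rangle_{\rm Petersson}$ au relèvement theta de $f$, j'utiliserais la formule~\eqref{E:S*ell}: sur chaque symbole $[v_1,\ldots,v_n]$, le cocycle $\mathbf{S}_{\rm ell}^*[D_\delta]$ s'écrit comme polynôme explicite en les fonctions $E_1(\tau, v_i^*+\xi_i)$. L'accouplement avec la forme parabolique $f$ par le produit de Petersson transforme alors cette expression en une spécialisation du noyau theta classique de Shimura, adapté au groupe $\GL_n(\Z)$; la cuspidalité de $f$ garantit la convergence et l'égalité avec la fonctionnelle définissant le relèvement attendu dans $H^{n-1}(\Gamma,\C)$.

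L'équivariance Hecke combine les parties (4) et (5) du théorème~\ref{T:ell}. La partie (4) donne $\mathbf{T}_p^{(k)}[\Theta[D_\delta]] = [\Theta[[\Gamma a_p^{(k)}\Gamma]^*D_\delta]]$, et pour $p$ premier à $N$ la décomposition selon le rang dans $M_{n,2}(\mathbf{F}_p)$ effectuée dans l'exemple précédant le théorème s'adapte mot à mot au cycle $D_\delta$, livrant
\begin{equation*}
[\Gamma a_p^{(k)} \Gamma]^* D_\delta = \left( \begin{smallmatrix} n-2 \\ k-2 \end{smallmatrix} \right)_p [p]^* D_\delta + \left( \left( \begin{smallmatrix} n-1 \\ k-1 \end{smallmatrix} \right)_p - \left( \begin{smallmatrix} n-2 \\ k-2 \end{smallmatrix} \right)_p \right) \left[ \Lambda \left( \begin{smallmatrix} p & 0 \\ 0 & 1 \end{smallmatrix} \right) \Lambda \right]^* D_\delta + \lambda \cdot D_\delta ,
\end{equation*}
pour un scalaire explicite $\lambda$. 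La partie (5) du théorème~\ref{T:ell} transforme alors le second terme, après évaluation et accouplement à $f$, en l'action de $T_p$ sur la forme modulaire, par auto-adjonction du produit de Petersson, d'où l'apparition de $[\Theta_{T_p f}[D_\delta]]$ avec le coefficient attendu.

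L'obstacle principal réside dans le traitement du terme $[p]^* D_\delta$. La relation de distribution du théorème~\ref{T:ell}(1) fournit $\mathbf{S}_{\rm ell, \chi}[[p]^*D_\delta] = [p]^* \mathbf{S}_{\rm ell, \chi}[D_\delta]$, et l'analyse du tiré en arrière par $[p]$ d'une forme de degré $n$ évaluée à la section nulle ramène ce terme à un multiple explicite (en $p^n$) de $\Theta[D_\delta]$. Le point le plus délicat consiste alors à vérifier que ce multiple, combiné à la contribution scalaire $\lambda$ et aux normalisations usuelles de l'opérateur $T_p$ sur les formes modulaires de poids $n$ pour $Y_1(N)$, se simplifie exactement en
$\left( \begin{smallmatrix} n \\ k \end{smallmatrix} \right)_p - (p+1) \left( \left( \begin{smallmatrix} n-1 \\ k-1 \end{smallmatrix} \right)_p - \left( \begin{smallmatrix} n-2 \\ k-2 \end{smallmatrix} \right)_p \right)$;
cette vérification, de nature purement combinatoire sur les coefficients binomiaux de Gauss mais requérant de suivre soigneusement toutes les normalisations, livre la formule annoncée.
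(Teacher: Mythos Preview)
Le texte ne donne pas de démonstration détaillée de ce théorème~: il est énoncé au chapitre~2 comme application directe des théorèmes~\ref{T:ell} et~\ref{T:ellbis}, la formule de Hecke se déduisant de la décomposition de $[\Gamma a_p^{(k)}\Gamma]^*D$ calculée dans l'exemple qui suit le théorème~\ref{T:ell} (pour $D_c$, mais l'argument est identique pour $D_\delta$ puisque $p\nmid N$). Ta démarche est donc essentiellement celle que le texte suggère implicitement.

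Il y a toutefois une imprécision dans ton traitement du terme $[p]^*D_\delta$. Tu annonces que l'évaluation en $z=0$ du tiré en arrière par $[p]$ produit un multiple \og en $p^n$\fg{} de $\Theta[D_\delta]$~; si tel était le cas, le coefficient de $[\Theta_f[D_\delta]]$ dans la formule finale comporterait un terme résiduel $(p^n-1)\binom{n-2}{k-2}_p$ qui n'y figure pas. Ce qu'il faut observer, c'est que l'évaluation à la section nulle entrelace l'action de $[p]^*$ sur $\Omega^n_{\rm mer}(E^n)$ avec une action \emph{non triviale} sur $M_n(Y_1(N))$~: la contribution du terme $\binom{n-2}{k-2}_p[p]^*D_\delta$ après évaluation et accouplement de Petersson redonne exactement $\binom{n-2}{k-2}_p[\Theta_f[D_\delta]]$, ce qui compense le $-\binom{n-2}{k-2}_p$ présent dans le coefficient constant de la décomposition. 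Le point délicat n'est donc pas une simplification combinatoire sur les $q$-binomiaux, mais bien de suivre comment l'opérateur de Hecke $\mathbf{T}_p^{(k)}$ sur $H^{n-1}(\Gamma,\C)$ (coefficients triviaux) se compare à celui sur $H^{n-1}(\Gamma,\Omega^n_{\rm mer}(E^n))$ à travers l'évaluation et le produit de Petersson --- le texte ne l'explicite pas davantage, et c'est effectivement là que réside la vérification substantielle.
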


\medskip
\noindent
{\it Remarques.} 1. Si $f$ est une forme propre pour $T_p$ de valeur propre $a_p$, le relevé de $f$ est propre pour les opérateurs de Hecke $\mathbf{T}_p^{(k)}$ Lorsque $n=3$ les valeurs propres correspondantes aux opérateurs $\mathbf{T}_p^{(1)}$ et $\mathbf{T}_p^{(2)}$ sont respectivement $a_p+p^2$ et $pa_p+1$; ce relèvement est notamment étudié dans \cite{AshGraysonGreen}.

2. L'hypothèse $\sum_{d | N} n_d d =0$ faite sur $D_\delta$ implique que les formes modulaires dans l'image de $\Theta [D_\delta ]$ sont paraboliques à l'infini. Après évaluation en l'infini de $\Theta [D_\delta ]$ on retrouve le cocycle de Dedekind-Rademacher généralisé $\Psi_\delta$.

\medskip

Si $F$ est un corps de nombres totalement réel de degré $n$ au-dessus de $\Q$ et $L$ est un idéal fractionnaire de $F$, on peut là encore considérer le groupe $U$ des unités totalement positives de $\mathcal{O}_F$ préservant $L$ et le plonger dans $\GL_n (\Z)$. On peut cette fois montrer que l'évaluation de $\Theta_\delta [D]$ sur la classe fondamentale dans $H_{n-1} (U , \Z)$ est une forme modulaire de poids $n$ égale à la restriction à la diagonale d'une série d'Eisenstein partielle\footnote{Associée à la classe de $L$.} sur une variété modulaire de Hilbert; voir \cite[\S 13.3]{Takagi}. Le théorème \ref{T:ellbis} permet alors d'exprimer cette restriction comme une combinaison linéaire de produits de séries d'Eisenstein $E_1$. 

\medskip
\noindent
{\it Remarque.} Lorsque $n=2$, le théorème \ref{T:ellbis} est à rapprocher d'un théorème de Borisov et Gunnells \cite[Theorem 3.16]{BorisovGunnells} selon lequel l'application qui à un symbole unimodulaire $[a/c : b/d]$, avec $c,d \neq 0$ modulo $N$, associe 
$$E_1 \left(\tau , \frac{c}{N}  \right) E_1 \left(\tau , \frac{d}{N} \right)$$
induit un symbole modulaire partiel Hecke équivariant à valeurs dans les formes modulaires de poids $2$ sur $Y_1 (N)$ \emph{modulo les séries d'Eisenstein de poids $2$}. Borisov et Gunnells considèrent en effet les séries d'Eisenstein, de poids $1$  et niveau $Y_1(N)$,
$$E_1 (\tau , a/N)  \quad  ( a \in \{ 1 , \ldots , N-1 \} ),$$ 
et remarquent que si $a+b+c = 0$ modulo $N$ et si $a,b,c \neq 0$, l'expression 
$$E_1 (\tau , a/N) E_1 (\tau , b/N) + E_1 (\tau , b/N) E_1 (\tau , c/N) + E_1 (\tau , c/N) E_1 (\tau , a/N)$$ 
est une combinaison linéaire de séries d'Eisentein de poids $2$, voir \cite[Proposition 3.7 et 3.8]{BorisovGunnells}.\footnote{Cela se déduit d'une formule plus générale démontrée dans \cite{BorisovGunnellsInventiones} et que l'on peut également trouver chez Eisenstein \cite{Eisenstein}.}  

\medskip

Notons finalement que la construction de ce chapitre s'applique de manière similaire à une courbe elliptique plutôt qu'à une famille de courbes elliptiques. Lors-\ que cette courbe elliptique est à multiplication complexe par l'anneau des entiers $\mathcal{O}$ d'un corps quadratique imaginaire $k$ on obtient des cocycles de degré $n-1$ de sous-groupes de congruence, non plus de $\GL_n (\Z)$ mais, de $\GL_n (\mathcal{O})$. Ce sont des généralisations des cocycles considérés par Sczech \cite{SczechBianchi} et Ito \cite{Ito} lorsque $n=2$. Comme d'après un théorème classique de Damerell les évaluations des séries d'Eisenstein $E_1$ en des points CM sont, à des périodes transcendantes explicites près, des nombres algébriques, on peut montrer que les cocycles associés aux corps quadratiques imaginaires sont à valeurs algébriques. Leur étude fait l'objet de l'article  \cite{ColmezNous} dans lequel nous démontrons une conjecture de Sczech et Colmez relative aux valeurs critiques des fonctions $L$ attachées aux caractères de Hecke d'extensions de $k$. 

\chapter[Cohomologie d'arrangements d'hyperplans]{Cohomologie d'arrangements d'hyperplans~: représentants canoniques} \label{S:OrlikSolomon}

\resettheoremcounters

\numberwithin{equation}{chapter}

Dans ce chapitre, qui peut se lire de manière indépendante du reste de l'ouvrage, on démontre un théorème ``à la Orlik--Solomon'' pour les arrangements d'hyperplans dans des produits de $\C^\times$ ou des produits de courbes elliptiques. Le résultat principal que nous démontrons est le théorème \ref{P:Brieskorn}.

\section{Arrangement d'hyperplans trigonométriques ou elliptiques}
On fixe un groupe algébrique $A$, isomorphe au groupe multiplicatif ou à une courbe elliptique. Soit $n$ un entier naturel. On considère un $A^n$-torseur $T$ muni d'un sous-ensemble $T_{\tors} \subset T$ fixé qui est un torseur pour le groupe des points de torsions de $A^n$.

\medskip
\noindent
{\it Remarque.} Même dans le cas (qui nous intéresse principalement) où $T=A^n$, nous serons amené à considérer des hyperplans affines comme 
$\{a \} \times A^{n-1}$ où $a \in A_{\rm tors}$. Ces derniers sont encore des $A^{n-1}$-torseurs. 

\medskip

Comme dans le cas où $T=A^n$, on appelle {\it fonctionnelle affine} toute application $\chi: T \rightarrow A$ de la forme
$$t_0 + \mathbf{a} \mapsto \chi_0(\mathbf{a})$$
où $t_0$ est un élément de $T_{\tors}$ et $\chi_0 : A^n \rightarrow A$ un morphisme de la forme
$\mathbf{a} = (a_1, \dots, a_n) \mapsto \sum r_i a_i$
où les $r_i$ sont des entiers.\footnote{Autrement dit un morphisme standard, sauf dans le cas où $A$ est une courbe elliptique à multiplication complexe.} Rappelons que $\chi$ est {\it primitif} si les coordonnées  
$ (r_1, \dots, r_n) \in \mathbf{Z}^n$ de $\chi_0$ sont premières entre elles dans leur ensemble. Dans ce cas le lieu d'annulation de $\chi$ est un translaté de $\mathrm{ker}(\chi_0)$ qui est isomorphe à l'image de l'application linéaire $A^{n-1} \to A^n$ associée à une base du sous-module de $\Z^n$ orthogonal au vecteur $\mathbf{r}$. 
Le lieu d'annulation de $\chi$ a donc, comme $T$, une structure de torseur sous l'action de $A^{n-1}$ et il est muni d'une notion de points de torsion. 

On appelle \emph{hyperplan} le lieu d'annulation (ou abusivement ``noyau'') d'une fonctionnelle affine primitive, ou de manière équivalente l'image d'une application  
$A^{n-1} \rightarrow T$ linéaire relativement à un morphisme $A^{n-1} \rightarrow A^n$ induit par une matrice entière de taille $(n-1) \times n$.

\begin{definition}
Un \emph{arrangement d'hyperplans} $\Hyp$ est un fermé de Zariski dans $T$ réunion d'hyperplans. La taille $\# \Hyp$ est le nombre d'hyperplans distincts de cet arrangement. 
\end{definition}

Comme dans le cas linéaire (Lemme \ref{affine}) on a le lemme suivant.

\begin{lemma} \label{affineb}
Si $\Hyp$ contient $n$ fonctionnelles affines $\chi$ dont les vecteurs associés $\mathbf{r} \in \Z^n$ sont linéairement indépendants alors 
le complémentaire $T-\Hyp$ est affine. 

Lorsque $A$ est une courbe elliptique, c'est même une équivalence. 
\end{lemma}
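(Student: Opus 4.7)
The plan is to reduce Lemma~\ref{affineb} to the already established Lemma~\ref{affine} by trivialising the torseur $T$. I pick an arbitrary base point $t_0 \in T_{\tors}$, which yields an isomorphism of algebraic variétés $T \simeq A^n$ identifying $T_{\tors}$ with $A^n_{\tors}$. Under this trivialisation, a fonctionnelle affine on $T$ in the sense of the current section---i.e.\ a map $t_0' + \mathbf{a} \mapsto \chi_0(\mathbf{a})$ with $t_0' \in T_{\tors}$ and $\chi_0 : A^n \to A$ un morphisme entier---correspond à une fonctionnelle affine sur $A^n$ au sens du premier chapitre, et la primitivité comme l'indépendance linéaire des vecteurs associés $\mathbf{r} \in \Z^n$ sont préservées. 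Puisque le caractère affine de $T-\Hyp$ est une propriété intrinsèque, l'hypothèse et la conclusion de Lemma~\ref{affineb} se transportent l'une et l'autre par cette identification, et l'énoncé découle alors de Lemma~\ref{affine}.

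Pour être complet, je rappelle comment fonctionne la preuve de Lemma~\ref{affine}. Dans le sens direct, les $n$ fonctionnelles affines $\chi_1, \ldots, \chi_n$ dont les vecteurs $\mathbf{r}_j$ sont linéairement indépendants s'assemblent en un morphisme $\chi = (\chi_1, \ldots, \chi_n) : T \to A^n$ dont la partie linéaire sous-jacente est une isogénie (noyau fini car $\det(\mathbf{r}_1, \ldots, \mathbf{r}_n) \neq 0$), donc $\chi$ est fini. Alors $\chi^{-1}((A-\{0\})^n) = T - \bigcup_{j=1}^n \chi_j^{-1}(0)$ est la préimage finie d'une variété affine et est donc affine; et les éventuels autres hyperplans de $\Hyp$ sont des hypersurfaces dont on retire le complémentaire dans une variété affine, ce qui préserve l'affinité. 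Pour la réciproque dans le cas elliptique, si les vecteurs entiers associés aux fonctionnelles affines définissant $\Hyp$ n'engendrent qu'un sous-module propre $M \subsetneq \Z^n$, je choisis un vecteur primitif $\mathbf{v} \in \Z^n$ orthogonal à $M$ et un point $t \in T - \Hyp$, et je vérifie que l'immersion fermée $A \hookrightarrow T$, $a \mapsto t + (v_1 a, \ldots, v_n a)$, se factorise à travers $T - \Hyp$ (chaque fonctionnelle $\chi$ définissant $\Hyp$ est constante le long de l'image, par orthogonalité). Cela exhibe une courbe projective à l'intérieur de $T - \Hyp$, ce qui est incompatible avec le fait que cet espace soit affine.

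Il n'y a pas d'obstacle sérieux ici; le seul point délicat est la vérification que les notions parallèles de fonctionnelle affine et d'hyperplan sur le torseur $T$ et sur $A^n$ se correspondent sous n'importe quel choix de point base dans $T_{\tors}$, et cela est immédiat sur les définitions. La preuve est donc essentiellement une variante relative de celle de Lemma~\ref{affine}.
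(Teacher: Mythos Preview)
Your proof is correct and matches the paper's approach: the paper does not even write out a separate proof for Lemma~\ref{affineb}, it simply introduces it by ``Comme dans le cas linéaire (Lemme~\ref{affine}) on a le lemme suivant,'' which is exactly your strategy of trivialising the torseur by a torsion base point and invoking Lemma~\ref{affine}. Your recollection of the argument for Lemma~\ref{affine} is faithful to the paper's (and in fact slightly more careful, since you spell out why the remaining hyperplanes beyond the chosen $n$ can be removed while preserving affinity).
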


\section[Cohomologie des arrangements d'hyperplans]{Opérateurs de dilatation et cohomologie des arrangements d'hyperplans} \label{S:dil}

On appelle {\em application de dilatation} toute application $[s] : T \rightarrow T$ associée à un entier $s>1$ et de la forme
$$[s] : t+ \mathbf{a} \mapsto t  + s \mathbf{a}$$
pour un certain $t \in T_{\tors}$.
L'image d'un hyperplan par une application de dilatation est encore un hyperplan. 

\'Etant donné un arrangement d'hyperplans $\Hyp$ définis par des fonctionnelles affines $\chi_1 , \ldots , \chi_r$ on peut, comme dans le cas $T=A^n$, trouver une application de dilatation $[s]$ qui préserve $\Hyp$. Quitte à augmenter $s$ on peut de plus supposer que si $\chi_1 , \ldots , \chi_r$ est une collection de fonctionnelles affines définissant des hyperplans dans $\Hyp$, alors $[s]$ préserve les composantes connexes de la préimage de $0$ par 
$$(\chi_1 , \ldots , \chi_r) : T \to A^r;$$
ces composantes connexes sont en effet des fermés de Zariski dans $T$. 
On fixe dorénavant un tel choix de dilatation $[s]$. Pour ne pas alourdir les notations on notera simplement 
$$H^*(T-\Hyp, \C)^{(1)} \subset H^*(T-\Hyp, \C)$$
le sous-espace caractéristique de $[s]_*$ associé à la valeur propre $1$. Noter que cette fois ce sous-espace dépend {\it a priori} du choix de la dilatation. 
 
On utilisera les mêmes notations pour les espaces de formes différentielles.

\subsection{Faisceau de formes différentielles et un théorème de Clément Dupont} 
Une forme méromorphe $\omega$ sur $T$ est {\it à pôles logarithmiques le long de $\Hyp$} si au voisinage de chaque point $p \in T$ on peut décomposer $\omega$ comme combinaison linéaire sur $\C$ de formes du type  
\begin{equation} \label{E31}
\nu \wedge  \bigwedge_{J} \frac{d f_j}{f_j}
\end{equation}
où $\nu$ est une forme holomorphe au voisinage de $p$ et chaque indice $j \in J$ paramètre un hyperplan $H_j$ de $\Hyp$ passant par $p$ défini par une équation linéaire locale $f_j = 0$. Notons que cette définition est indépendante du choix des $f_j$ puisque ceux-ci sont uniquement déterminés à une unité locale près.  

Les formes méromorphes sur $T$ à pôles logarithmiques le long de $\Hyp$ forment un complexe de faisceaux d'espaces vectoriels complexes sur $T$ que, suivant Dupont \cite{DupontC}, nous notons $\Omega^\bullet_{\langle T , \Hyp \rangle}$. On prendra garde au fait que ce complexe est en général strictement contenu dans le complexe de Saito $\Omega^\bullet_T (\log \Hyp)$ lorsque le diviseur $\Hyp$ n'est pas à croisements normaux. 

Dupont \cite[Theorem 1.3]{DupontC} démontre que l'inclusion entre complexes de faisceaux 
\begin{equation} \label{Dqi} 
\Omega^\bullet_{\langle T , \Hyp \rangle} \hookrightarrow j_* \Omega^\bullet_{T - \Hyp},
\end{equation}
où $j$ désigne l'inclusion de $T-\Hyp$ dans $T$, est un quasi-isomorphisme. On rappelle ci-dessous les ingrédients principaux de sa démonstration. 
   
\subsection{Opérateurs de dilatation} Il découle du lemme suivant que l'application de dilatation $[s]$ induit un opérateur $[s]_*$ sur $\Omega^\bullet_{\langle T , \Hyp \rangle}$. 
  
\begin{lemma}
Pour tout ouvert $V \subset T$, le morphisme de trace
$$\Omega^*_{\mathrm{mer}}([s]^{-1} V) \rightarrow \Omega^*_{\mathrm{mer}}(V)$$
induit par $[s]$ envoie $\Omega^\bullet_{\langle T , \Hyp \rangle} ([s]^{-1} V)$ dans $\Omega^\bullet_{\langle T , \Hyp \rangle} (V)$. 
Autrement dit, la trace définit un morphisme de faisceaux 
$$[s]_* \Omega^\bullet_{\langle T , \Hyp \rangle} \rightarrow \Omega^\bullet_{\langle T , \Hyp \rangle}.$$
\end{lemma}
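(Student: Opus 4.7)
Le plan est d'utiliser que l'application de dilatation $[s] : T \to T$ est étale en caractéristique zéro --- ceci découle du fait que la multiplication par un entier non nul $s$ sur une courbe elliptique (resp. sur le groupe multiplicatif) est un revêtement étale de degré $s^2$ (resp. $s$). Par conséquent $[s]$ est un revêtement étale fini de $T$ sur lui-même, et le morphisme de trace $[s]_*$ sur les formes méromorphes se définit localement comme somme des poussées en avant par les inverses locaux. Comme la question est de nature locale et analytique, il suffit de tester le résultat dans un voisinage d'un point arbitraire $q \in V$.

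\textbf{Réduction locale.} Fixons donc $q \in V$ et notons $[s]^{-1}(q) = \{p_1 , \ldots , p_d\}$. On choisit des voisinages ouverts deux à deux disjoints $U_i \ni p_i$ dans $[s]^{-1} V$ et un voisinage ouvert $V' \subset V$ de $q$ tels que chaque restriction $[s]|_{U_i} : U_i \to V'$ soit un biholomorphisme. Étant donné $\omega \in \Omega^\bullet_{\langle T, \Hyp \rangle}([s]^{-1} V)$, on a alors
$$\mathrm{tr}(\omega)|_{V'} = \sum_{i=1}^d ([s]|_{U_i})_* \bigl(\omega|_{U_i}\bigr),$$
et il suffit de vérifier que chaque terme $([s]|_{U_i})_*(\omega|_{U_i})$ appartient à $\Omega^\bullet_{\langle T, \Hyp \rangle}(V')$.

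\textbf{Correspondance des hyperplans.} Fixons $i$ et soit $H \subset \Hyp$ un hyperplan passant par $p_i$. Comme $[s]$ est un endomorphisme affine de $T$, l'image $[s](H)$ est encore un hyperplan ; par l'hypothèse $[s] \Hyp \subset \Hyp$ et comme $[s](p_i) = q$, cet hyperplan $[s](H)$ appartient à $\Hyp$ et passe par $q$. Si $f$ désigne une équation locale de $H$ au voisinage de $p_i$ et $g$ une équation locale de $[s](H)$ au voisinage de $q$, alors $[s]$ étant un isomorphisme de $U_i$ sur $V'$ qui envoie bijectivement $H \cap U_i$ sur $[s](H) \cap V'$, on a $[s]^* g = u \cdot f$ pour une unité $u$ au voisinage de $p_i$. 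Par suite
$$\frac{df}{f} = [s]^* \biggl( \frac{dg}{g} \biggr) - \frac{du}{u},$$
et $du/u$ est holomorphe. En injectant cette identité dans une écriture locale $\omega = \sum_J \nu_J \wedge \bigwedge_{j \in J} df_j / f_j$, puis en appliquant la formule de projection $([s]|_{U_i})_*( [s]^* \alpha \wedge \beta) = \alpha \wedge ([s]|_{U_i})_*(\beta)$, on voit que $([s]|_{U_i})_*(\omega|_{U_i})$ s'écrit comme combinaison linéaire de formes $\nu' \wedge \bigwedge_{j \in J} dg_j/g_j$, où les $g_j$ définissent des hyperplans de $\Hyp$ passant par $q$ et $\nu'$ est holomorphe ; on obtient bien un élément de $\Omega^\bullet_{\langle T, \Hyp \rangle}(V')$.

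\textbf{Principal obstacle.} Le point clef, et seule vérification non-triviale, est que chaque hyperplan apparaissant dans le lieu polaire de la poussée en avant appartient effectivement à $\Hyp$ : c'est précisément ce que garantit l'hypothèse $[s] \Hyp \subset \Hyp$, complétée par la préservation des composantes connexes des lieux d'incidence faite au début du paragraphe (qui évite toute confusion combinatoire lorsque plusieurs hyperplans de $\Hyp$ se rencontrent en $p_i$). Tout le reste est soit formel (formule de projection, étalicité), soit issu de la structure du complexe $\Omega^\bullet_{\langle T, \Hyp \rangle}$ vue comme algèbre localement engendrée par les $df/f$.
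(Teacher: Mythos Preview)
Your proof is correct and follows essentially the same approach as the paper: both reduce to a local computation using that $[s]$ is a local biholomorphism, identify how hyperplanes of $\Hyp$ through a preimage $p_i$ map to hyperplanes of $\Hyp$ through $q$ via the hypothesis $[s]\Hyp \subset \Hyp$, and conclude by tracking the generators $df/f$. Your version is in fact slightly more explicit about summing over all preimages and handling the unit relating the two local equations; the paper shortcuts this by choosing $f_1 = [s]^* f_2$ directly, and your final remark invoking the preservation of connected components of incidence loci is not actually needed for this lemma.
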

\proof 
Soient $p_1$ et $p_2$ deux points de $T$ tels que $[s] p_1 = p_2$ et soit $\omega_1$ une section locale de $\Omega^\bullet_{\langle T , \Hyp \rangle}$ définie sur un voisinage de $p_1$. Puisque $[s]$ préserve $\Hyp$, l'image par $[s]$ d'un hyperplan $H_1$ de $\Hyp$ passant par $p_1$ est un hyperplan $H_2$ de $\Hyp$ passant par $p_2$. 

Soit maintenant $f_2 = 0$ une équation locale pour l'hyperplan $H_2$ dans un voisinage $W$ de $p_2$. Comme l'application $[s]$ est un biholomorphisme local, quitte à rétrécir $W$, l'application $f_1 = [s]^* f_2$ définit une équation locale pour l'hyperplan $H_1$ au voisinage de $p_1$. De plus, le morphisme trace induit par $[s]$ envoie $f_1 \in \Omega^0  ([s]^{-1} W)$ sur $\deg([s]) f_2$ et donc 
$$[s]_* \frac{df_1}{f_1} = \frac{df_2}{f_2}.$$
Comme $\Omega^\bullet_{\langle T , \Hyp \rangle}([s]^{-1} V)$ est engendré par des formes qui localement peuvent s'exprimer comme produits extérieurs de formes régulières et de formes du type $df_1/f_1$, le lemme s'en déduit. 
\qed 
  
\begin{definition}
Soit 
$$H^* (T , \Omega^j_{\langle T , \Hyp \rangle})^{(1)} \subset H^*(T , \Omega^j_{\langle T , \Hyp \rangle})$$
le sous-espace caractéristique de $[s]_*$ associé à la valeur propre $1$, c'est-à-dire le sous-espace des classes de cohomologie qui sont envoyées sur $0$ par une puissance de $[s]_*-1$. 
\end{definition}

Le but de ce chapitre est la démonstration du théorème qui, dans les contextes multiplicatifs et elliptiques, remplace le théorème de Brieskorn invoqué dans le cas affine pour construire la classe \eqref{E:Sa}. 
  
\begin{theorem} \label{P:Brieskorn}
Supposons $T-\Hyp$ affine. Pour tout degré $j \leq n = \dim(T)$ les formes dans $H^0(T, \Omega^j_{\langle T , \Hyp \rangle})^{(1)}$ sont fermées et l'application naturelle
$$H^0(T, \Omega^j_{\langle T , \Hyp \rangle})^{(1)} \rightarrow H^j(T-\Hyp)^{(1)}$$
est un isomorphisme. 
\end{theorem}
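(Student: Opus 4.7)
The argument combines Dupont's quasi-isomorphism \eqref{Dqi} with an eigenvalue analysis of the $[s]_*$-action on the Hodge--de Rham spectral sequence. I proceed in three steps.

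\textbf{Step 1 (Spectral sequence).} Since $T - \Hyp$ is affine by hypothesis, the open immersion $j : T - \Hyp \hookrightarrow T$ is an affine morphism, so $j_*\Omega^\bullet_{T - \Hyp}$ already represents $Rj_*\Omega^\bullet_{T - \Hyp}$. Combined with \eqref{Dqi}, this gives $\mathbb{H}^\bullet(T, \Omega^\bullet_{\langle T, \Hyp \rangle}) \simeq H^\bullet(T - \Hyp, \C)$, and the b\^ete filtration produces a $[s]_*$-equivariant spectral sequence
$$E_1^{p,q} = H^q(T, \Omega^p_{\langle T, \Hyp \rangle}) \Longrightarrow H^{p+q}(T - \Hyp, \C).$$
Passing to the generalised $(1)$-eigenspace of $[s]_*$ is exact.

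\textbf{Step 2 (Key vanishing).} I claim $E_1^{p,q}(1) = 0$ whenever $q \geq 1$ and $p + q \leq n$. In the multiplicative case this is immediate since $T = \mathbf{G}_m^n$ is affine. In the elliptic case, pass to a log-resolution $\tilde T \to T$ on which $\tilde\Hyp$ is a simple-normal-crossings divisor; Saito's complex $\Omega^\bullet_{\tilde T}(\log \tilde\Hyp)$ also computes $H^\bullet(T - \Hyp, \C)$, and Deligne's weight filtration on it has graded pieces
$$\mathrm{Gr}^W_k \Omega^p_{\tilde T}(\log \tilde\Hyp) \simeq \Omega^{p-k}_{D^{(k)}}$$
(iterated Poincaré residue), where $D^{(k)}$ is the normalisation of the $k$-fold intersections of components --- a disjoint union of sub-abelian varieties of dimension $n - k$. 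The dilation $s$ was chosen (before the statement) so that $[s]$ preserves every connected component of each $D^{(k)}$; thus $[s]_*$ acts by the scalar $s^{2(n-k) - r}$ on $H^r(D^{(k)}, \C)$, and consequently by $s^{2n - k - p - q}$ on the Hodge summand $H^q(\mathrm{Gr}^W_k \Omega^p_{\tilde T}(\log \tilde\Hyp)) \subset H^{p - k + q}(D^{(k)}, \C)$. The equation $s^{2n - k - p - q} = 1$ together with $0 \leq k \leq p$ and $p + q \leq n$ forces $q = 0$ (and $k = p = n$), outside the asserted range. Since distinct weights produce distinct scalar eigenvalues, $[s]_*$ is semisimple on $H^q(T, \Omega^p_{\langle T, \Hyp \rangle})$, and its $(1)$-eigenspace vanishes.

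\textbf{Step 3 (Closedness and conclusion).} For $j \leq n$, Step 2 leaves $E_1^{j, 0}(1) = H^0(T, \Omega^j_{\langle T, \Hyp \rangle})^{(1)}$ as the only potential $(1)$-contribution to $H^j(T - \Hyp, \C)^{(1)}$. All differentials $d_r$ ($r \geq 2$) into or out of $E_r^{j, 0}$ have source or target lying in a position with $q \geq 1$ and total degree $\leq n$, hence vanish on the $(1)$-part by Step 2. The remaining obstruction is the $d_1$-differential on the bottom row, i.e.\ the de Rham differential on $H^0(T, \Omega^\bullet_{\langle T, \Hyp \rangle})^{(1)}$. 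The same weight-eigenvalue analysis forces a $[s]_*$-invariant global section to lie in the top weight piece $W_p / W_{p-1}$, where it is locally a sum of pure logarithmic wedges $\bigwedge_{j \in J} df_j / f_j$; these are closed because $d(df_j / f_j) = 0$. This gives both the closedness claim and the desired isomorphism.

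The main obstacle is Step 2: one must verify that the dilation $[s]$ lifts compatibly to the log-resolution $\tilde T$ and acts on the exceptional strata with the clean scalar-eigenvalue formula used above --- a verification that relies essentially on the care taken when choosing $s$ prior to the statement of the theorem.
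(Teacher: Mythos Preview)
Your overall strategy is sound and close in spirit to the remark the paper makes after Lemma~\ref{C2}: degeneration of the Hodge--de Rham spectral sequence on the $(1)$-part, combined with the purity/eigenvalue analysis, does give the result directly. But Step~2 as written contains a genuine error.

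When you pass to a log-resolution $\tilde T \to T$, the SNC divisor $\tilde\Hyp$ is the union of the proper transforms of the original hyperplanes \emph{and} the exceptional divisors of the blowups. The $k$-fold intersections $D^{(k)}$ therefore include strata lying inside exceptional divisors, and these are projective bundles over abelian subvarieties, not abelian varieties themselves. Your eigenvalue formula $s^{2(n-k)-r}$ on $H^r(D^{(k)},\C)$ fails on such strata: the dilation $[s]$ lifts to $\tilde T$ (it preserves the blown-up centers), but on an exceptional $\mathbb{P}^{c-1}$-bundle over a codimension-$c$ stratum it acts with an additional twist coming from the normal bundle, and the cohomology acquires extra classes (powers of the tautological hyperplane class) on which $[s]_*$ does not act by the scalar you wrote. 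You flag this yourself as ``the main obstacle,'' but it is not a verification --- it is a different computation, and without it your vanishing claim is unproved.

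The fix is to avoid resolution altogether and use Dupont's weight filtration on $\Omega^\bullet_{\langle T,\Hyp\rangle}$ directly: the isomorphism \eqref{appDupont2} identifies $\mathrm{gr}^W_k\Omega^\bullet_{\langle T,\Hyp\rangle}$ with $\bigoplus_S \iota_{S*}\Omega^\bullet_S \otimes \mathrm{OS}_S(\Hyp)$, where the strata $S$ \emph{are} translates of abelian subvarieties of dimension $n-k$. The choice of $s$ made before the theorem ensures $[s]$ preserves each $S$, and then your scalar $s^{2n-k-p-q}$ is correct on $H^q(S,\Omega^{p-k}_S)\otimes\mathrm{OS}_S$. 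With this change your Steps~2--3 go through.

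The paper itself takes a more hands-on route: it argues by induction on $(n,\#\Hyp)$ using the deletion-restriction short exact sequence \eqref{dupont}. The eigenvalue analysis (Lemmas~\ref{L:B1} and~\ref{L:affirmation}) is carried out only in the base cases ($\Hyp$ empty in the multiplicative case, $\#\Hyp=n$ SNC in the elliptic case), and the inductive step combines a purity lemma (Lemma~\ref{C1}) with a dimension count (Lemma~\ref{C2}) to show that the spectral sequence degenerates and the connecting maps $\delta$ in \eqref{dupont2} vanish on the $(1)$-part. Your approach, once repaired, is more uniform and avoids the induction; the paper's approach has the advantage of never needing to analyse the full weight-graded structure of Dupont's complex beyond what is already in \S\ref{travaux dupont}.
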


Après avoir terminé la rédaction de ce chapitre nous avons appris que, dans le cas multiplicatif, il existe en fait, comme dans le cas affine, une algèbre de formes différentielles algébriques fermées sur $T-\Hyp$, qui est isomorphe à la cohomologie de $T-\Hyp$. Dans le cas des arrangements d’hyperplans affines, cette algèbre est celle d’Arnol’d et Brieskorn. Dans le cas des arrangements toriques, elle est plus difficile à décrire, notamment parce que la cohomologie n’est pas engendrée en degré 1 en général. De Concini et Procesi \cite[Theorem 5.2]{DeConciniProcesi} traitent le cas unimodulaire; ce cas-là est analogue au cas affine, l’algèbre est engendrée par des représentants d’une base du $H^1$ du tore et les formes $d\log (\chi-a)$, où $\{\chi =a\}$ est une équation d’un des hyperplans de l'arrangement. Une description de l'algèbre dans le cas général est donnée par Callegaro, D'Adderio, Delucchi, Migliorini et Pagaria \cite{CDDMP}.
  
\subsection{Mise en place de la démonstration (par récurrence)}
 
On démontre le théorème \ref{P:Brieskorn} par récurrence sur la paire $(n=\dim \ T , \#(\Hyp))$, relativement à l'ordre lexicographique. L'initialisation de la récurrence se fera en démontrant directement le théorème 
\begin{itemize}
\item lorsque $\# \Hyp = n$, dans le cas elliptique, et
\item lorsque $\Hyp$ est vide, dans le cas multiplicatif.
\end{itemize}

\medskip
\noindent
{\it Remarque.} Lorsque $A$ est elliptique et que $\# \Hyp = n$ --- cas d'initialisation de la récurrence --- les vecteurs de $\Z^n$ associés aux hyperplans de $\Hyp$ sont nécessairement linéairement indépendants d'après le lemme \ref{affineb}. Le diviseur $\Hyp  \subset T$ est {\it à croisements normaux simples}.

\medskip

\`A chaque étape de récurrence on procède comme suit~: supposons que $\Hyp'$ soit un ensemble non vide d'hyperplans avec $T-\Hyp'$ affine. 
Supposons de plus $\# \Hyp' > n$ dans le cas elliptique. Considérons un hyperplan $H$ dans $\Hyp'$ tel que $T-\Hyp$, avec $\Hyp = \Hyp '-H$, soit affine. 
Notons $\Hyp \cap H$ l'ensemble des hyperplans de $H$ --- vu comme $A^{n-1}$-torseur --- obtenus par intersection de $H$ avec les hyperplans (de $T$) appartenant à $\Hyp$. L'arrangement 
$$H - (\Hyp \cap H),$$
étant un fermé de Zariski de la variété affine $T-\Hyp$, est affine.

On peut maintenant considérer le diagramme 
\begin{equation} \label{E:diag}
T - \Hyp '  \stackrel{j}{\hookrightarrow} T - \Hyp
\stackrel{\iota}{\hookleftarrow}  H -  (\Hyp \cap H),
\end{equation}
où $\iota$ est une immersion fermée et $j$ une injection ouverte. 
Dans la suite $\iota$ désignera plus généralement l'inclusion de $H$ dans $T$. Noter que $\iota (H -  (\Hyp \cap H))$ est un diviseur lisse de $T-\Hyp$ de complémentaire l'image de $j$.

Le point clé de la démonstration est le fait, non trivial et démontré par Clément Dupont, que la suite de faisceaux
\begin{equation} \label{dupont}  
0 \rightarrow \Omega^q_{\langle T , \Hyp \rangle} \rightarrow \Omega^{q}_{\langle T , \Hyp ' \rangle}  \stackrel{\mathrm{Res}}{\longrightarrow} \iota_* \Omega^{q-1}_{\langle H , \Hyp \cap H \rangle} \rightarrow 0
\end{equation}
est \emph{exacte}. 
Dans le paragraphe suivant on décrit les grandes lignes de la démonstration de Dupont.

\section{Travaux de Dupont} \label{travaux dupont}
 
Suivant Dupont \cite[Definition 3.4]{DupontC}, on munit le faisceau $\Omega^\bullet_{\langle T , \Hyp \rangle}$ d'une filtration ascendante 
$$W_0 \Omega^\bullet_{\langle T , \Hyp \rangle} \subset W_1 \Omega^\bullet_{\langle T , \Hyp \rangle} \subset \ldots $$
appelé {\it filtration par les poids}. Le $k$-ème terme de cette filtration $W_k \Omega^\bullet_{\langle T , \Hyp \rangle}$ est engendré par les formes du type \eqref{E31} avec $\# J \leq k$. On a donc 
$$W_0 \Omega^\bullet_{\langle T , \Hyp \rangle} = \Omega^\bullet_T \quad \mbox{et} \quad W_q  \Omega^q_{\langle T , \Hyp \rangle} =  \Omega^q_{\langle T , \Hyp \rangle}.$$

Pour comprendre le complexe gradué associé considérons $\mathrm{gr}_{n}^W  \Omega^\bullet_{\langle T , \Hyp \rangle}$, 
c'est-à-dire le quotient 
$$W_n  \Omega^\bullet_{\langle T , \Hyp \rangle} /W_{n-1}  \Omega^\bullet_{\langle T , \Hyp \rangle}.$$ 
Localement, la classe d'une forme de type \eqref{E31} dans $\mathrm{gr}_{n}^W  \Omega^\bullet_{\langle T , \Hyp \rangle}$ ne dépend pas du choix des $f_j$ puisque ceux-ci sont uniquement déterminés à une unité locale près. Un choix approprié de coordonnées locales permet alors de vérifier que la fibre de $\mathrm{gr}_{n}^W  \Omega^\bullet_{\langle T , \Hyp \rangle}$ au-dessus d'un point $p$ est nulle à moins que $p$ ne soit contenu dans $n$ hyperplans linéairement indépendants. 
   
Maintenant, si $p$ appartient à $n$ hyperplans linéairement indépendants, Dupont \cite[Theorem 3.11]{DupontC} construit une application naturelle de l'algèbre de Orlik--Solomon locale en $p$ vers $\mathrm{gr}_{n}^W  \Omega^\bullet_{\langle T , \Hyp \rangle}$. Les hyperplans de $\Hyp$ passant par $p$ induisent un arrangement d'hyperplans (linéaires) $\Hyp^{(p)}$ dans l'espace tangent en $p$ à $T$ qu'un choix de coordonnées locales identifie à $\C^n$. Orlik et Solomon \cite{OrlikSolomon} définissent une algèbre graduée $\mathrm{OS}_\bullet (\Hyp^{(p)})$ par générateurs et relations. En énumérant $H_1 , \ldots , H_\ell$ les hyperplans de $\Hyp^{(p)}$, autrement dit les hyperplans de $\Hyp$ passant par $p$, Orlik et Solomon définissent en particulier $\mathrm{OS}_n (\Hyp^{(p)})$ 
comme étant engendrée par des éléments $e_J$ pour $J = \{ j_1 , \ldots , j_n \} \subset  \{1 , \ldots , \ell \}$ avec $j_1 < \ldots < j_n$, quotientée par l'espace des relations
engendré par les combinaisons linéaires
\begin{equation} \label{RelOS}
\sum_{i=0}^n (-1)^i e_{K-\{j_i \}},
\end{equation}
où $K=\{ j_0 , \ldots , j_n \} \subset \{1 , \ldots , \ell \}$ avec $j_0 < j_1 < \ldots < j_n$. 

L'application 
\begin{equation} \label{appDupont}
\mathrm{OS}_n (\Hyp^{(p)}) \to \mathrm{gr}_{n}^W  \Omega^\bullet_{\langle T , \Hyp \rangle}
\end{equation}
construite par Dupont est alors définie de la manière suivante. Pour chaque $j$ dans $\{1 , \ldots , \ell \}$ on choisit une équation locale $f_j =0$ définissant l'hyperplan $H_j$ au voisinage de $p$, et on envoie chaque générateur $e_J$ de $\mathrm{OS}_n (\Hyp^{(p)})$ sur la forme 
$$\wedge_{j \in J} \frac{df_j}{f_j} \in \mathrm{gr}_{n}^W  \Omega^\bullet_{\langle T , \Hyp \rangle}.$$
Rappelons que ces éléments ne dépendent pas des choix faits pour les $f_j$, le fait qu'ils vérifient les relations \eqref{RelOS} s'obtient, comme dans le cas classique des arrangements d'hyperplans dans $\C^n$, en explicitant la relation de dépendance entre les $H_j$  $(j \in K)$. Cela montre que l'application \eqref{appDupont} est bien définie. Par définition, elle est surjective au voisinage de $p$.

Dupont définit plus généralement une application
\begin{equation} \label{appDupont2}
\bigoplus_{\codim(S) = k} \iota_{S*} \Omega_S^{\bullet} \otimes \mathrm{OS}_S (\Hyp )   \rightarrow \mathrm{gr}_{k}^W  \Omega^\bullet_{\langle T , \Hyp \rangle},
\end{equation}
où la somme porte maintenant sur les {\it strates}, c'est-à-dire les composantes connexes d'intersections d'hyperplans, de codimension $k$ et $\iota_S$ désigne l'inclusion de la strate $S$ dans $T$. \`A tout point $p$ d'une strate $S$ il correspond une algèbre d'Orlik--Solomon locale $\mathrm{OS}_\bullet (\Hyp^{(p)})$ et on note 
$$\mathrm{OS}_S (\Hyp) \subset \mathrm{OS}_\bullet (\Hyp^{(p)})$$
le sous-espace engendré par les monômes $e_J$ où $J$ décrit les sous-ensembles d'hyperplans appartenant à $\Hyp^{(p)}$ et dont l'intersection est exactement la trace $S^{(p)}$ de la strate $S$ dans l'espace tangent en $p$ à $T$. Noter que $\mathrm{OS}_S (\Hyp)$ ne dépend pas du choix du point $p$ dans $S$. 

Le facteur correspondant à une strate $S = \{ p \}$ de codimension $n$ dans \eqref{appDupont2} correspond à l'application \eqref{appDupont}.
Pour construire l'application \eqref{appDupont2} en général, considérons une strate $S$ de codimension $k$ et un point $p \in S$. Un choix d'équations locales 
$f_j =0$ pour les hyperplans de $\Hyp$ contenant $S$ permet --- par un argument similaire à celui détaillé ci-dessus --- de construire une application
\begin{equation} \label{E:app1}
\underline{\mathrm{OS}_S (\Hyp)} \rightarrow \mathrm{gr}_{k}^W  \Omega^\bullet_{\langle T , \Hyp \rangle}.
\end{equation}
(Ici $\underline{\mathrm{OS}_S (\Hyp)}$ désigne le faisceau constant associé à $\mathrm{OS}_S (\Hyp)$.)
On étend maintenant \eqref{E:app1} en une application  
\begin{equation} \label{E:app2}
\Omega_T^{\bullet} \otimes \mathrm{OS}_S (\Hyp)   \rightarrow \mathrm{gr}_{k}^W  \Omega^\bullet_{\langle T , \Hyp \rangle}
\end{equation}
par linéarité. Finalement \eqref{E:app2} se factorise par $\iota_{S*} \Omega_S^{\bullet } \otimes \mathrm{OS}_S (\Hyp)$~: étant donné une forme $\nu$ dans le noyau de l'application $\Omega_T^{\bullet } \to \iota_{S*} \Omega_S^{\bullet }$ et un monôme $e_J \in \mathrm{OS}_S (\Hyp)$ il s'agit de montrer que \eqref{E:app2} envoie $\nu \otimes e_J$ sur $0$. Notons $f_j = 0$ les équations locales des hyperplans de $J$. Dans les coordonnées les $f_j$ engendrent l'idéal de $S$, on peut donc écrire $\nu$ comme une combinaison linéaire 
$$\nu \sum \omega_j f_j + \sum \omega_j ' (df_j).$$
Il nous reste alors finalement à vérifier que
$$f_j \wedge \left( \frac{df_1}{f_1} \wedge \dots \wedge \frac{df_k}{f_k} \right) \in W_{k-1}  \Omega^\bullet_{\langle T , \Hyp \rangle}$$
et
$$df_j \wedge \left( \frac{df_1}{f_1} \wedge \dots \wedge \frac{df_k}{f_k} \right) \in W_{k-1}  \Omega^\bullet_{\langle T , \Hyp \rangle}$$
ce qui résulte des définitions. 

L'application \eqref{appDupont2} est surjective, ce qui peut se voir en calculant les fibres. Dupont \cite[Theorem 3.6]{DupontC} montre en fait que  \eqref{appDupont2} est un isomorphisme de complexes. On extrait ici de sa démonstration le lemme crucial (pour nous) suivant.

\begin{lemma} \label{L:exactdupont}
La suite courte de faisceaux \eqref{dupont} est exacte.
\end{lemma}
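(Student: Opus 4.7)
\emph{Approche.} L'exactitude étant une assertion locale sur $T$, je fixerais un point $p \in T$ et travaillerais dans un voisinage analytique de $p$. Hors de $H$, les faisceaux $\Omega^q_{\langle T , \Hyp' \rangle}$ et $\Omega^q_{\langle T , \Hyp \rangle}$ coïncident et $\iota_* \Omega^{q-1}_{\langle H , \Hyp \cap H \rangle}$ est nul, donc la suite est trivialement exacte. Le cas intéressant est $p \in H$; je fixerais alors une équation holomorphe locale $f=0$ pour $H$ et $f_1 = 0 , \ldots , f_\ell = 0$ pour les hyperplans de $\Hyp$ passant par $p$. Puisque $H \notin \Hyp$, chaque restriction $f_j|_H$ est soit une unité locale sur $H$ (si $p \notin H_j$), soit une équation locale pour l'hyperplan $H_j \cap H$ de $\Hyp \cap H$, les hyperplans $H$ et $H_j$ n'étant pas parallèles et se coupant alors transversalement.

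\emph{Construction du résidu.} Je définirais $\mathrm{Res}$ comme la restriction à $\Omega^q_{\langle T , \Hyp' \rangle}$ du résidu de Poincaré le long du diviseur lisse $H$. En effet, d'après la description locale rappelée plus haut, $\Omega^q_{\langle T , \Hyp' \rangle}$ est engendré $\mathcal{O}_T$-linéairement près de $p$ par les deux types de formes
$$\nu \wedge \bigwedge_{j \in J} \frac{df_j}{f_j} \quad \mbox{et} \quad \nu \wedge \frac{df}{f} \wedge \bigwedge_{j \in J} \frac{df_j}{f_j}$$
($\nu$ holomorphe, $J \subset \{1, \ldots , \ell\}$), seul le second type présentant un pôle (simple) le long de $H$. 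Le résidu de Poincaré est canoniquement défini sur les formes à pôle simple le long de $H$; il envoie le premier type sur $0$ et le second sur $\pm \nu|_H \wedge \bigwedge_{j \in J} df_j|_H / f_j|_H$. L'étape-clé est alors de vérifier que l'image est bien une section de $\iota_* \Omega^{q-1}_{\langle H , \Hyp \cap H \rangle}$, ce qui résulte directement de la remarque préalable sur les $f_j|_H$.

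\emph{Exactitude.} L'injectivité de la première flèche est tautologique. Le noyau de $\mathrm{Res}$ contient $\Omega^q_{\langle T , \Hyp \rangle}$ par construction; réciproquement, si $\omega \in \Omega^q_{\langle T , \Hyp' \rangle}_p$ est à résidu nul, alors $\omega$ est holomorphe le long de $H$, et en l'écrivant dans la base locale de Dupont ci-dessus, les composantes du second type devant s'annuler après restriction à $H$, on peut réécrire $\omega$ à l'aide des seuls générateurs du premier type, donc dans $\Omega^q_{\langle T , \Hyp \rangle}_p$. Pour la surjectivité, une section locale de $\Omega^{q-1}_{\langle H , \Hyp \cap H \rangle}$ près de $p$ s'exprime comme combinaison $\mathcal{O}_H$-linéaire de termes $\mu \wedge \bigwedge_{k \in K} dg_k / g_k$ où chaque $g_k$ est l'équation locale d'un hyperplan $H_{j_k} \cap H$ de $\Hyp \cap H$ passant par $p$. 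Quitte à multiplier par une unité locale, on peut prendre $g_k = f_{j_k}|_H$; en relevant $\mu$ en une forme holomorphe $\tilde\mu$ sur un voisinage de $p$ dans $T$, la forme
$$\frac{df}{f} \wedge \tilde\mu \wedge \bigwedge_{k \in K} \frac{df_{j_k}}{f_{j_k}}$$
est une section locale de $\Omega^q_{\langle T , \Hyp' \rangle}$ dont le résidu est (au signe près) la section donnée.

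\emph{Obstacle principal.} Le point le plus délicat est la vérification conjointe que $\mathrm{Res}$ préserve bien le sous-faisceau des formes à pôles logarithmiques (plutôt que seulement méromorphes) le long de $\Hyp \cap H$, et que son noyau est \emph{exactement} $\Omega^q_{\langle T , \Hyp \rangle}$. Les deux points reposent de manière essentielle sur la description explicite des générateurs locaux de $\Omega^\bullet_{\langle T , \Hyp \rangle}$ due à Dupont et, en filigrane, sur la structure de l'algèbre de Orlik--Solomon locale en chaque point~: c'est cette description qui garantit à la fois l'inexistence de pôles parasites non logarithmiques et l'engendrement exhaustif par les deux types de formes ci-dessus.
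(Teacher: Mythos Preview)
Your approach is more direct than the paper's, and the injectivity and surjectivity arguments are fine. But there is a real gap at the crucial step, exactness in the middle. You write that if $\omega \in \Omega^q_{\langle T , \Hyp' \rangle}_p$ has zero residue along $H$, then ``les composantes du second type devant s'annuler après restriction à $H$, on peut réécrire $\omega$ à l'aide des seuls générateurs du premier type''. But the generators $\nu \wedge (df/f) \wedge \bigwedge_{j \in J} df_j/f_j$ are not linearly independent over $\mathcal{O}_{T,p}$ --- the Orlik--Solomon relations hold among them --- so there is no ``base locale'' in which to decompose $\omega$. Zero residue only says that the \emph{sum} of the restricted second-type terms vanishes in $\Omega^{q-1}_{\langle H , \Hyp \cap H \rangle}_p$, not that each vanishes; and passing from ``the residue of $\omega$ is zero'' to ``$\omega$ is expressible using only the $df_j/f_j$ with $H_j \in \Hyp$'' is precisely the nontrivial content of the lemma at points where several hyperplanes meet. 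You correctly flag this as the \emph{obstacle principal}, but invoking ``en filigrane, la structure de l'algèbre de Orlik--Solomon'' is a diagnosis, not an argument.

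The paper's proof takes a different route that makes this step rigorous. It equips all three sheaves with Dupont's weight filtration $W_\bullet$ and observes that the sequence \eqref{dupont} is compatible with it, giving filtered pieces $(\mathbf{W}_k)$ and graded pieces $(\mathbf{gr}^W_k)$. By the long exact sequence of the short exact sequence of complexes $0 \to (\mathbf{W}_{k-1}) \to (\mathbf{W}_k) \to (\mathbf{gr}^W_k) \to 0$ and induction on $k$, exactness of \eqref{dupont} reduces to exactness of each $(\mathbf{gr}^W_k)$. At the graded level, Dupont's isomorphism \eqref{appDupont2} identifies the terms with direct sums $\bigoplus_S \iota_{S*}\Omega_S^\bullet \otimes \mathrm{OS}_S(\Hyp)$, and the sequence $(\mathbf{gr}^W_k)$ becomes, up to tensoring by holomorphic forms on strata, the classical deletion--restriction exact sequence for Orlik--Solomon algebras --- a purely combinatorial fact. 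The paper actually runs a simultaneous induction on $\#\Hyp$ proving both that \eqref{appDupont2} is an isomorphism and that $(\mathbf{gr}^W_k)$ is exact, using the five lemma. This filtration-plus-combinatorics argument is exactly what is missing from your direct attack; to complete your approach you would essentially have to reproduce it at the intersection points.
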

\begin{proof} 
La suite de complexes \eqref{dupont} induit des suites courtes
\begin{equation} \label{dupontW}
\tag{$\mathbf{W}_k$} 0 \rightarrow W_k \Omega^q_{\langle T , \Hyp \rangle} \rightarrow W_k \Omega^{q}_{\langle T , \Hyp ' \rangle}  \rightarrow W_{k-1} \Omega^{q-1}_{\langle H , \Hyp \cap H \rangle} \rightarrow 0
\end{equation}
et
\begin{equation} \label{dupontgr}
\tag{$\mathbf{gr}^W_k$} 0 \rightarrow \mathrm{gr}^W_k \Omega^q_{\langle T , \Hyp \rangle} \rightarrow \mathrm{gr}^W_k \Omega^{q}_{\langle T , \Hyp' \rangle}  \rightarrow  \mathrm{gr}^W_{k-1} \Omega^{q-1}_{\langle H , \Hyp \cap H \rangle} \rightarrow 0.
\end{equation}
Puisque $W_k \Omega^\bullet_{\langle T , \Hyp \rangle} = \Omega^\bullet_{\langle T , \Hyp \rangle}$ pour $k$ suffisamment grand, pour démontrer le lemme il suffit de montrer que les suites \eqref{dupontW} sont exactes. Et comme par ailleurs les suites courtes \eqref{dupontW} sont exactes à gauche et à droite, il nous faut seulement vérifier qu'elles sont exactes au milieu. 

Remarquons maintenant que les suites \eqref{dupontW} et \eqref{dupontgr} donne lieu à une suite exacte courte de complexes
\begin{equation} \label{SEcx}
0 \rightarrow (\mathbf{W}_{k-1}) \rightarrow (\mathbf{W}_k) \rightarrow (\mathbf{gr}_k^W) \to 0 .
\end{equation}
La suite exacte longue associée en cohomologie implique que si les suites  $(\mathbf{W}_{k-1})$ et \eqref{dupontgr} sont exactes au milieu alors \eqref{dupontW} l'est aussi. Puisque $(\mathbf{W}_{0})$ est évidemment exacte, une récurrence sur $k$ réduit la démonstration du lemme à la démonstration que les suites \eqref{dupontgr} sont exactes au milieu.

Pour abréger les expressions, on notera simplement $\mathrm{OS}^{(k)} (\Hyp)$ le faisceau apparaissant à la source de l'application \eqref{appDupont2}. En tensorisant avec les faisceaux $\iota_{S*} \Omega_S^\bullet$ les suites exactes entre algèbres d'Orlik--Solomon obtenues par ``restriction et effacement'' on obtient que les suites de complexes
$$0 \to \mathrm{OS}^{(k)} (\Hyp) \to \mathrm{OS}^{(k)} (\Hyp' ) \to \mathrm{OS}^{(k)} (\Hyp \cap H ) \to 0$$
sont exactes. En considérant les diagrammes commutatifs
$$\xymatrix{
0 \ar[r] & \mathrm{OS}^{(k)} (\Hyp) \ar[r]  \ar[d] & \mathrm{OS}^{(k)}_{\Hyp'} \ar[r] \ar[d]  &  \mathrm{OS}^{(k-1)}_{\Hyp \cap H} \ar[d] \ar[r]  &0 \\
0 \ar[r] & \mathrm{gr}_k^W \Omega^\bullet_{\langle T ,\Hyp \rangle} \ar[r] & \mathrm{gr}_k^W \Omega^\bullet_{\langle T , \Hyp' \rangle} \ar[r]&  \mathrm{gr}_{k-1}^W \Omega^\bullet_{\langle H ,  \Hyp \cap H \rangle} \ar[r] & 0.
}
$$
on montre, dans un même élan, par récurrence sur le cardinal de l'arrangement d'hyperplans que les flèches verticales sont des isomorphismes et que la ligne du bas est exacte au milieu. Par hypothèse de récurrence on peut supposer que les flèches verticales à gauche et à droite sont des isomorphismes. Une chasse au diagramme montre alors que la ligne du bas est exacte au milieu pour tout $k$. Comme expliqué ci-dessus cela suffit à montrer que les suites courtes \eqref{dupontW} sont exactes. La suite exacte longue en cohomologie associée à la suite exacte courte de complexes \eqref{SEcx} implique alors que la suite \eqref{dupontgr} est enfin partout exacte (pas seulement au milieu). Finalement les deux lignes des diagrammes commutatifs ci-dessus sont exactes et, puisque les flèches verticales à gauche et à droite sont des isomorphismes, le lemme des cinq implique que la flèche du milieu est également un isomorphisme. Ce qui permet de continuer la récurrence.

\end{proof}
  
Terminons cette section par un autre résultat important (pour nous en tout cas) de Clément Dupont \cite[Theorem 3.13]{DupontC}.
  
\begin{lemma} \label{L:exactdupont}
L'inclusion \eqref{Dqi} est un quasi-isomorphisme.
\end{lemma}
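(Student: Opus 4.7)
L'assertion est de nature locale sur $T$; je vérifierais donc que l'inclusion \eqref{Dqi} induit un quasi-isomorphisme sur les germes en chaque point $p \in T$. Pour $p \notin \Hyp$ l'inclusion est une égalité. Pour $p \in \Hyp$, les germes des deux complexes ne dépendent que du sous-arrangement $\Hyp^{(p)} \subset \Hyp$ formé des hyperplans passant par $p$, et l'on peut supposer que $\Hyp = \Hyp^{(p)}$.

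Je procéderais ensuite par récurrence sur la paire $(\dim T, \# \Hyp)$ munie de l'ordre lexicographique. L'initialisation, $\Hyp = \emptyset$, est évidente. Pour l'étape inductive, on choisit $H \in \Hyp$, on pose $\Hyp_0 = \Hyp - \{H\}$, et l'on compare le triangle distingué fourni par le lemme précédent
$$\Omega^\bullet_{\langle T, \Hyp_0\rangle} \to \Omega^\bullet_{\langle T, \Hyp\rangle} \xrightarrow{\mathrm{Res}} \iota_* \Omega^{\bullet-1}_{\langle H, \Hyp_0 \cap H\rangle} \xrightarrow{+1}$$
au triangle de Gysin analytique associé à la décomposition ouvert-fermé $T - \Hyp = (T - \Hyp_0) \setminus (H - \Hyp_0 \cap H)$. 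En notant $j_0 : T - \Hyp_0 \hookrightarrow T$ et $j' : H - \Hyp_0 \cap H \hookrightarrow H$ les inclusions ouvertes, ce triangle s'écrit
$$j_{0*} \Omega^\bullet_{T-\Hyp_0} \to j_* \Omega^\bullet_{T-\Hyp} \xrightarrow{\mathrm{res}_H} \iota_* j'_* \Omega^{\bullet-1}_{H - \Hyp_0 \cap H} \xrightarrow{+1}$$
dans la catégorie dérivée des faisceaux sur $T$. L'affinité --- donc la Stein-cité --- des ouverts en présence, assurée par le lemme \ref{affineb}, permet de s'affranchir des foncteurs images directes dérivées.

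L'inclusion \eqref{Dqi} et ses analogues pour les arrangements $\Hyp_0$ et $\Hyp_0 \cap H$ fournissent un morphisme entre ces deux triangles. Les deux flèches verticales extrêmes sont des quasi-isomorphismes par hypothèse de récurrence, la paire $(\dim T, \# \Hyp_0)$, respectivement $(\dim H, \#(\Hyp_0 \cap H))$, étant strictement inférieure à $(\dim T, \# \Hyp)$ dans l'ordre lexicographique. Le lemme des cinq, appliqué à la suite exacte longue en cohomologie des germes, entraîne alors que la flèche centrale --- c'est-à-dire \eqref{Dqi} elle-même --- est aussi un quasi-isomorphisme, clôturant la récurrence.

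L'obstacle principal est la construction effective du triangle de Gysin analytique et l'identification de son bord à la suite exacte de Dupont. Pour cela, on peut introduire sur $j_* \Omega^\bullet_{T-\Hyp}$ la filtration par l'ordre du pôle le long de $H$, montrer par un argument local --- en choisissant une équation $f$ de $H$ et en intégrant itérativement ${df}/{f}$ --- que l'inclusion du sous-complexe à pôles logarithmiques induit un quasi-isomorphisme, puis identifier le quotient correspondant à $\iota_* j'_* \Omega^{\bullet-1}_{H - \Hyp_0 \cap H}$ via le résidu classique.
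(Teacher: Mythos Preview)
Your argument is sound in outline, but it follows a genuinely different route from the paper's sketch. The paper computes both stalks at $p$ directly: on the right-hand side, the stalk cohomology of $j_*\Omega^\bullet_{T-\Hyp}$ is the de Rham cohomology of the local complement, which by Brieskorn--Orlik--Solomon is the algebra $\mathrm{OS}_\bullet(\Hyp^{(p)})$; on the left-hand side, the stalk cohomology of $\Omega^\bullet_{\langle T,\Hyp\rangle}$ is read off from the weight filtration via Dupont's identification \eqref{appDupont2} of the graded pieces with strata-indexed Orlik--Solomon pieces, and one finds the same answer. There is no induction on $(\dim T,\#\Hyp)$ in the paper's argument for this lemma; the inductive structure is reserved for the proof of Theorem~\ref{P:Brieskorn} itself.

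Your deletion--restriction induction, by contrast, bypasses the stalk computation entirely and instead reuses the short exact sequence of the preceding lemma together with an analytic Gysin triangle and the five lemma. This is a legitimate and arguably more self-contained alternative: it does not require knowing in advance that the local cohomology is Orlik--Solomon. The price is that you must establish the Gysin triangle for $j_*\Omega^\bullet$ and its compatibility with the residue map, which you correctly flag as the main point to check.

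One correction: your appeal to Lemma~\ref{affineb} is misplaced. That lemma concerns global affinity of $T-\Hyp$ in the toric or abelian setting and requires $n$ independent hyperplanes, a condition not available at every stage of your induction. What you actually need is purely local: after passing to a chart around $p$, the complements of linear hyperplane arrangements in $\C^n$ are affine, hence Stein, so $j_* = Rj_*$ on the relevant sheaves. This is immediate and does not require Lemma~\ref{affineb}.
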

\begin{proof}[Esquisse de démonstration]
La cohomologie de complexe $j_* \Omega_{T-\Hyp}^\bullet$ en $p$ est précisément la cohomologie de l'arrangement local $\Hyp^{(p)}$ d'hyperplans dans $\C^n$, c'est-à-dire l'algèbre de Orlik--Solomon $\mathrm{OS}_\bullet (\Hyp^{(p)})$. D'un autre côté on peut calculer la cohomologie de $\Omega^\bullet_{\langle T , \Hyp \rangle}$ en se servant de la filtration par les poids et de l'isomorphisme \eqref{appDupont2}, on obtient le même résultat. 
\end{proof}

\section[Démonstration du théorème 3.5]{Démonstration du théorème \ref{P:Brieskorn}}

\subsection{Initialisation dans le cas multiplicatif} \label{B1}
On l'a dit, on démontre le théorème \ref{P:Brieskorn} par récurrence. Dans le cas multiplicatif où $A =\mathbf{G}_m$, l'initialisation de la récurrence correspond au cas où $\Hyp$ est vide. 
   
\begin{lemma} \label{L:B1}
Le sous-espace 
\begin{equation} \label{E:H0(1)}
H^0(\mathbf{G}_m^n, \Omega^\bullet)^{(1)}
\end{equation}
est de dimension $1$ engendré par la forme $\wedge_{i=1}^{n} dz_i/z_i$ où $z_i$ désigne la $i$-ème coordonnée sur $\mathbf{G}_m^n$. 

En particulier \eqref{E:H0(1)} est concentré en degré $n$.
\end{lemma}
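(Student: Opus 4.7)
L'algèbre des formes régulières globales sur la variété affine $\mathbf{G}_m^n$ s'écrit
\[
\Omega^\bullet(\mathbf{G}_m^n) \;=\; \C[z_1^{\pm 1}, \ldots, z_n^{\pm 1}] \otimes \bigwedge\nolimits^\bullet \Bigl\langle \tfrac{dz_1}{z_1}, \ldots, \tfrac{dz_n}{z_n} \Bigr\rangle,
\]
avec base monomiale $\{z^a \, \omega_J\}$, où $z^a = z_1^{a_1}\cdots z_n^{a_n}$ pour $a\in\Z^n$ et $\omega_J = \bigwedge_{i\in J} dz_i/z_i$ pour $J \subset \{1,\ldots,n\}$. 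La première étape consiste à expliciter l'action de $[s]_*$ dans cette base. Comme $[s]$ est étale fini de degré $s^n$ sur $\mathbf{G}_m^n$, la trace des fonctions se calcule directement : $[s]_*(z^a)(w) = \sum_{\zeta\in\mu_s^n}(\zeta w^{1/s})^a = w^{a/s}\prod_i \sum_{\zeta_i\in\mu_s}\zeta_i^{a_i}$, d'où $[s]_*(z^a) = s^n z^{a/s}$ si $s$ divise chaque $a_i$, et $0$ sinon. Puisque $[s]^*\omega_J = s^{|J|}\omega_J$, la formule de projection donne alors
\[
[s]_*(z^a\,\omega_J) \;=\; \begin{cases} s^{\,n-|J|}\, z^{a/s}\, \omega_J & \text{si } s \mid a, \\ 0 & \text{sinon.}\end{cases}
\]

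Pour chaque $J$, la composante $\C[z^{\pm 1}]\cdot\omega_J$ est stable par $[s]_*$. On la décompose en sous-espaces stables indexés par les exposants \emph{$s$-primitifs} : tout $a\in\Z^n$ non nul s'écrit de manière unique $a=s^k b$ avec $k\geq 0$ maximal (i.e.\ au moins une coordonnée de $b$ n'est pas divisible par $s$), ce qui fournit une décomposition
\[
\C[z^{\pm 1}] \;=\; \C\cdot 1 \,\oplus\, \bigoplus_{b \ \text{primitif}} \mathrm{Vect}\bigl(z^{s^k b} : k\geq 0\bigr).
\]
Sur l'orbite triviale $\C\cdot\omega_J$, l'opérateur $[s]_*$ agit par multiplication par le scalaire $s^{n-|J|}$, qui vaut $1$ si et seulement si $|J|=n$. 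Sur chaque orbite $\mathrm{Vect}(z^{s^k b}\,\omega_J : k\geq 0)$ avec $b$ primitif non nul, $[s]_*$ envoie $z^b\omega_J$ sur $0$ et $z^{s^k b}\omega_J$ sur $s^{n-|J|}\,z^{s^{k-1}b}\,\omega_J$ pour $k\geq 1$ : c'est donc un opérateur \emph{localement nilpotent}. En particulier, sur tout sous-espace de dimension finie stable par $[s]_*$ dans une telle orbite, l'opérateur $[s]_*-1$ est inversible et le sous-espace caractéristique associé à la valeur propre $1$ y est nul.

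En sommant sur $J$ les contributions, le sous-espace caractéristique de $[s]_*$ associé à la valeur propre $1$ dans $H^0(\mathbf{G}_m^n, \Omega^p)$ est nul pour $p<n$ et égal à $\C\cdot\bigwedge_{i=1}^n dz_i/z_i$ pour $p=n$, d'où le lemme. Les étapes à soigner sont le calcul explicite de $[s]_*$ (via la formule de projection), et surtout l'identification de la nilpotence locale sur chaque orbite non triviale : c'est elle qui assure qu'aucune contribution autre que celle du terme constant $\omega_0$ en degré $n$ ne survit dans le sous-espace caractéristique.
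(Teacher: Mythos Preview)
Your argument is correct and essentially identical to the paper's: both compute $[s]_*$ on the monomial basis $z^a\omega_J$ to obtain the formula $[s]_*(z^a\omega_J)=s^{\,n-|J|}z^{a/s}\omega_J$ or $0$, then observe that for $a\neq 0$ the orbit is locally nilpotent (equivalently, each monomial lies in a finite-dimensional $[s]_*$-stable subspace on which $[s]_*$ is nilpotent), while for $a=0$ the eigenvalue $s^{\,n-|J|}$ equals $1$ only when $|J|=n$. Your explicit orbit decomposition indexed by $s$-primitive exponents is a clean way to package this, but it is the same computation as in the paper.
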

\proof 
Une forme différentielle holomorphe sur $\mathbf{G}_m^n$ se décompose de manière unique en série de Laurent à plusieurs variables. Il suffit donc de considérer les monômes 
\begin{equation} \label{E:monome}
\left( \prod_{i=1}^n z_i^{a_i} \right) \cdot \bigwedge_{j \in J} \frac{dz_j}{z_j} \quad (J \subset \{1 , \ldots , n \} , \ a_1 , \ldots , a_n  \in \Z).
\end{equation}
Maintenant, pour tout entier $a \in \Z$ on a
$$\sum_{w : w^s=z} w^a = \left\{ \begin{array}{cl}
s z^{a/s} & \mbox{si } s | a, \\
0 & \mbox{sinon}.
\end{array} \right.$$
En poussant en avant le monôme \eqref{E:monome} par $[s]_*$ on obtient donc
\begin{multline*}
\sum_{w_i^s = z_i} \left( \left( \prod_{i=1}^n w_i^{a_i} \right) \cdot \bigwedge_{j \in J} \frac{dw_j}{w_j}  \right) \\
= \left\{ \begin{array}{ll}
s^{n- \# J} \left( \prod_{i=1}^n z_i^{a_i /s} \right) \cdot \bigwedge_{j \in J} \frac{dz_j}{z_j} & \mbox{si } s | a_i \mbox{ pour tout } i, \\
0 & \mbox{sinon}. 
\end{array} \right. 
\end{multline*}
Il s'en suit que tout élément de $H^0(\mathbf{G}_m^n , \Omega^\bullet )$ appartient à un sous-espace $[s]_*$-invariant de dimension fini et qu'un monôme \eqref{E:monome} appartient à $H^0(\mathbf{G}_m^n, \Omega^\bullet)^{(1)}$ si et seulement si $\# J = n$ et tous les $a_i$ sont nuls. 
\qed 
 
La cohomologie de $\mathbf{G}_m^n$ est égale à l'algèbre extérieure sur les formes fermées $dz_i /z_i$. Le sous-espace $H^\bullet ( \mathbf{G}_m^n )^{(1)}$ est donc de dimension $1$ engendré par la forme $\wedge_{i=1}^{n} dz_i/z_i$. Le lemme \ref{L:B1} implique donc que, dans ce cas où $A =\mathbf{G}_m$ et $\Hyp$ est vide, le théorème \ref{P:Brieskorn} est bien vérifié.

\subsection{Initialisation dans le cas elliptique} \label{B2}
Dans le cas elliptique où $A =E$, l'initialisation de la récurrence correspond au cas où $\Hyp$ est constitué de $n$ hyperplans linéairement indépendants. Dans ce cas, le faisceau $\Omega^\bullet_{\langle T , \Hyp \rangle}$ coïncide avec le faisceau $\Omega^\bullet_T (\log \Hyp)$ des formes différentielles logarithmiques. C'est un complexe de faisceaux et, d'après Atiyah--Hodge \cite{AtiyahHodge}, Griffiths \cite{Griffiths} et Deligne \cite{DeligneH2}, on a un isomorphisme canonique
\begin{equation} \label{G-D}
H^k (T-\Hyp , \C)  = \mathbf{H}^k (T , \Omega^\bullet_T (\log \Hyp) );
\end{equation}
cf. \cite[Corollary 8.19]{Voisin}. De plus, la suite spectrale de Hodge--de Rham
$$E_1^{pq} = H^p (T , \Omega^q_T (\log \Hyp)) \Longrightarrow \mathbf{H}^{p+q} (T , \Omega^\bullet_T (\log \Hyp ) )$$
dégénère à la première page, cf. \cite[Corollaire 3.2.13, (ii)]{DeligneH2}.
 
Le lemme suivant implique donc que, dans ce cas, le théorème \ref{P:Brieskorn} est bien vérifié.
\begin{lemma} \label{L:affirmation}
Supposons que $\Hyp$ soit constitué de $n$ hyperplans linéairement indépendants. Alors le sous-espace $H^p(T, \Omega^q_T (\log \Hyp) )^{(1)}$ est nul pour tout $p > 0$.
\end{lemma}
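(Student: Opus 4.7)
\emph{Plan.} The plan combines Deligne's Hodge theory of the logarithmic de Rham complex with a direct eigenvalue calculation on weight-graded pieces. Since the $n$ hyperplanes are linearly independent, $\Hyp$ is a simple normal crossings divisor, so $\Omega^\bullet_{\langle T, \Hyp \rangle}$ coincides with the log de Rham complex $\Omega^\bullet_T(\log \Hyp)$. By the theorem of Deligne invoked just above the lemma, the Hodge-to-de Rham spectral sequence
\[
E_1^{p,q} = H^p(T, \Omega^q_T(\log \Hyp)) \Longrightarrow H^{p+q}(T - \Hyp, \C)
\]
degenerates at $E_1$, so $H^p(T, \Omega^q_T(\log \Hyp))$ is identified with $\mathrm{gr}_F^q H^{p+q}(T - \Hyp, \C)$ for the Hodge filtration $F^\bullet$; this identification is $[s]_*$-equivariant.

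I would first reduce to the case where $[s]$ is the pure dilation $\mathbf{a} \mapsto s\mathbf{a}$. By linear independence, the $n$ hyperplanes intersect in a unique torsion point $p_0$; translating by $-p_0$ brings each $H_i$ through the origin as a sub-torus $\ker \chi_{0,i}$. The conjugated dilation has the form $\mathbf{a} \mapsto t' + s\mathbf{a}$, and the requirement that $t'$ preserve $\bigcup_i H_i$ forces $t' \in \bigcap_i H_i = \{0\}$.

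Next I would apply Deligne's weight spectral sequence
\[
E_1^{-r, k + r} = \bigoplus_{|I| = r} H^{k-r}(H_I, \C)(-r) \Longrightarrow H^k(T - \Hyp, \C),
\]
where $H_I = \bigcap_{i \in I} H_i$ is (up to torsion translates permuting its connected components) an abelian subvariety of dimension $n - r$. On an abelian variety of dimension $d$, the relation $[s]_*[s]^* = s^{2d}$ together with $[s]^* = s^\ell$ on $H^\ell$ gives $[s]_* = s^{2d - \ell}$; hence $[s]_*$ acts on $H^{k-r}(H_I)$ with eigenvalue $s^{2(n-r) - (k-r)} = s^{2n-r-k}$ (a cyclic permutation of components introduces a factor of a root of unity which, for $s > 1$, cannot produce $1$ unless the exponent $2n - r - k$ already vanishes). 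The Tate twist $(-r)$ does not alter this eigenvalue because $[s]_*$ acts as the identity on $\C(-1) \cong H^2(E)$ (since $[s]^* = s^2 = \deg[s]$ on $H^2(E)$).

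Setting $s^{2n - r - k} = 1$ forces $r + k = 2n$; combined with $r \leq n$ and $k \leq n$ (the latter because $T - \Hyp$ is affine of complex dimension $n$ by Lemma \ref{affineb}), this leaves only $r = k = n$. The surviving contribution $E_1^{-n, 2n} = H^0(D^{(n)})(-n)$ is of pure Hodge type $(n, n)$, so $H^n(T - \Hyp, \C)^{(1)}$ lies in $F^n H^n(T - \Hyp)$, which via Hodge-to-de Rham equals $H^0(T, \Omega^n_T(\log \Hyp))$. Consequently, for every $p > 0$ the piece $H^p(T, \Omega^q_T(\log \Hyp))^{(1)} = \mathrm{gr}_F^q H^{p+q}(T - \Hyp, \C)^{(1)}$ vanishes: either $p + q < n$ and $H^{p+q}(T - \Hyp)^{(1)} = 0$ outright by the eigenvalue analysis, or $p + q = n$ with $q < n$, in which case the $(1)$-part lies in $F^n \subseteq F^{q+1}$ so the $q$-th Hodge-de Rham graded piece is zero. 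The principal obstacle is the weight-spectral-sequence eigenvalue analysis, in particular justifying the trivial action of $[s]_*$ on Tate twists and correctly handling the permutation of connected components of the $H_I$.
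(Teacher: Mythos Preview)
Your approach is correct and takes a genuinely different route from the paper's. The paper argues directly at the sheaf level: using the linear independence of the hyperplanes it writes $\Omega_T^q(\log\Hyp)\simeq\bigoplus_{|J|=q}\mathcal{O}\big(\sum_{j\in J}H_j\big)$ via $f\mapsto f\,\omega_J$, then computes $H^p\big(T,\mathcal{O}(\sum_{j\in J}H_j)\big)$ and the $[s]_*$-action on it by a Leray spectral sequence for the projection $A^n\to A^{|J|}$ determined by $(\chi_j)_{j\in J}$, together with Mumford vanishing for ample line bundles on abelian varieties and an explicit theta-function calculation. The upshot is that for $p>0$ either the group vanishes (when $p+q\geq n$) or $[s]_*$ acts on it by the scalar $s^{2n-p}\neq s^q$, so no generalized eigenvalue $1$ survives on $H^p(T,\Omega^q_T(\log\Hyp))$. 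Your route instead passes through the weight spectral sequence and the strictness of morphisms of mixed Hodge structures; this is more conceptual and closer in spirit to the referees' remarks the authors record after Lemme~\ref{C2} and at the end of the proof of Théorème~\ref{P:Brieskorn}. The paper's method, by contrast, avoids mixed Hodge theory entirely and gives the precise scalar action along the way.

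Two small points deserve tightening. First, your reduction step is slightly off: the intersection $\bigcap_i H_i$ is a finite torsion subgroup, not a single point, so after translation one only gets $t'\in\bigcap_i H_i$, not $t'=0$. This is harmless, since translation by a torsion point acts trivially on cohomology and since the paper's standing hypothesis on $[s]$ already forces it to preserve the connected components of every stratum, making your root-of-unity worry moot. Second, the step ``$H^n(T-\Hyp)^{(1)}\subset F^n$'' needs a word: the generalized $1$-eigenspace is the kernel of a power of the MHS morphism $[s]_*-1$, hence a sub-MHS; your eigenvalue analysis shows its weight-graded pieces vanish except in weight $2n$, where the type is $(n,n)$, and strictness of the Hodge filtration then yields the inclusion into $F^n$. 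Equivalently, and perhaps more directly for your conclusion, one checks $\mathrm{gr}_{2n}^W\mathrm{gr}_F^qH^n=0$ for $q<n$, so every weight-graded piece of $\mathrm{gr}_F^qH^n$ carries an $[s]_*$-eigenvalue $\neq 1$, forcing $(\mathrm{gr}_F^qH^n)^{(1)}=0$.
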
 
\begin{proof}
On démontre le lemme en explicitant les groupes de cohomologie $H^p(T, \Omega^q_T (\log \Hyp) )^{(1)}$. On commence par remarquer que l'on a un isomorphisme
\begin{equation} \label{E:isomF}
\bigoplus_{i=1}^n \mathcal{O}(H_i) \stackrel{\sim}{\longrightarrow} \Omega^1_T (\log \Hyp) ,
\end{equation}
où $\Hyp = \{ H_1, \dots, H_n \}$. Pour tout $i \in \{ 1 , \ldots , n \}$ soit $\omega_i$ une $1$-forme différentielle sur $T$ obtenue comme tirée en arrière par un caractère définissant $H_i$ d'une $1$-forme partout non-nulle sur $A$. 
L'application 
$$(f_i) \in \oplus_i \mathcal{O} (H_i) \longmapsto \sum f_i \omega_i $$
est globalement définie et induit un isomorphisme au niveau des fibres; elle réalise l'isomorphisme \eqref{E:isomF}. 

Noter que $[s]^* \omega_i = s \omega_i$; pour tout $f \in \mathcal{O} (H_i )$ on a donc
$$[s]_*( f \omega_i) = (s^{-1} [s]_* f) \omega_i.$$
Comme par ailleurs 
$$\Omega^q_T (\log \Hyp) = \wedge^q \Omega^1_T (\log \Hyp) \simeq  \bigoplus_{\substack{J \subset \{1, \ldots , n \} \\ \# J = q}} \mathcal{O}(\sum_{j \in J} H_j )$$
le lemme \ref{L:affirmation} découle du lemme qui suit.
\end{proof} 

\begin{lemma} 
Soit $J \subset \{1, \ldots , n \}$ de cardinal $q$ et soit $p$ un entier strictement positif. Le morphisme $[s]_*$  opère sur $H^p(T, \mathcal{O}(\sum_{j \in J} H_j))$ par le scalaire $s^{2n-p}$ et l'espace $H^p(T, \mathcal{O}(\sum_{j \in J} H_j))$ est réduit à $0$ si $n\leq p+q$. 
\end{lemma}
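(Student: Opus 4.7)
The statement is Künneth-like in nature, so my plan is to reduce to a product of elliptic curves via the natural isogeny determined by the hyperplanes, and then compute both the vanishing and the scalar action by the Künneth formula together with elementary computations on a single elliptic curve.

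Since $H_1,\dots,H_n$ are linearly independent, each quotient $\pi_j\colon T\to T/H_j=:E_j$ is an elliptic curve, and the combined morphism $\chi=(\pi_1,\dots,\pi_n)\colon T\to\prod_{i=1}^n E_i$ is an isogeny that commutes with $[s]$ on source and target (so that $\chi_{*}\chi^{*}=\deg(\chi)\cdot\mathrm{id}$ makes $\chi^{*}$ rationally injective). Because each $\pi_j$ is primitive, $\pi_j^{*}\mathcal O_{E_j}(0)=\mathcal O_T(H_j)$, and therefore $\chi^{*}\mathcal O_{\prod E_i}(\sum_{j\in J}\{z_j=0\})=\mathcal O_T(\sum_{j\in J}H_j)$. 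Using this I would reduce to the case $T=\prod_iE_i$ with $H_j=\{z_j=0\}$ the coordinate hyperplane, where $\mathcal O(\sum_{j\in J}H_j)$ is the external product $\boxtimes_iL_i$ with $L_i=\mathcal O_{E_i}(0)$ for $i\in J$ and $L_i=\mathcal O_{E_i}$ otherwise. Künneth then yields
\[
H^p\bigl({\textstyle\prod_iE_i},{\textstyle\boxtimes_iL_i}\bigr)=\bigoplus_{p_1+\cdots+p_n=p}\bigotimes_{i=1}^n H^{p_i}(E_i,L_i),
\]
and on a single elliptic curve one has $H^0(E,\mathcal O)=H^1(E,\mathcal O)=H^0(E,\mathcal O(0))=\C$ and $H^1(E,\mathcal O(0))=0$ by Serre duality.

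For the vanishing, a non-zero Künneth summand requires $p_i=0$ for every $i\in J$; the remaining $p_i\in\{0,1\}$ (with $i\notin J$) must sum to $p$, so $p\le n-q$, and hence the group vanishes once $p+q>n$. For the scalar action I compute factor by factor on the degree-$s^2$ étale cover $[s]\colon E_i\to E_i$: the trace $[s]_{*}$ acts by $s^2$ on $H^0(E_i,\mathcal O)$ and on $H^0(E_i,\mathcal O(0))$ (trace of a constant section) and by $s$ on $H^1(E_i,\mathcal O)$ (since $[s]^{*}$ acts by $s$ on $H^{0,1}(E_i)$ and $[s]_{*}[s]^{*}=s^2$). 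Each contributing Künneth summand is then multiplied by $\prod_i s^{2-p_i}=s^{2n-p}$, the same scalar for every contributing summand; consequently $[s]_{*}$ operates on the whole group by $s^{2n-p}$.

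\textbf{Main obstacle.} The delicate point is the reduction step: one must match the $[s]_{*}$-actions on $T$ and on $\prod E_i$ through $\chi$, using the decomposition $\chi_{*}\mathcal O_T=\bigoplus_\eta\mathcal L_\eta$ into line bundles in $\mathrm{Pic}^0(\prod E_i)$ indexed by the characters $\eta$ of $\ker\chi$. Because $[s]$ commutes with $\chi$, this action preserves the decomposition (up to permuting the $\eta$'s by the $s$-action on $\ker\chi$), so applying Künneth summand by summand delivers the same scalar $s^{2n-p}$ on each; once this bookkeeping is settled the remainder is a clean elementary computation on the individual curves $E_i$.
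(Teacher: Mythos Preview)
Your approach is correct and takes a genuinely different route from the paper. The paper projects only along the $q$ characters indexed by $J$, obtaining $\pi\colon T\to A^q$ with $\mathcal{O}(\sum_{j\in J}H_j)=\pi^*\mathcal{L}$, and then runs the Leray spectral sequence for (the Stein factorisation of) $\pi$ together with Mumford's vanishing theorem on abelian varieties to arrive at a decomposition $H^p(T,\pi^*\mathcal{L})\cong H^0(B,g^*\mathcal{L})\otimes H^p(A^{n-q},\mathcal{O})$; the scalar $s^{2n-p}$ is then read off as $s^{2q}$ on the theta sections (via Beauville) times $s^{2(n-q)-p}$ on $H^{0,p}$ of the $(n-q)$-dimensional fibre. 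Your argument instead uses \emph{all} $n$ characters at once to produce an isogeny $\chi\colon T\to\prod_iE_i$, and then appeals only to K\"unneth and Serre duality on curves; this is more elementary (no Leray, no Mumford vanishing), at the price of having to control the torsion twists $\mathcal{L}_\eta$ appearing in $\chi_*\mathcal{O}_T=\bigoplus_\eta\mathcal{L}_\eta$.

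One point in your ``bookkeeping'' deserves to be made sharper: if $[s]$ genuinely permutes the characters $\eta$ nontrivially, then $[s]_*$ acts by $s^{2n-p}$ times a permutation matrix, which is \emph{not} a scalar. What saves you is precisely the paper's standing hypothesis on $[s]$ from \S\ref{S:dil}: $[s]$ was chosen so as to preserve the connected components of $\bigcap_iH_i=\ker\chi$, which (this intersection being finite) forces $[s]$ to fix $\ker\chi$ pointwise and hence fix every $\eta$. You should invoke that hypothesis explicitly rather than leave the permutation as a loose end; the paper does the same in its own proof when it asserts that $[s]_*$ preserves each summand $H^0(A^q,\mathcal{L}_j')$.
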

En particulier $s^q$ n'est pas un valeur propre généralisée.
\proof
Lorsque $q = n$, le faisceau $\mathcal{O}(\sum_{i=1}^n H_i)$ est ample. Or il découle du théorème d'annulation de Mumford \cite[\S III.16]{Mumford} que la cohomologie supérieure, de degré $p>0$, des fibrés amples est nulle sur une variété abélienne.

Supposons maintenant $q<n$ et supposons pour simplifier $T=A^n$. Les caractères définissant les hyperplans $H_j$ ($j \in J$) induisent un morphisme de variétés abéliennes 
$$\pi: A^n \rightarrow A^q$$
et le fibré en droites $\mathcal{O}(\sum_{j \in J} H_j )$ sur $A^n$ est égal au tiré en arrière $\pi^* \mathcal{L}$ du fibré en droites 
$\mathcal{L}$ sur $A^q$ associé au diviseur $z_1 \dots z_q = 0$ égal à la somme des hyperplans de coordonnées. 

Commençons par remarquer qu'en général si $f : C \to B$ est un morphisme de variétés abéliennes à fibres connexes alors les $R^j f_* \mathcal{O}_C$ sont des fibrés vectoriels triviaux. Pour le voir, on note $F$ le noyau de $f$; c'est une variété abélienne et il existe une sous-variété abélienne $B' \subset C$ telle que 
\begin{itemize}
\item l'application somme $F \times B' \to C$ et
\item l'application induite $B' \to B$
\end{itemize} 
soient des isogénies (voir par exemple \cite[Proposition 12.1]{Milne}). Notons $C' = F \times B' $ et $f' : C' = F \times B' \to B'$. Le faisceau de cohomologie $R^j f'_* \mathcal{O}_{C'}$ est un faisceau constant de fibre $H^j (F , \mathcal{O}_F)$. Comme les translations de $F$ opèrent trivialement sur $H^j (F , \mathcal{O}_F)$, ce faisceau constant sur $B'$ descend sur $B$ en un faisceau, nécessairement égal à $R^j f_* \mathcal{O}_C$, constant de fibre $H^j (F, \mathcal{O}_F)$. 

En général le morphisme $\pi$ n'est pas à fibres connexes mais, comme $\pi$ est un morphisme de groupes, les composantes connexes des fibres sont toutes de même dimension et le quotient $B$ de $A^n$ par les composantes connexes des fibres est un revêtement fini de $A^q$. On peut donc factoriser $\pi$ en la composition $\pi = g \circ \pi'$ de morphismes $\pi': A^n \to B$ et $g : B \to A^q$ avec $\pi'$ à fibres connexes et $g$ fini. Notons $F$ la fibre de $\pi'$. La suite spectrale de Leray associée au morphisme $\pi '$ s'écrit~:
\begin{equation} \label{suiteLeray}
H^r (B , R^p  \pi '_*(\pi {}^* \mathcal{L} )) \Longrightarrow H^{p+r} (A^n , \pi {}^* \mathcal{L}).
\end{equation}
Par la formule de projection, on a un isomorphisme
\begin{equation} \label{ProjFormula}
R^p  \pi '_*(\pi {}^* \mathcal{L} ) = R^p  \pi '_*(\pi' {}^* g^* \mathcal{L} ) \cong g^*\mathcal{L} \otimes R^p \pi ' _* \mathcal{O}_{A^n} = g^*\mathcal{L} \otimes H^p (F ,  \mathcal{O}_F),
\end{equation}
où la dernière égalité découle du paragraphe précédent. 

Puisque les composantes connexes des fibres de $\pi$ sont des copies de $A^{n-q}$, on obtient que 
$$H^r (B , R^p  \pi '_*(\pi^* \mathcal{L} )) \cong H^r (B , g^*\mathcal{L}) \otimes H^p (A^{n-q} , \mathcal{O}_{A^{n-q}} ).$$
Il découle à nouveau du théorème d'annulation de Mumford que ce groupe s'annule pour $r>0$ car $g^*\mathcal{L}$ est ample. La suite spectrale \eqref{suiteLeray} dégénère donc et on obtient un isomorphisme 
\begin{equation} \label{HpDec}
H^p (A^n , \pi^* \mathcal{L}) \cong H^0 (B , g^*\mathcal{L} ) \otimes H^p (A^{n-q} , \mathcal{O}_{A^{n-q}} ).
\end{equation}
Notons de plus que 
$$H^0 (B, g^*\mathcal{L} ) = H^0 (A^q , g_* (g^* \mathcal{L})) = H^0 (A^q , \mathcal{L} \otimes g_* \mathcal{O}_B)$$
où $g_* \mathcal{O}_B$ est une somme de fibrés en droites de torsion $\mathcal{T}_1 , \ldots  ,\mathcal{T}_r$. On a donc
$$H^0 (B, g^*\mathcal{L} ) = \bigoplus_{j=1}^r H^0 (A^q , \mathcal{L}_j')$$ 
où $\mathcal{L}_j ' = \mathcal{L} \otimes \mathcal{T}_j$.

Par hypothèse, la multiplication par $s$ sur $A^n$ préserve les composantes connexes des fibres de $\pi$ et donc sa décomposition de Stein. L'endomorphisme induit $[s]_*$ sur
$$H^p (A^n , \mathcal{O}(\sum_{j \in J} H_j)) = H^p (A^n , \pi^* \mathcal{L})$$
préserve donc la décomposition \eqref{HpDec} et chaque terme $H^0 (A^q , \mathcal{L}_j')$ de la décomposition de $H^0 (B, g^*\mathcal{L} )$. On étudie l'action de $[s]_*$ sur chacun des facteurs du produit tensoriel~:
\begin{itemize}
\item L'endomorphisme $[s]_*$ opère sur\footnote{Ici on utilise que la suite spectrale de Hodge--de Rham pour $A^{n-q}$ dégénère en $E_1$.}
$$H^p (A^{n-q} , \mathcal{O}_{A^{n-q}} ) \hookrightarrow H^p(A^{n-q}, \C)$$
par $s^{2n-2q-p}$. De plus, le groupe de gauche est nul si $n \leq p+q$. 
\item  Comme $H^r (A^q , \mathcal{L}_j ')=0$ pour $r>0$, il découle par exemple de \cite[Theorem 13.3]{Milne} que chaque groupe $H^0 (A^q , \mathcal{L}_j')$ est de rang $1$. La section canonique est une fonction theta sur laquelle l'homomorphisme $[s]_*$ opère par multiplication par $s^{2q}$; voir par exemple \cite[Eq. (3.1)]{Beauville}. 
\end{itemize}
Finalement $[s]_*$ opère sur $H^p (A^n , \mathcal{O}(\sum_{j \in J} H_j))$ par le scalaire $s^{2n-p}$ et le groupe $H^p (A^n , \mathcal{O}(\sum_{j \in J} H_j))$ est nul si $n\leq p+q$. Il est donc exclu d'obtenir $s^q$ comme valeur propre. 
\qed

\subsection{Pureté, par récurrence}
 
\begin{lemma}  \label{C1}
La partie invariante de la cohomologie $H^j(T -\Hyp )^{(1)}$ est pure (comme structure de Hodge \cite{DeligneH2}) de poids $2j$ pour $j \leq \dim(T)$.  En particulier, si $T-\Hyp$ est affine, 
la cohomologie est pure en tout degré. 
\end{lemma}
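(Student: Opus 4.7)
Le plan est de procéder par récurrence sur le couple $(\dim T, \#\Hyp)$ dans l'ordre lexicographique, suivant le même schéma de récurrence que pour le théorème \ref{P:Brieskorn} (\S\ref{travaux dupont}). Les cas d'initialisation sont traités par les calculs de \S\ref{B1} et \S\ref{B2}. Dans le cas multiplicatif avec $\Hyp = \emptyset$, la formule de Künneth donne $H^j(\mathbf{G}_m^n)$ pur de poids $2j$, et le lemme \ref{L:B1} montre que la partie $[s]_*$-invariante s'annule pour $j < n$ tandis que $H^n(\mathbf{G}_m^n)^{(1)} = H^n(\mathbf{G}_m^n)$ est pur de poids $2n$, d'où l'affirmation. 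Dans le cas elliptique avec $\#\Hyp = n$ hyperplans linéairement indépendants, la dégénérescence en $E_1$ de Deligne appliquée à $\Omega^\bullet_T(\log \Hyp)$ combinée au lemme \ref{L:affirmation} donne $H^j(T-\Hyp)^{(1)} = 0$ pour $j \leq n$, et l'énoncé est donc vérifié de manière vide.

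Pour l'étape de récurrence, on choisit $H \in \Hyp'$ avec $\Hyp = \Hyp' \setminus \{H\}$ tel que $T - \Hyp$ et $H - (\Hyp \cap H)$ soient tous deux affines, comme dans \eqref{E:diag}. En passant à l'hypercohomologie dans la suite exacte courte \eqref{dupont} de Dupont et en utilisant le quasi-isomorphisme \eqref{Dqi}, on obtient la suite exacte longue de Gysin dans la catégorie des structures de Hodge mixtes
\begin{equation*}
\cdots \to H^{j-2}(H - \Hyp \cap H)(-1) \to H^j(T - \Hyp) \to H^j(T - \Hyp') \to H^{j-1}(H - \Hyp \cap H)(-1) \to \cdots,
\end{equation*}
où la torsion de Tate $(-1)$ reflète le décalage de résidu dans la filtration par les poids de Dupont. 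Cette suite est $[s]_*$-équivariante, quitte à sommer sur l'orbite de $H$ sous $[s]$ dans $\Hyp' \setminus \Hyp$ afin d'assurer la stabilité sous la dilatation. Comme tous les groupes de cohomologie en jeu sont de dimension finie, le foncteur « sous-espace caractéristique généralisé pour la valeur propre $1$ de $[s]_*$ » est exact, et on en déduit une suite exacte longue des parties $(\cdot)^{(1)}$.

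Par l'hypothèse de récurrence appliquée à $(T, \Hyp)$ (où $\#\Hyp < \#\Hyp'$) et à $(H, \Hyp \cap H)$ (où $\dim H = n-1 < n$), chaque $H^k(T-\Hyp)^{(1)}$ est pur de poids $2k$, et chaque $H^k(H - \Hyp \cap H)(-1)^{(1)}$ est pur de poids $2k+2$, dans les intervalles pertinents. La stricte compatibilité des morphismes de structures de Hodge mixtes avec la filtration par les poids force alors l'annulation des deux applications de bord $H^{j-2}(H - \Hyp \cap H)(-1)^{(1)} \to H^j(T-\Hyp)^{(1)}$ (poids $2j-2 \to 2j$) et $H^{j-1}(H - \Hyp \cap H)(-1)^{(1)} \to H^{j+1}(T-\Hyp)^{(1)}$ (poids $2j \to 2j+2$, ou bien $H^{j+1}(T-\Hyp) = 0$ par annulation d'Artin lorsque $j = n$). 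On en extrait donc une suite exacte courte
\begin{equation*}
0 \to H^j(T-\Hyp)^{(1)} \to H^j(T-\Hyp')^{(1)} \to H^{j-1}(H - \Hyp \cap H)(-1)^{(1)} \to 0
\end{equation*}
dont les deux termes extrêmes sont purs de poids $2j$. Les extensions de structures de Hodge pures d'un même poids restant pures, le terme central $H^j(T-\Hyp')^{(1)}$ est donc également pur de poids $2j$. L'assertion « En particulier » résulte ensuite de l'annulation $H^j(T-\Hyp) = 0$ pour $j > \dim T$ lorsque $T - \Hyp$ est affine.

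Le principal obstacle consistera à vérifier que la filtration par les poids de Dupont sur $\mathbf{H}^*(T, \Omega^\bullet_{\langle T, \Hyp \rangle})$ coïncide effectivement avec la filtration par les poids de la structure de Hodge mixte canonique sur $H^*(T-\Hyp)$ dans ce contexte qui n'est pas à croisements normaux (comparaison contenue dans \cite{DupontC}), et à confirmer que l'application résidu dans \eqref{dupont} induit bien l'application de bord attendue avec torsion de Tate dans la suite de Gysin. Une fois ces compatibilités établies, l'argument de stricte compatibilité se déroule sans encombre et la récurrence se conclut.
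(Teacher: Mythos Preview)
Your strategy---induction on $(\dim T,\#\Hyp)$ via the Gysin sequence, exactness of the functor $(\cdot)^{(1)}$ on finite-dimensional spaces, and strict compatibility of weights to kill the connecting maps---is exactly the paper's.

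There is, however, a genuine gap in your elliptic base case. You claim that degeneration of the Hodge--de~Rham spectral sequence together with Lemma~\ref{L:affirmation} yields $H^j(T-\Hyp)^{(1)}=0$ for all $j\leq n$ when $\#\Hyp=n$. But Lemma~\ref{L:affirmation} only asserts that $H^p(T,\Omega^q_T(\log\Hyp))^{(1)}=0$ for $p>0$; it says nothing about $H^0(T,\Omega^j)^{(1)}$, which is precisely the surviving piece after degeneration. Your vanishing claim is in fact correct (one checks that $[s]_*$ acts on $H^0(T,\Omega^j)$ with eigenvalue $s^{2n-j}\neq 1$, as can be extracted from the proof of the lemma following \ref{L:affirmation}), but it does not follow from what you cite. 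The paper avoids this issue altogether by taking $\#\Hyp=0$ as the base case in both the multiplicative and elliptic settings: in the elliptic case $H^j(T)^{(1)}$ is concentrated in degree $2n$, so the statement for $j\leq n$ is vacuous. The paper's induction for this lemma also drops the affineness hypothesis on $T-\Hyp$ (the Gysin sequence needs no such assumption), which is what permits descending to $\#\Hyp=0$; your insistence on affineness is unnecessary here and is precisely what forces you into the more delicate base case. Relatedly, since your induction carries the affine hypothesis throughout, you are really proving the ``En particulier'' clause directly, not deducing it from the general statement as your last line suggests.

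Your closing concerns about matching Dupont's weight filtration with the canonical one are not needed for this lemma: the Gysin sequence for a smooth divisor in a smooth variety is a sequence of mixed Hodge structures with the standard Tate twist, independently of \eqref{dupont}.
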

\proof
Pour $n=0$ le lemme est immédiat. Lorsque $n$ est strictement positif mais que $\# \Hyp=0$,
la cohomologie invariante n'intervient qu'en degré cohomologique maximal. Dans le cas elliptique ce degré est $2 \dim (T)$ et il n'y a rien à démontrer. Dans le cas multiplicatif le lemme se déduit du fait que $H^1(\mathbf{G}_m)$ est pur de poids $2$. 

Pour l'étape de récurrence (sur $(n, \# \Hyp )$), on considère un triplet \eqref{E:diag} tel que $(T,\Hyp )$ et $(H , (\Hyp \cap H))$ vérifient tous les deux les conclusions du lemme \ref{C1}. La suite exacte longue de Gysin en cohomologie associée à ce triplet s'écrit
\begin{multline} \label{LESGysin}
H^{j-2}( H - (H \cap \Hyp ),  \C(-1)) \stackrel{\delta_1}{\rightarrow} H^j( T - \Hyp ) \rightarrow  H^j(T - \Hyp ' ) \\ \rightarrow H^{j-1}(H - (H \cap \Hyp),  \C(-1)) 
\stackrel{\delta_2}{\rightarrow} 
H^{j+1}( T - \Hyp ).
\end{multline}
Comme $\C (-1)$ est de poids $2$ et que les applications de la suite exacte ci-dessus préservent la filtration par le poids et sont compatibles avec l'action de $[s]_*$, on conclut que $H^j(T - \Hyp ' )^{(1)}$ est pure de poids $2j$ pour $j \leq \dim(T)$. Cela prouve la première assertion du lemme.
 
La deuxième partie du lemme s'en déduit puisque, si $T-\Hyp$ est affine, la cohomologie de $T-\Hyp$ s'annule pour $j > \dim(T)$. 
\qed

\begin{lemma} \label{exactness in topology}
Supposons que $T - \Hyp$ et $H - (H \cap \Hyp)$ soient affines. Alors  
$$0 \rightarrow H^j( T - \Hyp)^{(1)} \rightarrow  H^j(T - \Hyp')^{(1)} \stackrel{\mathrm{Res}}{\longrightarrow} H^{j-1}(H - (H \cap \Hyp),  \C(-1))^{(1)} \rightarrow 0$$
est une suite exacte courte. 
\end{lemma}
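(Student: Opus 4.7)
\medskip

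The plan is to deduce the claimed short exact sequence directly from the long Gysin sequence \eqref{LESGysin} used in the proof of Lemma~\ref{C1}, by showing that after restriction to the $(1)$-parts the two outer connecting homomorphisms $\delta_{1}$ and $\delta_{2}$ vanish, purely for weight reasons.

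First I will verify that the three open complements appearing are all affine, so that Lemma~\ref{C1} yields purity of their $(1)$-parts in every degree. By hypothesis $T-\Hyp$ and $H-(H\cap\Hyp)$ are affine, and $T-\Hyp'$ is the complement in the affine variety $T-\Hyp$ of the principal Cartier divisor defined by a local equation for $H$, so it is also affine. Next I will record that every arrow in \eqref{LESGysin} is natural and is therefore a morphism of mixed Hodge structures which commutes with the dilatation $[s]_{*}$; consequently the generalized eigenspaces for the eigenvalue $1$ form a subcomplex of \eqref{LESGysin}, and the weight filtration restricts to it.

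The core of the argument is then weight bookkeeping. By Lemma~\ref{C1} the subspaces $H^{j}(T-\Hyp)^{(1)}$ and $H^{j}(T-\Hyp')^{(1)}$ are pure of weight $2j$, while $H^{j-1}(H-(H\cap\Hyp),\C(-1))^{(1)}$ is pure of weight $2(j-1)+2=2j$; so the middle three terms of \eqref{LESGysin} are concentrated in a single common weight. On the other hand the source of $\delta_{1}$ restricted to its $(1)$-part is pure of weight $2(j-2)+2=2j-2$, and the target $(1)$-part is pure of weight $2j$. Since $\delta_{1}$ preserves weights and commutes with $[s]_{*}$, its image in $H^{j}(T-\Hyp)^{(1)}$ lies simultaneously in weight $\leq 2j-2$ and in weight $=2j$, hence vanishes. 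The same dichotomy applies to $\delta_{2}$, whose target $(1)$-part is pure of weight $2j+2$ (or zero when $j+1>\dim T$) while its source $(1)$-part is pure of weight $2j$. Truncating \eqref{LESGysin} at these two vanishing maps produces the desired short exact sequence.

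The only non-routine ingredient is the compatibility of the Gysin long exact sequence simultaneously with the mixed Hodge structures of Deligne and with the dilatation action; both statements are standard (the Gysin maps are induced by natural push–pull operations for algebraic varieties and algebraic morphisms), and everything else is a formal weight truncation. I therefore expect no obstacle beyond citing these compatibilities and then reading off the result from the weights given by Lemma~\ref{C1}.
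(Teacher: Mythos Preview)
Your proof is correct and follows essentially the same route as the paper: pass to the $(1)$-part of the Gysin long exact sequence \eqref{LESGysin} and kill $\delta_1$ and $\delta_2$ by comparing the pure weights supplied by Lemma~\ref{C1}. The paper adds one small point you glossed over, namely an explicit projector argument showing that the functor $H\mapsto H^{(1)}$ is exact (so that the $(1)$-parts form not just a subcomplex but an exact one); conversely your observation that $T-\Hyp'$ is itself affine is true but not actually needed, since only the purity of $H^{\bullet}(T-\Hyp)^{(1)}$ and $H^{\bullet}(H-(H\cap\Hyp))^{(1)}$ enters the weight comparison for $\delta_1$ and $\delta_2$.
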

\proof
Puisque $T - \Hyp$ et $H - (H \cap \Hyp)$ sont affines, il découle du lemme \ref{C1} que pour tout degré $j$ les parties invariantes de la cohomologie $H^j(T -\Hyp )^{(1)}$ et $H^j(H- (H \cap \Hyp) )^{(1)}$ sont pures de poids $2j$.

Les membres de la suite \eqref{LESGysin} appartiennent à la catégorie des $\C$-espaces vectoriels de dimension finie munis d'une action linéaire de $\Z$ (le groupe engendré par $[s]_*$). Le foncteur qui à un tel espace $H$ associe $H^{(1)}$ est exact. On peut en effet décomposer $H$ en la somme directe $\oplus H^{(\lambda)}$ des sous-espaces caractéristiques de $[s]_*$ et l'application 
$$\alpha  \mapsto \frac{1}{\prod_{\lambda \neq 1} (1- \lambda)^m} \prod_{\lambda \neq 1} ([s]_* - \lambda)^m \quad \left( m = \dim \ H \right)$$
est un projecteur sur $H^{(1)}$.\footnote{C'est pour cette raison qu'on considère le sous-espace caractéristique $H^{(1)}$ plutôt que le sous-espace des vecteurs $1$-propre.}  

En passant aux parties invariantes dans la suite \eqref{LESGysin}, on obtient donc une suite exacte dont le terme de gauche est pur de poids $2j-2$ alors que le terme suivant est pur de poids $2j$. L'application $\delta_1$ est donc nulle. De même le terme de droite est pur de poids $2j+2$ alors que le terme précédent $H^{j-1}(H - (H \cap \Hyp),  \C(-1))^{(1)}$ est pur de poids $2j$. L'application $\delta_2$ est donc nulle elle aussi. 
\qed

\medskip

On compare maintenant ces suites exactes à la suite exacte longue associée à \eqref{dupont}, c'est-à-dire, 
\begin{multline} \label{dupont2}  
H^p(T, \Omega^q_{\langle T ,\Hyp \rangle} )^{(1)} \rightarrow H^p(T, \Omega^q_{\langle T , \Hyp ' \rangle })^{(1)}  \rightarrow H^p(H,  \Omega^{q-1}_{\langle H , \Hyp \cap H\rangle })^{(1)} \\
	\stackrel{\delta}{\rightarrow} H^{p+1}(T, \Omega^q_{\langle T ,\Hyp \rangle})^{(1)}. 
\end{multline}
Le quasi-isomorphisme \eqref{Dqi} donne plus précisément lieu à une suite spectrale qui calcule la cohomologie de $T-\Hyp$ et le lemme suivant montre que la première page de cette suite spectrale permet de calculer la partie invariante de la cohomologie de $T-\Hyp$.

\begin{lemma}\label{C2}
Supposons que $A$ soit une courbe elliptique et que $T-\Hyp$ soit affine. Alors la suite spectrale de Hodge--de Rham 
\begin{equation} \label{SSHdR}
H^p(T, \Omega^q_{\langle T , \Hyp \rangle})^{(1)} \Longrightarrow H^{p+q}(T-\Hyp)^{(1)}
\end{equation} 
dégénère à la première page et tous les morphismes de connexions $\delta$ dans \eqref{dupont2} sont nuls.    
\end{lemma}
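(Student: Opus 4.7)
Le plan est de démontrer les deux assertions (dégénérescence et annulation des $\delta$) simultanément par une récurrence sur $\#\Hyp$, le point-clé étant un argument de comparaison de dimensions qui force l'égalité dans trois inégalités naturelles. Cela exploite de manière essentielle le fait, déjà démontré, que le foncteur $H \mapsto H^{(1)}$ est exact sur la catégorie des $\C$-espaces vectoriels de dimension finie munis d'une action de $\Z$, de sorte que toutes les suites exactes longues restent exactes après passage aux parties invariantes.

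Le cas initial est celui où $\Hyp$ est constitué de $n$ hyperplans linéairement indépendants ; c'est le plus petit cas où $T - \Hyp$ est affine d'après le lemme \ref{affineb}. Dans ce cas le faisceau $\Omega^\bullet_{\langle T, \Hyp\rangle}$ coïncide avec $\Omega^\bullet_T(\log \Hyp)$ et la dégénérescence à la première page découle du théorème de Deligne. En outre le lemme \ref{L:affirmation} assure que $H^p(T, \Omega^q_T(\log \Hyp))^{(1)} = 0$ pour tout $p>0$, ce qui rend la dégénérescence évidente sur la partie invariante.

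Pour l'étape inductive, on écrit $\Hyp' = \Hyp \sqcup \{H\}$ avec $H$ choisi de sorte que $T - \Hyp$ soit encore affine (ce qui entraîne automatiquement que $H - (H\cap\Hyp)$ est affine). Par hypothèse de récurrence, la suite spectrale pour $(T, \Hyp)$ et pour $(H, H\cap\Hyp)$ dégénère sur la partie invariante, et les $\delta$ correspondants sont nuls. On combine alors trois ingrédients~: d'une part la suite exacte longue \eqref{dupont2} fournit, après passage aux parties invariantes,
\[
\dim H^p(T, \Omega^q_{\langle T, \Hyp'\rangle})^{(1)} \leq \dim H^p(T, \Omega^q_{\langle T, \Hyp\rangle})^{(1)} + \dim H^p(H, \Omega^{q-1}_{\langle H, \Hyp\cap H\rangle})^{(1)},
\]
avec égalité si et seulement si les deux $\delta$ adjacents s'annulent ; d'autre part la suite spectrale \eqref{SSHdR} pour $(T, \Hyp')$ fournit
\[
\dim H^j(T-\Hyp')^{(1)} \leq \sum_{p+q=j} \dim H^p(T, \Omega^q_{\langle T, \Hyp'\rangle})^{(1)},
\]
avec égalité si et seulement si elle dégénère à $E_1$ ; enfin le lemme \ref{exactness in topology} donne
\[
\dim H^j(T-\Hyp')^{(1)} = \dim H^j(T-\Hyp)^{(1)} + \dim H^{j-1}(H - (H\cap\Hyp))^{(1)}.
\]

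En sommant la première inégalité sur $p+q=j$ et en appliquant l'hypothèse de récurrence (qui exprime chacun des deux termes de droite comme une somme sur $E_1$), puis en utilisant le lemme \ref{exactness in topology} et la deuxième inégalité, on obtient une chaîne d'inégalités reliant $\dim H^j(T-\Hyp')^{(1)}$ à lui-même. Elles sont donc toutes des égalités, ce qui entraîne à la fois la dégénérescence de \eqref{SSHdR} pour $(T, \Hyp')$ sur la partie invariante et l'annulation de tous les connecteurs $\delta$ dans \eqref{dupont2}, achevant l'induction. L'obstacle principal a déjà été franchi en amont~: c'est l'argument de pureté de Hodge (lemme \ref{C1}) qui rend possible le lemme \ref{exactness in topology} ; la démonstration présente n'est qu'un exercice de comptage soigneux, où la seule subtilité est de bien suivre la compatibilité de toutes les suites exactes avec l'action de $[s]_*$.
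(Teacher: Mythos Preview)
Your proposal is correct and follows essentially the same argument as the paper: an induction initialized at the case of $n$ linearly independent hyperplanes (where one invokes the classical degeneration of the logarithmic Hodge--de Rham spectral sequence), and an inductive step combining the dimension inequality from \eqref{dupont2}, the spectral sequence inequality, the inductive hypothesis, and the equality of Lemma~\ref{exactness in topology} into a closed chain that forces all inequalities to be equalities. One minor point of phrasing: the induction is really on the pair $(n,\#\Hyp)$ in lexicographic order (as set up at the start of the chapter), since you need the hypothesis for $(H,H\cap\Hyp)$ which lives in dimension $n-1$; your text says ``récurrence sur $\#\Hyp$'' but your use of the hypothesis for $(H,H\cap\Hyp)$ shows you are implicitly doing the double induction, exactly as the paper does.
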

\proof
On procède à nouveau par récurrence sur $(n , \# \Hyp)$. 
 
On initialise la récurrence avec le cas où $\Hyp$ est constitué de $n$ hyperplans linéairement indépendants. Alors $H$ est un diviseur à croisements normaux et $\Omega^q_{\langle T , \Hyp \rangle} = \Omega^q_T (\log \Hyp )$. Dans ce cas le lemme est une conséquence de la dégénérescence de la suite spectrale de Hodge--de Rham à pôles logarithmiques, cf. \S \ref{B2}. 

Supposons maintenant par récurrence que le lemme est démontré pour les arrangements $(H, H \cap \Hyp)$ et $(T, \Hyp)$; nous le vérifions alors pour l'arrangement $(T, \Hyp ')$ avec toujours $\Hyp '=\Hyp \cup H$. 

De la suite exacte courte \eqref{dupont} on tire
\begin{multline} \label{dps}  
\dim \ H^p(T, \Omega^q_{\langle T , \Hyp ' \rangle})^{(1)} \\ \leq \dim \ H^p(T, \Omega^q_{\langle T , \Hyp \rangle})^{(1)} +  \dim \ H^p(H, \Omega_{\langle H , H\cap \Hyp \rangle }^{q-1})^{(1)}.
\end{multline}
En sommant sur tous les couples $(p,q)$ tels que $p+q=j$ on obtient tour à tour les inégalités suivantes~:
\begin{equation*} 
\begin{split}
\dim \  H^j(T - \Hyp ' )^{(1)} & \stackrel{(i)}{\leq} \sum_{p+q=j} \dim \ H^p(T, \Omega^q_{\langle T , \Hyp ' \rangle} )^{(1)}   \\ 
& \stackrel{(ii)}{ \leq } \sum_{p+q=j} \left( \dim \ H^p(T, \Omega^q_{\langle T , \Hyp \rangle})^{(1)} +  \dim \ H^p(H, \Omega_{\langle H , H\cap \Hyp \rangle }^{q-1})^{(1)} \right) \\
& \stackrel{(iii)}{=} \dim \ H^j(T - \Hyp)^{(1)} + \dim  \ H^{j-1}(H - (\Hyp \cap H))^{(1)},
\end{split}
\end{equation*}
où l'on explique chacune des (in)égalités ci-dessous.
\begin{itemize}
\item[(i)] Découle de l'existence de la suite spectrale \eqref{SSHdR}; on a égalité pour tous les $j$ si et seulement si la suite spectrale dégénère à la première page. 
\item[(ii)] Découle de \eqref{dps}; on a égalité si et seulement les morphismes de connexions dans la suite exacte longue \eqref{dupont2} sont nuls. 
\item[(iii)] C'est l'étape de récurrence. 
\end{itemize}
Finalement le lemme \ref{exactness in topology} implique que toutes les inégalités ci-dessus sont en fait des égalités et donc que la suite spectrale pour $\Hyp '=\Hyp \cup H$ dégénère et que la suite exacte longue \eqref{dupont2} se scinde en suites exactes courtes. 
\qed

\medskip
\noindent
{\it Remarque.} Comme nous l'a fait remarquer un rapporteur, le formalisme général des complexes de Hodge mixtes \cite[Scholie 8.1.9, (v)]{DeligneH3} et les travaux de Dupont impliquent en fait directement que la suite spectrale de Hodge--de Rham \eqref{SSHdR} dégénère à la première page.

\medskip
	
\begin{lemma}\label{C3}
Supposons $A = \mathbf{G}_m$. La suite \eqref{dupont} donne lieu à des suites exactes courtes :
\begin{equation} \label{dupont3}  
0 \rightarrow H^0(T,  \Omega^j_{\langle T,\Hyp \rangle }) \rightarrow H^0(T, \Omega^{j}_{\langle T , \Hyp '  \rangle } ) \rightarrow H^0(H , \Omega^{j-1}_{\langle H , H \cap \Hyp \rangle}) \rightarrow 0.
\end{equation}
De plus, l'action de $[s]_*$ sur chacun des espaces impliqués est localement finie. 
\end{lemma}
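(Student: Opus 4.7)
My plan is to derive both assertions from the affineness of $T$ combined with Dupont's description of the weight-graded pieces recalled in Section \ref{travaux dupont}.

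For the exactness of \eqref{dupont3}, the short exact sequence of sheaves \eqref{dupont} induces a long exact sequence in cohomology, so the claim reduces to the vanishing $H^1(T, \Omega^j_{\langle T, \Hyp \rangle}) = 0$ (using that $\iota$ is a closed immersion to identify $H^0(T, \iota_* \Omega^{j-1}_{\langle H, H \cap \Hyp\rangle})$ with $H^0(H, \Omega^{j-1}_{\langle H, H \cap \Hyp\rangle})$). In the multiplicative case $T = \mathbf{G}_m^n$ is affine, equivalently Stein, so Cartan's Theorem B supplies this vanishing provided $\Omega^j_{\langle T, \Hyp \rangle}$ is a coherent analytic sheaf. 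I would establish coherence by induction on the weight: by Dupont's isomorphism \eqref{appDupont2}, each graded piece $\mathrm{gr}_k^W \Omega^j_{\langle T, \Hyp \rangle}$ is a finite direct sum of coherent sheaves $\iota_{S*} \Omega_S^{j-k} \otimes \mathrm{OS}_S(\Hyp)$, and the short exact sequences relating successive pieces of the weight filtration propagate coherence from the graded pieces to the whole sheaf.

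For the local finiteness of $[s]_*$, the $[s]_*$-equivariant weight filtration reduces the question to each graded piece, and then to the action on $H^0(S, \Omega_S^{j-k})$ for $S$ a translate of a subtorus (since $\mathrm{OS}_S(\Hyp)$ is finite-dimensional and $[s]_*$-stable). Working in coordinates $(w_1, \ldots, w_m)$ identifying $S$ with $\mathbf{G}_m^m$, a section decomposes as a sum of monomials $w^a \wedge_{i \in I} dw_i/w_i$, and the computation of Lemma \ref{L:B1} shows that $[s]_*$ either annihilates such a monomial or sends it to $s^{m-|I|} w^{a/s} \wedge_{i \in I} dw_i/w_i$. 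The operation is strictly exponent-reducing outside the zero exponent, so the forward orbit under $[s]_*$ of any Laurent polynomial is finite-dimensional.

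The main obstacle is the analytic-versus-algebraic distinction: an infinite analytic Laurent series (for instance $\sum_{k \geq 0} c_k w^{s^k}$) can a priori generate an infinite-dimensional $[s]_*$-orbit. To resolve this I would invoke Serre's GAGA for affine varieties to identify algebraic and analytic global sections of the coherent sheaves appearing above, allowing algebraic (Laurent polynomial) representatives of all sections and recovering genuine local finiteness in the strict sense.
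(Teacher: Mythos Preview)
Your exactness argument is the paper's: $T=\mathbf{G}_m^n$ is affine, the sheaf $\Omega^j_{\langle T,\Hyp\rangle}$ is coherent, hence $H^1$ vanishes and the long exact sequence of \eqref{dupont} collapses. The paper states this in one line without spelling out coherence.

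For local finiteness the paper takes a shorter route than yours: it runs the same induction on $(n,\#\Hyp)$ that organises the whole chapter, using the just-established sequences \eqref{dupont3} themselves. The two outer terms carry arrangements with strictly smaller invariant (respectively $(n,\#\Hyp'-1)$ and $(n-1,\#(\Hyp\cap H))$), so $[s]_*$ is locally finite on them by induction; local finiteness is stable under extensions, so the middle term inherits it. The base case is the empty arrangement, i.e.\ Lemma~\ref{L:B1}. Your weight-filtration reduction to the strata $S$ can also be made to work (one checks that the chosen $[s]$ fixes each hyperplane and hence each stratum, so $[s]_*$ respects Dupont's decomposition \eqref{appDupont2} summand by summand), but it is longer and requires these extra verifications.

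The genuine gap is your last paragraph. There is no ``GAGA for affine varieties'' identifying analytic and algebraic global sections: already on $\mathbf{G}_m$ the analytic sections $\mathcal{O}(\C^\times)$ strictly contain $\C[z,z^{-1}]$, and your own example $\sum_{k\geq 1} c_k z^{s^k}$ (take for instance $c_k=e^{-s^{2k}}$, so the series converges on all of $\C^\times$ while the shifted sequences $(c_{k+m})_{k\geq 0}$ span an infinite-dimensional space) shows that $[s]_*$ is \emph{not} locally finite on analytic sections. The resolution is not a comparison theorem but the observation that the entire chapter works in the algebraic category: $T$, the sheaves $\Omega^j_{\langle T,\Hyp\rangle}$, and Dupont's results are all algebraic, so global sections on $\mathbf{G}_m^n$ are Laurent \emph{polynomials} from the outset, and the difficulty you flagged never arises.
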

\proof
La première assertion découle du fait que qu'il n'y a pas de cohomologie supérieure puisque $T$ est affine. La seconde affirmation s'obtient par récurrence; l'initialisation est conséquence du lemme \ref{L:B1}. 
\qed

\subsection{Fin de la démonstration du théorème \ref{P:Brieskorn}}
On peut maintenant démontrer le théorème par récurrence sur $(n, \# \Hyp)$. 

L'initialisation a été vérifiée aux \S \ref{B1} et \ref{B2}. Supposons maintenant par récurrence que le théorème est démontré pour les arrangements $(H, H \cap \Hyp)$ et $(T, \Hyp)$ (avec $T-\Hyp$ et $H-(H\cap \Hyp)$ affines donc). Vérifions alors le théorème pour l'arrangement $(T, \Hyp ')$ avec toujours $\Hyp '=\Hyp \cup H$. 

D'après les lemmes \ref{exactness in topology}, \ref{C1} et \ref{C2} on a des diagrammes commutatifs
$$
\xymatrix{
H^0(T,  \Omega^j_{\langle T,\Hyp \rangle })^{(1)}  \ar[d] \ar@{^{(}->}[r] & H^0(T, \Omega^{j}_{\langle T , \Hyp '  \rangle } )^{(1)}  \ar[d] \ar@{->>}[r] & H^0(H , \Omega^{j-1}_{\langle H , H \cap \Hyp \rangle})^{(1)} \ar[d]  \\ 
H^j(T - \Hyp )^{(1)} \ar@{^{(}->}[r] & H^j (T- \Hyp ' )^{(1)} \ar@{->>}[r] & H^{j-1}( H - (H \cap \Hyp ))^{(1)}  
}
$$
où les suites horizontales sont exactes et, par hypothèse de récurrence, les morphismes verticaux à gauche et à droite sont des isomorphismes. Le morphisme vertical du milieu est donc lui aussi un isomorphisme, ce qui démontre le théorème.
\qed

\medskip
\noindent
{\it Remarque.} Comme nous l'a fait remarquer un rapporteur, la théorie de Hodge mixte et les travaux de Dupont permettent de déduire directement du lemme \ref{C1} de pureté que 
$$H^0(T,  \Omega^\bullet_{\langle T,\Hyp \rangle })^{(1)} \to H^\bullet (T - \Hyp )^{(1)}$$
est un isomorphisme. Nous avons préféré maintenir notre approche un peu plus pédestre. Dans tous les cas, le c{\oe}ur de l'argument repose sur \cite{DupontC}.

\medskip

\chapter{Formes différentielles sur l'espace symétrique associé à $\SL_n (\C)$} \label{C:4}

\resettheoremcounters

Dans ce chapitre, on fixe un entier $n\geq 2$ et on note $V = \C^n$; on voit les éléments de $V$ comme des vecteurs colonnes. 

Une matrice $g \in \GL_n (\C)$ définit une forme hermitienne sur $\C^n$ de matrice hermitienne associée $g^{-*}g^{-1}$.\footnote{On note $M^\top$ la transposée d'une matrice $M$, et $M^*=\overline{M}^\top$. Lorsque $M$ est inversible on note enfin $M^{-\top}$ et $M^{-*}$ la transposée et la conjuguée de son inverse.} On en déduit une bijection 
\begin{equation*}
S:=\mathrm{GL}_n (\C ) / \mathrm{U}_n \simeq \left\{H  \; : \; H \text{ forme hermitienne définie positive sur } \C^n \right\}.
\end{equation*}
Dans ce chapitre on note 
$$G = \GL_n (\C) \quad \mbox{et} \quad K = \UU_n$$ 
de sorte que $S=G/K$. En identifiant $\R_{>0}$ au centre réel de $\GL_n  (\C)$ {\it via} l'application $s \mapsto s \cdot 1_n$, le quotient
$$X = \mathrm{GL}_n (\C) / \UU_n \R_{>0}$$ 
est l'espace symétrique associé au groupe $\mathrm{SL}_n (\C)$.

On prendra pour point base dans $S$ la métrique hermitienne $|\cdot |$. Pour $z = (z_1 , \ldots , z_n) \in \C^n$, vu comme vecteur colonne, on a $|z|^2 = |z_1 |^2 + \ldots + |z_n|^2$ et la forme hermitienne $H$ associée à une matrice $g \in \GL_n (\C)$ est donc donnée par $H(z,z) = | g^{-1} z |^2$.

Dans ce chapitre on explique que la théorie de Mathai--Quillen permet de construire deux formes différentielles $G$-invariantes naturelles 
$$\psi \in A^{2n-1}(S \times \C^n ) \quad \mbox{et} \quad \varphi \in A^{2n} (S \times \C^n )$$
qui décroissent rapidement le long des fibres de $S \times \C^n \to S$ et vérifient
\begin{enumerate}
\item $\varphi$ est une \emph{forme de Thom} au sens qu'elle est fermée et d'intégrale $1$ dans les fibres $\C^n$, et
\item la transformée de Mellin de $\psi$ en $0$ définit est une forme fermée sur $S\times (\C^n - \{ 0 \} )$ et sa restriction aux fibres $\C^n - \{0 \}$ représente la classe fondamentale. 
\end{enumerate}

\section{Formes de Mathai--Quillen}

Dans la suite de ce chapitre on note $\mathfrak{g}$ et $\mathfrak{k}$ les algèbres de Lie respectives de $G$ et $K$ et on note $\mathfrak{p}$ le supplémentaire orthogonal de $\mathfrak{k}$ dans $\mathfrak{g}$ relativement à la forme de Killing.

\subsection{Fibré $K$-principal au-dessus de $S$} La projection $G \to G/K$ permet de voir le groupe $G$ comme un fibré $K$-principal au-dessus de $S$. Une connexion sur $G$ est une $1$-forme $\theta \in A^1(G) \otimes \mathfrak{k}$ telle que
\begin{equation}
\begin{split}
\mathrm{Ad}(k)(k^*\theta) &= \theta, \qquad k \in K, \\
\iota_X \theta &=X, \qquad X \in \mathfrak{k}.
\end{split}
\end{equation}
Le fibré $G \to S$ est naturellement $G$-équivariant (relativement aux actions naturelles, à gauche, de $G$ sur lui-même et sur $S$). Soit $\theta$ la connexion $G$-invariante sur $G$ définit comme suit~: {\it via} les isomorphismes
\[
(A^1(G) \otimes \mathfrak{k})^{G \times K} \simeq (\mathfrak{g}^* \otimes \mathfrak{k})^K \simeq \mathrm{Hom}_K(\mathfrak{g},\mathfrak{k}),
\]
une connexion $G$-invariante correspond à une section $K$-équivariante de l'inclusion $\mathfrak{k} \hookrightarrow \mathfrak{g}$. On définit $\theta$ comme étant la connexion associée à la projection $p : \mathfrak{g} \to \mathfrak{k}$. La forme de connexion $\theta$ est donc explicitement donnée par la formule
\[
\theta = p(g^{-1}dg) = \frac{1}{2}(g^{-1}dg - g^* d( g^{-*})).
\]
Sa forme de courbure associée
\[
\Omega = (\Omega_{ij} )_{1 \leq i, j \leq n} = d\theta+\theta^2 \in A^2(G) \otimes \mathfrak{k}
\]
est $G$-invariante et horizontale, autrement dit $\iota_X \Omega = 0$ pour tout $X \in \mathfrak{k}$ --- identifié au champs de vecteur engendré par l'action à droite de $\mathfrak{k}$ on $G$.

\subsection{Fibré vectoriel associé} 
Soit 
$$G \times^K V = [ G \times V ] / K,$$ 
où l'action (à droite) de $K$ on $G \times V$ est $(g,v)k = (gk,k^{-1}v)$. C'est un fibré vectoriel au-dessus de $S=G/K$ et l'action 
$$h \cdot [g,v]=[hg,v]$$
l'équipe d'une structure de fibré $G$-équivariant. La forme hermitienne standard $v \mapsto v^*v$ sur $V$ le munit finalement d'une métrique $G$-\'equivariante.

\subsection{Une forme de Thom explicite} On rappelle ici l'expression de la forme de Thom construite par Mathai et Quillen \cite{MathaiQuillen} sur $G \times^K V$. Soient $z_1,\overline{z}_1, \ldots,z_n, \overline{z}_n$ les coordonnées standard sur $V$. On note respectivement $z$ et $dz$ les vecteurs colonnes $(z_1,\ldots,z_n)^\top$ et $(dz_1,\ldots,dz_n)^\top$. 

\'Etant donné un sous-ensemble $I=\{i_1,\ldots,i_p\} \subseteq \{1 \ldots,n\}$, avec $i_1 < \cdots < i_p$, et un vecteur $\xi=(\xi_1,\ldots,\xi_n )$ on note 
$$\xi^I = \xi_{i_1} \cdots \xi_{i_p} \quad \mbox{et} \quad \xi^{I*} = \xi_{i_p} \cdots \xi_{i_1}.$$ 
On note $I'=\{1,\ldots,n \}-I$ le complémentaire de $I$ et on définit une signe $\epsilon(I,I')$ par l'égalité $dz^I dz^{I'} = \epsilon(I,I') dz_1 \cdots dz_n$.

L'expression
\begin{equation} \label{E:UMQ}
U = \left(\frac{i}{2\pi}\right)^n e^{-|z|^2} \sum_{I, J} \epsilon(I,I') \epsilon(J,J') \det(\Omega_{IJ}) (dz+\theta z)^{I'} \overline{(dz+\theta z)}^{J' *},
\end{equation}
où la somme porte sur tous les couples $(I,J)$ de sous-ensembles de $\{1,\ldots,n\}$ avec $|I|=|J|$ et $\Omega_{IJ}=(\Omega_{ij})_{i \in I, j \in J}$, définit une forme $K$-invariante, fermée, horizontale et de degré $2n$ sur $G \times V$ et donc une forme dans $A_{d=0}^{2n}(G \times^K V)$; c'est la forme de Thom de Mathai--Quillen.

\subsection{Formes différentielles sur $S \times V$} 
La représentation standard de $G$ dans $V$ fait du fibré trivial 
$$E = S \times V \to S$$
un fibré $G$-équivariant --- pour tout $g \in G$, l'isomorphisme $\alpha(g):E \xrightarrow{\sim} g^*E$ est donné, dans chaque fibre, par la multiplication par $g$. Le fibré $E$ est naturellement muni d'une métrique hermitienne $G$-équivariante $|\cdot|_h^2$ donnée par 
$|v|_H^2 = v^*Hv$, où $H=H(g)=g^{-*}g^{-1}$. 
L'application
\[
\Phi: E \to G \times^{K} V, \qquad \Phi(gK,v) = (g,g^{-1}v)
\]
est une isométrie $G$-équivariante. 

\begin{definition}
Soit 
\[
\varphi = \Phi^*( U) \in A^{2n} (E)^G \quad \mbox{et} \quad \psi = \iota_X \varphi \in A^{2n-1} (E)^G, 
\]
où $X = \sum_i (z_i \partial_{i} + \overline{z}_i \overline{\partial}_i )$ est le champs de vecteur radial sur $E$.
\end{definition}
La forme $\varphi$ est fermée, rapidement décroissante et d'intégrale $1$ le long des fibres de $S \times V \to V$; c'est une \emph{forme de Thom} en ce sens. 

\section{Une $(2n-1)$-forme fermée sur $X \times (\C^n - \{ 0 \})$} \label{S:42}

La multiplication par un réel strictement positif $s$ sur $V=\C^n$ induit une application 
$$[s] :  S \times V \to S \times V.$$
Il découle des définitions que 
\begin{equation} \label{E:tddt1}
d ([s]^* \psi ) =  s \frac{d}{ds} ( [s]^* \varphi ).
\end{equation}

La proposition suivante est essentiellement due à Mathai et Quillen \cite[\S 7]{MathaiQuillen}.
\begin{proposition} \label{P:eta}
L'intégrale 
\begin{equation} \label{E:eta}
\eta =  \int_0^{+\infty} [s]^* \psi \frac{ds}{s} 
\end{equation}
est convergente et définit une forme \emph{fermée} dans $A^{2n-1} (X \times (\C^n - \{ 0 \}))^G$ dont la restriction à chaque fibre $\C^n -\{0 \}$ de $X \times (\C^n - \{ 0 \} )$ représente la classe fondamentale. 
\end{proposition}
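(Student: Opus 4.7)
Le plan est de suivre la démonstration de transgression de Mathai--Quillen du \S 7 de \cite{MathaiQuillen}, en explicitant en outre la descente de $S$ vers le quotient $X$. Je commencerais par analyser la convergence~: sur une fibre $\{gK\} \times \C^n$ les formes de connexion $\theta$ et de courbure $\Omega$ ne contribuent pas de différentielles fibreuses, de sorte que $\varphi$ se restreint à la forme de Mathai--Quillen gaussienne standard $(i/2\pi)^n e^{-|z|^2} dz_1 \wedge d\bar z_1 \wedge \cdots \wedge dz_n \wedge d\bar z_n$ et que $\psi = \iota_X \varphi$ à sa contraction par le champ d'Euler radial. En passant aux coordonnées polaires $(r,\sigma)$ sur $\R^{2n}$, on trouve que $[s]^* \psi$ est proportionnel à $s^{2n} r^{2n} e^{-s^2 r^2}\,d\sigma$, donc que $[s]^* \psi \cdot ds/s \sim s^{2n-1} r^{2n} e^{-s^2 r^2}\, d\sigma\, ds$ est intégrable sur $(0,\infty)$ pour tout $r>0$. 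Le facteur gaussien contrôle la queue en $s \to \infty$ et le facteur polynomial $s^{2n-1}$ celle en $s \to 0$; cette estimation, avec ses dérivées, s'étend uniformément sur les compacts de $S \times (\C^n - \{0\})$, montrant que $\eta$ y est lisse.

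L'identité clé est \eqref{E:tddt1}, qui combinée au théorème fondamental du calcul donne
$$d\eta = \int_0^\infty \frac{d}{ds}([s]^*\varphi)\, ds = \lim_{s\to\infty}[s]^*\varphi - \lim_{s\to 0}[s]^*\varphi.$$
La limite en $s=\infty$ s'annule sur $S \times (\C^n - \{0\})$ par décroissance gaussienne. L'examen de la formule \eqref{E:UMQ} montre que la limite en $s=0$ ne reçoit de contribution que du terme $I=J=\{1,\ldots,n\}$ et vaut la forme de Chern--Weil $(i/2\pi)^n \det(\Omega)$, tirée en arrière depuis $S$. Comme $S$ est contractile cette forme est exacte; modifier $\eta$ par une correction exacte donne un représentant strictement fermé, et l'on montrera que cette correction disparaît après descente à $X$.

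La descente provient de la $G$-invariance de $\psi$~: pour $r$ dans le centre $\R_{>0} \subset G$, la relation $L_r^* \psi = [r^{-1}]^* \psi$, jointe au changement de variable $u=s/r$ dans l'intégrale définissant $\eta$, donne $L_r^* \eta = \eta$. Une identité parallèle avec le produit intérieur montre que $\eta$ est horizontale pour la fibration en cercles $S \to X$, donc basique, et descend donc en une forme sur $X \times (\C^n - \{0\})$; la $G$-invariance diagonale passe à $\eta$ par le même changement de variable. Finalement, en restreignant l'intégrale à une fibre et en évaluant en coordonnées polaires, on obtient $\eta|_{\text{fibre}} = C_n\, d\sigma$ avec $C_n = \Gamma(n)/(2\pi^n)$, constante normalisée pour que $\int_{S^{2n-1}} C_n\, d\sigma = 1$; ainsi $\eta|_{\text{fibre}}$ représente la classe fondamentale de $\C^n - \{0\}$.

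L'obstacle principal sera l'annulation du terme de bord $\lim_{s\to 0}[s]^*\varphi$ après descente à $X$~: il faudra montrer que $(i/2\pi)^n \det(\Omega)$ descend en la forme nulle sur $X$, ou de manière équivalente que sa primitive naturelle peut être choisie basique pour la fibration $S \to X$. Cela reflète le fait que la classe de Chern supérieure de $E$ devient rationnellement triviale après quotient par l'action centrale scalaire, et c'est précisément le point de comptabilité dissimulé derrière l'attribution ``essentiellement dû à Mathai et Quillen''; tout le reste se réduit à un calcul direct ou à un changement de variable.
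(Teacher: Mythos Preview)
Your overall architecture is right and matches the paper: convergence from the explicit Mathai--Quillen formula, the computation $d\eta = -\lim_{s\to 0}[s]^*\varphi = -(i/2\pi)^n\Phi^*(\det\Omega)$ via \eqref{E:tddt1}, descent to $X$ by the change of variable $u=s/r$, and the fibrewise integral giving $\frac{\Gamma(n)}{2\pi^n}$ times the round volume form. You have also correctly located the one nontrivial point.

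The gap is in how you handle that point. Your paragraph proposing to ``modifier $\eta$ par une correction exacte'' and then asserting that ``cette correction dispara\^{\i}t apr\`es descente \`a $X$'' does not work as written: a primitive $\beta$ of $(i/2\pi)^n\det\Omega$ on $S$ is a $(2n-1)$-form pulled back from $S$, and there is no reason it should be basic for $S\to X$, $G$-invariant, or vanish on restriction to a fibre $\C^n-\{0\}$ (where it would pollute the fundamental-class computation). Likewise, ``descend en la forme nulle sur $X$'' is not a weaker statement than vanishing on $S$: the map $S\to X$ is a submersion, so a basic form on $S$ that descends to zero is already zero. What is actually needed --- and what the paper proves --- is that the Chern--Weil $2n$-form $(i/2\pi)^n\Phi^*(\det\Omega)$ is \emph{identically zero} on $S$, not merely exact. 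The paper gives two arguments: the slick one is the Weyl unitary trick (the corresponding invariant form on the compact dual $U_n$ is harmonic and represents $c_n$ of a flat bundle, hence is zero); the computational one uses the Iwasawa-type decomposition $G=BK$ to pull back to $B\times^{U_1^n}V$, where the bundle splits metrically as a sum of line bundles, so the form is a product of $n$ one-dimensional factors, each a $2$-form on the circle $U_1$ and therefore zero. Either argument closes the gap; your appeal to cohomological triviality of $c_n$ alone does not.
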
 
\begin{proof} L'expression explicite \eqref{E:UMQ} de la forme de Thom de Mathai--Quillen permet de montrer que l'intégrale converge en $0$; en fait lorsque $s$ tend vers $0$ la forme $[s]^* \psi$ est un $O(s)$. 

Maintenant si $v \in V$ est un vecteur non nul, l'image $s v$ tend vers l'infini avec $s$ et le fait que $\psi$ soit rapidement décroissante dans les fibres de $S \times V \to S$ implique que l'intégrale \eqref{E:eta} converge sur $S \times (\C^n - \{ 0 \})$. Comme $\eta$ est de plus invariante, par construction, par multiplication dans les fibres, elle définit finalement une forme sur $X \times (\C^n - \{ 0 \} )$. 

La forme $\eta$ est de degré $2n-1$ et $G$-invariante. On calcule sa différentielle à l'aide de \eqref{E:tddt1}~:
\begin{equation*}
\begin{split}
d \eta & = d \left( \int_0^{+\infty} [s]^* \psi \frac{ds}{s}  \right) \\
& = \int_0^{+\infty}  d([s]^* \psi)  \frac{ds}{s} \\
& =  \int_0^{+\infty} \frac{d}{ds} ([s]^* \varphi )  ds .
\end{split}
\end{equation*} 
La décroissance rapide de $\varphi$ le long des fibres de $E \to S$ montre comme ci-dessus que sur $S \times (\C^n - \{ 0 \} )$ les formes $[s]^* \varphi$ tendent vers $0$ quand $s$ tend vers l'infini. Enfin, il découle de \eqref{E:UMQ} que lorsque $s$ tend vers $0$ la forme $[s]^* \varphi$ tend vers 
\begin{equation} \label{E:chern}
\omega = \left( \frac{i}{2\pi} \right)^n \Phi^* (\det \Omega).
\end{equation}
La différentielle $d\eta$ est donc égale à \eqref{E:chern}. Montrons maintenant que cette forme est identiquement nulle. 

La $2n$-forme $G$-invariante \eqref{E:chern} est égale au tiré en arrière, par la projection $E \to S$, du représentant de Chern--Weil de la classe de Chern de degré maximal $c_n (E)$.\footnote{Le $G$-fibré $E$ étant plat la forme \eqref{E:chern} admet donc nécessairement une primitive $G$-invariante, ce qu'est précisément le forme $\eta$. Montrer que $\eta$ est fermée est un problème analogue à l'existence du relevé canonique dans la proposition \ref{P4}.} 

Soit $B  \subset \GL_n (\C)$ le sous-groupe de Borel des matrices triangulaires supérieures. La décomposition $G = B K$ induit un difféomorphisme $B$-équivariant
$$f: B \times^{B \cap K} V \to G \times^K V.$$
Il suffit donc de montrer que la $2n$-forme différentielle 
\begin{equation} \label{E:f*MQ}
\left( \frac{i}{2\pi} \right)^n f^* (\det \Omega)
\end{equation}
est identiquement nulle. Maintenant, comme $B \cap K = \UU_1^n$, le fibré hermitien $B \times^{B \cap K} V$ se scinde métriquement en une somme directe de fibrés en droite au-dessus de $B/(B \cap K)$. Par fonctorialité la forme \eqref{E:f*MQ} au-dessus d'un point $[b]$ dans $B/(B \cap K)$ s'obtient comme $b$-translaté du produit de $n$ formes sur les facteurs $\UU_1$ de $B \cap K$, chacune de ces formes égale à \eqref{E:f*MQ} avec $n=1$. On est ainsi réduit à considérer le cas $n=1$ et la nullité de \eqref{E:f*MQ} résulte du fait qu'il n'y a pas de $2$-forme non nulle sur le cercle. 

\medskip
\noindent
{\it Remarque.} Une démonstration plus conceptuelle est possible : par l'astuce de Weyl il suffit en effet de démontrer que la forme invariante correspondant à \eqref{E:chern} sur le dual compact de $S$ 
$$S^c = \UU_n = (\UU_n \times \UU_n) /\mathrm{U}_n$$ 
est identiquement nulle. Mais celle-ci est harmonique car fermée invariante et représente la classe de Chern maximale d'un fibré plat. Elle est donc nécessairement nulle. 

\medskip

Il nous reste à voir que $\eta$ représente la classe fondamentale de $S \times (\C^n - \{ 0 \} )$. Il suffit pour cela de se restreindre à la fibre au-dessus du point base de $S$. Alors $\varphi$ est simplement 
$$
\left( \frac{i}{2\pi} \right)^n e^{-|z|^2} dz_1 \wedge \ldots \wedge dz_n \wedge d\overline{z}_n \wedge \ldots \wedge d \overline{z}_1 
 = \frac{1}{\pi^n} e^{- \sum_j x_j^2} dx_1 \wedge \ldots \wedge dx_{2n}$$
où les $x_j$ sont les coordonnées dans une base orthonormée et orienté positivement du $\R$-espace vectoriel $\C^n$. Dans ces coordonnées le champ radial $X$ s'écrit $\sum_j x_j \partial_{x_j}$ et la forme $\psi$ est égale à 
$$\frac{1}{\pi^n} e^{- \sum_j x_j^2} \sum_{j=1}^{2n} (-1)^{j-1} x_j dx_1 \wedge \ldots \wedge \widehat{dx_j} \wedge \ldots \wedge dx_{2n}.$$
On trouve finalement que la forme $\eta$, en restriction à la fibre $\C^n - \{ 0 \}$ au-dessus du point base de $S$, est égale à 
$$\frac{\Gamma (n)}{2 \pi^n} \frac{ \sum_{j=1}^{2n} (-1)^{j-1} x_j dx_1 \wedge \ldots \wedge \widehat{dx_j} \wedge \ldots \wedge dx_{2n}}{ (x_1^2 + \ldots + x_{2n}^2)^n}$$
qui est la forme volume normalisée sur la sphère $\mathbf{S}^{2n-1}$. 
\end{proof}

\medskip
\noindent
{\it Remarque.} On peut identifier $S$ à $X \times \R_{>0}$ {\it via} l'application $\GL_n (\C )$-équivariante $gU_n \mapsto ( [g] , | \det (g) |^{1/n})$. Les formes $\varphi$ et $\psi$ définissent alors deux formes invariantes sur $X \times \R_{>0} \times V$ et on a  
$$\varphi = \alpha - \psi \wedge \frac{dr}{r},$$
où 
$\alpha$ est une forme différentielle sur $X \times \R_{>0} \times V$ qui ne contient pas de composante $dr$ le long de $\R_{>0}$ et dont la restriction à chaque $X \times \{s \} \times V$ $(s \in \R_{>0})$ est égale à $[s]^* \varphi$. Le fait que $\varphi$ soit fermée est équivalent à 
\eqref{E:tddt1} et, au-dessus de $V-\{0 \}$, la forme $\eta$ est le résultat de l'intégration partielle
$$\eta =  \int_{\R_{>0}} \varphi.$$

\section{Calculs explicites dans le cas $n=1$} 

On suppose dans ce paragraphe que $n=1$. Dans ce cas $S= \R_{>0}$ et $X$ est réduit à un point. On peut alors calculer explicitement les formes de Mathai--Quillen, cf. \cite{Takagi}. On trouve que pour $(r,z) \in \R_{>0} \times \C$ on a
\begin{equation} \label{E:phiN1}
\varphi = \frac{i}{2\pi} e^{- r^2 |z|^2 } \left( r^2 dz \wedge d \overline{z} -r^2 ( z d\overline{z} - \overline{z}dz ) \wedge \frac{dr}{r} \right)
\end{equation} 
de sorte que 
\begin{equation} \label{E:psiN1}
\psi = - \frac{i}{2\pi} \left(  r^2 |z|^2 e^{- r^2 |z|^2 } \right) \left( \frac{dz}{z} - \frac{d\overline{z}}{\overline{z}} \right)
\end{equation}
et 
\begin{equation} \label{E:etaN1}
\eta = - \frac{i}{4\pi}  \left( \frac{dz}{z} - \frac{d\overline{z}}{\overline{z}} \right).
\end{equation}

\section{Formes de Schwartz et représentation de Weil} \label{S:44}

L'espace total du fibré $E$ est un espace homogène sous l'action du groupe affine $G \ltimes V = \GL_n (\C) \ltimes \C^n$, où 
$$(g,v) \cdot (g' , v' ) = (gg' , g v' + v).$$
L'action à gauche du groupe affine sur $E= S \times V$ est transitive et le stabilisateur du point base $([K] , 0)$ est le groupe $K=\UU_n$ plongé dans le groupe affine {\it via} l'application $k \mapsto (k,0)$. 

Soit $\mathcal{S} (V)$ l'espace de Schwartz de $V$. On appelle {\it représentation de Weil} du groupe affine dans $\mathcal{S} (V)$ la représentation $\omega$ donnée par~:
$$\omega (g , v) : \mathcal{S} (V) \to \mathcal{S} (V) ; \quad \phi \mapsto \left( w \mapsto \phi ( g^{-1} (w-v)) \right).$$

Soit\footnote{L'isomorphisme est induit par l'évaluation en le point base $(eK, 0)$ dans $S \times V$.}
$$A^{k} (E , \mathcal{S}(V))^{G \ltimes V} := \left[ \mathcal{S} (V) \otimes A^k (E) \right]^{G \ltimes V} \cong \left[\mathcal{S}(V) \otimes \wedge^\bullet (\mathfrak{p} \oplus V)^*   \right]^K$$
l'espace des $k$-formes différentielles invariantes sur $E$ à valeurs dans $\mathcal{S}(V)$. 

\begin{definition}
Pour tout $w \in V$, on pose 
$$\widetilde{\varphi}  (w) = t_{-w}^* \varphi \in A^{2n} (E)  \quad \mbox{et} \quad \widetilde{\psi}(w) = t_{-w}^* \psi \in A^{2n-1} (E),$$
où $t_{-w} : E \to E$ désigne la translation par $-w$ dans les fibres de $E$.
\end{definition}

\begin{lemma}
Les applications $w \mapsto \widetilde{\varphi}  (w)$ et $w \mapsto \widetilde{\psi}(w)$ définissent des éléments 
$$\widetilde{\varphi} \in A^{2n} (E , \mathcal{S}(V))^{G \ltimes V} \quad \mbox{et} \quad \widetilde{\psi} \in A^{2n-1} (E , \mathcal{S}(V))^{G \ltimes V}.$$
\end{lemma}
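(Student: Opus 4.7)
Le plan consiste à vérifier séparément les deux points du lemme~: d'abord que les assignations $w \mapsto \widetilde{\varphi}(w)$ et $w \mapsto \widetilde{\psi}(w)$ donnent bien lieu à des formes à valeurs dans $\mathcal{S}(V)$, puis qu'elles sont invariantes par le groupe affine $G \ltimes V$ agissant diagonalement \emph{via} la représentation de Weil sur le facteur $\mathcal{S}(V)$ et par tiré en arrière sur le facteur $A^\bullet (E)$.

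Pour la première partie, on partira de l'expression explicite \eqref{E:UMQ} de la forme de Thom de Mathai--Quillen. Celle-ci exhibe $\varphi$, via l'isométrie $\Phi$, comme le produit du facteur gaussien $e^{-|g^{-1}v|^2}$ par un polynôme en les coordonnées de $g^{-1}v$, $\overline{g^{-1}v}$ et en les composantes des formes $\theta$, $\Omega$ et $dz+\theta z$. En un point $p=(gK,v) \in E$, la valeur $\widetilde{\varphi}(w)|_p = \varphi|_{(gK,v-w)}$ s'écrira donc comme une gaussienne centrée en $w=v$ modulée par un polynôme en $w$, dont toutes les dérivées partielles sont encore du même type~; il en résultera immédiatement que $w \mapsto \widetilde{\varphi}(w)|_p$ appartient à $\mathcal{S}(V) \otimes \wedge^{2n} T^*_p E$, d'où $\widetilde{\varphi} \in A^{2n}(E,\mathcal{S}(V))$. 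Le cas de $\widetilde{\psi}$ en découlera, puisque la contraction $\iota_X$ par le champ radial n'introduit que des facteurs polynomiaux, toujours dominés par la décroissance gaussienne.

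Pour la seconde partie, on procédera par un calcul direct. Soit $\gamma=(h,u) \in G \ltimes V$, agissant sur $E$ par $(gK,v) \mapsto (hgK, hv+u)$. On écrira
\[
\gamma^* \widetilde{\varphi}(w) = \gamma^* t_{-w}^* \varphi = (t_{-w} \circ \gamma)^* \varphi,
\]
et un calcul élémentaire sur les applications $E \to E$ donnera l'identité $t_{-w} \circ \gamma = h \circ t_{h^{-1}(u-w)}$. En invoquant la $G$-invariance $h^* \varphi = \varphi$ établie au chapitre, on obtiendra
\[
\gamma^* \widetilde{\varphi}(w) = t_{h^{-1}(u-w)}^* \varphi = \widetilde{\varphi}(h^{-1}(w-u)),
\]
qui coïncide avec la formule $\omega(h,u)\phi(w) = \phi(h^{-1}(w-u))$ définissant la représentation de Weil. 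On en tirera l'identité $\gamma^* \widetilde{\varphi}(w) = [\omega(\gamma) \widetilde{\varphi}](w)$ pour tout $w \in V$, qui traduit précisément la $(G \ltimes V)$-invariance de $\widetilde{\varphi}$ sous l'action diagonale sur $\mathcal{S}(V) \otimes A^{2n}(E)$. Le même argument, $\psi$ étant lui aussi $G$-invariant par construction, fournira l'invariance de $\widetilde{\psi}$.

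Il n'y a pas d'obstacle sérieux ici~; la seule subtilité, très mineure, consistera à bien accorder les conventions entre l'action de $G \ltimes V$ sur $E$ et celle sur $\mathcal{S}(V)$ via $\omega$, notamment l'ordre des compositions et la place des inverses. Une fois ces conventions alignées, l'invariance résultera automatiquement de l'identité $t_{-w} \circ (h,u) = h \circ t_{h^{-1}(u-w)}$ combinée à la $G$-invariance de $\varphi$ et $\psi$.
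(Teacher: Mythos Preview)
Your proposal is correct and follows essentially the same approach as the paper: both arguments reduce the invariance to the $G$-invariance of $\varphi$ via a factorization of $t_{-w}\circ(h,u)$ in the affine group, and both read the Schwartz property off the explicit Gaussian-times-polynomial form \eqref{E:UMQ}. The only cosmetic difference is the order: the paper proves the invariance \eqref{E:invariancephipsi} first and then uses it to reduce the Schwartz check to the basepoint $(eK,0)$, whereas you verify the Schwartz property directly at an arbitrary point; your factorization $t_{-w}\circ(h,u)=h\circ t_{h^{-1}(u-w)}$ is also a slightly more compact version of the paper's step-by-step group-law manipulation.
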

\begin{proof} Montrons d'abord l'invariance, c'est à dire que pour tous $(g,v)$ et $(h,w)$ dans $G \ltimes V$, on a
\begin{equation} \label{E:invariancephipsi}
(g,v)^* \widetilde{\varphi} (gw+v) = \widetilde{\varphi} (w) \quad \mbox{et} \quad (g,v)^* \widetilde{\psi} (gw+v) = \widetilde{\psi} (w).
\end{equation}
Ici $(g,v)^*$ désigne le tiré en arrière par le difféomorphisme de $E$ induit par l'élément $(g,v) \in G \ltimes V$.

L'identité \eqref{E:invariancephipsi} résulte des définitions; on la vérifie pour $\widetilde{\varphi}$, le cas de $\widetilde{\psi}$ se traitant de la même manière~:
\begin{equation*}
\begin{split}
(g,v)^* \widetilde{\varphi} (gw+v) & = \left[ (1,v) \cdot (g , 0) \right]^* \widetilde{\varphi} (gw+v) \\
& = g^* \left[ (1,v)^* \widetilde{\varphi} (gw+v) \right] \\
& = g^* \left[ (1,v)^* ((1, -gw -v)^* \varphi ) \right] \\
& = g^* \left[ (1 , -gw )^* \varphi \right] = \left[ (1, -gw) \cdot (g, 0 ) \right]^* \varphi \\
& = (g , -gw)^* \varphi = \left[ (g,0) \cdot (1 , -w) \right]^* \varphi \\
& = (1 , -w)^* (g^* \varphi ) = (1,-w)^* \varphi = \widetilde{\varphi} (w).
\end{split}
\end{equation*}

Il nous reste à vérifier que $\widetilde{\varphi}$ --- et de la même manière $\widetilde{\psi}$ --- définit bien une forme différentielle à valeurs dans $\mathcal{S}(V)$. Par $(G \ltimes V)$-invariance, il suffit pour cela de remarquer que pour tout $X$ dans $\wedge^{2n} (\mathfrak{p} \oplus V)$, la fonction 
$$(\widetilde{\varphi} (w))_{(eK , 0)} (X) = (t_{-w}^* \varphi )_{(eK , 0)} (X) = [\varphi (-w)] (X) $$ 
est bien une fonction de Schwartz de $w$. 
\end{proof}

\medskip
\noindent
{\it Remarque.} De notre point de vue, l'apport majeur du formalisme de Mathai--Quillen est de fournir ces fonctions tests ``archimédiennes'' auxquelles on peut alors appliquer le formalisme automorphe. Il est en effet notoirement délicat de construire les bonnes fonctions tests à l'infini en général. 

\medskip

Par définition on a 
$$\widetilde{\varphi} (0) = \varphi \quad \mbox{et} \quad \widetilde{\psi} (0) = \psi$$
et il découle en particulier de \eqref{E:invariancephipsi} que 
\begin{equation} \label{E:pullback}
v^* \widetilde{\varphi} (w) = (v-w)^* \varphi  \quad \mbox{et} \quad v^* \widetilde{\psi} (w) = (v-w)^* \psi \in A^\bullet (S) ,
\end{equation} 
où l'on a identifié un vecteur de $V$ à la section constante $S \to E$ qu'il définit. 

Finalement, les formes $[s]^* \varphi$ et $[s]^* \psi$ définissent à leur tour des formes différentielles $\widetilde{[s]^* \varphi}$ et $\widetilde{[s]^* \psi}$ dans $A^{\bullet} (E, \mathcal{S}(V))^{G \ltimes V}$ qui vérifient 
\begin{equation} \label{E:invariancephipsi2}
\widetilde{[s]^* \varphi} (w) = [s]^* \widetilde{\varphi} (s w) \quad \mbox{et} \quad \widetilde{[s]^* \psi} (w) = [s]^* \widetilde{\psi} (s w).
\end{equation}

Le lemme suivant découle de la construction. 

\begin{lemma} \label{L:convcourant}
1. Lorsque $s$ tend vers $+\infty$, les formes $\widetilde{[s]^*\varphi} (0) = [s]^* \varphi$ convergent uniformément sur tout compact de $S \times (\C^n - \{ 0 \})$ vers la forme nulle et convergent au sens des courants vers le courant d'intégration le long de $S \times \{ 0 \}$.

2. Soit $v \in \C^n - \{ 0 \}$. Lorsque $s$ tend vers $+\infty$, les formes $$\widetilde{[s]^*\varphi} (v) = [s]^* \widetilde{\varphi} (sv)$$ convergent uniformément exponentiellement vite sur tout compact de $S \times \C^n$ vers la forme nulle. 

3. Lorsque $s$ tend vers $0$, les formes $\widetilde{[s]^*\varphi} (0) = [s]^* \varphi$ convergent uniformément sur tout compact de $S \times \C^n$ vers la forme nulle.

\end{lemma}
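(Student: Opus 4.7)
L'ensemble des trois assertions se déduit par inspection directe de la formule explicite de Mathai--Quillen \eqref{E:UMQ} combinée à des estimées gaussiennes élémentaires. Partons de l'identité $\varphi = \Phi^*U$ avec $\Phi(gK,w) = (g,g^{-1}w)$. En un point $(g,z) \in S \times V$, la forme $[s]^*\varphi$ est une somme, indexée par les couples $(I,J)$ de sous-ensembles de $\{1,\ldots,n\}$ avec $|I|=|J|=k$, de termes de la forme
$$s^{2(n-k)} \cdot e^{-s^2 |g^{-1}z|^2} \cdot \Phi^* \det(\Omega_{IJ}) \wedge P_{IJ}(g,z,dz,d\overline{z}),$$
où $P_{IJ}$ ne dépend de $s$ qu'à travers la substitution $z \mapsto sz$ dans $(dz+\theta z)^{I'} \overline{(dz+\theta z)}^{J'*}$ et reste borné sur tout compact. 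L'identité \eqref{E:invariancephipsi2} montre ensuite que $\widetilde{[s]^*\varphi}(v) = [s]^*\widetilde{\varphi}(sv)$ s'obtient simplement en remplaçant $|g^{-1}z|$ par $|g^{-1}(z-v)|$ et en décalant les polynômes $P_{IJ}$ de manière analogue.

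Commençons par (3). Sur un compact $K \subset S \times \C^n$ le facteur gaussien $e^{-s^2|g^{-1}z|^2}$ est majoré par $1$ et les $P_{IJ}$ sont bornés. Pour $k<n$, le scalaire $s^{2(n-k)}$ tend vers $0$ quand $s \to 0$, et le seul terme restant correspond à $(I,J)=(\{1,\dots,n\},\{1,\dots,n\})$ et est égal à $e^{-s^2|g^{-1}z|^2}\omega$ où $\omega = (i/2\pi)^n \Phi^*(\det \Omega)$; mais $\omega$ est identiquement nulle d'après la démonstration de la proposition \ref{P:eta}. Chacun des termes converge donc vers $0$ uniformément sur $K$, d'où (3).

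Pour (1) fixons un compact $K \subset S \times (\C^n - \{0\})$. Par compacité, il existe $c>0$ tel que $|g^{-1}z| \geq c$ sur $K$, d'où la majoration $e^{-s^2|g^{-1}z|^2} \leq e^{-s^2 c^2}$ qui domine les facteurs polynomiaux $s^{2(n-k)}$ lorsque $s \to +\infty$; la convergence vers $0$ est donc uniforme et exponentiellement rapide. Pour la convergence au sens des courants, on observe que $\varphi$ est une forme de Thom, d'intégrale $1$ le long des fibres et $G$-invariante. Pour toute forme test $\eta$ à support compact sur $E$ de degré complémentaire, le changement de variable $w = sz$ dans la fibre donne
$$\int_{S \times \C^n} [s]^*\varphi \wedge \eta = \int_{S \times \C^n} \varphi(g,w) \wedge \eta\bigl(g, w/s\bigr) \, (\mbox{termes de pullback sur } S),$$
et la décroissance rapide de $\varphi$ dans la fibre combinée à $\eta(g, w/s) \to \eta(g, 0)$ quand $s \to +\infty$ donne la limite $\int_{S} \eta|_{z=0}$ désirée. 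Pour (2) l'argument est identique après substitution de $z$ par $z-v$ : sur tout compact le facteur $e^{-s^2|g^{-1}(z-v)|^2}$ procure la même décroissance exponentielle uniforme dès lors que $z$ reste à distance positive de la section $\{(g,v) : g \in S\}$, condition qui est automatique car, à $v \neq 0$ fixé et $(g,z)$ variant dans un compact, $|g^{-1}(z-v)|$ est minorée par une constante strictement positive en dehors d'un voisinage de cette section.

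Le principal écueil de cette démonstration est purement comptable : il faut contrôler avec précision les exposants de $s$ issus du tiré en arrière de $(dz+\theta z)$ par la dilatation $[s]$ et s'assurer que le seul terme de \eqref{E:UMQ} sans puissance strictement positive de $s$ en facteur, à savoir le représentant de Chern--Weil de $c_n(E)$, soit identiquement nul. Ce point crucial est précisément celui qui a déjà été établi au cours de la démonstration de la proposition \ref{P:eta} grâce à la platitude du fibré $E$ et à un calcul explicite via la décomposition $G=BK$.
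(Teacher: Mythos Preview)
Le papier lui-même ne donne pas de démonstration détaillée de ce lemme : il se contente de dire qu'il \og découle de la construction\fg{}, la remarque qui suit précisant seulement que pour le point (3) la limite est la forme de Chern--Weil $\omega$ de \eqref{E:chern}, nulle d'après la preuve de la proposition~\ref{P:eta}. Votre approche --- inspection directe de la formule de Mathai--Quillen \eqref{E:UMQ} et contrôle du facteur gaussien --- est donc exactement celle que les auteurs ont en tête, et vos arguments pour (1) et (3) sont corrects et bien rédigés.

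Pour (2) en revanche, votre dernière phrase contient une vraie erreur. Vous écrivez que la minoration $|g^{-1}(z-v)| \geq c > 0$ est \og automatique\fg{} sur tout compact de $S \times \C^n$, quitte à exclure un voisinage de la section $\{z=v\}$. Mais cela ne donne \emph{pas} la convergence uniforme sur un compact $K$ qui rencontre cette section : en un point $(g,v) \in K$ le facteur gaussien vaut $1$, et les préfacteurs $s^{2(n-k)}$ (pour $k<n$) font exploser la forme quand $s\to+\infty$. Le calcul explicite pour $n=1$ le montre déjà : d'après \eqref{E:phiN1}, on a $\widetilde{[s]^*\varphi}(v)|_{(r,v)} = \tfrac{i}{2\pi}\, r^2 s^2\, dz\wedge d\overline{z}$, qui diverge. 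Autrement dit, l'énoncé du point (2) tel qu'il est formulé est lui-même un peu optimiste : la convergence exponentielle n'a lieu que sur les compacts de $S \times (\C^n \setminus \{v\})$, ce qui suffit d'ailleurs pour l'application au lemme~\ref{L:theta-asympt} puisque l'on y travaille sur $\widehat{U_{\varphi_f,K}}$, donc à l'écart des points du support de $D_{\varphi_f}$. Il faut donc soit corriger l'énoncé, soit le signaler explicitement ; ce que vous ne pouvez pas faire, c'est affirmer que la minoration est automatique sur tout $S\times\C^n$.
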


\medskip
\noindent
{\it Remarque.} Dans le dernier cas, les formes $\widetilde{[s]^*\varphi} (0) = [s]^* \varphi$ convergent uniformément sur tout compact de $S \times \C^n$ vers la forme $\omega$ définie en \eqref{E:chern} et dont on a montré qu'elle est identiquement nulle. Dans le dernier chapitre, on considère la forme $\varphi$ associée au produit 
$$\GL_n  (\R ) / \SO_n \times \SL_2 (\R ) /\SO_2 \times \C^n.$$
Dans ce cas les formes $\widetilde{[s]^*\varphi} (0) = [s]^* \varphi$ convergent uniformément sur tout compact de $S \times \C^n$ vers une forme invariante non nulle en générale.

\chapter{Compactifications de Satake, de Tits et symboles modulaires} \label{C:5}

\resettheoremcounters

Les espaces symétriques admettent de nombreuses compactifications équivariantes, cf. \cite{BorelJi}. On rappelle ici deux d'entre elles, la compactification (minimale) de Satake et la compactification de Tits. On étudie ensuite le comportement de la forme $\eta$, définie au chapitre précédent, lorsque l'on s'approche du bord de ces compactifications. 

\section{Compactification de Satake}

L'espace $S$ est un cône ouvert dans l'espace vectoriel réel $\mathcal{H}$ des matrices hermitiennes de rang $n$. L'action de $G$ sur $S$ 
$$g : H \mapsto g^{-*} H g^{-1}$$
s'étend en une action sur $\mathcal{H}$ qui induit une action de $G$ sur l'espace projectif $\mathrm{P} (\mathcal{H})$. L'application 
$$i : X \to \mathrm{P} (\mathcal{H})$$
est un plongement $G$-équivariant. L'adhérence de son image $i(X)$ dans l'espace compact $\mathrm{P} (\mathcal{H})$ est donc une compactification $G$-équivariante de $X$; elle est appelée {\it compactification (minimale) de Satake} et notée $\overline{X}^S$. Elle est convexe et donc contractile.

La compactification de Satake $\overline{X}^S$ se décompose en une union disjointe 
\begin{equation} \label{E:Xsatake1}
\overline{X}^S = X \bigcup_{W} b(W) ,
\end{equation}
où $W$ parcourt les sous-espaces non nuls et propres de $\C^n$ et $b(W)$ désigne l'image dans $\mathrm{P} (\mathcal{H})$ du cône sur l'ensemble des matrices hermitiennes semi-définies positives dont le noyau est exactement $W$. 

On note $P_W$ le sous-groupe parabolique de $\SL_n (\C)$ qui préserve $W$. Si $g \in \SL_n (\C)$ est tel que 
$$W = g \langle e_1 , \ldots , e_{j} \rangle \quad (j=\dim W) ,$$
le groupe $P_W$ est obtenu en conjuguant par $g$ le groupe 
$$P=P_j = P_{\langle e_1 , \ldots , e_{j} \rangle}.$$
Soient
\begin{equation*}
\begin{split}
N=N_j &= \left\{ \left. \begin{pmatrix} 1_j & x \\ 0 & 1_{n-j}\end{pmatrix} \ \right| \  x \in M_{j, (n-j)} (\C ) \right\} \\
M=M_j &=  \left\{ \left. \begin{pmatrix} A & 0 \\ 0 & B \end{pmatrix} \ \right| \ A \in \mathrm{GL}_j(\C), \ B \in \mathrm{GL}_{n-j}(\C) , \ |\det(A)| =|\det(B)|=1
 \right\} \\ 
A=A_j &=  \left\{ \left. a(t_1,t_2):=\begin{pmatrix} t_1 1_j & 0 \\ 0 & t_2 1_{n-j} \end{pmatrix} \ \right| \ t_1, t_2 \in \R_{>0}, \ \det a(t_1,t_2) = 1 
\right\} .
\end{split}
\end{equation*}
Un élément $g \in \mathrm{SL}_n(\C)$ peut s'écrire 
\[
g = u m a k, \quad u \in N, \ m \in M, \ a \in A, \ k \in \SU_n
\]
Dans cette décomposition $u$ et $a$ sont uniquement déterminés par $g$, et $m$ et $k$ sont déterminés à un élément de $M \cap \SU_n$ près. On peut donc écrire $a=a(t_1(g),t_2(g))$ avec $t_1(g)$ et $t_2(g)$ déterminé par $g$; on a plus précisément
\[
t_1(g)^{-j} = \det (H(g)|_{ \langle e_1,\ldots,e_j \rangle }),
\]
où $H(g)|_{\langle e_1,\ldots,e_j \rangle}$ désigne la restriction de la métrique hermitienne $v \mapsto |g^{-1}v|^2$ déterminée par $g$ au sous-espace $\langle e_1,\ldots, e_j \rangle$.

\'Etant donné un réel $t \in \R^+$, on pose
\[
A_t = \{a(t_1,t_2) \in A \; : \;  t_1 /t_2  \geq t \}.
\]

\begin{definition}
On appelle \emph{ensemble de Siegel} associé à un sous-ensemble relativement compact $\omega \subset NM$ le sous-ensemble
\[
\mathfrak{S}_j (t,\omega) := \omega A_t \cdot \SU_n \subset \mathrm{SL}_n(\C);
\]
on parlera aussi d'ensemble de Siegel pour son image dans $X$.

On appelle plus généralement, \emph{ensemble de Siegel associé à $W$} tout sous-ensemble de la forme
\[
\mathfrak{S}_W (g,t,\omega) := g \omega A_t \cdot \SU_n
\]
où $g \in \mathrm{GL}_n(\C)$ vérifie $g \langle e_1,\ldots,e_{\mathrm{dim} W} \rangle =W$.
\end{definition}

Dans la décomposition \eqref{E:Xsatake1}, les composantes de bord $b(W)$ appartiennent toutes à une même $G$-orbite $G\cdot X_{n-j}$ où l'on note $X_{n-j}$ l'espace symétrique associé au sous-espace $\langle e_{j +1},\ldots, e_n \rangle$ plongé dans $P (\mathcal{H})$ {\it via} l'inclusion
$$\GL_{n-j} (\C) \to \mathcal{H} ; \quad A \mapsto \begin{pmatrix} 0 & 0 \\ 0 & A \end{pmatrix}$$
de sorte que les formes hermitiennes de l'image aient pour noyau $\langle e_1,\ldots, e_j \rangle$.

Le groupe $G$ a en fait exactement $n$ orbites~:
\begin{equation} \label{E:Xsatake2}
\overline{X}^S = X \sqcup G \cdot X_{n-1} \sqcup \ldots \sqcup G  \cdot X_1.
\end{equation}
Finalement la topologie sur $\overline{X}^S$ peut être comprise inductivement~: un ouvert $U$ relativement compact voisinage d'un point $p$ dans $X_{n-1}$ se relève en un ouvert relativement compact de $\GL_{n-1} (\C) \subset M_1$. Le produit de cet ouvert avec un sous-ensemble relativement compact de $N_1$ définit un sous-ensemble relativement compact $\omega \subset N_1 M_1$. On construit alors un voisinage de $p$ dans $\overline{X}^S$ en prenant la réunion de $U$ avec l'ensemble de Siegel $\mathfrak{S} (t , \omega )$.

\section{Compactification de Tits} \label{S:Tits}

L'immeuble de Tits $\mathbf{T}=\mathbf{T}_n$ associé au groupe $\SL_n (\C)$ est un ensemble simplicial dont les simplexes non dégénérés sont en bijection avec les sous-groupes paraboliques propres de $\SL_n (\C)$, ou de manière équivalente avec les drapeaux propres 
\begin{equation} \label{E:flag}
W_\bullet: 0 \subsetneq W_1 \subsetneq \cdots \subsetneq W_k \subsetneq \C^n, \quad k \geq 0.
\end{equation}
Le stabilisateur $Q(W_\bullet) \subset \SL_n (\C )$ d'un tel drapeau est un sous-groupe parabolique propre qui définit un simplexe de dimension $k$ dans $\mathbf{T}$. La $i$-ème face de ce simplexe correspond au drapeau déduit de $W_\bullet$ en enlevant $W_i$.

Notons $Q= N_Q A_Q M_Q$ la décomposition de Langlands (associé au choix fixé du sous-groupe compact $\SU_n$) d'un sous-groupe parabolique $Q$ de $\SL_n (\C)$ et $\mathfrak{q}$, $\mathfrak{n}_Q$, $\mathfrak{a}_Q$ et $\mathfrak{m}_Q$ les algèbres de Lie correspondantes. 

Soit $\Phi^+(Q ,A_Q )$ l'ensemble des racines pour l'action adjointe de $\mathfrak{a}_Q$ on $\mathfrak{n}_Q$. Ces racines définissent une chambre positive
$$
\mathfrak{a}_Q^+ = \left\{ H \in \mathfrak{a}_Q \; : \;  \alpha(H)>0, \quad  \alpha \in \Phi^+(Q,A_Q) \right\}.
$$
En notant $\langle \cdot, \cdot \rangle$ la forme de Killing sur $\mathfrak{sl}_N (\C)$, on définit un simplexe ouvert 
$$
\mathfrak{a}_Q^+(\infty) = \left\{ H \in \mathfrak{a}_Q^+ \; :  \; \langle H, H \rangle = 1  \right\} \subset \mathfrak{a}_Q^+
$$
et un simplexe fermé
$$
\overline{\mathfrak{a}_Q^+}(\infty) = \left\{ H \in \mathfrak{a}_Q \; : \; \alpha(H) \geq 0, \ \langle H, H \rangle = 1, \quad  \alpha \in \Phi^+(Q,A_Q) \right\}
$$
dans $\mathfrak{a}_Q$. 

\medskip
\noindent
{\it Remarque.} Lorsque $Q= P_W$ est maximal, l'algèbre de Lie $\mathfrak{a}_Q$ est de dimension $1$ et $\overline{\mathfrak{a}_Q^+}(\infty)$ se réduit à un point. 

\medskip

Si $Q_1$ et $Q_2$ sont deux sous-groupes paraboliques propres, alors $\overline{\mathfrak{a}_{Q_1}^+}(\infty)$ est une face de $\overline{\mathfrak{a}_{Q_2}^+}(\infty)$ si et seulement si $Q_2 \subseteq Q_1$. L'immeuble de Tits peut donc être réalisé géométriquement comme 
\begin{equation} \label{eq:Tits_building_geometric_realization}
\mathbf{T}_n \sim \coprod_{Q} \overline{\mathfrak{a}_Q^+}(\infty)/\sim,
\end{equation}
où l'union porte sur tous les sous-groupes paraboliques propres $Q$ dans $\SL_n (\C)$ et $\sim$ désigne la relation d'équivalence induite par l'identification de $\overline{\mathfrak{a}_{Q_1}^+}(\infty)$ à une face de $\overline{\mathfrak{a}_{Q_2}^+}(\infty)$ dès que $Q_2 \subseteq Q_1$. Au niveau ensembliste on peut décomposer
$$
\mathbf{T}_n = \coprod_Q  \mathfrak{a}_Q^+(\infty)
$$
en l'union disjointe des simplexes ouverts $\mathfrak{a}_Q^+(\infty)$.

La compactification de Tits $\overline{X}^T$ de $X$ a pour bord l'immeuble de Tits $\mathbf{T}_n$~: au niveau ensembliste on a 
$$
\overline{X}^T = X \cup \coprod_Q \mathfrak{a}_Q^+(\infty).
$$
On renvoie à \cite[\S I.2]{BorelJi} pour une description détaillée de la topologie sur $\overline{X}^T$ et son identification avec la compactification géodésique de $X$, on se contente ici d'énoncer les trois propriétés suivantes qui caractérisent cette topologie~:
\begin{enumerate}
\item La topologie induite sur le bord $\mathbf{T}_n$ est la topologie quotient donnée par \eqref{eq:Tits_building_geometric_realization}.

\item Soit $x \in X$. Une suite $x_j \in \overline{X}^T$, $n \geq 1$, converge vers $x$ si et seulement si $x_j \in X$ pour $j \gg 1$ et  $x_j$ converge vers $x$ dans $X$ muni de sa topologie usuelle. 

\item Soit $H_\infty \in \mathfrak{a}_Q^+(\infty)$ et soit $(x_j)_{j \geq 1}$ une suite dans $X$. La décomposition de Langlands de $Q$ permet d'écrire $x_j=u_j \exp(H_j)m_j$ avec $u_j \in N_Q$ et $H_j \in \mathfrak{a}_Q$ uniquement déterminés et $m_j \in M_Q$ uniquement déterminé modulo $\SU_n \cap M_Q$. Alors $x_j \to H_\infty$ si et seulement si $x_j$ est non bornée et 
\begin{enumerate}
\item[(i)] $H_j/||H_j|| \to H_\infty$ dans $\mathfrak{a}_Q$,
\item[(ii)] $d(u_j m_j x_0,x_0)/||H_j|| \to 0$,
\end{enumerate}
où $d$ désigne la métrique symétrique sur $X$.
\end{enumerate}

Muni de cette topologie, l'espace $\overline{X}^T$ est séparé et l'action de $\SL_n (\C)$ sur $X$ s'étend naturellement en une action continue sur $\overline{X}^T$.

\begin{definition}
\'Etant donné deux points $x \in X$ et $x' \in \overline{X}^T$, on note $[x,x']$ l'unique segment géodésique orienté joignant $x$ à $x'$. 
\end{definition}
Si $x' \in X$, on définit plus explicitement $[x,x']$ comme étant égal à l'image de l'application
\begin{equation} \label{E:segment}
s(x,x') : [0,1] \to \overline{X}^T; \quad t \mapsto s(t;x,x'),
\end{equation}
l'unique segment géodésique orienté, paramétré à vitesse constante par l'intervalle unité, reliant $x$ à $x'$ dans $X$ avec $s(0;x,x') = x$ et $s(1; x,x' )= x'$.
Si $x'$ appartient au bord de $\overline{X}^T$, il existe un unique sous-groupe parabolique $Q$ tel que $x'$ corresponde à $H_\infty \in \mathfrak{a}_Q^+ (\infty)$. Dans les coordonnées horocycliques associées à la décomposition de Langlands de $Q$, on a $x= u \exp (H)m$. On définit alors $[x,x']$ comme étant égal à l'image de l'application
\begin{equation*} 
s(x,x') : [0,1] \to \overline{X}^T; \quad t \mapsto s(t;x,x') = \left\{ \begin{array}{ll}
u \exp \left( H + \frac{t}{1-t} H_\infty \right) m, & \mbox{si } t<1 ,\\
x' , & \mbox{si } t=1.
\end{array} \right.
\end{equation*}

\section{Ensembles de Siegel généralisés}

Soit $J = \{ j_1 < \ldots < j_r \}$ une suite strictement croissante d'entiers dans $\{1 , \ldots , n-1 \}$. On associe à $J$ le drapeau 
$$W_J : 0 \subsetneq W_{j_1} \subsetneq \cdots \subsetneq W_{j_r} \subsetneq \C^n,$$
où $W_{j_k} = \langle e_1 , \ldots , e_{j_k} \rangle$, et on note $Q_J$ le sous-groupe parabolique qui stabilise $W_J$. On peut décrire explicitement la décomposition de Langlands $Q_J$~: soient
\begin{equation*} 
\begin{split}
N &= N_{J} = \left\{ \begin{pmatrix} 1_{j_1} & * & \cdots & *  \\ 0 & 1_{j_2} & \cdots & * \\ 0 & 0 & \ddots & * \\ 0 & 0 & 0 & 1_{j_{r+1}} \end{pmatrix}\right\} \\
M &= M_{J} =   \left\{ \left. \begin{pmatrix} A_1 & 0 & \cdots & 0 \\ 0 & A_2 & \cdots & 0 \\ 0 & 0 & \ddots & 0 \\ 0 & 0 & 0 & A_{r+1} \end{pmatrix} \ \right| \ A_k \in \mathrm{GL}_{j_k}(\mathbb{C}), \ |\det(A_k)|=1 \right\} \\ 
A &= A_{J} = \left\{ a(t_1,\ldots,t_{r+1}) \; : \;  t_k>0, \ \det a(t_1,\ldots,t_{r+1}) = 1 \right\},
\end{split}
\end{equation*}
où 
$$a(t_1,\ldots,t_{r+1}) =\begin{pmatrix} t_1 1_{j_1} & 0 & \cdots & 0 \\ 0 & t_2 1_{j_2} & \cdots & 0 \\ 0 & 0 & \ddots & 0 \\ 0 & 0 & 0 & t_{r+1} 1_{j_{r+1}} \end{pmatrix}.$$
On a $Q_J = N A M$. Un élément $g \in \SL_n (\C)$ peut être décomposé en un produit 
\[
g = u ma k, \quad u \in N, \ m \in M, \ a \in A, \ k \in \SU_n,
\]
où $u$ et $a$ sont uniquement déterminés par $g$, et $m$ et $k$ sont déterminés à un élément de $M \cap \SU_n$ près.

\'Etant donné un nombre réel strictement strictement positif $t$, on pose 
\[
A_t = \{a(t_1,\ldots,t_{r+1}) \in A \; : \;  t_k/t_{k+1}  \geq t \text{ for all } k \}.
\]
L'ensemble de Siegel généralisé déterminé par $t>0$ et un sous-ensemble relativement compact $\omega \subset NM$ est 
$$\mathfrak{S}_J (t , \omega) = \omega A_t \cdot \SU_n;$$ 
on le verra aussi bien comme un sous-ensemble de $\SL_n (\C)$ que de $X$. 

Soit
$$\overline{\mathfrak{S}_{J} (t , \omega)}^T \subset \overline{X}^T$$
l'adhérence de $\mathfrak{S}_{J} (t , \omega)$.

Les deux affirmations suivantes découlent du fait que la compactification de Tits coïncide avec la compactification géodésique, cf. \cite[\S 1.2 et Proposition I.12.6]{BorelJi}.

\begin{quote}
{\it Affirmation 1 :} Soit $H \in \mathfrak{a}_{Q_J}^+(\infty)$ et soit $\omega \subset N_J M_J$ un sous-ensemble ouvert relativement compact. Alors pour tout réel $t>0$, l'ensemble des $x \in X$ tels qu'il existe $y \in [x, H[$ tel que 
$$[ y , H ] \subset \overline{\mathfrak{S}_{J} (t , \omega)}^T$$
est ouvert.
\end{quote} 

\medskip

\begin{quote}
{\it Affirmation 2 :}
Soit $x$ un point dans $X$ et soit $H \in \mathfrak{a}_{Q_J}^+(\infty)$. Il existe un sous-ensemble ouvert relativement compact $\omega \subset N_J M_J$ tel que pour tout $t>0$, il existe $y \in [x, H [$ tel que 
$$[ y , H ] \subset \overline{\mathfrak{S}_{J} (t , \omega)}^T.$$
\end{quote}

\medskip

Il découle en particulier de ces deux affirmations que si $\kappa \subset X$ un sous-ensemble compact et si $t$ est un réel strictement positif, le cône 
$$C (\kappa , [W_J]) = \bigcup_{\substack{x \in \kappa \\ x' \in [W_J]}} [x,x']$$
est asymptotiquement contenu dans une réunion finie d'ensembles de Siegel généralisés, autrement dit il existe un compact $\Omega \subset X$ et des ensembles relativement compacts $\omega_{J '} \subset N_{J'} M_{J'}$, pour tout sous-ensemble $J' \subset J$, tels que 
$$C(\kappa , [W_J]) \subset \Omega \cup \bigcup_{J ' \subset J} \overline{\mathfrak{S}_{J'} (t , \omega_{J'} )}^T.$$
Le dessin ci-dessous représente schématiquement la décomposition en ensemble de Siegel lorsque $\# J=2$ (de sorte que $[W_J]$ est un simplexe de dimension $1$).

\begin{center}
\includegraphics[width=0.3\textwidth]{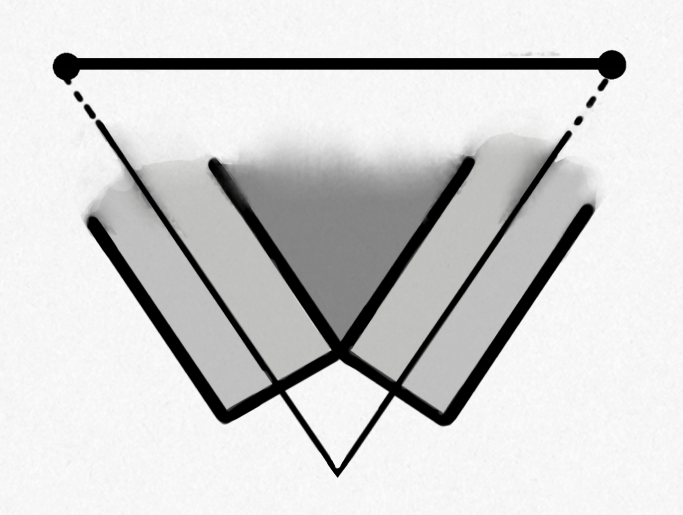}
\end{center}

On pose plus généralement la définition suivante. 

\begin{definition}
Soit $W_\bullet$ un drapeau propre de $\C^n$. On appelle \emph{ensemble de Siegel généralisé} associé au cusp $W_\bullet$ tout ensemble de la forme 
$$\mathfrak{S}_{W_\bullet} (g,t , \omega) = g \omega A_t  \cdot \SU_n $$ 
dans $\SL_n (\C)$ (ou dans $X$), où $g \in \SL_n (\C)$ est tel que $g^{-1} W_\bullet$ est un drapeau standard, c'est-à-dire de la forme $W_J$ pour un certain $J$. 
\end{definition}

Après translation par $g$, l'observation ci-dessus implique~:

\begin{proposition} \label{P:reduction}
Soit $W_\bullet$ un drapeau propre de $\C^n$ de simplexe associé $[W_\bullet] \subset \mathbf{T}$, soit $t$ un réel strictement positif et soit $\kappa \subset X$ un sous-ensemble compact. Il existe un sous-ensemble relativement compact $\Omega \subset X$, un élément $g \in \SL_n (\C)$ et des ensembles relativement compacts $\omega_{J '} \subset N_{J'} M_{J'}$, pour tout sous-ensemble $J' \subset J$, tels que 
$$C(\kappa , [W_\bullet]) \subset \Omega \cup \bigcup_{W_\bullet ' \subset W_\bullet} \overline{\mathfrak{S}_{W_\bullet '} (g, t , \omega_{J'} )}^T.$$
\end{proposition}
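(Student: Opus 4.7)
La stratégie est de se ramener au cas d'un drapeau standard par $\SL_n (\C)$-équivariance, puis d'invoquer les deux Affirmations qui précèdent pour construire le recouvrement dans ce cas.

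On commence par choisir $g \in \SL_n (\C)$ tel que $g^{-1} W_\bullet = W_J$ soit un drapeau standard, ce qui est possible par transitivité de l'action de $\SL_n (\C)$ sur les drapeaux d'un type fixé. La multiplication à gauche par $g$, qui est une isométrie de $X$ s'étendant continûment à $\overline{X}^T$, envoie $[W_J]$ sur $[W_\bullet]$ et donc le cône $C(g^{-1} \kappa , [W_J])$ sur $C(\kappa , [W_\bullet])$. Par définition des ensembles de Siegel généralisés, on a $\mathfrak{S}_{W_\bullet '} (g, t , \omega_{J'}) = g \cdot \mathfrak{S}_{J'} (t , \omega_{J'})$ pour les sous-drapeaux $W_\bullet ' = g W_{J'} \subset W_\bullet$ (avec $J' \subseteq J$); il suffit donc d'établir la décomposition pour le compact $\kappa ' := g^{-1} \kappa$ et le drapeau standard $W_J$, quitte à translater ensuite par $g$.

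Pour ce cas standard, on observe que tout point $H$ du simplexe fermé $\overline{\mathfrak{a}_{Q_J}^+} (\infty)$ associé à $[W_J]$ appartient à une unique face ouverte $\mathfrak{a}_{Q_{J'}}^+ (\infty)$ pour un certain $J' \subseteq J$. L'Affirmation 2 fournit alors un ouvert relativement compact $\omega (H) \subset N_{J'} M_{J'}$ tel que, pour chaque $x \in \kappa '$, il existe un $y \in [x , H[$ vérifiant $[y, H] \subset \overline{\mathfrak{S}_{J'} (t, \omega (H))}^T$. L'Affirmation 1 garantit que cette propriété est stable par petite perturbation de $x$. Par compacité conjointe de $\kappa '$ et de $\overline{\mathfrak{a}_{Q_J}^+} (\infty)$, on extrait un recouvrement fini par une famille $(H_\alpha)_\alpha$ telle que pour tout $x \in \kappa '$ et tout $H$ proche d'un $H_\alpha$ appartenant à la même face, le germe $[y , H]$ soit contenu dans $\overline{\mathfrak{S}_{J'} (t, \omega (H_\alpha))}^T$. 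Pour chaque $J' \subseteq J$, on regroupe alors les indices $\alpha$ vérifiant $H_\alpha \in \mathfrak{a}_{Q_{J'}}^+ (\infty)$, et on définit $\omega_{J'} \subset N_{J'} M_{J'}$ comme un ouvert relativement compact contenant la réunion finie des $\omega (H_\alpha)$ correspondants.

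Le complémentaire
$$R = C(\kappa ', [W_J]) \setminus \bigcup_{J' \subseteq J} \overline{\mathfrak{S}_{J'} (t, \omega_{J'})}^T$$
est alors borné dans $X$ : son adhérence dans $\overline{X}^T$ est un fermé du compact $\overline{C(\kappa ', [W_J])}^T$ qui ne rencontre pas le bord de Tits par construction, donc est contenue dans $X$, donc est compacte. En posant $\Omega = g \cdot \overline{R}$, on obtient la décomposition annoncée. L'obstacle principal est le passage de l'énoncé ponctuel de l'Affirmation 2 à l'énoncé uniforme requis par la proposition : il faut s'assurer que le procédé d'extraction finie est compatible avec la stratification simpliciale du bord, de sorte qu'un unique $\omega_{J'}$ convienne pour toute la face associée à $J'$. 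Ceci repose sur le fait que, dans les coordonnées horocycliques associées à la décomposition de Langlands de $Q_{J'}$, l'appartenance d'un germe de géodésique à l'ensemble de Siegel $\overline{\mathfrak{S}_{J'} (t, \omega)}^T$ est une condition ouverte à la fois en le point de départ (Affirmation 1) et en le point limite parcourant un compact de la face ouverte, grâce à la compatibilité entre compactification de Tits et compactification géodésique rappelée au paragraphe précédent.
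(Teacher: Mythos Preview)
Ta démonstration est correcte et suit essentiellement la même approche que l'article : réduction au cas d'un drapeau standard par translation par $g$, puis invocation des deux Affirmations précédentes et d'un argument de compacité. L'article est plus laconique (il renvoie simplement au cas standard « détaillé ci-dessus », lui-même énoncé comme conséquence des deux Affirmations sans extraction explicite), mais tu combles précisément ce que le texte laisse implicite, en identifiant honnêtement la seule subtilité, à savoir l'ouverture en $H$ le long d'une face du simplexe.
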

\begin{proof} 
On prend pour $g$ un élément tel que $g^{-1} W_\bullet = W_J$. Alors 
$$C(\kappa , [W_\bullet]) = g C(g^{-1} \kappa , [W_J])$$
et on est ramené au cas du drapeau standard $W_J$ détaillé ci-dessus. 
\end{proof}

\section{Comportement à l'infini de $\eta$}

\'Etant donné un sous-espace $W \subset V=\C^n$ on note $W^\perp$ l'orthogonal de $W$ relativement à la métrique hermitienne standard $| \cdot |$ sur $\C^n$ et $p_{W} : V \to W$ la projection orthogonale. 

\begin{lem} \label{L8}
Soit $Q_J \subset \SL_n (\C)$ le sous-groupe parabolique propre associé au drapeau $W_J$. Soit $\omega \subset N_J M_J$ un sous-ensemble relativement compact. Il existe alors des constantes strictement positives $C$, $\alpha$ et $\beta$ telles que pour tout réel $t >0$, on ait
\begin{equation*}
\| v^*\varphi \|_\infty \leq C e^{- \alpha t^\beta  | p_{W_{j_r}^\perp }(v) |^2} t^{\beta n/2} \max (1 , |v|^n ) 
\end{equation*}
et 
\begin{equation*}
\| v^* \psi \|_\infty \leq C e^{- \alpha t^\beta  | p_{W_{j_r}^\perp }(v) |^2}  t^{\beta n/2} \max ( |v| , |v|^n) \quad (v \in V)
\end{equation*}
en restriction à $\mathfrak{S}_J (t , \omega) \subset X (\subset S)$. 
\end{lem}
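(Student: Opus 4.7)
The plan is to reduce the pointwise estimate to a computation at the base point $eK$ using the $G$-equivariance of $\varphi$ and $\psi$, and then to extract the exponential decay in $|p_{W_{j_r}^\perp}(v)|$ from the Iwasawa decomposition of $g$ on $\mathfrak{S}_J(t, \omega)$. Since $\varphi$ and $\psi$ are invariant under the diagonal $G$-action $(gK, v) \mapsto (hgK, hv)$ on $E$, one has $(v^*\varphi)_{gK} = L_{g^{-1}}^*((g^{-1}v)^*\varphi)_{eK}$ and similarly for $\psi$, so it suffices to bound $(w^*\varphi)_{eK}$ and $(w^*\psi)_{eK}$ as functions of $w = g^{-1}v$, and then convert back to the fixed Iwasawa frame used on the Siegel set. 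At the base point, the Mathai--Quillen formula \eqref{E:UMQ} simplifies because pulling back along a constant section of $E$ kills the $dz$ summands in $(dz + \theta z)^{I'}$; expanding the surviving terms I would derive
$$|(w^*\varphi)_{eK}| \leq C\, Q(|w|)\, e^{-|w|^2}, \qquad |(w^*\psi)_{eK}| \leq C\, |w|\, Q(|w|)\, e^{-|w|^2}$$
for some polynomial $Q$ of bounded degree, the extra factor of $|w|$ for $\psi$ reflecting the fact that $\iota_X$ annihilates the $k=n$ summand of $\varphi$, the unique one with a $w$-independent part.

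The geometric input is a lower bound for $|g^{-1}v|$ on $\mathfrak{S}_J(t,\omega)$. Writing $g = u\, m\, a(t_1,\ldots,t_{r+1})\, k$ with $t_i/t_{i+1} \geq t$, $\prod_i t_i^{d_i} = 1$ (where $d_i = j_i - j_{i-1}$), and $(u,m) \in \omega$, I would observe that the block upper triangular unipotent $u^{-1}$ preserves the filtration $W_{j_\bullet}$ and that $m^{-1}$ is block-diagonal, with the last block $m_{r+1}^{-1}$ varying in a relatively compact subset of $\GL_{d_{r+1}}(\C)$. Consequently
$$p_{W_{j_r}^\perp}(g^{-1}v) = \frac{1}{t_{r+1}}\, m_{r+1}^{-1}\, p_{W_{j_r}^\perp}(v),$$
whence $|g^{-1}v|^2 \geq (c_0/t_{r+1})^2 \, |p_{W_{j_r}^\perp}(v)|^2$ for a constant $c_0 = c_0(\omega) > 0$. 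The constraint $\det a = 1$ together with $t_i \geq t^{r+1-i} t_{r+1}$ then yields $1/t_{r+1} \geq t^{\beta/2}$ with $\beta := (2/n)\sum_i (r+1-i)d_i > 0$, and substituting into the exponential --- after absorbing the polynomial $Q$ by shrinking $\alpha$ slightly --- gives the claimed decay $e^{-\alpha t^\beta |p_{W_{j_r}^\perp}(v)|^2}$.

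It remains to produce the factors $t^{\beta n/2}$ and $\max(1, |v|^n)$ (respectively $\max(|v|,|v|^n)$ for $\psi$): these come from converting the $G$-invariant pointwise norm at $eK$ into the $L^\infty$-norm measured in the Iwasawa frame of the Siegel set. The change of frame is controlled by the action of $a$ on $\mathfrak{p}$, which scales each root vector by a ratio of the $t_i$'s; taking the product over the relevant root directions produces at worst a factor $1/t_{r+1}^n \leq t^{-\beta n/2}$. The polynomial factor $Q(|w|)$ is then handled using $|w| = |g^{-1}v| \leq (C/t_{r+1})|v|$, which together with the identification of the $w$-independent summand yields the stated power of $|v|$. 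The main obstacle is not conceptual --- exponential decay transverse to the unipotent radical of $Q_J$ is a standard feature of the Mathai--Quillen form --- but rather the combinatorial bookkeeping of the powers of $t_i$ and $|v|$ needed to produce exactly the exponents $\beta n/2$ and $n$ appearing in the statement.
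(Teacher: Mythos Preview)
Your approach is essentially the same as the paper's. The paper's proof is terser: it observes directly that the Hermitian metric $v \mapsto |g^{-1}v|^2$ at a point of $\omega A_t \cdot \SU_n$ is bi-Lipschitz to the block-diagonal form $\sum_i t_i^{-2}|p_{W_{j_{i-1}}^\perp \cap W_{j_i}}(v)|^2$, keeps only the last term, and then runs the same determinant argument you give to obtain $1/t_{r+1}^2 \geq t^\beta$ with $\beta = \tfrac{2}{n}(j_1+\cdots+j_r)$; your $\beta = \tfrac{2}{n}\sum_i (r+1-i)d_i$ is the same number after an Abel summation. The polynomial factors and the distinction between $\varphi$ and $\psi$ are simply read off from the explicit Mathai--Quillen formula, which is exactly what you do with your bounds $|(w^*\varphi)_{eK}| \leq C\,Q(|w|)e^{-|w|^2}$ and the extra factor $|w|$ for $\psi$.

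One small imprecision to fix: your displayed formula $p_{W_{j_r}^\perp}(g^{-1}v) = \tfrac{1}{t_{r+1}}m_{r+1}^{-1}p_{W_{j_r}^\perp}(v)$ is not literally true because of the $k \in \SU_n$ factor in $g = uma k$; what is true is that $|g^{-1}v| = |a^{-1}m^{-1}u^{-1}v|$ and the projection identity holds for $a^{-1}m^{-1}u^{-1}v$. This does not affect the bound on $|g^{-1}v|^2$, so the argument goes through unchanged.
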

\begin{proof}
Le carré de la métrique hermitienne sur $V$ associée à un élément $g$ dans $\omega A_t \cdot \SU_n$ est bi-Lipschitz à 
$$v \mapsto \frac{1}{t_1^2} | p_{W_{j_1}} (v) |^2 + \ldots + \frac{1}{t_r^2} | p_{W_{j_{r-1}}^\perp \cap W_{j_r}} (v) |^2 + \frac{1}{t_{r+1}^2} | p_{W_{j_r}^\perp} (v) |^2 \geq \frac{1}{t_{r+1}^2} | p_{W_{j_r}^\perp} (v) |^2.$$
Mais puisque chaque $t_j$ est supérieur à $t^{r+1-j} t_{r+1}$ et que $t_1 \cdots t_{r+1} =1$, on a $1 \geq t^{j_1 + \ldots + j_r} t_{r+1}^n$ et donc 
$$\frac{1}{t_{r+1}^2} \geq t^\beta \quad \mbox{avec} \quad \beta = \frac{2}{n} (j_1 + \ldots + j_r) >0.$$ 
Le lemme se déduit alors des expressions explicites de $\varphi$ et $\psi$ déduites de \eqref{E:UMQ}.\footnote{Noter que $\varphi(v)$ tend vers une constante quand $v$ tend vers $0$ alors que $\psi (v)$ tend vers $0$ linéairement.}
\end{proof}

On déduit de ce lemme la proposition suivante.

\begin{proposition} \label{P32}
Soit $Q_J \subset \SL_n (\C)$ le sous-groupe parabolique propre associé au drapeau $W_J$. Soit $\omega \subset N_J M_J$ un sous-ensemble relativement compact, soit $t$ un réel strictement positif et soit $\kappa \subset V - W_{j_r}$ un sous-espace compact. La restriction de $\eta$ à $\mathfrak{S}_J (t,\omega) \times \kappa$ s'étend en une forme fermée, nulle à l'infini, à l'adhérence 
$$\overline{\mathfrak{S}_J (t,\omega)}^S \times \kappa, \quad \mbox{resp. } \overline{\mathfrak{S}_J (t,\omega)}^T \times \kappa,$$ dans $\overline{X}^S \times \kappa$, resp. $\overline{X}^T \times \kappa$. 
\end{proposition}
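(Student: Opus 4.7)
The plan is to combine the integral representation $\eta = \int_0^{+\infty} [s]^*\psi \, ds/s$ from Proposition \ref{P:eta} with the exponential decay estimates of Lemma \ref{L8}. Since $\kappa$ is compact and disjoint from $W_{j_r}$, there exists $c > 0$ such that $|p_{W_{j_r}^\perp}(v)| \geq c$ for all $v \in \kappa$. Substituting $sv$ for $v$ in Lemma \ref{L8} yields, for $(g,v) \in \mathfrak{S}_J(t,\omega) \times \kappa$,
\begin{equation*}
\|(sv)^*\psi(g)\|_\infty \leq C \, e^{-\alpha c^2 t^\beta s^2} \, t^{\beta n/2} \, \max(s|v|, s^n|v|^n).
\end{equation*}
This gives uniform integrability of the integrand against $ds/s$ on $(0,+\infty)$: near $s = 0$ the linear vanishing of $\psi$ (since $\psi = \iota_X \varphi$ and the radial field $X$ vanishes at the origin) guarantees convergence, and near $s = +\infty$ the Gaussian factor dominates the polynomial. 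The integral defining $\eta$ is thus absolutely convergent, uniformly on compact subsets of $\mathfrak{S}_J(t,\omega) \times \kappa$.

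The next step is to extend $\eta$ to the closure of the Siegel set in each compactification. Boundary strata of $\overline{\mathfrak{S}_J(t,\omega)}^S$ and $\overline{\mathfrak{S}_J(t,\omega)}^T$ correspond to sub-flags $W_\bullet' \subseteq W_J$: by the description of the topology of $\overline{X}^S$ and by Proposition \ref{P:reduction} in the Tits case, a neighborhood of such a stratum is contained in a finite union of translated Siegel sets $\mathfrak{S}_{W_\bullet'}(g_0,t',\omega')$ attached to the smaller parabolic $Q_{W_\bullet'}$, with the corresponding threshold $t'$ tending to $+\infty$ as one approaches the stratum. Since $W_\bullet' \subseteq W_J$ implies that the deepest subspace $W_{j_{r'}'}'$ is contained in $W_{j_r}$, we still have $\kappa \cap W_{j_{r'}'}' = \emptyset$. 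Reapplying Lemma \ref{L8} with the sub-flag provides exponential decay with rate $t'{}^{\beta'}$ as $t' \to +\infty$, which after integration yields a bound on $\eta(g,v)$ tending to $0$ uniformly as one approaches the boundary stratum.

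By dominated convergence, $\eta$ thus extends continuously to $\overline{\mathfrak{S}_J(t,\omega)}^S \times \kappa$ (respectively to $\overline{\mathfrak{S}_J(t,\omega)}^T \times \kappa$) and the extension vanishes on the boundary. Closedness on the closure follows from closedness on the dense interior (Proposition \ref{P:eta}) combined with continuity of the extension: a continuous form agreeing with a closed form on a dense open set is still closed, as one sees by integrating against compactly supported test forms and invoking Stokes.

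The main technical obstacle is the second step: the polynomial factor $t'{}^{\beta' n/2}$ appearing in Lemma \ref{L8} grows with $t'$, so one must verify that, after integration against $ds/s$, the exponential decay $e^{-\alpha c^2 t'{}^{\beta'} s^2}$ effectively dominates. The key mechanism is that as $t' \to +\infty$ the Gaussian becomes increasingly peaked at $s = 0$, precisely where the integrand is small thanks to the linear vanishing of $\psi$ at $sv = 0$. A careful substitution $u = s \sqrt{\alpha c^2 t'{}^{\beta'}}$ combined with splitting the $s$-integral at $s = 1$ then converts the apparent polynomial growth into the desired decay at each boundary stratum.
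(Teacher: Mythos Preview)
Your approach is exactly the paper's: invoke Lemma~\ref{L8} on the pull-backs $(sv)^*\psi$, integrate over $s$, and use the resulting estimate together with Proposition~\ref{P:reduction} to get vanishing at the boundary. The paper's argument is considerably terser --- it simply records the bound $\|[s]^*\psi\|_\infty \leq sC e^{-s^2\alpha t^\beta}$ on $\mathfrak{S}_J(t,\omega)\times\kappa$, observes that $\int_0^\infty [s]^*\psi\,ds/s$ therefore converges uniformly, and concludes in one sentence that this bound forces $\eta\to 0$ near the Tits or Satake boundary.

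You are right to flag the prefactor $t^{\beta n/2}$ from Lemma~\ref{L8} as the delicate point --- the paper simply drops it when passing to the displayed estimate. But your proposed cure does not actually work. The linear vanishing of $\psi$ at the origin is \emph{already} encoded in the factor $\max(|v|,|v|^n)$ of Lemma~\ref{L8}; it contributes one power of $s$ to the integrand, so that on $(0,1)$ one is left with $t^{\beta n/2}\int_0^1 e^{-\alpha c^2 t^\beta s^2}\,ds \sim t^{\beta(n-1)/2}$, which grows for $n\ge 2$. The substitution $u=s\sqrt{\alpha c^2 t^\beta}$ only rescales the integral and produces the same power. To close this step one needs a finer estimate than the uniform $t^{\beta n/2}$ of Lemma~\ref{L8}: the polynomial part of the Mathai--Quillen form, read in coordinates adapted to the compactification, does not in fact grow at this rate once the form is measured against a frame that extends across the boundary. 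Neither you nor the paper makes this explicit.
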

\begin{proof} Pour tout $s >0$ on a
$$v^* ([s]^* \psi ) = (sv)^* \psi  .$$
Il découle donc du lemme précédent qu'il existe des constantes strictement positives $C$, $\alpha$ et $\beta$ telles que, en restriction à $\mathfrak{S}_J (t ,\omega) \times \kappa$, la forme $[s]^*\psi$ soit de norme 
\begin{equation} \label{E:psis}
\| [s]^* \psi \|_\infty \leq s C  e^{- s^2 \alpha t^\beta } .
\end{equation}
L'intégrale 
$$\int_0^{+\infty} [s]^* \psi \frac{ds}{s}$$
est donc uniformément convergente sur $\mathfrak{S}_J (t ,\omega) \times \kappa$. Il découle enfin de la proposition \ref{P:reduction} et de \eqref{E:psis} que la forme $\eta$ tend uniformément vers $0$ lorsque l'on s'approche du bord de Tits (ou de Satake) dans $\mathfrak{S}_J (t ,\omega) \times \kappa$.
\end{proof}

\section{Symboles modulaires}

Soit $k$ un entier naturel. On note $\Delta_k '$ la première subdivision barycentrique du $k$-simplexe standard. On identifie chaque sommet $v$ de $\Delta_k '$ à un sous-ensemble non vide de $\{0,\ldots,k\}$ de sorte qu'un ensemble de sommets $\{v_0,\ldots,v_r\}$ forme un $r$-simplexe de $\Delta_k '$ si et seulement si  
$$v_0 \subseteq \cdots \subseteq v_r.$$ 
On notera $\Delta_{v_0,\ldots,v_r}$ ce simplexe.

\`A tout $(k+1)$-uplet $\mathbf{q} = (q_0 , \ldots , q_{k})$ de vecteurs non nuls dans $V$ avec $k \leq n-1$, on associe maintenant une application continue 
\begin{equation} \label{E:appDelta}
\Delta(\mathbf{q}) : \Delta_{k} ' \to \overline{X}^T.
\end{equation}
Supposons dans un premier temps que $\langle q_0 , \ldots, q_{k} \rangle$ soit un sous-espace propre de $V = \C^n$. Pour toute chaîne $v_0 \subseteq \cdots \subseteq v_r$ définissant un $r$-simplexe de $\Delta_{k}'$, le drapeau associé
\begin{equation} \label{E:flagq}
0 \subsetneq \langle q_i \; | \; i \in v_0 \rangle \subseteq \langle q_i \; | \; i \in v_1 \rangle \subseteq \cdots \subseteq \langle q_i \; | \; i \in v_r \rangle \subsetneq \C^n
\end{equation}
est propre et définit un $r$-simplexe (possiblement dégénéré) de l'immeuble de Tits $\mathbf{T}$. On définit alors $\Delta (\mathbf{q})$ comme étant l'application simpliciale $\Delta_{k}' \to \mathbf{T}$ qui envoie chaque $r$-simplexe $\Delta_{v_0,\ldots,v_r}$ sur le $r$-simplexe associé à \eqref{E:flagq} dans $\mathbf{T}$ (on laisse au lecteur le soin de vérifier que cette application est bien simpliciale, autrement dit qu'elle est compatible aux applications de faces et de dégénérescence).

Supposons maintenant que $k=n-1$ et que les vecteurs $q_0 , \ldots , q_{n-1}$ soient linéairement indépendants. Soit 
\begin{equation} \label{E:g}
g = (q_0 |\cdots|q_{n-1}) \in \mathrm{GL}_n ( \C)
\end{equation}
la matrice dont les vecteurs colonnes sont précisément les vecteurs $q_0 , \ldots , q_{n-1}$. On définit alors une sous-variété $\Delta^\circ (\mathbf{q})$ dans $X$ de la manière suivante. Soit $B$ le sous-groupe parabolique minimal de $\SL_n (\C)$ associé au drapeau maximal 
$$0 \subsetneq \langle e_1 \rangle \subsetneq  \langle e_1 , e_2 \rangle  \subsetneq \cdots \subsetneq \langle e_1 , \ldots , e_{n-1} \rangle \subsetneq \C^n.$$
En notant simplement $A$ le groupe  
$$A_B=\{ \mathrm{diag}(t_1,\ldots,t_n) \in \SL_n (\C) \; : \; t_j \in \R_{>0},  \ t_1 \cdots t_n = 1 \} \cong \R_{>0}^{n-1},$$
on pose
\begin{equation}
\Delta^\circ (\mathbf{q}) := g A K \R_{>0}  \subset G/K \R_{>0}=X,
\end{equation}
muni de l'orientation induite par les coordonnées $\mathrm{diag}(t_1,\ldots,t_n) \mapsto t_i$ identifiant $\Delta^\circ(\mathbf{q})$ à $\R_{> 0}^{n-1}$ (ce dernier étant muni de l'orientation standard). Son adhérence dans $\overline{X}^T$ est naturellement identifiée à la première subdivision barycentrique d'un $(n-1)$-simplexe dont le bord 
est la réunion dans $\mathbf{T}$ des translatés par $g$ de tous les $\overline{\mathfrak{a}_Q^+} (\infty )$ où $Q$ est un sous-groupe parabolique propre de $\SL_n (\C)$ contenant $A$. On construit ainsi une  application \eqref{E:appDelta} pour $k=n-1$ dont la restriction au bord  
$\partial \Delta_{n-1} '$ coïncide avec les applications construites précédemment.  

\medskip

Concluons ce paragraphe en expliquant comment recouvrir l'image de $\Delta (\mathbf{q})$ par des ensembles de Siegel généralisés~: à chaque sommet $v$ de $\Delta_{n-1} '$, autrement dit un sous-ensemble propre non vide de $\{0,\ldots , n-1\}$, il correspond un sous-espace 
\begin{equation}
W(\mathbf{q})_v = \langle q_k \; | \; k \in v  \rangle
\end{equation}
dans $\C^n$. Il découle de la proposition \ref{P:reduction} que l'on peut recouvrir l'image de $\Delta (\mathbf{q})$ par des ensembles de Siegel généralisés associés aux cusps $W_\bullet$ formés de sous-espaces $W(\mathbf{q})_v$~: 

\begin{proposition} \label{P33}
Soit $\mathbf{q} = (q_0 , \ldots , q_{n-1})$ un $n$-uplet de vecteurs non nuls dans $V$ et soit $t$ un réel strictement positif. Il existe alors un sous-ensemble relativement compact $\Omega \subset X$ et un nombre fini d'ensembles de Siegel généralisés 
$\mathfrak{S}_{W_\bullet} (g , t , \omega)$, où chaque drapeau $W_\bullet$ est formé de sous-espaces $W(\mathbf{q})_v$ et chaque $g$ est une matrice dont les vecteurs colonnes sont des $q_j$, tels que l'image de l'application $\Delta (\mathbf{q})$ dans $\overline{X}^T$ soit contenue dans la réunion finie
$$\Omega \cup \bigcup_{W_\bullet, g , \omega} \overline{\mathfrak{S}_{W_\bullet} (g , t , \omega)}.$$
\end{proposition}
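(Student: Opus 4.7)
The plan is to use Proposition \ref{P:reduction} as a black box and reduce the present statement to a covering of the compact set $\overline{\Delta^\circ(\mathbf{q})}^T \subset \overline{X}^T$ by geodesic cones emanating from a single interior base-point. First I would separate two cases. If $q_0,\ldots,q_{n-1}$ are linearly dependent, then $\Delta(\mathbf{q})$ is, by construction, a simplicial map into $\mathbf{T}$, and its image is contained in the union of the finitely many closed simplices $\overline{[W_\bullet]}$ corresponding to the flags appearing in \eqref{E:flagq}. Each such flag is by definition built from subspaces $W(\mathbf{q})_v$, so Proposition \ref{P:reduction} applied to an arbitrary compact $\kappa$ (say a point) and to each of these flags finishes this case directly: the simplex $[W_\bullet]$ at infinity sits in the Tits closure of any generalized Siegel set attached to $W_\bullet$.

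It then remains to treat the generic case where the $q_i$ form a basis of $\C^n$. Rescaling $g=(q_0\mid\cdots\mid q_{n-1})$ to lie in $\SL_n(\C)$, the set $\Delta^\circ(\mathbf{q})=gAK\R_{>0}$ is a translated flat of dimension $n-1$, and $\overline{\Delta^\circ(\mathbf{q})}^T$ is a closed $(n-1)$-disc whose boundary sphere inside $\mathbf{T}$ is the apartment associated to the maximal torus $gAg^{-1}$. I would pick an interior point $x_0 \in \Delta^\circ(\mathbf{q})$, for instance the image of $g$; then every point of $\overline{\Delta^\circ(\mathbf{q})}^T$ lies on the unique Tits geodesic from $x_0$ to a point of the boundary sphere, which in turn lies in one of the finitely many top-dimensional closed boundary simplices $\overline{[W_\bullet^\sigma]}$ indexed by permutations $\sigma$ of $\{0,\ldots,n-1\}$ via
$$W_\bullet^\sigma \;:\; \langle q_{\sigma(0)}\rangle \subsetneq \langle q_{\sigma(0)},q_{\sigma(1)}\rangle \subsetneq \cdots \subsetneq \langle q_{\sigma(0)},\ldots, q_{\sigma(n-2)}\rangle.$$
Each $W_\bullet^\sigma$ is formed of subspaces $W(\mathbf{q})_v$ for $v_i=\{\sigma(0),\ldots,\sigma(i)\}$, and the matrix $g_\sigma=(q_{\sigma(0)}\mid\cdots\mid q_{\sigma(n-1)})$ (rescaled into $\SL_n(\C)$) sends the standard maximal flag $W_{\{1,\dots,n-1\}}$ onto $W_\bullet^\sigma$. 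Thus Proposition \ref{P:reduction} applied to $\kappa=\{x_0\}$, to the flag $W_\bullet^\sigma$, to the prescribed $t$, and with $g_\sigma$ in place of $g$, delivers a relatively compact $\Omega_\sigma\subset X$ and finitely many generalized Siegel sets $\mathfrak{S}_{W'_\bullet}(g_\sigma,t,\omega_{J'})$, indexed by sub-flags $W'_\bullet\subset W_\bullet^\sigma$ — which are again built from the $W(\mathbf{q})_v$ — whose Tits closures together cover the cone $C(\{x_0\},[W_\bullet^\sigma])$. Setting $\Omega=\{x_0\}\cup\bigcup_\sigma\Omega_\sigma$ (a finite union of relatively compact sets, hence relatively compact) and collecting the finitely many Siegel sets over all $\sigma$ yields the desired covering.

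\textbf{Main obstacle.} The argument is essentially formal once Proposition \ref{P:reduction} is in hand; the two points one has to check carefully are (i) that the top-dimensional simplices of the boundary sphere of $\overline{\Delta^\circ(\mathbf{q})}^T$ are exactly the $[W_\bullet^\sigma]$ above and that every point of $\overline{\Delta^\circ(\mathbf{q})}^T$ lies in the geodesic cone from $x_0$ to some such simplex, and (ii) that the matrices $g_\sigma$ really can serve as the matrix $g$ demanded by Proposition \ref{P:reduction}. Both are direct consequences of the identification of the apartment of $gAg^{-1}$ in the Tits building with the Weyl chamber decomposition of $gA$, together with the fact that rescaling $g_\sigma$ by a positive real affects neither the quotient $X=G/K\R_{>0}$ nor the Siegel set. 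No serious analytic or combinatorial obstacle is therefore expected.
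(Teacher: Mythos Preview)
Your proposal is correct and follows the same route as the paper: reduce via the translation $g=(q_0\mid\cdots\mid q_{n-1})$ to the standard flat $AK\R_{>0}$, decompose its Tits closure into geodesic cones over the Weyl chambers at infinity (indexed by permutations), and apply Proposition~\ref{P:reduction} to each cone. The paper's proof is extremely terse --- it records only the reduction step and otherwise points back to Proposition~\ref{P:reduction}, with most of the displayed argument actually devoted to the accompanying Remarque on the Satake closure --- so your write-up is a faithful expansion rather than a different argument.
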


\medskip
\noindent
{\it Remarque.} L'adhérence de $\Delta^\circ (\mathbf{q})$ dans $\overline{X}^S$ est égale à l'enveloppe convexe conique 
$$\mathrm{P} \left\{ \sum_{j=0}^{n-1} t_j m_j  \in \mathcal{H} \; : \;  \forall j \in \{0, \ldots , n-1 \}, \  t_j >0  \right\}  \subset \overline{X}^S$$
des formes hermitiennes semi-définies positives $m_j  =  q_j^* (\overline{q_j^*} )^\top$ où\footnote{On prendra garde au fait que Ash et Rudolph \cite[p. 5]{AshRudolph} commettent une légère erreur en identifiant $\Delta^\circ (\mathbf{q})$ avec l'enveloppe convexe conique des formes 
hermitiennes semi-définies positives $m_j  =  q_j \overline{q_j}^\top$.}
$$g^{-*} = (q_0^* | \cdots | q_{N-1}^* ), \quad 
\mbox{de sorte que } (\overline{q_i^*} )^\top q_j  = \delta_{ij} \quad \mbox{et} \quad q_j^* = g^{-*} e_{j+1} .$$

\medskip
\begin{proof} Pour tout $j \in \{0, \ldots , n-1 \}$, on a $ge_{j+1} = q_{j}$. On peut donc se ramener au cas où $\mathbf{q} = (e_1 , \ldots , e_n)$ et   $$\Delta (\mathbf{q})^\circ  = A K \R_{>0}  \subset G/K \R_{>0}=X$$
ce qui prouve immédiatement la remarque. Le cas général s'en déduit en translatant par $g$. Noter qu'alors la forme $m_j$ est égale à $q_j^* (q_j^* )^\top$ qui a bien pour noyau 
$$W(\mathbf{q})^{(j)} = W(\mathbf{q})_{\{0 , \ldots , \widehat{j} , \ldots , n-1 \}}. $$
\end{proof}

\section{\'Evaluation de $\eta$ sur les symboles modulaires} 

Soit $\mathbf{q} = (q_0 , \ldots , q_{k})$ un $(k+1)$-uplet de vecteurs de non nuls dans $V$ avec $k \leq n-1$. Les propositions \ref{P32} et \ref{P33} impliquent que la forme différentielle fermée
\begin{equation}
\eta (\mathbf{q} ) = (\Delta (\mathbf{q}) \times \mathrm{id} )^* \eta \in A^{2n-1} \left( \Delta_{k}' \times \left( V - \bigcup_{|v| <n} W(\mathbf{q})_v \right) \right),
\end{equation}
où $\Delta_{k}'$ est identifié à 
$$\R^{k} \cong \{ (t_0 , \ldots , t_{k} )  \in \R_+^{k+1} \; : \; t_0 + \ldots + t_{k} =1 \}$$
{\it via} les coordonnées barycentriques, est bien définie. 

Il découle en outre des propositions \ref{P32} et \ref{P33} que l'intégrale partielle 
$$\int_{\Delta_{k}'} \eta (\mathbf{q})$$
converge et définit une forme de degré $2n-k-1$ sur $V - \bigcup_{|v|<n} W(\mathbf{q})_v$. 

\begin{proposition} \label{P34}
1. Si $\langle q_0 , \ldots , q_{k} \rangle$ est un sous-espace propre de $V$, alors la forme $\int_{\Delta_{k}'} \eta (\mathbf{q})$ est identiquement nulle.

2. Supposons $k=n-1$ et que les vecteurs $q_0 , \ldots , q_{n-1}$ soient linéairement indépendants. Alors la $n$-forme $\int_{\Delta_{n-1}'} \eta (\mathbf{q})$ est égale à 
$$\frac{1}{(4i\pi)^n} \left( \frac{d \ell_0}{\ell_0} - \overline{\frac{d\ell_0}{\ell_0}} \right) \wedge \ldots \wedge \left( \frac{d \ell_{n-1}}{\ell_{n-1}} - \overline{\frac{d\ell_{n-1}}{\ell_{n-1}}} \right)  \in A^n \left( V - \bigcup_{j} W(\mathbf{q})^{(j)} \right),$$
où $\ell_j$ est la forme linéaire sur $\C^n$, de noyau $W(\mathbf{q})^{(j)}$, qui à $z$ associe $z^\top q_j^*$.
\end{proposition}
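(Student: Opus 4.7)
My plan is to treat the two parts separately, exploiting in each case a key feature of $\eta$ established earlier.

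For part 1, I would observe that when $\langle q_0, \ldots, q_k\rangle$ is a proper subspace of $V$, every flag \eqref{E:flagq} is a proper flag of $\C^n$, so by construction the image of $\Delta(\mathbf{q})$ is contained in the Tits boundary $\mathbf{T}_n \subset \overline{X}^T$. By Proposition \ref{P32}, the extension of $\eta$ to the Tits compactification vanishes on this boundary. Hence $\eta(\mathbf{q}) = (\Delta(\mathbf{q}) \times \mathrm{id})^*\eta$ is identically zero, and the integral vanishes.

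For part 2, I would first note that the boundary $\partial\Delta_{n-1}'$ maps into $\mathbf{T}_n$ where $\eta$ vanishes (as above), and the interior maps diffeomorphically onto $\Delta^\circ(\mathbf{q}) \subset X$, so $\int_{\Delta_{n-1}'}\eta(\mathbf{q}) = \int_{\Delta^\circ(\mathbf{q})}\eta$. I would then use the $\GL_n(\C)$-invariance of $\eta$: the matrix $g = (q_0|\cdots|q_{n-1}) \in \GL_n(\C)$ maps $\Delta^\circ(e_1,\ldots,e_n)$ diffeomorphically onto $\Delta^\circ(\mathbf{q})$, so the integral transforms by pullback under $g^{-1}:V\to V$. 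Since $\ell_j(z) = e_{j+1}^*(g^{-1}z)$, this pullback sends $dz_{j+1}/z_{j+1}$ to $d\ell_j/\ell_j$, and it suffices to prove the standard case $\mathbf{q}=(e_1,\ldots,e_n)$.

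For the standard case, $\Delta^\circ(e_1,\ldots,e_n)$ is the image in $X = G/K\R_{>0}$ of the positive diagonal torus $\R_{>0}^n \subset \GL_n(\C)$. Along this abelian slice, the hermitian bundle $S \times V$ splits orthogonally as a sum of $n$ line bundles and the principal $\UU_n$-bundle $G \to S$ reduces to a product of $n$ copies of $\GL_1(\C) \to \R_{>0}$. The multiplicativity of the Mathai--Quillen construction under orthogonal direct sums of hermitian bundles with connection then gives the key factorization
\[
\varphi|_{\R_{>0}^n \times V} \;=\; \varphi^{(1)}_1 \wedge \cdots \wedge \varphi^{(1)}_n,
\]
where $\varphi^{(1)}_j$ is the one-dimensional form \eqref{E:phiN1} on the $j$-th factor $\R_{>0} \times \C$. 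I would then parametrize the simplex by $(t_j) \in \R_{>0}^n$ with $\prod_j t_j = 1$ and perform the change of variables $r_j = st_j$ to identify the combined integration $\int_A \int_0^\infty \cdot\, \tfrac{ds}{s}$ with an integration over $\R_{>0}^n$, with Jacobian
\[
\bigwedge_{j=1}^n \frac{dr_j}{r_j} \;=\; n(-1)^{n-1}\, \frac{ds}{s} \wedge \bigwedge_{k=1}^{n-1} \frac{dt_k}{t_k}.
\]
Under the product factorization of $\varphi$, the integration decouples into $n$ one-dimensional integrations, each equal to $\frac{1}{4i\pi}\bigl(\tfrac{dz_j}{z_j} - \tfrac{d\bar z_j}{\bar z_j}\bigr)$ by \eqref{E:etaN1}, whose wedge product yields the claimed formula.

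The main obstacle will be the final step: $\psi = \iota_X\varphi$ restricted to the slice is the sum $\sum_j \varphi^{(1)}_1 \wedge \cdots \wedge \psi^{(1)}_j \wedge \cdots \wedge \varphi^{(1)}_n$ of $n$ terms rather than a single product, so one must carefully check that after the change of variables these cross-terms combine with the Jacobian factor of $n$ and the measures to reconstitute a symmetric $n$-fold product of one-dimensional integrations $\eta^{(1)}_j$.
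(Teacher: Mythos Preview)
Your approach to Part~1 and the reduction to the standard case in Part~2 is the same as the paper's. The final computation is also morally the same (metric splitting over the diagonal torus, reduction to the $n=1$ case), but the paper sidesteps the ``obstacle'' you identify by a cleaner maneuver.

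Rather than working with $\psi$ and the definition $\eta=\int_0^\infty [s]^*\psi\,\frac{ds}{s}$, the paper invokes the remark at the end of \S\ref{S:42}: on $S\cong X\times\R_{>0}$ one has $\varphi=\alpha-\psi\wedge\frac{dr}{r}$ and hence $\eta=\int_{\R_{>0}}\varphi$. This immediately gives
\[
\int_{AK\R_{>0}}\eta \;=\; \int_{\{\mathrm{diag}(t_1,\ldots,t_n)\,:\,t_j>0\}K}\varphi,
\]
i.e.\ the integral over $A\times\R_{>0}\cong\R_{>0}^n$ of $\varphi$. Now $\varphi$ itself, not $\psi$, factors along the diagonal slice as a clean product $\varphi^{(1)}\wedge\cdots\wedge\varphi^{(n)}$ of the one-dimensional $2$-forms \eqref{E:phiN1}, and Fubini reduces the integral to the product $\prod_j\int_{\R_{>0}}\varphi^{(j)}=\prod_j\eta^{(j)}$, each factor given by \eqref{E:etaN1}. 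No cross-terms, no Jacobian factor of $n$ to chase.

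Your route via $\psi$ is correct and would eventually yield the same answer: the $n$ cross-terms in $\psi=\sum_j\varphi^{(1)}\wedge\cdots\wedge\psi^{(1)}_j\wedge\cdots\wedge\varphi^{(n)}$ together with the Jacobian of the change of variables $(s,t_1,\ldots,t_{n-1})\mapsto(r_1,\ldots,r_n)$ do conspire to produce the symmetric product. But this bookkeeping is exactly what the identity $\eta=\int_{\R_{>0}}\varphi$ packages for free; it is worth noting that this is the intended shortcut.
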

\begin{proof} 1.  Dans ce cas l'image de  $\Delta (\mathbf{q})$ est contenue dans le bord de $\overline{X}^T$ et il résulte de la proposition \ref{P32} que $\int_{\Delta_{k}'} \eta (\mathbf{q})$ est nulle sur tout ouvert relativement compact de $V - \bigcup_{|v| <n} W(\mathbf{q})_v$. 

2. Supposons donc $k=n-1$ et que les vecteurs $q_0 , \ldots , q_{n-1}$ soient linéairement indépendants.

En notant toujours $g$ l'élément \eqref{E:g}, la $G$-invariance de $\eta$ implique que 
$$g^* \left( \int_{\Delta_{n-1}'} \eta (\mathbf{q}) \right) = \int_{\Delta_{n-1}'} \eta (e_1 , \ldots , e_n ).$$
Comme par ailleurs $g^*\ell_j$ est la forme linéaire $e_{j+1}^*$ de noyau 
$$g^{-1} W(\mathbf{q})^{(j)} = \langle e_1 , \ldots , \widehat{e_{j+1}} , \ldots , e_n  \rangle,$$ 
on est réduit à vérifier la proposition dans le cas où $\mathbf{q} = (e_1 , \ldots , e_n )$.

Il nous reste donc à calculer l'intégrale 
$$\int_{AK\R_{>0}} \eta, \quad \mbox{où } A=\{ \mathrm{diag}(t_1,\ldots,t_n) \in \SL_n (\C) \; : \; t_j \in \R_{>0},  \ t_1 \cdots t_n = 1 \}.$$ D'après la remarque à la fin du paragraphe \ref{S:42} on a 
\begin{equation} \label{E:intsymbMod}
\int_{AK\R_{>0}} \eta = \int_{\{ \mathrm{diag}(t_1,\ldots,t_n ) \; : \; t_j \in \R_{>0} \} K} \varphi.
\end{equation}
Or, en restriction à l'ensemble des matrices symétriques diagonales réelles, le fibré en $\C^n$ se scinde {\it métriquement} en une somme directe de $n$ fibrés en droites, correspondant aux coordonnées $(z_j )_{j=1 , \ldots , n}$ de $z$ et la forme $\varphi$ se décompose en le produit de $2$-formes associées à ces fibrés en droites~: 
$$
\varphi^{(j)}  = \frac{i}{2\pi} e^{- t_j^2 |z_j |^2 }  \left( t_j^2 dz_j \wedge d \overline{z}_j 
-  t_j^2 ( z_j d\overline{z}_j - \overline{z}_j dz_j )  \wedge \frac{dt_j}{t_j}  \right),
$$
d'après (\ref{E:phiN1}).

Finalement, on obtient que l'intégrale \eqref{E:intsymbMod} est égale à 
\begin{multline*}
\frac{(-i)^n}{(2\pi)^n} \left( \prod_{j=1}^n \int_{\R_{>0}} t_j^2 |z_j|^2 e^{-t_j ^2 |z_j |^2}  \frac{dt_j }{t_j} \right) \wedge_{j=1}^n  \left( \frac{dz_j}{z_j} - \frac{d\overline{z}_j}{\overline{z}_j} \right)    \\ 
= \frac{1}{(4i\pi)^n} \wedge_{j=1}^n  \left( \frac{dz_j}{z_j} - \frac{d\overline{z}_j}{\overline{z}_j} \right) ,
\end{multline*}
comme attendu. 
\end{proof}

\chapter{Cocycles de $\GL_n (\C)$ explicites}  \label{S:6}

\resettheoremcounters

Dans ce chapitre on note à nouveau $G=\GL_n (\C)^{\delta}$. Dans un premier temps on explique comment associer à la forme de Mathai--Quillen $\eta \in A^{2n-1} (X \times (\C^n - \{ 0 \} ))^G$ un représentant explicite du relevé canonique $\Phi \in H_G^{2n-1} (\C^n -\{ 0 \})$ fourni par la proposition \ref{P4}. On utilise ensuite ce représentant explicite pour démontrer le théorème \ref{T:Sa}.

\section{Forme simpliciale associée à $\eta$} \label{S:61}

Soit $x_0 \in X$ le point base associé à la classe de l'identité dans $\SL_n (\C)$. L'application $\exp : T_{x_0} X \to X$ étant un difféomorphisme, il existe une rétraction 
\begin{equation} \label{E:R}
R : [0, 1] \times X  \to X 
\end{equation}
de $X$ sur $\{x_0 \} $. Elle est donnée par la formule 
$$R_s (x ) = \exp (s \exp^{-1} (x) ) \quad (x \in X,   \ s \in [0,1]).$$

Suivant \cite{Dupont} on déduit de $R$ une suite d'applications
\begin{equation} \label{E:mapr}
\rho_k : \Delta_k \times E_kG  \times \C^n \longrightarrow X \times \C^n
\end{equation}
définies de la manière suivante~: pour $t = (t_0 , \ldots , t_k ) \in \Delta_k$ on pose $s_j = t_j + t_{j+1} + \ldots + t_k$ ($j=1, \ldots , k$). \'Etant donné un $(k+1)$-uplet 
$$\mathbf{g} = (g_0 , \ldots , g_k ) \in E_kG$$ 
et un vecteur $z \in \C^n$, on a alors
\begin{multline} \label{E:mapr2}
\rho_k ( t , \mathbf{g} , z ) = (g_0^{-1} \cdot R_{s_1} ( g_0 g_1^{-1} \cdot R_{s_2 / s_1} ( g_1 g_2^{-1} \cdot \\ \ldots g_{j-1} g_j^{-1} \cdot R_{s_{j+1} / s_j} ( g_j g_{j+1}^{-1} \cdot \cdots R_{s_k / s_{k-1} } ( g_{k-1} g_k^{-1} \cdot x_0) \ldots )))  , z).
\end{multline}
La suite $(\rho_k )$ est constituée d'applications $G$-équivariantes qui font commuter le diagramme\footnote{Ici $\epsilon^k : \Delta_{k-1} \to \Delta_k$ désigne l'application d'inclusion de la $k$-ième face.} 
\begin{equation} \label{diag:simpl}
\xymatrix{
\Delta_{k-1} \times E_kG  \times \C^n   \ar[d]^{\  \mathrm{id} \times \partial_k \times \mathrm{id}} \ar[r]^{\epsilon^k \times \mathrm{id}} & \Delta_k \times E_kG   \times \C^n \ar[d]^{\rho_{k}} \\
\Delta_{k-1} \times E_{k-1} G  \times \C^n \ar[r]^{\quad \quad \rho_{k-1}} & X\times \C^n,
}
\end{equation}
de sorte que $\rho$ induit une application $G$-équivariante
$$\rho^* : A^\bullet (X \times \C^n ) \to \mathrm{A}^\bullet (EG \times \C^n),$$
où l'espace à droite est celui des formes différentielles simpliciales sur la variété simpliciale $EG \times \C^n$, cf. annexe \ref{A:A}. 

La proposition suivante précise la proposition \ref{P4}. 

\begin{proposition}
La forme simpliciale
$$\rho^* \eta \in \mathrm{A}^{2n-1} (EG \times (\C^n - \{ 0 \}))^G$$
est fermée et représente une classe dans $H_G^{2n-1} (\C^n - \{ 0 \} )$ qui relève la classe fondamentale dans $H^{2n-1} (\C^n - \{ 0 \} )$. 
\end{proposition}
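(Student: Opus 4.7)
Le plan consiste à vérifier trois propriétés successives de $\rho^* \eta$, en s'appuyant sur les propriétés de $\eta$ établies à la proposition \ref{P:eta} et sur le formalisme des formes différentielles simpliciales rappelé à l'annexe \ref{A:A}.

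Je commencerais par vérifier que la collection $(\rho_k^* \eta)_{k\ge 0}$ définit bien une forme différentielle simpliciale sur $EG \times (\C^n - \{0\})$. Cela se ramène à la compatibilité avec les applications de face, laquelle résulte immédiatement de la commutativité du diagramme \eqref{diag:simpl} : les tirés en arrière par $\epsilon^k \times \mathrm{id}$ et par $\mathrm{id} \times \partial_k \times \mathrm{id}$ coïncident.

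La fermeture et la $G$-invariance s'établissent ensuite sans difficulté. La fermeture de chaque $\rho_k^* \eta$ est immédiate par naturalité de $d$ à partir de l'égalité $d\eta = 0$ démontrée à la proposition \ref{P:eta}. De même, la $G$-invariance de $\rho^* \eta$ résulte de la $G$-invariance de $\eta$ combinée à la $G$-équivariance de chaque $\rho_k$, que l'on lit directement sur la formule explicite \eqref{E:mapr2}. Ceci fournit une classe bien définie dans $H_G^{2n-1}(\C^n - \{0\})$ \emph{via} l'identification entre cohomologie de de Rham simpliciale et cohomologie équivariante expliquée à l'annexe \ref{A:A}.

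Pour conclure, il resterait à montrer que cette classe relève la classe fondamentale de $\C^n - \{0\}$. Dans le modèle simplicial, l'application de restriction $H_G^{2n-1}(\C^n - \{0\}) \to H^{2n-1}(\C^n - \{0\})$ correspond au tiré en arrière par un $0$-simplexe unique. La spécialisation de \eqref{E:mapr2} en $k=0$ donne $\rho_0(1, g, z) = (g^{-1} x_0, z)$, de sorte que ce tiré en arrière n'est autre que la restriction de $\eta$ à la fibre $\{g^{-1} x_0\} \times (\C^n - \{0\})$ et représente donc la classe fondamentale d'après la dernière assertion de la proposition \ref{P:eta}. L'argument est essentiellement formel une fois la proposition \ref{P:eta} acquise ; l'étape qui demande le plus de soin est l'identification entre formes simpliciales fermées invariantes et classes équivariantes, mais elle est standard (théorie de Dupont) et traitée en détail à l'annexe \ref{A:A}.
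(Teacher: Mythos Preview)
Your proof is correct and follows essentially the same approach as the paper: both deduce closedness and $G$-invariance of $\rho^*\eta$ directly from the corresponding properties of $\eta$ (proposition~\ref{P:eta}) and the $G$-equivariance of $\rho$, then identify the image in $H^{2n-1}(\C^n-\{0\})$ with the fundamental class by restricting to a fibre. The paper's version is simply terser, leaving implicit the verification that $(\rho_k^*\eta)_k$ is a simplicial form and the explicit description of the restriction map via the $0$-simplex, both of which you spell out.
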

\begin{proof} La forme $\rho^* \eta$ est fermée et $G$-invariante puisque $\eta$ l'est (proposition \ref{P:eta}) et, comme $\eta$ représente la classe fondamentale de $\C^n - \{ 0 \}$, la classe de cohomologie équivariante $[\rho^* \eta ]$ s'envoie sur la classe fondamentale dans $H^{2n-1} (\C^n - \{ 0 \} )$.   
\end{proof}

\section{Cocycle associé} \label{S:62}

Considérons maintenant les ouverts 
\begin{equation} \label{hypCU'}
\begin{split}
U(g_0 , \ldots , g_k )  & = \left\{  z  \in  \C^n \; : \; \forall j \in \{ 0 , \ldots , k \}, \ e_1^* (g_j z ) \neq 0 \right\} \\
& = \C^n - \cup_j H_j,
\end{split}
\end{equation}
où $H_j$ est le translaté par $g_j^{-1}$ de l'hyperplan $z_1=0$ dans $\C^n$. Ces sous-variétés sont toutes affines de dimension $n$ et n'ont donc pas de cohomologie en degré $>n$. 

La forme simpliciale $\rho^* \eta$ représente une classe de cohomologie équivariante dans $H^{2n-1}_G (\C^n - \{0 \})$. On peut lui appliquer la méthode décrite dans l'annexe \ref{A:A} et lui associer un $(n-1)$-cocycle de $G$ à valeurs dans 
$$\lim_{\substack{\rightarrow \\ H_j}} H^n (\C^n - \cup_j H_j ).$$
On calculera explicitement ce cocycle au paragraphe \S \ref{S:demTSa}. 

Avant cela remarquons qu'on aurait pu également considérer les ouverts 
\begin{equation} \label{hypC}
\begin{split}
U^* (g_0 , \ldots , g_k) & = \left\{  z \in \C^n \; \left| \;   \begin{array}{l} \forall J \subset \{ 0 , \ldots , k \}, \\ |J| <\min (k,n)  \Rightarrow z \notin \langle q_j  \; : \; j \in J \rangle \end{array} \right. \right\} \\
& = \C^n - \bigcup_j W(\mathbf{q})^{(j)},
\end{split}
\end{equation}
où $\mathbf{q} = (q_0 , \ldots , q_k)=( g_0^{-1} e_1 , \ldots  , g_{k}^{-1} e_1 )$ et $W(\mathbf{q})^{(j)} = \langle q_i \; : \; i \neq j \rangle$. 

En général la sous-variété \eqref{hypC} n'est pas affine. C'est toutefois le cas lorsque les vecteurs $g_0^{-1} e_1, \ldots , g_{k}^{-1} e_1$ engendrent $\C^n$.  L'argument de l'annexe \ref{A:A} implique donc encore que la forme simpliciale $\rho^* \eta$ détermine un $(n-1)$-cocycle de $G$ à valeurs dans 
$$\lim_{\substack{\rightarrow \\ H_j}} H^n (\C^n - \cup_j H_j ).$$
On détaille particulièrement le calcul explicite de ce cocycle dans les paragraphes suivants; ce sont en effet ces calculs que nous généraliserons dans les cas multiplicatifs et elliptiques.

\section{Section simpliciale et homotopie} \label{S:63}

Pour tout entier $k \in [0 , n-1]$, on définit, par récurrence, une subdivision simpliciale $\left[ \Delta_k \times [0,1] \right]'$ de $\Delta_k \times [0,1]$ en joignant tous les simplexes dans $\Delta_k \times \{0 \} \cup \partial \Delta_k \times [0,1]$ au barycentre de $\Delta_k \times \{1\}$, comme sur la figure ci-dessous.
\begin{center}
\includegraphics[width=0.5\textwidth]{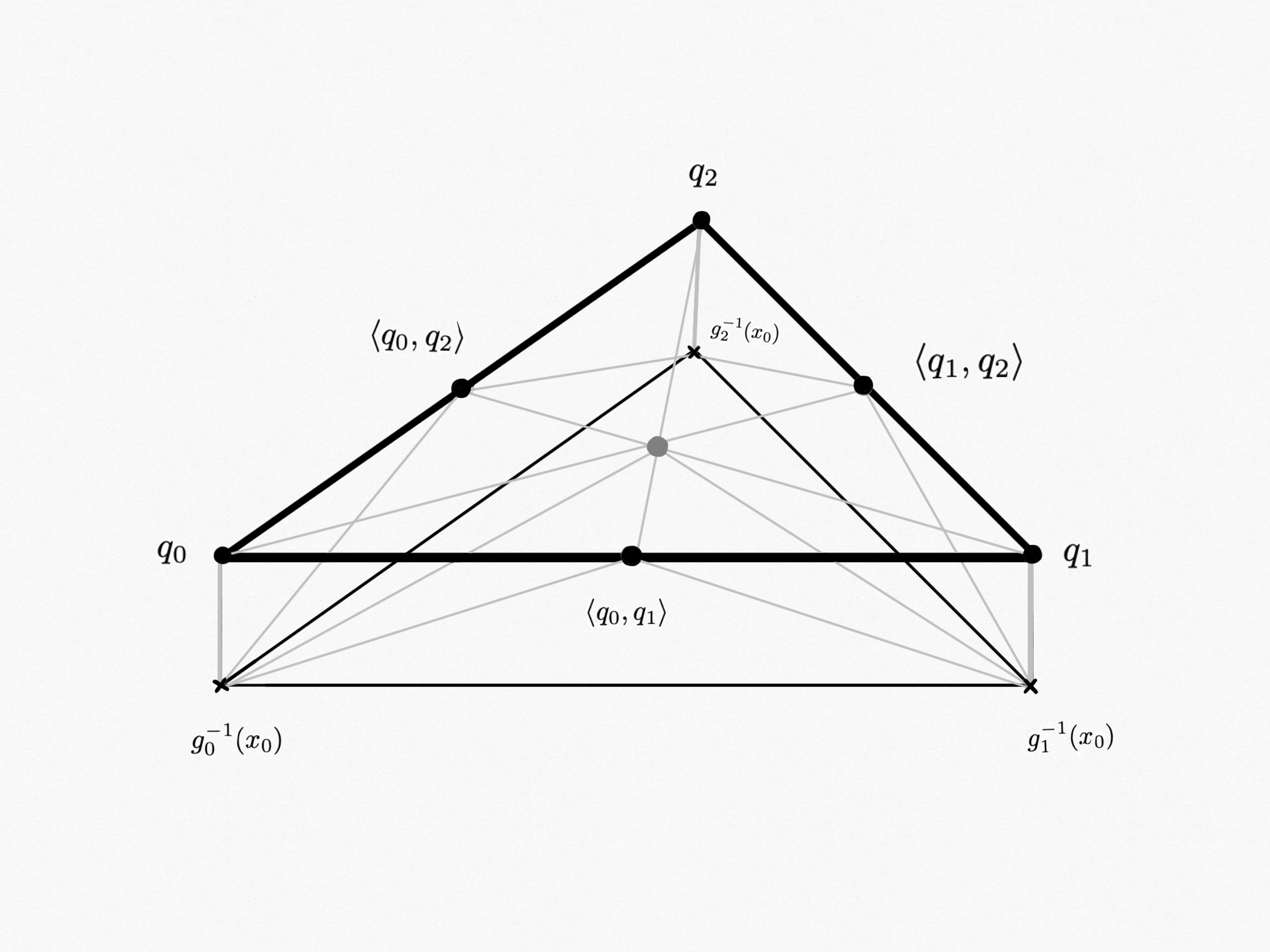}
\end{center}
L'ensemble des $(k+1)$-simplexes de $\left[ \Delta_k \times [0,1] \right]'$ est constitué des joints 
\begin{equation} \label{E:cone1}
\Delta_w \star \Delta_{v_0, \ldots , v_{k - |w|}}  \quad (w \subset v_0)
\end{equation}
où $\Delta_w \subset \Delta_k = \Delta_k \times \{ 0 \}$ est le simplexe correspondant à un sous-ensemble non-vide $w \subset \{0, \ldots , k \}$, de cardinal $|w|$, et $\Delta_{v_0, \ldots , v_{k-|w|}}$ est le $(k-|w|)$-simplexe de la subdivision barycentrique $\Delta_k' = \Delta_k ' \times \{ 1 \}$ associé à une suite croissante $v_0 \subset \cdots \subset v_{k-|w|}$ de sous-ensembles de $\{0 , \ldots , k \}$ contenant tous $w$. 

On définit maintenant une suite d'applications 
\begin{equation}
\varrho_k : \left[ \Delta _k \times [0,1] \right]' \times E_k G \times \C^n \to \overline{X}^T \times \C^n \quad (k \in \{ 0 , \ldots , n-1 \} )
\end{equation}
de manière à ce que la restriction de $\varrho_k$ à 
$$\Delta_k  \times E_k G \times \C^n = (\Delta _k \times \{0 \} ) \times E_k G \times \C^n$$ 
coïncide avec $\rho_k$ et qu'en restriction à 
$$\Delta_k '  \times E_k G \times \C^n \subset (\Delta _k ' \times \{1 \} ) \times E_k G \times \C^n$$
on ait  
$$\varrho_k (  - , \mathbf{g} , -) = \Delta (\mathbf{q} ) \times \mathrm{Id}_{\C^n },$$
où si $\mathbf{g}=(g_0 , \ldots , g_{k} ) \in E_{k} G$ on note toujours $\mathbf{q} = ( g_0^{-1} e_1 , \ldots , g_k^{-1} e_1 ).$ 

En procédant par récurrence sur $k$ on est ramené à définir $\varrho_k$ sur chaque simplexe \eqref{E:cone1}. Considérons donc un sous-ensemble non-vide $w \subset \{0, \ldots , k \}$ et une suite croissante $v_0 \subset \cdots \subset v_{k-|w|}$ de sous-ensembles de $\{0 , \ldots , k \}$ contenant tous $w$. L'application $\varrho_k$ étant définie sur $\Delta_w \subset \Delta _k \times \{0 \}$ et sur $\Delta_{v_0, \ldots , v_{k-|w|}} \subset \Delta _k ' \times \{1 \} $, l'expression
$$\varrho_k ( t ,  \mathbf{g} , z ) = s \left( t ; \varrho_k ( 0 ,  \mathbf{g} , z ) , \varrho_k ( 1 ,  \mathbf{g} , z ) \right)$$
définit une application 
$$|\Delta_w|  \times |\Delta_{v_0, \ldots , v_{k-|w|}}| \times [0,1] \to \overline{X}^T \times \C^n$$
qui se factorise en une application
$$| \Delta_w \star \Delta_{v_0, \ldots , v_{k - |w|}}  | \to \overline{X}^T \times \C^n.$$
On définit ainsi $\varrho_k$ sur les simplexes \eqref{E:cone1}; on laisse au lecteur le soin de vérifier les relations de compatibilité.

La proposition \ref{P33} implique que, pour tout $\mathbf{g} \in E_k G$, on peut recouvrir l'image
$$\varrho_k \left( \left[ \Delta _k \times [0,1] \right]'  \times \{ \mathbf{g} \} \times \C^n \right) \subset \overline{X}^T \times \C^n$$
par un nombre fini de produits $\mathfrak{S} \times \C^n$ d'ensembles de Siegel par $\C^n$. Il découle donc de la proposition \ref{P32} que la forme différentielle fermée 
$$\varrho_k^* \eta (\mathbf{g}) \in \mathrm{A}^{2n-1} \left( \left[ \Delta _k \times [0,1] \right]' \times U^* (g_0, \ldots , g_k) \right)$$
est bien définie. 

\begin{definition}
Pour tout entier $k \in [0, n-1]$ et pour tout $(k+1)$-uplet $(g_0 , \ldots , g_{k}) \in E_k G$, on pose 
$$H_k (g_0 , \ldots , g_k ) = \int_{\left[ \Delta _k \times [0,1] \right]'} \varrho_k^* \eta (g_0 , \ldots , g_k ) \in A^{2n-2-k}  \left(\C^n - \bigcup_{j=0}^k W(\mathbf{q})^{(j)} \right),$$
où $\mathbf{q} = (q_0 , \ldots , q_k)$ avec $q_j = g_j^{-1} e_1$. 
\end{definition}

\section{Calcul du cocycle}

Il résulte de la définition de l'application bord $\delta$ donnée dans l'annexe \ref{A:A} que, pour tout entier $k \in [0, n-1]$, on a
$$\delta H_{k-1} (g_0 , \ldots , g_{k}) = \sum_{j=0}^k H_{k-1} (g_0 , \ldots , \widehat{g}_j , \ldots , g_k ),$$
vue comme forme différentielle sur $\C^n - \bigcup_{j=0}^k W(\mathbf{q})^{(j)}$. 

\begin{theorem} \label{T37}
Pour tout entier $k \in [0 , n-1]$ et pour tout $\mathbf{g} = (g_0 , \ldots , g_k)$ dans $E_k G$, l'intégrale $\int_{\Delta_k} \rho^* \eta (\mathbf{g})$ est égale à 
$$\delta H_{k-1} (g_0 , \ldots , g_{k}) \pm d H_k (g_0 , \ldots , g_k),  \quad \mbox{si} \quad  k < n-1,$$
et
$$\int_{\Delta_{n-1}'} \eta (\mathbf{q}) + \delta H_{n-2} (g_0 , \ldots , g_{n-1}) \pm dH_{n-1}  (g_0 , \ldots , g_{n-1}), \quad \mbox{si} \quad k=n-1,$$
dans $A^{2n-2-k} \left(\C^n - \bigcup_{j=0}^k W(\mathbf{q})^{(j)} \right),$ 
où $\mathbf{q} = (q_0 , \ldots , q_k)$ avec $q_j = g_j^{-1} e_1$. 
\end{theorem}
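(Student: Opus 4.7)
L'idée est d'appliquer la formule de Stokes à la forme fermée $\varrho_k^* \eta(\mathbf{g})$ sur la variété à coins $\left[ \Delta_k \times [0,1] \right]' \times U^*(g_0,\ldots,g_k)$. Puisque $\eta$ est fermée (proposition \ref{P:eta}), sa tirée en arrière $\varrho_k^* \eta(\mathbf{g})$ l'est également. En écrivant la différentielle extérieure totale comme $d = d_\Delta + d_{\C^n}$, où $d_\Delta$ porte sur les variables de $\left[ \Delta_k \times [0,1] \right]'$ et $d_{\C^n}$ sur celles de $\C^n$, la fermeture de la forme donne $d_\Delta \varrho_k^* \eta (\mathbf{g}) = \pm d_{\C^n} \varrho_k^* \eta (\mathbf{g})$. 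Comme $U^*(g_0,\ldots,g_k)$ est ouvert et que les intégrations partielles commutent à $d_{\C^n}$ au signe près, on obtient
\[
\pm d H_k(\mathbf{g}) = \int_{\left[ \Delta_k \times [0,1] \right]'} d_\Delta \varrho_k^* \eta (\mathbf{g}) = \int_{\partial \left( \left[ \Delta_k \times [0,1] \right]' \right)} \varrho_k^* \eta(\mathbf{g}) ,
\]
l'intégrale de bord étant convergente grâce à la décroissance rapide de $\eta$ près du bord de $\overline{X}^T$ établie à la proposition \ref{P32} (appliquée via la proposition \ref{P33} aux ensembles de Siegel généralisés couvrant l'image de $\varrho_k$).

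Il reste à décomposer le bord de $\left[ \Delta_k \times [0,1] \right]'$ en ses trois composantes et à identifier chacune des contributions. Le plan est le suivant :
\begin{enumerate}
\item[(a)] Sur $\Delta_k \times \{0\}$, la construction donne $\varrho_k = \rho_k$ et la contribution est exactement $\int_{\Delta_k} \rho^* \eta(\mathbf{g})$.
\item[(b)] Sur $\Delta_k' \times \{1\}$, par définition de $\varrho_k$, l'application se factorise par $\Delta(\mathbf{q})\times \mathrm{id}_{\C^n}$. La contribution est donc $\int_{\Delta_k'} \eta(\mathbf{q})$. Lorsque $k < n-1$, les vecteurs $q_0,\ldots,q_k$ engendrent un sous-espace propre de $\C^n$, et la proposition \ref{P34}, partie 1, assure que cette contribution s'annule. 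Lorsque $k=n-1$, on obtient précisément le terme $\int_{\Delta_{n-1}'} \eta(\mathbf{q})$ de l'énoncé.
\item[(c)] Sur $\partial \Delta_k \times [0,1]$, la compatibilité de la famille $(\varrho_k)$ avec les applications de faces (qui découle du diagramme \eqref{diag:simpl} et de la construction inductive par jointures barycentriques décrite au \S \ref{S:63}) montre que la restriction à la $j$-ème face vaut $\varrho_{k-1}$ appliqué à $(g_0, \ldots, \widehat{g}_j, \ldots, g_k)$. La contribution totale est alors $\sum_{j=0}^k (-1)^j H_{k-1}(g_0, \ldots, \widehat{g}_j, \ldots, g_k) = \delta H_{k-1}(\mathbf{g})$.
\end{enumerate}
En rassemblant les trois contributions, on obtient la formule annoncée.

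Le principal point délicat est l'étape (c), à savoir la vérification minutieuse de la compatibilité des applications $\varrho_k$ avec les faces, compte tenu de la subdivision simpliciale particulière de $\left[ \Delta_k \times [0,1] \right]'$ (jointure barycentrique) ; il faut à la fois suivre les signes d'orientation et s'assurer que la restriction de $\varrho_k$ à la trace de $\Delta_j \times [0,1]$ sur le bord coïncide bien avec $\varrho_{k-1}$ correspondant, ce qui relève d'une récurrence soigneuse sur $k$. Les étapes (a) et (b) sont en revanche des conséquences quasi-immédiates de la construction et de la proposition \ref{P34}. Enfin, la justification de Stokes elle-même (en particulier la nullité de la contribution à l'infini dans $\overline{X}^T$) résulte des estimations uniformes de la proposition \ref{P32} et du recouvrement fourni par la proposition \ref{P33}.
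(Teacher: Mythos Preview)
Your proposal is correct and follows essentially the same approach as the paper's proof: both use the closedness of $\eta$ and Stokes' theorem on $\left[\Delta_k \times [0,1]\right]'$, then identify the three boundary pieces $\Delta_k \times \{0\}$, $\Delta_k' \times \{1\}$, and $(\partial\Delta_k) \times [0,1]$ exactly as you do, invoking Proposition~\ref{P34} for the vanishing of the top-face contribution when $k<n-1$. The paper's version is terser (it simply writes ``par d\'efinition de $\varrho_k$'' for the lateral boundary identification and does not re-invoke Propositions~\ref{P32}--\ref{P33} inside the proof), but your additional remarks on the face-compatibility of the $\varrho_k$ and on the convergence justification are accurate and in line with what was set up in \S\ref{S:63}.
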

\begin{proof} Puisque $\eta$ est fermée on a~:
$$(d_{\left[ \Delta _k \times [0,1] \right]'} \pm d ) \varrho_k^* \eta = 0$$
et donc 
$$\int_{\left[ \Delta _k \times [0,1] \right]'} d_{\left[ \Delta _k \times [0,1] \right]'} \varrho_k^* \eta (g_0 , \ldots , g_k ) \pm d H_k (g_0 , \ldots , g_k ) =0.$$
Maintenant, d'après le théorème de Stokes on a
\begin{multline*}
\int_{\left[ \Delta _k \times [0,1] \right]'} d_{\left[ \Delta _k \times [0,1] \right]'} \varrho_k^* \eta (g_0 , \ldots , g_k ) = \int_{\Delta_k \times \{ 0 \}} \varrho_k^* \eta (g_0 , \ldots , g_k ) \\ +  \int_{\left[(\partial \Delta_k) \times [0,1]\right]'} \varrho_k^* \eta (g_0 , \ldots , g_k )  -  \int_{\Delta_k' \times \{ 1 \}} \varrho_k^* \eta (g_0 , \ldots , g_k ).
\end{multline*}
La dernière intégrale est égale à $\int_{\Delta_k '} \eta (\mathbf{q})$ et est donc nulle si $k < n-1$ d'après la proposition \ref{P34}.

Finalement, par définition de $\varrho_k$ on a 
$$\int_{\Delta_k \times \{ 0 \}} \varrho_k^* \eta (g_0 , \ldots , g_k ) = \int_{\Delta_k} \rho_k^* \eta (g_0 , \ldots , g_k) $$
et
$$\int_{\left[(\partial \Delta_k) \times [0,1]\right]'} \varrho_k^* \eta (g_0 , \ldots , g_k ) = - \delta H_{k-1} (g_0 , \ldots , g_k ).$$
\end{proof}

Le corollaire suivant découle du théorème \ref{T37} et de la proposition \ref{P34}. 

\begin{cor} \label{C38}
La forme différentielle simpliciale fermée $\rho^* \eta $ définit un $(n-1)$-cocycle de $G$ à valeurs dans 
$$\lim_{\substack{\rightarrow \\ H_j}} H^n (\C^n - \cup_j H_j )$$
qui est cohomologue au cocycle 
$$(g_0 , \ldots , g_{n-1} ) \mapsto \left[ \frac{1}{(4i\pi)^n} \left( \frac{d \ell_0}{\ell_0} - \overline{\frac{d\ell_0}{\ell_0}} \right) \wedge \ldots \wedge \left( \frac{d \ell_{n-1}}{\ell_{n-1}} - \overline{\frac{d\ell_{n-1}}{\ell_{n-1}}} \right) \right],$$
où $\ell_j$ est une forme linéaire sur $\C^n$, de noyau $W(\mathbf{q})^{(j)}$, qui à $z$ associe $z^\top q_j^*$.
\end{cor}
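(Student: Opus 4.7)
The plan is to extract the $(n-1)$-cocycle by systematically unwinding the simplicial-to-group-cohomology passage described in Appendix \ref{A:A} and then applying the two technical tools we have just assembled: Theorem \ref{T37} and Proposition \ref{P34}.

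First I would recall the standard recipe. The closed $G$-invariant simplicial form $\rho^*\eta$ represents a class in $H^{2n-1}_G(\C^n-\{0\})$; restricted to the affine open $U(g_0,\dots,g_k)$ (or, equivalently, to $U^*(g_0,\dots,g_k)$ when the vectors $g_j^{-1}e_1$ span $\C^n$), it lies in a double complex whose filtration by simplicial degree gives a spectral sequence converging to equivariant cohomology. The explicit cocycle in $H^{n-1}(G,\varinjlim_{H_j} H^n(\C^n-\cup_j H_j))$ produced by this spectral sequence is obtained by integrating $\rho^*\eta$ over the simplicial direction: on $(g_0,\dots,g_{n-1})$ it is given by the cohomology class of $\int_{\Delta_{n-1}}\rho^*\eta(g_0,\dots,g_{n-1})$, provided that for every $k<n-1$ the form $\int_{\Delta_k}\rho^*\eta(g_0,\dots,g_k)$ is exact in the de Rham direction, modulo a group-cohomology coboundary. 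The forms $H_k$ introduced in \S\ref{S:63} are designed precisely to produce such a trivialisation.

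Second, I would invoke Theorem \ref{T37}: for $k<n-1$ it asserts
$$\int_{\Delta_k}\rho^*\eta(g_0,\dots,g_k) = \delta H_{k-1}(g_0,\dots,g_k) \pm d H_k(g_0,\dots,g_k),$$
which expresses the integrated form as the sum of a group-cochain coboundary and a de Rham coboundary of a form regular on $U^*(g_0,\dots,g_k)$. Iterating this from $k=0$ upwards shows that the partial integrals are trivial as cocycles in the double complex, so the spectral sequence representative really does reduce to the top piece. For $k=n-1$ the same theorem yields
$$\int_{\Delta_{n-1}}\rho^*\eta(g_0,\dots,g_{n-1}) = \int_{\Delta_{n-1}'}\eta(\mathbf{q}) + \delta H_{n-2}(g_0,\dots,g_{n-1}) \pm d H_{n-1}(g_0,\dots,g_{n-1}).$$
Thus, modulo a group coboundary and a de Rham coboundary, the cocycle representing $\rho^*\eta$ is represented by $\int_{\Delta_{n-1}'}\eta(\mathbf{q})$.

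Finally I would apply Proposition \ref{P34}(2), which evaluates this last integral as
$$\frac{1}{(4i\pi)^n}\left(\frac{d\ell_0}{\ell_0}-\overline{\frac{d\ell_0}{\ell_0}}\right)\wedge\cdots\wedge\left(\frac{d\ell_{n-1}}{\ell_{n-1}}-\overline{\frac{d\ell_{n-1}}{\ell_{n-1}}}\right)$$
on $\C^n-\cup_j W(\mathbf{q})^{(j)}$, where $\ell_j(z)=z^\top q_j^*$. Part (1) of the same proposition handles the degenerate case where the vectors $g_j^{-1}e_1$ fail to span $\C^n$: the boundary integral vanishes, which is consistent with the fact that in that regime the formula on the right makes sense (and is cohomologous to zero) once one passes to the inductive limit. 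The main obstacle, which is really the one already addressed by Theorem \ref{T37}, is the careful construction of the homotopy $\varrho_k$ together with the verification of integrability via Propositions \ref{P32} and \ref{P33}; once these are granted, the corollary follows by combining Theorem \ref{T37} with Proposition \ref{P34} in the double complex, exactly as sketched above.
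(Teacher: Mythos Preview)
Your proposal is correct and follows exactly the paper's approach: the paper simply states that the corollary ``découle du théorème \ref{T37} et de la proposition \ref{P34}'', and your argument spells out precisely how these two results combine via the double-complex recipe of Appendix~\ref{A:A}. Your treatment is more detailed than the paper's one-line attribution but proceeds along the same line.
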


\section[Démonstration du théorème 2.2]{Démonstration du théorème \ref{T:Sa}} \label{S:demTSa}

Les identités immédiates suivantes entre formes différentielles sur $\C^*$
$$\frac{1}{2i\pi} \frac{dz}{z} = \frac{d\theta}{2\pi} + d \left( \frac{1}{2i\pi} \log r \right) \quad \mbox{et} \quad 
\frac{1}{4i\pi} \left( \frac{dz}{z} - \overline{\frac{dz}{z}} \right) = \frac{d\theta}{2\pi},$$
avec $z = re^{i\theta}$, impliquent que si $\ell$ est une forme linéaire sur $\C^n$ les formes différentielles 
$$\frac{1}{2i\pi} \frac{d\ell}{\ell} \quad \mbox{et} \quad \frac{1}{4i\pi} \left( \frac{d \ell}{\ell} - \overline{\frac{d\ell}{\ell}} \right)$$
sur $\C^n- \mathrm{ker} (\ell )$ sont cohomologues. Le théorème de Brieskorn \cite{Brieskorn} évoqué au \S \ref{S21} implique donc que l'application 
$$\Omega_{\rm aff} \to \lim_{\substack{\rightarrow \\ H_j}} H^\bullet (\C^n - \cup_j H_j ); \quad \frac{1}{2i\pi} \frac{d\ell}{\ell} \mapsto \left[ \frac{1}{4i\pi} \left( \frac{d \ell}{\ell} - \overline{\frac{d\ell}{\ell}} \right) \right]$$
est un isomorphisme d'algèbre. Comme par ailleurs cet isomorphisme est $G$-équivariant, il résulte du corollaire \ref{C38}
que la classe de $[\rho^* \eta]$ donne lieu à un $(n-1)$-cocycle de $G$ dans $\Omega^n_{\rm aff}$ cohomologue à 
$$\mathbf{S}_{\rm aff}^* : G^n \to \Omega^n_{\rm aff} ; \quad (g_0 , \ldots , g_{n-1}) \mapsto \frac{1}{(2i\pi )^n} \frac{d\ell_0}{\ell_0} \wedge \ldots \wedge \frac{d\ell_{n-1}}{\ell_{n-1}},$$
où $\ell_j$ est une forme linéaire de noyau $\langle g_0^{-1} e_1 , \ldots , \widehat{g_j^{-1} e_1} , \ldots , g_{n-1}^{-1} e_1 \rangle$.

Il nous reste à montrer que ce cocycle est cohomologue à $\mathbf{S}_{\rm aff}$ et que sa classe de cohomologie est non nulle. 

Pour ce faire on applique à nouveau l'argument de l'annexe \ref{A:A} à la forme simpliciale fermée $\rho^*\eta$ mais en utilisant cette fois les ouverts \eqref{hypCU'}. Rappelons que  dans ce cas
\begin{equation*}
U (g_0 , \ldots , g_k)   = \C^n - \cup_j H_j 
\end{equation*}
où $H_j$ est le translaté par $g_j^{-1}$ de l'hyperplan $z_1=0$ dans $\C^n$. 

On procède alors de la même manière que pour obtenir $\mathbf{S}_{\rm aff}^*$ mais en remplaçant la compactification de Tits par celle de Satake $\overline{X}^S$. La convexité de l'ouvert des formes hermitiennes non-nulles semi-définies positives dans $\mathcal{H}$ permet de rétracter $\overline{X}^S$ sur le point (à l'infini) associé à la forme hermitienne $|e_1^* (\cdot )|^2$ de noyau $\langle e_2 , \ldots , e_n \rangle$; notons
$$\overline{R} : [0,1] \times \overline{X}^S \times \C^n \to \overline{X}^S \times \C^n$$
l'application correspondante. 

Comme pour $R$, il correspond à $\overline{R}$ une suite d'applications $G$-équivariantes 
$$\overline{\rho}_k : \Delta_k \times E_k G \times \C^n \to \overline{X}^S \times \C^n$$ 
qui font commuter le diagramme \eqref{diag:simpl} de sorte que $\overline{\rho} = (\overline{\rho}_k)$ induit une application $G$-équivariante 
$$(\overline{\rho})^* : A^\bullet (\overline{X}^S \times \C^n ) \to \mathrm{A}^\bullet (EG \times \C^n ).$$
Mieux, il découle encore de la proposition \ref{P32} que les formes différentielles fermées 
$$(\overline{\rho}_{n-1})^* \eta (\mathbf{g}) \in A^{2n-1} (\Delta_{n-1}  \times U (g_0 , \ldots , g_{n-1})  )$$
sont bien définies, se recollent en une forme simpliciale fermée dans  
$$\varinjlim A^{n}(  \C^n - \cup_j H_j )$$
et définissent un $(n-1)$-cocycle
$$(g_0 , \ldots , g_{n-1} ) \mapsto \int_{\Delta_{n-1}} (\overline{\rho}_{n-1})^* \eta (g_0 , \ldots , g_{n-1} ).$$
Ce dernier étant obtenu en appliquant l'argument de l'annexe \ref{A:A}, il est cohomologue au cocycle du corollaire \ref{C38}. 
Or, d'après la remarque suivant la proposition \ref{P33}, on a 
$$\overline{\rho}_k (\Delta_k \times \{ \mathbf{g} \} \times \{ z \} ) = \overline{\Delta^\circ (\mathbf{\sigma}^*)} \times \{ z \}$$
où cette fois $\mathbf{\sigma} = (\sigma_0 , \ldots , \sigma_{n-1})$, avec $\sigma_j = g_j^\perp e_1 $, de sorte que
$$\mathrm{ker} \ e_1^* (g_j \cdot) = \langle \sigma_0^* , \ldots , \widehat{\sigma_j^*} , \ldots , \sigma_{n-1}^* \rangle.$$
On a donc
$$\int_{\Delta_{n-1}} (\overline{\rho})^* \eta (g_0 , \ldots , g_{n-1} ) = \int_{\Delta_{n-1} '} \eta (\mathbf{\sigma}^*)$$
et la proposition \ref{P34} implique finalement que l'intégrale $\int_{\Delta_{n-1}} (\overline{\rho})^* \eta (\mathbf{g})$ est nulle si les $n$ formes linéaires $\ell_j = e_1^* (g_j \cdot)$ ($j \in \{0, \ldots , n-1\}$) sont linéairement dépendantes et qu'elle est égale à 
$$\frac{1}{(4i\pi)^n} \left( \frac{d \ell_0}{\ell_0} - \overline{\frac{d\ell_0}{\ell_0}} \right) \wedge \ldots \wedge \left( \frac{d \ell_{n-1}}{\ell_{n-1}} - \overline{\frac{d\ell_{n-1}}{\ell_{n-1}}} \right)$$
sinon. 

En faisant à nouveau appel au théorème de Brieskorn on retrouve que $\mathbf{S}_{\rm aff}$ définit un cocycle mais surtout que celui-ci représente la même classe que $\mathbf{S}_{\rm aff}^*$. Le fait que la classe de cohomologie correspondante $S_{\rm aff} \in H^{n-1} (G , \Omega_{\rm aff}^n)$ soit non nulle résulte finalement de \cite[Theorem 3]{Sczech93} où $\mathbf{S}_{\rm aff}$ est évalué sur un $(n-1)$-cycle d'éléments unipotents.  \qed
 
\medskip
\noindent
{\it Remarque.} Soit $W$ un sous-espace propre et non nul de $\C^n$, autrement dit un sommet de l'immeuble de Tits $\mathbf{T}$. Soit $X(W)$ le sous-espace de $\overline{X}^S$ constitué des matrices hermitiennes semi-définies positives dont le noyau contient $W$; il est homéomorphe à $\overline{X}_{n-\dim W}^S$ et donc contractile. Les sous-ensembles $X(\ell)$, avec $\ell \subset \C^n$ droites, forment un recouvrement acyclique du bord $\partial \overline{X}^S$ de la compactification de Satake. De plus, une intersection $X(\ell_1) \cap \ldots \cap X(\ell_k)$ est non-vide si et seulement si les droites $\ell_1, \ldots , \ell_k$ engendrent un sous-espace propre $W$ de $\C^n$, auquel cas 
$$X(\ell_1) \cap \ldots \cap X(\ell_k) = X(W).$$ 
Comme ensemble simplicial, la première subdivision barycentrique du nerf du recouvrement acyclique de $\partial \overline{X}^S$ par les $X(\ell )$ est donc égal à $\mathbf{T}$. C'est la source de la dualité qui relie les deux cocycles $\mathbf{S}_{\rm aff}$ et $\mathbf{S}_{\rm aff}^*$.

On a en effet des isomorphismes 
$$H_{n-1} ( \overline{X}^S , \partial \overline{X}^S ) \cong  \widetilde{H}_{n-2} (\partial \overline{X}^S) \cong \underbrace{\widetilde{H}_{n-2} (\mathbf{T})}_{= \mathrm{St} (\C^n )} \cong H_{n-1} ( \overline{X}^T , \partial \overline{X}^T ).$$
Explicitement, l'isomorphisme 
$$\mathrm{St} (\C^n ) \stackrel{\sim}{\longrightarrow} H_{n-1} ( \overline{X}^T , \partial \overline{X}^T )$$
associe à l'élément $[q_0 , \ldots , q_{n-1}] \in \mathrm{St} (\C^n )$ la classe de $\Delta (\mathbf{q})$ dans 
$$H_{n-1} ( \overline{X}^T , \partial \overline{X}^T );$$ 
il est $G$-équivariant ce qui se traduit par le fait que $\mathbf{S}_{\rm aff}^*$ s'étende en un isomorphisme $G$-équivariant de $\mathrm{St} (\C^n )$ vers $\Omega^n_{\rm aff}$. L'isomorphisme 
$$\mathrm{St} (\C^n ) \stackrel{\sim}{\longrightarrow} H_{n-1} ( \overline{X}^S , \partial \overline{X}^S )$$
associe quant à lui à un élément $[q_0 , \ldots , q_{n-1}] \in \mathrm{St} (\C^n )$ la classe de $[q_0^* , \ldots , q_{n-1}^*]$ dans 
$H_{n-1} ( \overline{X}^S , \partial \overline{X}^S )$; la représentation de $G$ dans $H_{n-1} ( \overline{X}^S , \partial \overline{X}^S )$ est donc naturellement identifiée à la représentation $\mathrm{St} ((\C^n )^\vee)$. 

\medskip

Dans la suite, on globalise la construction ci-dessus en remplaçant la forme $\eta$ par la valeur en $0$ d'une série d'Eisenstein construite à partir de la fonction test $\psi$ à l'infini. On ne considère que la compactification de Tits, plus naturelle pour notre propos comme le montre la remarque précédente.

\chapter{Séries d'Eisenstein associées à $\psi$} \label{C:7}

\resettheoremcounters

Dans ce chapitre on restreint les formes $\varphi$ et $\psi$ à l'espace symétrique associé à $\SL_n (\R)$ que nous noterons $X$ en espérant ne pas créer de confusion. On note donc dorénavant
$$S =  \GL_n (\R) / \SO_n, \ S^+ =  \GL_n (\R)^+ / \SO_n \ \mbox{et} \ X = S^+/ \R_{>0} = \SL_n (\R) /\SO_n.$$
On globalise les constructions précédentes en formant une série d'Eisenstein à partir de la fonction $\psi$. La préimage de cette série d'Eisenstein par une section de torsion est étudiée dans un registre plus général par Bismut et Cheeger \cite{BismutCheeger}. Nous considérons ici le fibré en tores; la valeur en $s=0$ de la série d'Eisenstein est une manière de régulariser la moyenne de $\eta$ relativement à un réseau de $\R^n$. Nous travaillons adéliquement. 

\section{Quotients adéliques}

Soit $\A$ l'anneau des adèles de $\Q$ et soit
$$[\GL_n] = \GL_n (\Q) \backslash \GL_n (\A ) / \SO_n Z(\R)^+.$$
Le théorème d'approximation forte pour $\GL_n$ implique que, pour tout sous-groupe compact ouvert $K \subset \GL_n (\A_f )$, le quotient 
$$[\GL_n] / K = \GL_n (\Q)  \backslash \left( (\GL_n (\R ) / \SO_n Z(\R)^+ ) \times \GL_n (\A_f ) \right) / K$$
est une union finie de quotients de $X$ de volumes finis que l'on peut décrire de la manière suivante. \'Ecrivons
\begin{equation} \label{E:approxforte}
\GL_n (\A_f ) =  \bigsqcup_{j} \GL_n (\Q)^+ g_j K
\end{equation}
avec $\GL_n (\Q )^+ = \GL_n (\Q ) \cap \GL_n (\R)^+$. Alors
\begin{equation} \label{E:quot0}
[\GL_n] / K = \bigsqcup_{j} \Gamma_j \backslash X,
\end{equation}
où $\Gamma_j$ est l'image de $\GL_n (\Q )^+ \cap g_j K g_j^{-1}$ dans $\GL_n (\R)^+ / Z(\R)^+$. 
La composante connexe de la classe de l'identité dans $[\GL_n] / K$ est le quotient 
\begin{equation} \label{E:quot3}
\Gamma \backslash X 
\end{equation}
où $\Gamma = K \cap \GL_n (\Q)^+$.

Soit $V = \mathbf{G}_a^n$ vu comme groupe algébrique sur $\Q$; on a en particulier
$$V (\Q ) = \Q^n \quad \mbox{et} \quad V (\R ) = \R^n.$$
Le groupe $\GL_n$, algébrique sur $\Q$, opère naturellement (par multiplication matricielle à gauche) sur $V(\C) = \C^n$. On note 
$$\mathcal{G} = \GL_n \ltimes V$$
le groupe affine correspondant; on le voit comme groupe algébrique sur $\Q$. Soit 
$$[\mathcal{G} ] = \mathcal{G} (\Q ) \backslash  \left[ (\GL_n (\R ) \ltimes \C^n ) \cdot \mathcal{G} (\A_f ) \right] / \SO_n Z (\R)^+.$$

On explique maintenant comment associer à ces données une famille de groupes abéliens isomorphes à $(\C^* )^n$. 
Soit 
$$L_f \subset V(\A_f ) = \{ I_n \} \ltimes V(\A_f ) \subset \mathcal{G} (\A_f)$$ 
un sous-groupe compact ouvert; l'intersection $L= L_f \cap V (\Q)$ est un réseau dans $V$. 
On suppose dorénavant que $K \subset \GL_n (\A_f)$ préserve $L_f$. Le sous-groupe 
$$\mathcal{K} = \mathcal{K}_{L_f} = K \ltimes L_f \subset \mathcal{G} (\A_f)$$ 
préserve $L_f$; c'est un sous-groupe compact ouvert. 
Les quotients
\begin{equation} \label{E:quot1}
[\mathcal{G} ] / L_f = \mathcal{G} (\Q ) \backslash  \left[ (\GL_n (\R ) \ltimes \C^n ) \cdot \mathcal{G} (\A_f ) \right] / \SO_n Z (\R)^+ L_f \quad \mbox{et} \quad [\mathcal{G} ] / \mathcal{K} 
\end{equation}
sont des fibrés en quotients de $\C^n$ au-dessus de respectivement $[\GL_n]$ et $[\GL_n] / K$. 

De \eqref{E:approxforte} on déduit que 
\begin{equation} \label{E:TK}
[\mathcal{G} ] / \mathcal{K} \simeq \bigsqcup_j \Gamma_j \backslash (X \times \C^n/L_j ),
\end{equation}
où $L_j = g_j (L_f) \cap V(\Q)$. 
L'isomorphisme s'obtient de la manière suivante~: étant donné une double classe 
$$\mathcal{G} (\Q) [( x , z) , ( g_f , v_f ) ] \mathcal{K},$$ 
avec $x \in \GL_n (\R )/\SO_n Z (\R)^+$, $z \in \C^n$ et $(g_f , v_f) \in \mathcal{G} (\A_f)$, 
on peut d'abord supposer que $x$ appartient à $X$ en multipliant à gauche par un élément de $\mathcal{G} (\Q)$ si nécessaire. On écrit alors 
$$(g_f , v_f) = (h , w)^{-1} (g_j , 0) k, \quad \mbox{avec } (h,w) \in  \mathcal{G} (\Q )^+ \mbox{ et } k \in \mathcal{K}.$$
Alors
\begin{equation} \label{E:TK2}
\begin{split}
\mathcal{G} (\Q) [( x , z) , ( g_f , v_f ) ] \mathcal{K} & = \mathcal{G} (\Q) [( x , z) , (h , w)^{-1} (g_j , 0)  ] \mathcal{K} \\
& = \mathcal{G} (\Q)(h , w)^{-1} [( hx , hz+w) , (g_j,0)  ] \mathcal{K} 
\end{split}
\end{equation}
d'image $[hx, hz+w]$ dans $\Gamma_j  \backslash (X \times \C^n/L_j )$.
Dans la suite, on note 
$$\mathcal{T}_\mathcal{K} = \Gamma \backslash (X \times \C^n/L )$$ 
le fibré au-dessus de \eqref{E:quot3}.

\medskip
\noindent
{\it Exemple.} Soit $N$ un entier strictement supérieur à $1$. On note  
$$K_{0} (N) \subset \GL_n (\widehat{\Z}) = \prod_p \GL_n (\Z_p)$$
le sous-groupe défini par les relations de congruences suivantes aux nombres premiers $p$ divisant $N$~: si $N = \prod p^{v_p (N)}$ alors la $p$-composante de $K_{0} (N)$ est constituée des matrices de $\GL_n (\Z_p )$ de la forme 
$$\left( \begin{array}{cc} \Z_p^\times & * \\  0_{1,n-1} & \GL_{n-1} (\Z_p )   \end{array} \right)$$
modulo $p^{v_p (N)}$. Le groupe $K_0 (N)$ est compact ouvert et 
$$[\GL_n] / K_0 (N) = \Gamma_0 (N) \backslash X$$
avec 
$$\Gamma_0 (N) = \left\{ A \in \SL_n (\Z ) \; : \; A \equiv \left( \begin{array}{ccc} * & * & * \\ 0 & * & * \\ \vdots & \vdots & \vdots \\ 0 & * & * \end{array} \right) \ (\mathrm{mod} \ N ) \right\}.$$
Le groupe $\mathcal{K}_0 (N) = K_0 (N) \ltimes V (\widehat{\Z} )$ est compact et ouvert dans $\mathcal{G} (\A_f )$ et 
$$\mathcal{T}_{\mathcal{K}_0 (N)} = \Gamma_0 (N) \backslash \left[ X \times (\C^n / \Z^n) \right].$$

\medskip

\section{Fonctions de Schwartz et cycles associés} \label{S:15}

Soit $\mathcal{S} (V (\A_f ))$ l'espace de Schwartz de $V(\A_f)$ des fonctions $\varphi_f : V (\A_f) \to \C$ localement constantes et à support compact. Le groupe $\mathcal{G} (\A_f)$ opère sur $\mathcal{S} (V (\A_f ))$ par la ``représentation de Weil''
$$\omega (g , v ) : \mathcal{S} (V (\A_f )) \to \mathcal{S} (V (\A_f )); \quad \phi \mapsto \left( w \mapsto \phi (g^{-1} (w-v) \right).$$ 

Considérons maintenant l'espace $C^\infty \left( \mathcal{G} (\A_f ) \right)$ des fonctions lisses; on fait opérer le groupe $\mathcal{G} (\A_f)$ sur $C^\infty \left( \mathcal{G} (\A_f ) \right)$ par la représentation régulière droite~:
$$( (h ,w) \cdot f) (g ,v ) = f( g h , g w + v ).$$
Les fonctions lisses sont précisément celles qui sont invariantes sous l'action d'un sous-groupe ouvert de $\mathcal{G} (\A_f )$. 

L'application 
\begin{equation}
\mathcal{S} (V (\A_f )) \to C^\infty \left( \mathcal{G} (\A_f ) \right); \quad \phi \mapsto f_\phi : ( (g,v) \mapsto \phi (-g^{-1} v ))
\end{equation}
est $\mathcal{G} (\A_f)$-équivariante relativement aux deux actions définies ci-dessus. 

\'Etant donné une fonction $\varphi_f \in \mathcal{S} (V (\A_f ))$ on note $L_{\varphi_f}$ le réseau des périodes de $\varphi_f$. Si $K \subset \GL_n (\A_f )$ est un sous-groupe compact ouvert qui laisse $\varphi_f$ invariante (et préserve donc $L_{\varphi_f}$), alors la fonction $f_{\varphi_f}$ est invariante (à droite) sous l'action de $\mathcal{K} = K \ltimes L_{\varphi_f} \subset \mathcal{G} (\A_f)$.

\begin{definition}
Soit $\varphi_f \in \mathcal{S} (V (\A_f ))$ une fonction de Schwartz invariante sous l'action d'un sous-groupe compact ouvert $K\subset \GL_n (\A_f )$. 
\begin{itemize}
\item Soit $D_{\varphi_f}$, resp. $D_{\varphi_f , K}$, l'image de l'application 
$$\mathcal{G} (\Q ) \left[ \left( \GL_n (\R) \times \{ 0 \} \right) \cdot \mathrm{supp} (f_{\varphi_f} ) \right]  \to  [\mathcal{G}] /L_{\varphi_f}  ,$$
resp.
$$\mathcal{G} (\Q ) \left[ \left( \GL_n (\R) \times \{ 0 \} \right) \cdot   \mathrm{supp} (f_{\varphi_f} ) \right]  \to  [\mathcal{G}]/ \mathcal{K},$$
induite par l'inclusion du support de $f_{\varphi_f}$ dans $\mathcal{G} (\A_f)$. 
\item Soit 
$$U_{\varphi_f} \subset  [\mathcal{G}] /L_{\varphi_f} , \quad \mbox{resp.} \quad U_{\varphi_f, K} \subset  [\mathcal{G}]/ \mathcal{K},$$ 
le complémentaire de $D_{\varphi_f}$, resp. $D_{\varphi_f , K}$.
\end{itemize}
\end{definition}

\medskip
\noindent
{\it Remarque.} {\it Via} l'isomorphisme \eqref{E:TK} la projection de $D_{\varphi_f , K} \subset [\mathcal{G}]/ \mathcal{K}$ est égale à la réunion finie
\begin{equation} 
\bigsqcup_j \bigcup_\xi \Gamma_j \backslash (X \times (L_j + \xi )/L_j ),
\end{equation}
où $\xi$ parcourt les éléments de $V(\Q) / L_j$ tels que $\varphi_f (g_j^{-1} \xi )$ soit non nul. 

\medskip

En effet, en suivant  \eqref{E:TK2} on constate que 
$$(g_f , v_f ) = (h^{-1} g_j , - h^{-1} w) k$$
et, la fonction $f_{\varphi_f}$ étant $\mathcal{K}$-invariante à droite, on a 
$$f_{\varphi_f} (g_f , v_f ) = f_{\varphi_f} (h^{-1} g_j , -h^{-1} w ) = \varphi_f ( g_j^{-1} w ).$$

\medskip

L'espace $D_{\varphi_f}$ est donc un revêtement fini de $[\GL_n]$. La fonction $\varphi_f$ induit en outre une fonction localement constante sur $D_{\varphi_f}$, c'est-à-dire un élément de $H^0 (D_{\varphi_f })$.
Maintenant, l'isomorphisme de Thom implique que l'on a~:
\begin{equation} \label{E:thom}
H^0 (D_{\varphi_f }) \stackrel{\sim}{\longrightarrow} H^{2n} \left( [\mathcal{G}]/ L_{\varphi_f} , U_{\varphi_f } \right);
\end{equation}
on note 
$$[\varphi_f] \in H^{2n}   \left( [\mathcal{G}]/ L_{\varphi_f} , U_{\varphi_f } \right)$$ 
l'image de $\varphi_f$; cette classe est $K$-invariante, on désigne par $[\varphi_f]_K$ son image dans 
$$H^{2n} \left( [\mathcal{G}]/ \mathcal{K}, U_{\varphi_f , K } \right).$$ 

\begin{lemma} \label{L:40}
Supposons $\widehat{\varphi}_f (0) =0$, autrement dit que $\int_{V(\A_f )} \varphi_f (v) dv=0$. Alors, l'image de $[\varphi_f]$ par l'application degré 
$$H^0 (D_{\varphi_f }) \to \Z^{\pi_0 (D_{\varphi_f })}$$
est égale à $0$. 
\end{lemma}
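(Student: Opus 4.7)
Le plan est d'expliciter l'application degré à l'aide de la description de $D_{\varphi_f}$ fournie par la remarque qui précède l'énoncé. D'après celle-ci et \eqref{E:approxforte}, les composantes connexes de $D_{\varphi_f}$ correspondent, composante par composante de la base $[\GL_n]$, aux $\Gamma_j$-orbites de classes $[\xi] \in V(\Q)/L_j$ telles que $\varphi_f(g_j^{-1}\xi) \neq 0$ ; chacune se présente comme un revêtement fini de la composante $\Gamma_j \backslash X$ correspondante via la projection $[\mathcal{G}]/L_{\varphi_f} \to [\GL_n]$.

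On interprétera ensuite $[\varphi_f] \in H^0(D_{\varphi_f})$ comme la fonction localement constante de valeur $\varphi_f(g_j^{-1}\xi)$ sur la composante indexée par $[\xi]$. L'application degré s'identifiera alors à la somme, pondérée par la taille des orbites, des valeurs de $\varphi_f$ le long des sections de torsion apparaissant au-dessus d'une composante fixée de $[\GL_n]$. En désorbitifiant, cela ramènera le calcul du degré sur la composante $\Gamma_j \backslash X$ à la somme
\begin{equation*}
\sum_{\xi \in V(\Q)/L_j} \varphi_f(g_j^{-1}\xi) .
\end{equation*}

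L'étape finale consistera à invoquer l'approximation forte pour le groupe additif $V = \mathbf{G}_a^n$ qui, en prenant $L_f = g_j^{-1} L_j$ comme sous-groupe compact ouvert de $V(\A_f)$, fournit une bijection naturelle $V(\Q)/L_j \simeq V(\A_f)/L_f$. Cela permettra de récrire la somme précédente comme la somme de Riemann adélique
\begin{equation*}
\sum_{w \in V(\A_f)/L_f} \varphi_f(w) = \mathrm{vol}(L_f)^{-1} \widehat{\varphi}_f(0),
\end{equation*}
qui s'annule par hypothèse. Le point délicat sera de justifier rigoureusement l'identification de l'application degré en termes de cette somme sur $V(\Q)/L_j$, en particulier le suivi des facteurs de multiplicité liés aux stabilisateurs $\Gamma_j^\xi \subset \Gamma_j$ et des facteurs de normalisation (volumes adéliques, déterminants des $g_j$) lors du passage de la somme discrète à l'intégrale adélique.
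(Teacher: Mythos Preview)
Your proposal is correct and follows essentially the same route as the paper: reduce the degree on each base component to the sum $\sum_{\xi \in V(\Q)/L_j}\varphi_f(g_j^{-1}\xi)$, then identify this discrete sum with $\vol(L_f)^{-1}\widehat{\varphi}_f(0)$ via the $L_f$-periodicity of $\varphi_f$. The paper streamlines slightly by first replacing $\varphi_f$ with $\omega(g_j)\varphi_f$ to reduce to $g_j=1$, and your concern about stabilizers $\Gamma_j^\xi$ is unnecessary here---the degree is simply the unweighted fiber sum, so no orbit bookkeeping is needed.
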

\begin{proof} Montrons en effet que pour tout $j$ on a 
$$\sum_{\xi \in V(\Q) / L_j} \varphi_f ( g_j^{-1} \xi) = 0.$$
Quitte à remplacer $\varphi_{f}$ par $\omega (g_j ) \varphi_f$ on peut supposer que $g_j$ est l'identité. Mais 
\begin{equation*}
\begin{split}
\sum_{\xi \in V(\Q) / L} \varphi_f (\xi ) & = \sum_{ v \in V (\A_f ) / L_{\varphi_f} } \varphi_f (v) \\
& = \frac{1}{\vol (L_{\varphi_f})} \sum_{v \in V (\A_f ) / L_{\varphi_f} } \int_{L_{\varphi_f} } \varphi_f (v+u) du \\
& = \frac{1}{\vol (L_{\varphi_f} )} \int_{V(\A_f )} \varphi_f (v) dv \\
& = \frac{1}{\vol (L_{\varphi_f} )} \widehat{\varphi}_f (0) =0.
\end{split}
\end{equation*}
\end{proof}

Dans la suite on désigne par $D_{\varphi_f }^0$ l'intersection de $D_{\varphi_f }$ avec la composante connexe $\mathcal{T}_{\mathcal{K}}$.

\medskip
\noindent
{\it Exemple.} Soit $N$ un entier strictement supérieur à $1$. Alors, le réseau 
$$L_{\mathcal{K}_0 (N)} = (N^{-1} \widehat{\Z} ) \times \widehat{\Z} \times \ldots \times \widehat{\Z}  \subset V (\A_f)$$ 
est $\mathcal{K}_0 (N)$-invariant. La fonction 
\begin{equation} \label{E:exvarphif}
\sum_{j=0}^{N-1} \delta_{(\frac{j}{N} , 0 , \ldots , 0) + \widehat{\Z}^n} - N \delta_{\widehat{\Z}^n} \in \mathcal{S} (V (\A_f))
\end{equation}
est $\mathcal{K}_0 (N)$-invariante, à support dans $L_{\mathcal{K}_0 (N)}$ et de degré $0$. 

En conservant les notations de l'exemple précédent, on désigne par $D_0 (N)$ le sous-ensemble de 
$$\mathcal{T}_{\mathcal{K}_0 (N)} = \Gamma_0 (N) \backslash \left[ X \times (\C^n / \Z^n) \right]$$
associé à la fonction \eqref{E:exvarphif}. Il est constitué de tous les points dont la première coordonnée dans la fibre au-dessus de $\Gamma_0 (N) \backslash X$ est de $N$-torsion et dont toutes les autres coordonnées sont nulles.

\medskip

\section{Série theta adélique} \`A toute fonction $\varphi_f \in \mathcal{S} (V (\A_f))$ il correspond les formes différentielles
$$\widetilde{\varphi} \otimes \varphi_f \quad \mbox{et} \quad \widetilde{\psi}  \otimes \varphi_f \in A^{\bullet} \left( S \times \C^n , \mathcal{S} (\C^n ) \right) \otimes \mathcal{S} (V (\A_f ))$$
de degrés respectifs $2n$ et $2n-1$.

En appliquant la distribution theta dans les fibres, on obtient alors des applications 
\begin{equation} \label{appl-theta}
\theta_\varphi \quad \mbox{et} \quad \theta_\psi  : \mathcal{S} (V (\A_f)) \longrightarrow \left[ A^{\bullet} (S \times \C^n)  \otimes C^\infty \left( \mathcal{G} (\A_f ) \right) \right]^{\mathcal{G} (\Q )}.
\end{equation}

\medskip
\noindent
{\it Remarque.} Rappelons que 
$$A^{\bullet} (S \times \C^n) \cong \left[ \wedge^\bullet (\mathfrak{p} \oplus \C^n)^* \otimes C^{\infty} (\GL_n (\R) \ltimes \C^n ) \right]^{\SO_n}.$$
Le produit tensoriel dans \eqref{appl-theta} est donc plus rigoureusement égal à 
$$\mathrm{Hom}_{\SO_n} \left( \wedge^\bullet (\mathfrak{p} \oplus \C^n) , C^{\infty} \left( (\GL_n (\R) \ltimes \C^n) \times \mathcal{G} (\A_f ) \right) \right)^{\mathcal{G} (\Q )}$$
où $C^{\infty} \left( (\GL_n (\R) \ltimes \C^n) \times \mathcal{G} (\A_f ) \right) $ est l'espace des fonctions lisses sur un espace adélique. 

\medskip

L'application $\theta_\varphi$ est définie par 
\begin{equation} \label{appl-theta2}
\begin{split}
\theta_\varphi (g_f , v_f ; \varphi_f ) & = \sum_{\xi \in V (\Q ) }  \widetilde{\varphi} (\xi ) (\omega (g_f ,  v_f ) \varphi_f ) (\xi ) \\
& = \sum_{\xi \in V(\Q ) } \varphi_f \left( g_f^{-1} (\xi -v_f ) \right) \widetilde{\varphi} (\xi )
\end{split}
\end{equation}
et de même pour $\theta_\psi$. 

Rappelons que le groupe $\mathcal{G} (\R)$ opère naturellement sur $S \times \C^n$; étant donné un élément $(g,v) \in \mathcal{G} (\R)$ et une forme $\alpha \in \mathcal{A}^{\bullet} (S \times \C^n)$ on note $(g,v)^* \alpha$ le tiré en arrière de $\alpha$ par l'application
$$(g,v) :  S  \times \C^n \to S \times  \C^n.$$
L'invariance sous le groupe $\mathcal{G}(\Q)$ dans \eqref{appl-theta} signifie donc que pour tout $(g,v)$ dans $\mathcal{G} (\Q)$ on a~:
\begin{equation} \label{E:invtheta}
(g,v)^*\theta_\varphi (g g_f ,  gv_f  + v ;\varphi_f)=\theta_\varphi (g_f, v_f ; \varphi_f);
\end{equation}
ce qui découle de la $\mathcal{G} (\R)$-invariance de $\widetilde{\varphi}$, voir \S \ref{S:44}. 

Les applications $\theta_\varphi$ et $\theta_\psi$ entrelacent par ailleurs les actions naturelles de $\mathcal{G} (\A_f)$ des deux côtés~: pour tout $(h_f , w_f ) \in \mathcal{G} (\A_f )$ et pour $\theta = \theta_\varphi$ ou $\theta_\psi$, on a 
\begin{equation} \label{E:entrelacetheta}
\theta ( \omega (h_f , w_f ) \varphi_f ) = (h_f , w_f ) \cdot \theta (\varphi_f ).
\end{equation}
En particulier, on a 
$$\theta_\varphi (\varphi_f )  \quad \mbox{et} \quad \theta_\psi (\varphi_f ) \in \left[ A^{\bullet} (S \times \C^n)  \otimes C^\infty \left( \mathcal{G} (\A_f ) \right) \right]^{\mathcal{G} (\Q ) \times L_{\varphi_f}} = A^\bullet \left( \widehat{[\mathcal{G}]} / L_{\varphi_f} \right),$$
où 
$$\widehat{[\mathcal{G}]} = \mathcal{G} (\Q) \backslash \left( (\GL_n (\R) \ltimes \C^n) \times  \mathcal{G} (\A_f ) \right) / \SO_n.$$
Il découle en outre de \eqref{E:entrelacetheta} que si $\varphi_f$ est $K$-invariante alors les formes $\theta_\varphi (\varphi_f )$ et 
$\theta_\psi (\varphi_f )$ sont $K$-invariantes à droite. 

\medskip

\medskip

\subsection{Action de l'algèbre de Hecke} \label{algHecke}
Soit $p$ un nombre premier. On désigne par $\mathcal{H}_p$ l'algèbre de Hecke locale de $\mathcal{G} (\Q_p )$, c'est-à-dire les fonctions lisses et à support compact dans $\mathcal{G} (\Q_p )$ et le produit de convolution. La fonction caractéristique de $\mathcal{G} (\Z_p)$ appartient à $\mathcal{H}_p$. 

L'\emph{algèbre de Hecke globale} $\mathcal{H} (\mathcal{G} (\A_f) )$ est le produit restreint des algèbres de Hecke locales $\mathcal{H}_p$ relativement aux fonctions caractéristiques de $\mathcal{G} (\Z_p)$, cf. \cite{Flath}. 

Un élément de $\mathcal{H} (\mathcal{G} (\A_f) )$ est donc un produit tensoriel 
$\phi = \otimes \phi_p$, où pour presque tout $p$ la fonction $\phi_p$ est égale à la fonction caractéristique de $\mathcal{G} (\Z_p)$. On désigne par $\mathbf{T}_\phi$ l'{\it opérateur de Hecke} sur $C^\infty (\mathcal{G} (\A_f ))$ qui lui correspond. Il associe à une fonction $f \in  C^\infty (\mathcal{G} (\A_f ))$ la fonction 
$$\mathbf{T}_\phi (f) : (g,v)  \mapsto \int_{\mathcal{G} (\A_f)} f(gh , gw+v) \phi (h,w) d(h,w),$$
où la mesure de Haar est normalisée de sorte que pour tout $p$, le volume de $\mathcal{G} (\Z_p )$ soit égal à un.  
On note encore 
$$\mathbf{T}_\phi : A^\bullet (S \times \C^n) \otimes C^\infty (\mathcal{G} (\A_f )) \to A^\bullet (S \times \C^n) \otimes C^\infty (\mathcal{G} (\A_f ))$$
l'opérateur de Hecke induit; il commute à l'action (à gauche) de $\mathcal{G} (\Q)$.

L'algèbre de Hecke $\mathcal{H} (\mathcal{G} (\A_f) )$ opère également sur $ \mathcal{S} (V (\A_f))$ {\it via} les opérateurs 
$T_\phi : \mathcal{S} (V (\A_f)) \to  \mathcal{S} (V (\A_f))$ qui à une fonction de Schwartz $\varphi_f$ associe la fonction 
$$T_\phi (\varphi_f ) : v \mapsto \int_{\mathcal{G} (\A_f )} \phi (h,w) \varphi_f (h^{-1} (v-w)) d (h,w) .$$

La proposition suivante résulte alors des définitions.

\begin{proposition} \label{P:hecke1}
Soit $\phi \in \mathcal{H} (\mathcal{G} (\A_f ))$. Alors $\mathbf{T}_\phi$ préserve 
$$\left[ A^\bullet (S \times \C^n) \otimes C^\infty (\mathcal{G} (\A_f )) \right]^{\mathcal{G} (\Q)} $$
et 
$$\mathbf{T}_\phi  ( \theta_{\varphi} (\varphi_f) ) = \theta_{\varphi} ( T_\phi (\varphi_f )) \quad \mbox{et} \quad \mathbf{T}_\phi  ( \theta_{\psi} (\varphi_f) ) = \theta_{\psi} ( T_\phi (\varphi_f )).$$
\end{proposition}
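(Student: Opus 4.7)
La première assertion, à savoir que $\mathbf{T}_\phi$ préserve les invariants sous $\mathcal{G}(\Q)$, est formelle. L'action de $\mathcal{G}(\Q)$ sur $C^\infty(\mathcal{G}(\A_f))$ par translation à gauche se fait par $((g,v) \cdot f)(g_f,v_f) = f( g^{-1} g_f , g^{-1}(v_f - v))$ (ou une variante obtenue en identifiant convenablement les deux actions aux places finies). Puisque l'opérateur $\mathbf{T}_\phi$ est, lui, défini par intégration contre $\phi$ relativement à l'action à droite de $\mathcal{G}(\A_f)$, et que les deux actions (à gauche de $\mathcal{G}(\Q)$, à droite de $\mathcal{G}(\A_f)$) commutent, l'invariance à gauche est préservée par $\mathbf{T}_\phi$. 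On procède de même pour l'action sur les formes différentielles.

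Pour la seconde assertion, je déroule simplement les définitions et permute somme et intégrale grâce à Fubini (ce qui est licite car $\phi$ est à support compact et, $\varphi_f$ étant de Schwartz, la somme sur $\xi \in V(\Q)$ se réduit en fait à une somme finie après intégration contre $\phi$). Plus précisément, on calcule, pour tout $(g_f , v_f) \in \mathcal{G}(\A_f)$,
\begin{align*}
\mathbf{T}_\phi \bigl( \theta_\varphi(\varphi_f) \bigr) (g_f, v_f)
&= \int_{\mathcal{G}(\A_f)} \theta_\varphi(\varphi_f)(g_f h, g_f w + v_f) \, \phi(h,w) \, d(h,w) \\
&= \sum_{\xi \in V(\Q)} \widetilde{\varphi}(\xi) \int_{\mathcal{G}(\A_f)} \varphi_f\bigl( h^{-1}(g_f^{-1}(\xi - v_f) - w) \bigr) \phi(h,w) \, d(h,w) \\
&= \sum_{\xi \in V(\Q)} \widetilde{\varphi}(\xi) \, T_\phi(\varphi_f)\bigl( g_f^{-1}(\xi - v_f) \bigr) \\
&= \theta_\varphi\bigl( T_\phi(\varphi_f) \bigr)(g_f, v_f).
\end{align*}
La clef de la deuxième ligne est le fait que $\widetilde{\varphi}(\xi)$ ne dépend pas de $(h,w)$ (c'est une forme sur $S \times \C^n$ évaluée en $\xi \in V(\Q) \subset V(\R)$), ce qui permet de factoriser ce terme hors de l'intégrale adélique. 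Le même calcul, mot pour mot, fonctionne en remplaçant $\widetilde{\varphi}$ par $\widetilde{\psi}$, et donne l'identité correspondante pour $\theta_\psi$.

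La vérification est donc essentiellement un jeu d'écriture; il n'y a pas de véritable obstacle. Le seul point méritant attention est de s'assurer que les définitions des actions (de $\mathcal{G}(\Q)$ à gauche et de $\mathcal{G}(\A_f)$ à droite {\it via} la convolution par $\phi$) sont correctement normalisées afin qu'elles commutent, et que la normalisation de la mesure de Haar (volume $1$ pour $\mathcal{G}(\Z_p)$) est compatible avec la définition de $T_\phi$. Une fois ces conventions fixées, les deux assertions découlent immédiatement du calcul ci-dessus et de l'équivariance \eqref{E:entrelacetheta} que l'on a déjà notée.
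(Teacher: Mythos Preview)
Your argument is correct and is precisely the unfolding of definitions that the paper has in mind: the text states just before the proposition that it ``résulte alors des définitions'' and gives no further proof. Your computation verifies this claim in detail, and the same reasoning (commutation of the left $\mathcal{G}(\Q)$-action with the right $\mathcal{G}(\A_f)$-convolution, plus the fact that $\widetilde{\varphi}(\xi)$ is independent of the adelic integration variable) is exactly what underlies the identity \eqref{E:entrelacetheta} on which the paper relies.
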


\subsection{Classes de cohomologie associées} \label{cohomClass}

Notons $\widehat{U_{\varphi_f}}$ la préimage de $U_{\varphi_f}$ dans  $\widehat{[\mathcal{G}]} / L_{\varphi_f}$ et $\widehat{U_{\varphi_f , K}} $ la projection de $\widehat{U_{\varphi_f}}$ dans $ \widehat{[\mathcal{G}]} / \mathcal{K}$ de sorte que
$$\widehat{U_{\varphi_f , K}} = \widehat{[\mathcal{G}]} / \mathcal{K} - \mathcal{G} (\Q) \left( [S^+ \times \{ 0 \}] \cdot  \mathrm{supp}(f_{\varphi_f} ) \right) \mathcal{K}.$$ 
Notons enfin $\widehat{[\varphi_f]}$ l'image de $[\varphi_f]$ dans 
$$H^{2n} \left( \widehat{[\mathcal{G}]} / L_{\varphi_f} , \widehat{U_{\varphi_f}} \right).$$

\begin{proposition} \label{P:thetacohom}
La forme différentielle $\theta_\varphi (\varphi_f )$ est fermée et représente $\widehat{[\varphi_f]}$ dans $H^{2n} \left( \widehat{[\mathcal{G}]} / L_{\varphi_f} , \widehat{U_{\varphi_f}} \right)$.
\end{proposition}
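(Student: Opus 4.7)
The proof plan has two independent components: establishing closedness of $\theta_\varphi(\varphi_f)$, and then identifying its cohomology class via the Thom isomorphism \eqref{E:thom}.

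For closedness, I would first note that each translated Mathai--Quillen form $\widetilde{\varphi}(\xi) = t_{-\xi}^*\varphi$ is closed, since $\varphi$ itself is closed and pullback commutes with the exterior differential. The Gaussian decay of $\varphi$ along the fibers of $E = S \times V \to S$ visible in \eqref{E:UMQ}, combined with the fact that $\varphi_f$ has compact support modulo the period lattice $L_{\varphi_f}$, ensures that the series $\sum_{\xi \in V(\Q)} \varphi_f(g_f^{-1}(\xi - v_f)) \widetilde{\varphi}(\xi)$ converges normally together with all its derivatives on any compact subset of $\widehat{[\mathcal{G}]}/L_{\varphi_f}$. Term-by-term differentiation then yields $d\theta_\varphi(\varphi_f) = 0$.

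For the cohomological identification, the strategy is to exploit the defining property of $\varphi$ as an equivariant Thom form: it is closed, rapidly decreasing in the fibers, and of fiberwise integral $1$. Near a component of $D_{\varphi_f}$ corresponding to a rational vector $\xi_0$ in the support of $f_{\varphi_f}$, only the single summand $\widetilde{\varphi}(\xi_0)$ is concentrated there, while every other term $\widetilde{\varphi}(\xi)$ is smooth and exponentially small in a neighborhood. This localizes the ``singular'' behavior of $\theta_\varphi(\varphi_f)$ along $D_{\varphi_f}$ and matches the Thom isomorphism prescription: the locally constant function $\varphi_f|_{D_{\varphi_f}}$ is precisely the datum attached to the class $[\varphi_f]$ via \eqref{E:thom}.

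To make this rigorous, I would show that $\theta_\varphi(\varphi_f)$ restricts to an exact form on $\widehat{U_{\varphi_f}}$. Using the remark following Proposition \ref{P:eta}, one has the explicit transgression identity $\varphi = \alpha - \psi \wedge dr/r$, which, upon fiberwise integration against $[s]^* (-)\, \tfrac{ds}{s}$, furnishes a primitive of $\widetilde{\varphi}(\xi)$ in any open set avoiding the section $\xi$. Summing these local primitives against the weights $\varphi_f(g_f^{-1}(\xi - v_f))$ produces a candidate global primitive on $\widehat{U_{\varphi_f}}$; combined with the characterization of $[\varphi_f]$ as the unique class mapping to $\varphi_f|_{D_{\varphi_f}}$ under the Thom isomorphism, this forces $[\theta_\varphi(\varphi_f)] = \widehat{[\varphi_f]}$.

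The main technical obstacle lies in controlling the convergence of the primitives in the last step: the transgression form $\psi$ decays strictly more slowly than $\varphi$ (it carries a polynomial prefactor of the type $r^2|z|^2 e^{-r^2|z|^2}$ visible in \eqref{E:psiN1}), so the argument requires careful estimates analogous to those of Lemma~\ref{L8} and Proposition~\ref{P32} to ensure both normal convergence of the sum and smoothness of the resulting global primitive. Once this control is secured, the whole argument is purely formal and hinges only on the Thom-form character of $\varphi$.
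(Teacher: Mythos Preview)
Your closedness argument is essentially the paper's: $\theta_\varphi(\varphi_f)$ is a normally convergent sum of closed forms, hence closed.

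For the cohomological identification, however, the paper takes a shorter and more direct route than you do. Rather than constructing a primitive on $\widehat{U_{\varphi_f}}$ and then reading off the relative class via the inverse Thom isomorphism, the paper simply observes that each individual summand $\varphi_f(\xi)\,\widetilde{\varphi}(\xi) = \varphi_f(\xi)\,(1,-\xi)^*\varphi$ is \emph{by construction} a Mathai--Quillen Thom form for the section $S^+ \times \{\xi\}$, and therefore already represents $\varphi_f(\xi)\cdot[S^+ \times \{\xi\}]$ in $H^{2n}(S^+\times\C^n,\, S^+\times(\C^n-\{\xi\}))$. Summing over $\xi$ (and passing to the quotient by $\Gamma \ltimes L$ on each connected component) gives the claimed relative class directly; after reducing to the identity component via the decomposition \eqref{E:TK}, there is nothing further to check.

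Your detour through an explicit transgressed primitive is not incorrect in principle, but it is redundant: the statement ``$\varphi$ is a Thom form'' already packages both the exactness of $\varphi$ away from the zero section and the normalization of its fiber integral, so re-deriving these facts by hand adds work without adding information. In particular, the convergence obstacle you flag for the summed primitive---which is a genuine analytic issue---is entirely bypassed by the paper's argument, since no primitive is ever written down.
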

\begin{proof} La forme $\theta_\varphi (\varphi_f )$ est fermée comme combinaison linéaire de formes fermées. Fixons un sous-groupe compact ouvert $K \subset \GL_n (\A_f)$ tel que $\varphi_f$ soit $K$-invariante. Alors $\theta_\varphi (\varphi_f )$ appartient à 
$$\left[ A^{2n} (S \times \C^n)  \otimes C^\infty \left( \mathcal{G} (\A_f ) \right) \right]^{\mathcal{G} (\Q ) \times \mathcal{K}} \cong \bigoplus_j A^{2n} (S^+ \times \C^n )^{\Gamma_j \ltimes L_j},$$
où l'isomorphisme ci-dessus est obtenu en évaluant en $(g_j , 0)$. D'un autre côté, on a montré que l'image de $[\varphi_f ]_{K}$ dans 
$$H^{2n} \left( \widehat{[\mathcal{G}]} / \mathcal{K} \right)  \cong \bigoplus_j H^{2n} (\Gamma_j \backslash (S^+ \times \C^n/L_j ))$$
est égale à 
$$\bigoplus_j \sum_{\xi \in V(\Q)/ L_j} \varphi_f (g_j^{-1} \xi ) \left[ \Gamma_j \backslash (S^+ \times (L_j +\xi )/L_j ) \right].$$
Quitte à remplacer  $\varphi_f$ par $\omega (g_j  ) \varphi_f$ on peut donc se restreindre à la composante connexe associée à l'identité. On a alors simplement $\Gamma_j = \Gamma$ et $L_j = L$.

Maintenant, pour tout $\xi \in V (\Q )$ la forme différentielle
$$\varphi_f (\xi ) \widetilde{\varphi} (\xi ) = \varphi_f (\xi) (1,-\xi)^* \varphi \in A^{2n} ( S^+ \times \C^n )$$
est une forme de Thom et représente  
$$\varphi_f (\xi) [S^+ \times \{  \xi \} ] \in H^{2n} (S^+ \times \C^n , S^+ \times (\C^n - \{ \xi \} )).$$
La moyenne 
$$\theta_\varphi (1,0;\varphi_f ) = \sum_{\xi \in V (\Q)} \varphi_f (\xi ) \widetilde{\varphi} (\xi) \in A^{2n} (S^+ \times \C^n )$$
représente donc l'image de  
$$\sum_{\xi \in V(\Q) / L} \varphi_f (\xi) \left[ \Gamma \backslash (S^+ \times (L +\xi )/L ) \right]$$
par l'isomorphisme de Thom. 
\end{proof}

La proposition \ref{P:eta} suggère de considérer les formes différentielles 
$$\theta_{[r]^*\varphi}  (\varphi_f ) \quad \mbox{et} \quad \theta_{[r]^*\psi}  (\varphi_f ) \in \left[ A^{\bullet} (S \times \C^n)  \otimes C^\infty \left( \mathcal{G} (\A_f ) \right) \right]^{\mathcal{G} (\Q )} \quad (r >0).$$
Le lemme suivant est une version globale du lemme \ref{L:convcourant}. On le déduit de la formule sommatoire de Poisson. 

\begin{lemma} \label{L:theta-asympt}
1. Lorsque $r$ tend vers $+\infty$, les formes $\theta_{[r]^* \varphi} (\varphi_f )$ convergent uniformément sur tout compact de 
$\widehat{U_{\varphi_f , K}}$ 
exponentiellement vite vers la forme nulle.

2. Vues comme courants dans 
$$ \left[\mathcal{D}^{2n} (S \times \C^n )  \otimes C^\infty \left( \mathcal{G} (\A_f ) \right) \right]^{\mathcal{G} (\Q ) \times \mathcal{K}}$$
les formes $\theta_{[r]^* \varphi} (\varphi_f )$ convergent exponentiellement vite, lorsque $r$ tend vers $+\infty$, vers le courant $[\widehat{D_{\varphi_f , K}}]$ associé à la fonction localement constante $\varphi_f$ et de support
$$\mathcal{G} (\Q) \left( [S^+ \times \{ 0 \}] \times \mathrm{supp}(f_{\varphi_f}) \right) \mathcal{K}.$$

3. L'application qui à $r \in \R_+^*$ associe la forme différentielle $\theta_{[r]^* \varphi} ( \varphi_f )$ sur $S \times \C^n $ se prolonge en une fonction lisse sur $[0, +\infty)$ qui s'annule en $0$.
\end{lemma}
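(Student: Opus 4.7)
Le plan consiste à traiter séparément les trois assertions, les parties (1) et (2) découlant d'estimées gaussiennes ponctuelles terme à terme, tandis que la partie (3) requiert véritablement la formule sommatoire de Poisson adélique, comme suggéré par les auteurs.

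Pour la partie (1), un point $(x,z,g_f,v_f)$ de $\widehat{U_{\varphi_f,K}}$ évite le diviseur $\widehat{D_{\varphi_f,K}}$, ce qui impose que pour tout $\xi \in V(\Q)$ tel que $\varphi_f(g_f^{-1}(\xi-v_f)) \neq 0$ on ait $z - \xi \neq 0$. Sur un compact $\kappa \subset \widehat{U_{\varphi_f,K}}$ il existe donc une borne inférieure uniforme $|z-\xi|_{H(x)} \geq c_0 > 0$ pour tout $\xi$ dans l'ensemble (réseau-translaté) du support. Appliqué à chaque forme de Thom translatée $\widetilde{[r]^*\varphi}(\xi) = t^*_{-\xi}([r]^*\varphi)$, le lemme \ref{L:convcourant}(1) fournit une estimée de la forme $\|\widetilde{[r]^*\varphi}(\xi)\|_\infty \leq C r^{2n} \max(1,|\xi|^n) e^{-c r^2 |z-\xi|_{H(x)}^2}$ uniforme sur $\kappa$. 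Scindant la somme entre les $\xi$ proches de $z$ (pour lesquels le facteur $e^{-c r^2 c_0^2}$ suffit) et la queue $|\xi| \to \infty$ (où la décroissance gaussienne l'emporte sur la croissance polynomiale), on obtient la convergence uniforme à taux exponentiel.

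Pour la partie (2), chaque forme individuelle $\varphi_f(g_f^{-1}(\xi-v_f)) \widetilde{[r]^*\varphi}(\xi)$ converge, au sens des courants, vers $\varphi_f(g_f^{-1}(\xi-v_f)) [S^+ \times \{\xi\}]$ lorsque $r \to +\infty$, par la propriété de localisation des formes de Thom (lemme \ref{L:convcourant}(1)). Sommant sur $\xi \in V(\Q)$ et identifiant la réunion des translatés du réseau à la préimage de $\widehat{D_{\varphi_f,K}}$ dans $\widehat{[\mathcal{G}]}/L_{\varphi_f}$ (cf. la remarque du \S \ref{S:15}), on obtient le courant limite $[\widehat{D_{\varphi_f,K}}]$. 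Le taux exponentiel, sur les pairages avec des formes test à support compact, s'obtient à nouveau par une estimée de queue : les contributions des $\xi$ dans un compact convergent exponentiellement par concentration gaussienne de largeur $1/r$, et la queue $|\xi| \gg 1$ est contrôlée par le lemme \ref{L:convcourant}(2).

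La partie (3) est l'obstacle principal. Chaque terme $\widetilde{[r]^*\varphi}(\xi)$ tend vers $[0]^*\varphi = (i/2\pi)^n \det \Omega$ indépendamment de $\xi$ lorsque $r \to 0^+$ ; la série ne peut donc converger terme à terme en $r=0$. L'idée est d'appliquer la formule de Poisson adélique à la fonction de Schwartz $\varphi_f \otimes \widetilde{[r]^*\varphi} \in \mathcal{S}(V(\adele))$, réécrivant
\begin{equation*}
\theta_{[r]^*\varphi}(\varphi_f)(x,z) = \sum_{\xi^* \in V(\Q)} \widehat{\varphi_f}(\xi^*) \, \widehat{\widetilde{[r]^*\varphi}}(\xi^*)(x,z),
\end{equation*}
où la transformée de Fourier archimédienne, calculée à partir de \eqref{E:UMQ} par un changement de variable gaussien, est de la forme $P(r\xi^*, \theta)\,e^{-\pi |\xi^*|^2/r^2}$ pour un polynôme explicite $P$ à coefficients formes différentielles sur $S^+$. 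Chaque terme avec $\xi^* \neq 0$ est alors $C^\infty$-plat en $r = 0$, si bien que la série duale converge uniformément sur tout sous-intervalle compact de $[0, +\infty)$ et définit une fonction lisse. La valeur en $r=0$ se réduit à la contribution $\xi^* = 0$, un multiple scalaire de $(i/2\pi)^n \det \Omega$ (seul terme survivant dans \eqref{E:UMQ} pour $r \to 0$ par considération des degrés en $dz$ et $d\bar z$). Or il s'agit de la restriction à $S^+ \subset \GL_n(\C)/\UU_n$ de la forme de Chern maximale d'un fibré plat complexe, et l'argument de factorisation à travers le sous-groupe de Borel utilisé dans la démonstration de la proposition \ref{P:eta} donne $\det \Omega = 0$ ; la limite en $r=0$ est donc nulle.
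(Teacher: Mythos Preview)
Your approach matches the paper's. The paper's own proof is extremely terse: it simply says that points (1), (2), (3) follow from the corresponding points of Lemma~\ref{L:convcourant}, after noting in the preamble that the lemma is deduced from the Poisson summation formula. Your write-up essentially unpacks this: for (1) and (2) you carry out the termwise Gaussian estimates that Lemma~\ref{L:convcourant} supplies (splitting into nearby and far $\xi$), and for (3) you make explicit the Poisson step that the paper only alludes to.

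One comment on part (3). Your identification of the $\xi^*=0$ term with a multiple of $(i/2\pi)^n\det\Omega$ is a little quick. The archimedean Fourier transform at $\xi^*=0$ is an integral over $\R^n$ (not $\C^n$) of the translated Thom form, and the Gaussian integral produces a Jacobian factor $r^{-n}$; the intermediate Mathai--Quillen terms with $|I|<n$ then come in with powers $r^{2(n-|I|)-n}$, several of which are nonpositive. So one cannot dismiss them by degree-in-$r$ alone. What saves the day is that the potentially singular contributions either involve $\det\Omega$ (which vanishes by the argument of Proposition~\ref{P:eta}) or cancel by parity of Gaussian moments and the skew-Hermitian nature of $\theta$; after these cancellations the $\xi^*=0$ term is indeed smooth in $r$ and $O(r)$ as $r\to 0$. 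This is consistent with your conclusion and with the paper's one-line citation of Lemma~\ref{L:convcourant}(3), but it is worth being aware that the vanishing of $\det\Omega$ alone is not the whole story.
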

\begin{proof} Les deux premiers points découlent des points (1) et (2) du lemme \ref{L:convcourant}. On peut en outre remarquer que, lorsque $r$ varie, les formes $\theta_{[r] ^* \varphi} (\varphi_f )$ sont toutes cohomologues d'après la démonstration de la proposition \ref{P:thetacohom}. 

Le troisième point découle lui du point (3) du lemme \ref{L:convcourant}. 
\end{proof}

\section{Séries d'Eisenstein adéliques} \label{SEA7}

Soit $\varphi_f \in \mathcal{S} (V(\A_f))$. Les trois propositions suivantes sont des incarnations adéliques du procédé classique de régularisation de Hecke, voir par exemple \cite[Chapitre IV]{Wielonsky}. Elles se démontrent de la même manière; on ne détaille que la démonstration de la première qui est légèrement plus subtile. 

\begin{proposition} \label{P:Eis1}
L'intégrale 
\begin{equation} \label{Eis1}
E_\varphi ( \varphi_f , s) = \int_0^{\infty} r^{s} \theta_{[r]^*\varphi}  (\varphi_f ) \frac{dr}{r},
\end{equation}
qui converge absolument, uniformément sur tout compact de $\widehat{U_{\varphi_f , K}}$ si $\mathrm{Re} (s) >0$, possède un prolongement méromorphe à $\C$ tout entier, à valeurs dans l'espace $A^{2n} (\widehat{U_{\varphi_f , K}})$, holomorphe en dehors de pôles au plus simples aux entiers strictement négatifs. 
\end{proposition}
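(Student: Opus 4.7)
The plan is to split the integral at $r=1$, writing
$$E_\varphi(\varphi_f,s)=E^-_\varphi(\varphi_f,s)+E^+_\varphi(\varphi_f,s), \quad E^\pm_\varphi(\varphi_f,s)=\int_{I_\pm} r^{s-1}\theta_{[r]^*\varphi}(\varphi_f)\, dr,$$
with $I_-=(0,1]$ and $I_+=[1,\infty)$, and to treat the two pieces by the two regimes recorded in le lemme \ref{L:theta-asympt}. The tail $E^+_\varphi$ is entire: by le lemme \ref{L:theta-asympt}(1), for every compact $\mathcal{C}\subset \widehat{U_{\varphi_f,K}}$ there exist $c,\alpha>0$ such that $\|\theta_{[r]^*\varphi}(\varphi_f)\|_{\mathcal{C}}\leq c\,e^{-\alpha r^2}$ for $r\geq 1$, so the integrand is dominated by $r^{\mathrm{Re}(s)-1}e^{-\alpha r^2}$, giving absolute and locally uniform convergence on $\C\times\mathcal{C}$.

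For the head, I would use le lemme \ref{L:theta-asympt}(3). Since $r\mapsto \theta_{[r]^*\varphi}(\varphi_f)$ extends smoothly to $[0,+\infty)$ with vanishing value at $r=0$ as a form on $S\times\C^n$, and since this property is preserved by the left action of $\mathcal{G}(\Q)$ and by the right action of $\mathcal{K}$ used to form the adelic quotient, it descends to a smooth family on $\widehat{U_{\varphi_f,K}}$ indexed by $r\in[0,+\infty)$. For any integer $N\geq 0$ this yields a Taylor expansion
$$\theta_{[r]^*\varphi}(\varphi_f)=\sum_{k=1}^{N}r^{k}A_{k}(\varphi_f)+r^{N+1}R_{N}(r,\varphi_f),$$
with $A_k(\varphi_f)\in A^{2n}(\widehat{U_{\varphi_f,K}})$ and $R_N(\cdot,\varphi_f)$ uniformly bounded on $\mathcal{C}\times[0,1]$ for every compact $\mathcal{C}\subset\widehat{U_{\varphi_f,K}}$. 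Substituting gives, for $\mathrm{Re}(s)>0$,
$$E^-_\varphi(\varphi_f,s)=\sum_{k=1}^{N}\frac{A_{k}(\varphi_f)}{s+k}+\int_{0}^{1}r^{s+N}R_{N}(r,\varphi_f)\, dr,$$
and the remainder integral is holomorphic in $s$ for $\mathrm{Re}(s)>-N-1$ with values in $A^{2n}(\widehat{U_{\varphi_f,K}})$. Letting $N\to\infty$ extends $E^-_\varphi$ meromorphically to all of $\C$, with at most simple poles located at $s=-1,-2,-3,\ldots$; together with the entire tail $E^+_\varphi$ this yields the proposition.

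The main obstacle is verifying that le lemme \ref{L:theta-asympt}(3), which is stated for the form on $S\times\C^n$, transfers to a smooth extension uniformly on compacts of the adelic quotient $\widehat{U_{\varphi_f,K}}$. To handle this I would argue locally: given a compact $\mathcal{C}\subset\widehat{U_{\varphi_f,K}}$, lift it through a fundamental domain using the isomorphism \eqref{E:TK}, so that $\mathcal{C}$ becomes a finite disjoint union of compacts in products $\Gamma_j\backslash(X\times(\C^n/L_j))$, each avoiding the divisor of torsion sections cut out by $\varphi_f$. On each such compact, rapid decay of $\widetilde{\varphi}$ in the fiber direction reduces the infinite theta sum defining $\theta_{[r]^*\varphi}(\varphi_f)$ to a finite sum modulo errors that are, uniformly in $r\in[0,1]$, of order $O(e^{-cR^2})$ for any truncation radius $R$ in $\C^n$; each finite term is a translate $(1,-\xi)^*[r]^*\varphi$ which depends smoothly on $r\in[0,+\infty)$ and vanishes at $r=0$ by (the pointwise statement of) le lemme \ref{L:theta-asympt}(3). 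The uniform Taylor expansion above follows by differentiating this truncation and controlling remainders, which is a routine consequence of Mathai--Quillen's explicit formula \eqref{E:UMQ}.
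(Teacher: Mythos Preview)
Your proof is correct and follows essentially the same route as the paper: split at $r=1$, invoke le lemme~\ref{L:theta-asympt}(1) for the tail and le lemme~\ref{L:theta-asympt}(3) for the head, Taylor-expand near $r=0$ to exhibit the simple poles at the negative integers, and use the vanishing at $r=0$ to see that $s=0$ is regular. You are in fact more explicit than the paper on two points --- the exact form of the Taylor remainder and the descent of the small-$r$ smoothness from $S\times\C^n$ to $\widehat{U_{\varphi_f,K}}$ --- but these are refinements of the same argument, not a different one.
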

\begin{proof} D'après le lemme \ref{L:theta-asympt} (1), l'intégrale \eqref{Eis1} 
est absolument convergente sur tout compact de $\widehat{U_{\varphi_f , K}}$ si $\mathrm{Re} (s) >0$. Elle définit donc une forme différentielle dans $A^{2n} (\widehat{U_{\varphi_f , K}})$. Pour prolonger cette fonction de $s$, il suffit de couper l'intégrale en deux morceaux, l'un allant de $0$ à $1$ où l'on utilise un développement limité de la fonction $r \mapsto \theta_{[r]^*\varphi}  (\varphi_f )$ au voisinage de $0$, qui est bien défini d'après le lemme \ref{L:theta-asympt} (3), et l'autre de $1$ à $+\infty$ qui ne pose pas de problème d'après le lemme \ref{L:theta-asympt} (1). Le fait que $0$ ne soit pas un pôle découle du fait que $r \mapsto \theta_{[r]^*\varphi}  (\varphi_f )$ s'annule en $0$.
\end{proof}

\begin{proposition} \label{P:Eis1bis}
L'intégrale 
\begin{equation} \label{Eis1bis}
E_\varphi ( \varphi_f , s) = \int_0^{\infty} r^{s} \left( \theta_{[r]^*\varphi}  (\varphi_f ) - [\widehat{D_{\varphi_f , K}}] \right) \frac{dr}{r},
\end{equation}
définit une application méromorphe en $s$ à valeurs dans l'espace des courants 
$$\left[\mathcal{D}_{\bullet} (S \times \C^n )  \otimes C^\infty \left( \mathcal{G} (\A_f ) \right) \right]^{\mathcal{G} (\Q ) \times \mathcal{K}}$$
avec un pôle simple en $s=0$ de résidu $- [\widehat{D_{\varphi_f , K}}]$. 
\end{proposition}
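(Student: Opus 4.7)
Le plan consiste à suivre le schéma de régularisation de Hecke déjà mis en {\oe}uvre pour la proposition \ref{P:Eis1}, mais en travaillant au niveau des courants, où la nouvelle information cruciale est la convergence exponentielle de $\theta_{[r]^*\varphi}(\varphi_f)$ vers $[\widehat{D_{\varphi_f,K}}]$ fournie par le lemme \ref{L:theta-asympt} (2). Il s'agit donc de tester l'intégrale contre une forme différentielle lisse à support compact et de se ramener à une étude scalaire, comme expliqué ci-dessous.

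D'abord je scinderais l'intégrale en $\int_0^1 + \int_1^\infty$ et je traiterais chaque morceau séparément. Pour $r \geq 1$, le lemme \ref{L:theta-asympt} (2) affirme que $\theta_{[r]^*\varphi}(\varphi_f) - [\widehat{D_{\varphi_f,K}}]$ tend vers $0$ dans les courants, exponentiellement vite en $r$; en particulier, pour toute forme test $\omega$, la fonction scalaire $r \mapsto \langle \theta_{[r]^*\varphi}(\varphi_f) - [\widehat{D_{\varphi_f,K}}], \omega\rangle$ est à décroissance rapide. L'intégrale
\[
\int_1^\infty r^s \bigl(\theta_{[r]^*\varphi}(\varphi_f) - [\widehat{D_{\varphi_f,K}}]\bigr) \frac{dr}{r}
\]
converge donc pour tout $s \in \C$, uniformément sur les compacts, et définit une fonction entière de $s$ à valeurs dans les courants.

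Pour $0 < r \leq 1$, je séparerais encore l'intégrande en deux termes, à traiter par prolongement méromorphe chacun de son côté. D'un côté, le terme $\int_0^1 r^s \theta_{[r]^*\varphi}(\varphi_f) \frac{dr}{r}$ s'analyse à l'aide du lemme \ref{L:theta-asympt} (3)~: la fonction $r \mapsto \theta_{[r]^*\varphi}(\varphi_f)$ se prolonge en une fonction lisse de $r$ sur $[0,+\infty)$ s'annulant en $0$, donc un développement de Taylor à tout ordre $N$ au voisinage de $0$ permet d'écrire
\[
\theta_{[r]^*\varphi}(\varphi_f) = \sum_{k=1}^N c_k\, r^k + r^{N+1} R_N(r),
\]
où les $c_k$ sont des courants et $R_N$ est un courant dépendant continûment de $r\in [0,1]$. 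L'intégrale correspondante se prolonge alors méromorphiquement à $\mathrm{Re}(s) > -N-1$, avec au plus des pôles simples aux entiers strictement négatifs, et en particulier est holomorphe en $s=0$. De l'autre côté,
\[
-\int_0^1 r^s\, [\widehat{D_{\varphi_f,K}}] \frac{dr}{r} = -\frac{[\widehat{D_{\varphi_f,K}}]}{s}
\]
pour $\mathrm{Re}(s) > 0$, qui se prolonge à $\C$ tout entier avec un unique pôle simple en $s=0$ de résidu $-[\widehat{D_{\varphi_f,K}}]$.

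En recollant les trois contributions, on obtient le prolongement méromorphe annoncé de $E_\varphi(\varphi_f,s)$ à valeurs dans les courants, avec un pôle simple en $s=0$ de résidu $-[\widehat{D_{\varphi_f,K}}]$, les autres pôles éventuels étant situés aux entiers strictement négatifs. L'obstacle principal est de s'assurer que toutes les manipulations faites scalairement après appariement avec une forme test restent uniformes en la forme test, pour garantir que l'on obtient bien une application méromorphe à valeurs dans l'espace des courants équivariants~; mais ceci découle de la continuité séquentielle de l'appariement courant/test et du caractère localement fini des estimations du lemme \ref{L:theta-asympt}.
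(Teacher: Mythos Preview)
Your proof is correct and follows essentially the same approach as the paper: split the integral at $r=1$, use the exponential convergence of Lemma~\ref{L:theta-asympt}~(2) for $r\geq 1$, and a Taylor expansion near $r=0$ together with Lemma~\ref{L:theta-asympt}~(3) for $r\leq 1$. The only cosmetic difference is that the paper keeps the integrand $\theta_{[r]^*\varphi}(\varphi_f) - [\widehat{D_{\varphi_f,K}}]$ together on $[0,1]$ and simply observes that its value at $r=0$ is $-[\widehat{D_{\varphi_f,K}}]$ (hence the constant term in the Taylor expansion produces the simple pole at $s=0$), whereas you separate the two summands explicitly and compute $-\int_0^1 r^s\,[\widehat{D_{\varphi_f,K}}]\,\frac{dr}{r}=-[\widehat{D_{\varphi_f,K}}]/s$ directly; the content is identical.
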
 
\begin{proof} La démonstration est identique à celle de la proposition \ref{P:Eis1} à ceci près que cette fois on applique le lemme \ref{L:theta-asympt} (2) et que l'application qui à $r$ associe le courant $ \theta_{[r]^*\varphi}  (\varphi_f ) - [\widehat{D_{\varphi_f , K}}]$ ne s'annule plus en $0$ mais est égale à $- [\widehat{D_{\varphi_f , K}}]$. La fonction qui à $s$ associe le courant $E_\varphi ( \varphi_f , s)$ a donc cette fois un pôle (simple) en $0$ de résidu $- [\widehat{D_{\varphi_f , K}}]$.
\end{proof}

\begin{proposition} \label{P:Eis2}
L'intégrale 
\begin{equation} \label{Eis2}
E_\psi ( \varphi_f , s) = \int_0^{\infty} r^{s} \theta_{[r]^*\psi}  (\varphi_f ) \frac{dr}{r}
\end{equation}
qui converge absolument, uniformément sur tout compact de $\widehat{U_{\varphi_f , K}}$ si $\mathrm{Re} (s) >0$, possède un prolongement méromorphe à $\C$ tout entier, à valeurs dans l'espace $A^{2n-1} (\widehat{U_{\varphi_f , K}})$, holomorphe en dehors de pôles au plus simples aux entiers strictement négatifs. 
\end{proposition}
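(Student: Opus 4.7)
On procède en tout point comme pour la proposition \ref{P:Eis1}, en remplaçant la forme de Thom $\varphi$ par $\psi = \iota_X \varphi$. Le c{\oe}ur de la preuve consiste donc à établir les analogues suivants du lemme \ref{L:theta-asympt} pour $\psi$~:
\begin{enumerate}
\item[(1')] lorsque $r \to +\infty$, les formes $\theta_{[r]^*\psi}(\varphi_f)$ convergent uniformément sur tout compact de $\widehat{U_{\varphi_f,K}}$, exponentiellement vite, vers la forme nulle~;
\item[(3')] l'application $r \mapsto \theta_{[r]^*\psi}(\varphi_f)$ se prolonge en une fonction lisse sur $[0,+\infty)$ à valeurs dans $A^{2n-1}(S \times \C^n)$ qui s'annule en $r=0$.
\end{enumerate}

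Le point (1') résulte directement du lemme \ref{L8} appliqué à $\psi$~: la majoration $\|v^*\psi\|_\infty \leq C e^{-\alpha t^\beta |p_{W_{j_r}^\perp}(v)|^2}\max(|v|,|v|^n)$, combinée avec $v^*([r]^*\psi) = (rv)^*\psi$, donne une décroissance gaussienne en $r$ sur tout compact de $\widehat{U_{\varphi_f,K}}$~; la sommation sur $V(\Q)$, contrôlée par la formule de Poisson (exactement comme dans la démonstration déjà connue de la partie (1) du lemme \ref{L:theta-asympt}), préserve cette décroissance exponentielle. Pour (3'), on observe sur l'expression explicite de Mathai--Quillen --- l'archétype étant \eqref{E:psiN1} dans le cas $n=1$ --- que $[r]^*\psi$ contient un facteur global en $r^2 |z|^2 e^{-r^2|z|^2}$ (plus précisément, $\psi = \iota_X \varphi$ introduit le facteur radial $|z|^2$ qui garantit l'annulation en $r=0$). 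La série theta $\theta_{[r]^*\psi}(\varphi_f)$ hérite donc d'une dépendance lisse en $r$ sur $[0,+\infty)$, avec valeur nulle en $r=0$.

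Munis de ces deux propriétés, on conclut par régularisation à la Hecke~: on décompose
\[
E_\psi(\varphi_f, s) = \int_0^1 r^s \theta_{[r]^*\psi}(\varphi_f) \frac{dr}{r} + \int_1^{+\infty} r^s \theta_{[r]^*\psi}(\varphi_f) \frac{dr}{r}.
\]
L'intégrale sur $[1,+\infty)$ converge absolument pour tout $s \in \C$ grâce à (1'), et définit une fonction entière à valeurs dans $A^{2n-1}(\widehat{U_{\varphi_f,K}})$. Pour l'intégrale sur $[0,1]$, on utilise le développement de Taylor fourni par (3') en $r=0$~: en écrivant
\[
\theta_{[r]^*\psi}(\varphi_f) = \sum_{k=1}^{N} a_k r^k + r^{N+1} R_N(r),
\]
avec $a_k \in A^{2n-1}(\widehat{U_{\varphi_f,K}})$ et $R_N$ lisse sur $[0,1]$, chaque terme $\int_0^1 a_k r^{s+k-1} dr = a_k/(s+k)$ apporte un pôle simple en $s=-k$ (pour $k \geq 1$), tandis que le reste $\int_0^1 r^{s+N} R_N(r) dr$ est holomorphe pour $\mathrm{Re}(s) > -N-1$. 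En faisant tendre $N$ vers l'infini, on obtient le prolongement méromorphe à $\C$ tout entier avec pôles au plus simples aux entiers strictement négatifs, comme voulu. Le fait que l'absence de terme constant dans le développement de Taylor empêche l'apparition d'un pôle en $s=0$ distingue la situation de celle de la proposition \ref{P:Eis1bis}.

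L'unique point légèrement délicat est (3'), en ce qu'il faut vérifier que le développement de Taylor en $r=0$ est bien régulier \emph{uniformément sur tout compact de $\widehat{U_{\varphi_f,K}}$} (et non seulement fibre à fibre)~; cela découle toutefois de la nature uniforme des décroissances gaussiennes dans la construction de Mathai--Quillen et de la rapidité de la décroissance de $\varphi_f$ à support compact dans $V(\A_f)$, qui permet d'interchanger somme et limites.
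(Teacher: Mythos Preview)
Your proof is correct and follows essentially the same approach as the paper's: the paper's own proof of this proposition is a two-line remark stating that the argument is identical to that of Proposition~\ref{P:Eis1}, with the vanishing of $r \mapsto \theta_{[r]^*\psi}(\varphi_f)$ at $r=0$ coming from the fact that $\psi$ itself vanishes at the origin (since $\psi = \iota_X\varphi$). You have simply spelled out the details of the Hecke regularisation and of the analogues (1') and (3') of Lemma~\ref{L:theta-asympt}; one minor remark is that invoking Lemma~\ref{L8} for (1') is more than is needed here (the rapid decrease of $\psi$ in the fibres already suffices on compacts of $\widehat{U_{\varphi_f,K}}$, without any Siegel-set uniformity), and the vanishing of $\psi$ at $z=0$ is in fact linear rather than of order $|z|^2$, though neither point affects the argument.
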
 
\begin{proof} La démonstration est identique à celle de la proposition \ref{P:Eis1}. Le fait que la fonction $r \mapsto  \theta_{[r]^*\psi}  (\varphi_f )$ soit nulle en $0$ découle cette fois du fait que $\psi$ s'annule en $0$. 
\end{proof}

\medskip

Rappelons maintenant que 
$$[\mathcal{G}] = \widehat{[\mathcal{G} ]} / Z (\R)^+$$
où $Z (\R)^+$ opère naturellement sur $S^+$ et trivialement sur la fibre $\C^n$. L'action d'un élément $\lambda \in Z(\R)^+$ sur 
$S^+ \times \C^n$ s'obtient donc en composant l'action de $(\lambda , 0)$ dans $\mathcal{G} (\R)$ par la multiplication par $\lambda^{-1}$ dans la fibre $\C^n$. La $\mathcal{G} (\R)$-équivariance de $\widetilde{\varphi}$ implique alors que pour tout $w \in \C^n$ on a
$$\lambda^* \widetilde{\varphi} (w) = [\lambda^{-1}]^* ((\lambda , 0 )^* \widetilde{\varphi} (w)) = [\lambda^{-1}]^* \widetilde{\varphi} (\lambda^{-1} w ) = \widetilde{[\lambda^{-1}]^* \varphi} (w)$$
et de même pour $\psi$. Il s'en suit que pour tout $\lambda \in Z(\R)^+$ on a 
\begin{equation}
\lambda^* \widetilde{[r]^*\varphi} = \widetilde{[\lambda^{-1} r]^*\varphi} \quad \mbox{et} \quad \lambda^* \widetilde{[r]^*\psi} = \widetilde{[\lambda^{-1} r]^*\psi}
\end{equation}
et donc
\begin{equation} \label{E:actZ}
\lambda^* E_\varphi (\varphi_f , s) = \lambda^{s} E_{\varphi} (\varphi_f , s) \quad \mbox{et} \quad \lambda^*E_\psi (\varphi_f , s) =  \lambda^{s} E_\psi (\varphi_f , s).
\end{equation}
On pose  
\begin{equation} 
E_\psi (\varphi_f) = E_\psi (\varphi_f , 0) \in A^{2n-1} \left( [\mathcal{G}] / L_{\varphi_f}  - D_{\varphi_f } \right) .
\end{equation}

\begin{theorem}
La forme différentielle
$$E_\psi (\varphi_f) \in A^{2n-1} \left( [\mathcal{G}] / L_{\varphi_f}  - D_{\varphi_f } \right)$$ 
est \emph{fermée} et représente une classe de cohomologie qui relève la classe\footnote{Noter que dans le cas (multiplicatif) de ce paragraphe, l'image de $[\varphi_f]$ dans $H^{2n} \left(  [\mathcal{G}] / L_{\varphi_f} \right)$ est nulle puisque les fibres sont de dimension cohomologique $n$.} 
$$[\varphi_f] \in H^{2n} \left(  [\mathcal{G}] / L_{\varphi_f} , [\mathcal{G}] / L_{\varphi_f}  - D_{\varphi_f }\right)$$ 
dans la suite exacte longue
\begin{multline*}
\ldots \to H^{2n-1} \left( [\mathcal{G}] / L_{\varphi_f}  - D_{\varphi_f } \right) \\ \to H^{2n} \left(  [\mathcal{G}] / L_{\varphi_f} , [\mathcal{G}] / L_{\varphi_f}  - D_{\varphi_f } \right) \to H^{2n} \left(  [\mathcal{G}] / L_{\varphi_f} \right) \to \ldots
\end{multline*}
\end{theorem}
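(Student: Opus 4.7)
Notre stratégie est de déduire l'énoncé d'une identité différentielle entre $E_\psi(\varphi_f, s)$ et $E_\varphi(\varphi_f, s)$ obtenue par intégration par parties dans le paramètre $r$. L'ingrédient de base est la relation $d([r]^*\psi) = r \frac{d}{dr}([r]^*\varphi)$ établie au chapitre~\ref{C:4}, qui, par $\C$-linéarité de la série thêta et commutation de $d$ à la sommation, se transpose en
$$d\,\theta_{[r]^*\psi}(\varphi_f) = r \frac{d}{dr}\,\theta_{[r]^*\varphi}(\varphi_f).$$
En la reportant dans la définition \eqref{Eis2} de $E_\psi(\varphi_f, s)$ et en intégrant par parties, on obtiendra l'identité clé
$$dE_\psi(\varphi_f, s) = -s\, E_\varphi(\varphi_f, s),$$
d'abord dans le demi-plan $\mathrm{Re}(s) > 0$ où les intégrales convergent absolument, puis sur tout $\C$ par prolongement méromorphe.

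Sur l'ouvert $\widehat{U_{\varphi_f, K}}$, l'intégration par parties se justifie sans difficulté~: les termes de bord $[r^s \theta_{[r]^*\varphi}(\varphi_f)]_0^\infty$ s'annulent en $r=+\infty$ par la décroissance exponentielle du lemme~\ref{L:theta-asympt}~(1) et en $r=0$ par le point~(3) de ce même lemme. Les deux membres de l'identité étant holomorphes en $s=0$ (propositions~\ref{P:Eis1} et~\ref{P:Eis2}), la spécialisation en $s=0$ donnera $dE_\psi(\varphi_f) = 0$ au-dessus de $\widehat{U_{\varphi_f, K}}$, ce qui prouve la fermeture. Globalement, on travaillera avec les courants~: en soustrayant $[\widehat{D_{\varphi_f, K}}]$ à $\theta_{[r]^*\varphi}(\varphi_f)$ pour retrouver la décroissance à l'infini (lemme~\ref{L:theta-asympt}~(2)), on obtiendra l'identité analogue avec, au membre de droite, le courant méromorphe donné par la proposition~\ref{P:Eis1bis}, dont le résidu en $s=0$ vaut précisément $-[\widehat{D_{\varphi_f, K}}]$.

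Le passage à la limite $s\to 0$ fournira alors l'équation fondamentale
$$dE_\psi(\varphi_f) = [\widehat{D_{\varphi_f, K}}]$$
dans l'espace des courants sur $\widehat{[\mathcal{G}]}/L_{\varphi_f}$. Or la proposition~\ref{P:thetacohom} identifie le courant de droite à la classe $\widehat{[\varphi_f]}$ fournie par l'isomorphisme de Thom. L'équation ci-dessus traduit donc exactement, au niveau des courants, que l'image de $[E_\psi(\varphi_f)]$ par le morphisme de connexion de la suite exacte longue de la paire est égale à $\widehat{[\varphi_f]}$. La relation d'homogénéité~\eqref{E:actZ}, évaluée en $s=0$, assure enfin que $E_\psi(\varphi_f)$ est $Z(\R)^+$-invariante et descend au quotient $[\mathcal{G}]/L_{\varphi_f} - D_{\varphi_f}$, ce qui permet de conclure. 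Le point technique principal consistera à justifier proprement l'intégration par parties au sens des courants et l'interversion de $d$ et de l'intégrale régularisée, mais il s'agit essentiellement de manipulations formelles sur les objets déjà construits dans le paragraphe~\ref{SEA7}.
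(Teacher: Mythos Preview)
Your proof is correct and follows essentially the same route as the paper: integration by parts in the regularization parameter $r$ using the transgression relation \eqref{E:tddt1} to obtain $dE_\psi(\varphi_f,s)=-sE_\varphi(\varphi_f,s)$, evaluation at $s=0$ on the open set for closedness, extension to currents via Proposition~\ref{P:Eis1bis} to identify the connecting image with $[\widehat{D_{\varphi_f,K}}]$, and descent by the $Z(\R)^+$-invariance \eqref{E:actZ}. The paper's proof is slightly more terse but structurally identical.
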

\begin{proof} C'est une version adélique de \cite[Theorem 19]{Takagi}. Fixons un sous-groupe compact ouvert $K \subset \GL_n (\A_f )$ tel que $\varphi_f$ soit $K$-invariante. Les intégrales \eqref{Eis1} et \eqref{Eis2} étant absolument convergentes sur tout compact de $\widehat{U_{\varphi_f , K}}$, il découle de (\ref{E:tddt1}) que 
\begin{equation*}
\begin{split}
d E_\psi ( \varphi_f , s) & = \int_0^\infty r^{s} d (\theta_{[r]^*\psi}^*  (\varphi_f )) \frac{dr}{r} \\ 
& = \int_0^\infty r^{s} \frac{d}{dr} \theta_{[r]^*\varphi}  (\varphi_f )  dr.
\end{split}
\end{equation*}
Une intégration  par parties donne donc
\begin{equation} \label{dEis}
d E_\psi ( \varphi_f , s) = - s  \int_0^\infty r^{s}  \theta_{[r]^*\varphi}  (\varphi_f )  \frac{dr}{r} = -s E_{\varphi} (\varphi_f , s )
\end{equation}
sur $\widehat{U_{\varphi_f , K}}$. 

En particulier la forme $E_\psi ( \varphi_f ) = E_\psi ( \varphi_f , 0)$ est fermée sur $\widehat{U_{\varphi_f , K}}$ et la première partie du théorème découle finalement du fait que, d'après \eqref{E:actZ}, la forme $E_\psi ( \varphi_f )$ est invariante sous l'action du centre $Z (\R)^+$. 

Enfin, la proposition \ref{P:Eis1bis} implique que l'identité \eqref{dEis} s'étend sur $\widehat{[\mathcal{G}]} / \mathcal{K}$ en une identité entre courants et qu'en ce sens
$$d E_\psi (\varphi_f) =[\widehat{D_{\varphi_f , K}}]$$
et que l'image de $[E_\psi (\varphi_f)] \in H^{2n-1} (\widehat{U_{\varphi_f , K}})$ dans $H^{2n} (\widehat{[\mathcal{G}] }/ \mathcal{K} , \widehat{U_{\varphi_f , K}})$ est égale à $[\varphi_f]_K$. Le théorème s'en déduit en remarquant encore que ces classes sont toutes invariantes sous l'action de $Z (\R )^+$. 
\end{proof}

\medskip

Pour conclure ce paragraphe notons que pour tout $g \in \GL_n (\Q) $, il découle de \eqref{E:invtheta} que l'on a
\begin{equation} \label{E:glnqinv}
g^* (E_\psi ( \varphi_f ) (gg_f , gv_f )) = E_\psi (\varphi_f) (g_f , v_f).
\end{equation} 
En prenant $g$ scalaire et $(g_f, v_f) = (g_j , 0)$ on obtient en particulier que pour $\alpha$ dans $\Q^*$, positif si $n$ est impair, on a 
$$E_\psi (\varphi_f (\alpha \cdot )) (g_j , 0) = E_{[\alpha^{-1}]^*\psi} (\varphi_f ) (g_j , 0) = E_{\psi} (\varphi_f ) (g_j , 0) .$$
Ce qui implique que 
$$[E_{\psi} (\varphi_f ) (g_j , 0) ] \in H_{\Gamma_j}^{2n-1} \left( ( \C^n - \cup_\xi (L_j +\xi) ) / L_j \right)^{(1)},$$
au sens de la définition \ref{Def1.7}. 

La restriction de $E_\psi (\varphi_f)$ à la composante connexe $\mathcal{T}_{\mathcal{K}}$ définit une forme fermée $E_\psi (\varphi_f)^0$ sur
$$\Gamma \backslash (X \times \C^n )/ L - D_{\varphi_f }^0$$
et donc une classe de cohomologie équivariante 
\begin{equation} \label{E:721}
[E_\psi (\varphi_f)^0] \in H_\Gamma^{2n-1} \left( ( \C^n - \cup_\xi (L+ \xi) ) / L \right)^{(1)} ,
\end{equation}
où $\xi$ parcourt des éléments de $V (\Q) \cap \mathrm{supp} (\varphi_f )$.

\medskip
\noindent
{\it Exemple.} Soit $N$ un entier strictement supérieur à $1$. En prenant pour $\varphi_f$ la fonction \eqref{E:exvarphif}, on obtient une classe de cohomologie équivariante  
$$E_{D_0 (N)} \in H_{\Gamma_0 (N)}^{2n-1} (T - T [N])^{(1)}$$
avec $T = \C^n / \Z^n$. Cette classe est associée au cycle invariant de $N$-torsion et de degré $0$  
$$D_0 (N) \in H_{\Gamma_0 (N)}^0 (T[N])$$
comme expliqué au paragraphe \ref{S:15}. 

\medskip

\section{Comportement à l'infini de $E_\psi (\varphi_f)$} \label{S192}

\begin{definition}
\'Etant donné un sous-espace rationnel $W \subset V$, on note 
$$\int_W : \mathcal{S} (V (\A_f)) \to \mathcal{S} (V (\A_f) / W (\A_f))$$
l'application naturelle d'intégration le long des fibres de la projection $V \to V/ W$. 
\end{definition}

Dans ce paragraphe on fixe une fonction $\varphi_f \in \mathcal{S}  (V(\A_f ))$ et un sous-groupe parabolique $Q = Q (W_\bullet )$ dans $\SL_n (\Q)$ associé à un drapeau de sous-espaces rationnels de $\Q^n$; voir (\ref{E:flag}) dont on reprend les notations. On se propose d'étudier le comportement de la forme différentielle $E_\psi (\varphi_f)$ en restriction aux ensembles de Siegel associés à $Q$. On fixe également 
\begin{itemize}
\item un réel strictement positif $t_0$,
\item un élément $g \in \SL_n (\Q)$ tel que $g^{-1} W_\bullet$ soit un drapeau standard $W_J$,
\item un sous-ensemble relativement compact $\omega\subset N_J M_J$, et 
\item un sous-ensemble compact $\kappa$ dans
\begin{equation*} 
\left\{ (z , (g_f , v_f ) ) \in \C^n \times \mathcal{G} (\A_f )  \; \left| \; \begin{array}{l} \forall \xi \in V (\Q), \\ \varphi_f (g_f^{-1} (\xi - v_f)) \neq 0 \Rightarrow z \notin  W_k (\C) + \xi  \end{array} \right. \right\}.
\end{equation*}
\end{itemize}

\begin{lemma} \label{L:thetaSiegel}
Il existe des constantes strictement positives $C$, $\alpha$ et $\beta$ telles que pour tout $t \geq t_0$ les deux propriétés suivantes sont vérifiées.
\begin{enumerate}
\item Les formes $\theta_{[r]^* \psi} (\varphi_f )$ ($r\geq 1$) sont de norme
$$||\theta_{[r]^* \psi} (\varphi_f ) ||_\infty \leq C e^{-r^2 \alpha t^\beta}$$
en restriction à $\mathfrak{S}_{W_\bullet} (g ,t, \omega) \times \kappa$.  
\item Si l'on suppose de plus que $\int_{W_1} \varphi_f$ est constante égale à $0$, alors les formes $\theta_{[r]^* \psi} (\varphi_f )$ ($r\leq 1$) sont de norme
$$||\theta_{[r]^* \psi} (\varphi_f ) ||_\infty \leq C e^{-r^{-2} \alpha t^\beta}$$
en restriction à $\mathfrak{S}_{W_\bullet} (g ,t, \omega) \times \C^n \times \mathcal{G} (\A_f )$.
\end{enumerate}
\end{lemma}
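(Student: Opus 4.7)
En utilisant l'équivariance sous $\GL_n (\Q)$ fournie par \eqref{E:glnqinv}, je me ramène au cas où $g = I_n$, c'est-à-dire où $W_\bullet$ est le drapeau standard $W_J$ et l'ensemble de Siegel est $\mathfrak{S}_J (t , \omega ) = \omega A_t \SO_n$. La série theta s'écrit alors explicitement
\[
\theta_{[r]^* \psi} (\varphi_f ) (x, z; g_f, v_f ) = \sum_{\xi \in V (\Q)} \varphi_f ( g_f^{-1} (\xi - v_f )) \cdot [r]^* \widetilde{\psi} (\xi ) (x, z),
\]
et \eqref{E:invariancephipsi2} identifie chaque terme à la forme $\psi$ évaluée au point fibré $(x, rz - \xi)$. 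Les deux énoncés relèvent alors de mécanismes très différents.

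Pour le point (1), j'appliquerais le lemme \ref{L8} terme à terme avec $v = rz - \xi$. La contrainte définissant $\kappa$ assure que la projection $p_{W_k^\perp} (z-\xi)$ reste uniformément minorée sur $\kappa$ pour tout $\xi$ dans le support, éventuellement infini, du facteur $\varphi_f (g_f^{-1}(\cdot - v_f ))$ (qui est la trace sur $V(\Q)$ d'un compact de $V (\mathbf{A}_{\mathrm{f}})$). Cela produit un facteur global $e^{-r^2 \alpha t^\beta c}$ pour un $c>0$. La sommation résiduelle sur le translaté de réseau se contrôle alors uniformément en $(z; g_f , v_f ) \in \kappa$ en exploitant la décroissance gaussienne totale de $\psi$ dans la fibre~: en coordonnées adaptées aux blocs du parabolique $Q_J$, la métrique hermitienne en un point de $\mathfrak{S}_J (t, \omega )$ est bi-Lipschitz à $\sum_i t_i^{-2} |v_i|^2$, et la somme gaussienne totale sur le réseau se borne par le volume $\prod_i t_i^{d_i} = 1$. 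D'où la borne annoncée $C e^{-r^2 \alpha t^\beta}$.

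Pour le point (2) le procédé précédent se dégrade lorsque $r \leq 1$, et l'hypothèse $\int_{W_1} \varphi_f = 0$ appelle naturellement la formule sommatoire de Poisson. Je décomposerais $\xi = \xi_1 + \xi_2$ avec $\xi_1 \in W_1 (\Q)$ et $\xi_2$ parcourant un ensemble de représentants de $V(\Q)/W_1(\Q)$, puis j'appliquerais Poisson à la sous-somme intérieure sur $\xi_1$ parcourant un translaté du réseau $L \cap W_1 (\Q)$. Le mode de fréquence zéro du développement dual factorise précisément l'intégrale $\int_{W_1} \varphi_f$ et s'annule donc par hypothèse. Pour chaque fréquence duale non nulle $\lambda \in (L \cap W_1 (\Q))^\vee - \{0\}$, le terme correspondant porte une gaussienne duale dont l'exposant, calculé en coordonnées horocycliques adaptées à $Q_J$, se comporte comme $-r^{-2} t_1^2 |\lambda|^2$~: le changement d'échelle $[r]^*$ transforme les fréquences par $1/r$, tandis que le paramètre diagonal $t_1$ contrôle la direction $W_1$ du réseau. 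Les contraintes $t_i / t_{i+1} \geq t$ et $\det a = 1$ définissant $A_t$ forcent $t_1^2 \geq t^\beta$ pour un $\beta >0$ adapté, d'où la borne $C e^{-r^{-2}\alpha t^\beta}$. L'uniformité en $z \in \C^n$ --- plus forte que pour (1) --- provient de ce que les seuls termes susceptibles d'engendrer des singularités sur les hyperplans affines $W_1 + \xi$ étaient précisément les modes zéro, éliminés par l'hypothèse de moyenne nulle.

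L'obstacle principal sera la mise en {\oe}uvre rigoureuse de cette formule de Poisson dans le cadre adélique mixte du \S \ref{SEA7}, conciliant la somme sur $\xi_1 \in W_1(\Q)$ avec la structure archimédienne lisse de $\widetilde{\psi}$, ainsi que le décompte exact des facteurs métriques extraits du calcul en coordonnées horocycliques, qui détermine l'exposant $\beta$ intervenant dans les deux bornes.
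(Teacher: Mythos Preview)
Your treatment of (1) is essentially the paper's: apply Lemma~\ref{L8} termwise and invoke Gaussian summability over the lattice coset supporting $\varphi_f(g_f^{-1}(\cdot-v_f))$.

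For (2) the approaches diverge. You propose a \emph{partial} Poisson summation along the sublattice sitting in $W_1$; the paper instead applies the \emph{full} Poisson formula on $V(\Q)$. After full Poisson the archimedean factor becomes $\widehat{\widetilde\psi}(h^\top\xi)$ (the $z$-dependence survives only through a unimodular character, which gives the uniformity in $z$ for free), and the structural point is that $h\mapsto h^{-\top}$ carries $\mathfrak S_{W_\bullet}(g,t,\omega)$ to a dual Siegel set $\mathfrak S_{W_\bullet'}(g',t,\omega')$ attached to the reversed flag of annihilators $W_k'\subsetneq\cdots\subsetneq W_1'$. Lemma~\ref{L8} applied to this dual Siegel set yields decay $O(e^{-\alpha' t^{\beta'}|p_{W_1}(\xi)|^2})$, while the hypothesis $\int_{W_1}\varphi_f=0$ forces $\widehat\varphi_f|_{W_1'}=0$, so that only $\xi$ with $p_{W_1}(\xi)\neq 0$ contribute. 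Part (2) thus reduces to the same estimate as (1), transported to the dual side. Your partial-Poisson route is also correct in principle, but it leaves you with a residual lattice sum over representatives of $V(\Q)/W_1(\Q)$ and Jacobian factors of order $r^{-j_1}t_1^{j_1}$ to absorb into the exponential; the paper's duality trick packages all of this into a second invocation of Lemma~\ref{L8}, which incidentally also dissolves your stated ``obstacle principal'' about the adelic implementation.
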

\begin{proof} La premier point découle du lemme \ref{L8} (la convergence uniforme de la somme sur $\xi$ résulte du fait qu'on somme des fonctions gaussiennes). Pour démontrer le deuxième point il suffit de majorer la norme ponctuelle de 
$$(h,v)^*\theta_{[r]^* \psi} (\varphi_f ) = [r]^* \left(\sum_\xi \varphi_f (\xi)  (\omega (r^{-1} h , v) \widetilde{\psi}) (\xi ) \right),$$
en le point base $([e] , 0)$ de $X \times \C^n$, pour $(h,v) \in \mathfrak{S}_{W_\bullet} (g ,t, \omega) \times \C^n$. Pour cela, on utilise à nouveau la formule de Poisson~:
\begin{equation*}
\sum_\xi \varphi_f (\xi)  (\omega (r^{-1} h , v) \widetilde{\psi}) (\xi )  = \sum_\xi \widehat{\varphi}_f (\xi)  \widehat{(\omega (r^{-1} h , v) \widetilde{\psi})} (\xi ) .
\end{equation*}
Maintenant, pour $(h , v) \in \GL_n (\C) \ltimes \C^n$ on a~:
$$| \widehat{\omega(h , v) \widetilde{\psi } } (\xi)|  = | \det (h) \widehat{\widetilde{\psi}} (h^{\top} \xi )|.$$
On est donc ramené à étudier la croissance de $\widehat{\widetilde{\psi}}$ sur $\mathfrak{S}_{W_\bullet} (g ,t, \omega)^{-\top} \times \{ 0 \}$ où l'image par $h \mapsto h^{-\top}$ de $\mathfrak{S}_{W_\bullet} (g ,t, \omega)$ est un 
ensemble de Siegel de la forme $\mathfrak{S}_{W_\bullet '} (g' ,t, \omega')$ associé au drapeau  
$$(0) \varsubsetneq W_k '  \varsubsetneq W_{k-1} '   \varsubsetneq \cdots   \varsubsetneq W_1 '  \varsubsetneq \Q^n,$$
avec $g' = w_J g^{-\top} w_J^{-1}$, $\omega' = w_J \omega^{-\top} w_J^{-1}$ et 
$$w_J =  \left(\begin{array}{ccc}
 & & 1_{n-j_k} \\ 
 & \reflectbox{$\ddots$} & \\
1_{n-j_1} & & \end{array} \right).$$ 
Or 
$$\widehat{\widetilde{\psi}} (h^{\top} \xi ) = (\omega (h^{-\top} , 0 ) \widehat{\widetilde{\psi}} ) (\xi)= ((h^{-\top})^* \widehat{\widetilde{\psi}} ) (\xi)$$ 
et la démonstration du lemme \ref{L8} --- ou le fait que $\widehat{\widetilde{\psi}}$ soit une forme différentielle dans $A^{2n-1} (E , \mathcal{S}(V))$ --- implique qu'en restriction à $\mathfrak{S}_{W_\bullet '} (g' ,t, \omega')$ la forme 
$$\widehat{\widetilde{\psi}} (h^\top \xi ) \quad \mbox{a pour norme} \quad O(e^{-\alpha' t^{\beta '} |p_{W_1} (\xi )|^2})$$ 
pour certaines constantes strictement positives $\alpha '$ et $\beta '$.

Finalement, par hypothèse $\int_{W_1} \varphi_f$ est identiquement nulle, de sorte que pour tout $\xi \in W_1 '$, on a\footnote{Ici $\chi$ est un caractère additif fixé.}
\begin{equation*}
\begin{split}
\widehat{\varphi}_f (\xi ) & = \int_{V (\A_f )} \varphi_f (x) \chi (\langle \xi , x \rangle) dx \\
& = \int_{W_1 '} \left( \int_{W_1} \varphi_f (w+w') dw' \right) \chi (\langle \xi , w \rangle) dw \\
& = 0.
\end{split}
\end{equation*}
La somme 
$$\sum_{\xi \in V (\Q ) } \widehat{\varphi}_f (\xi ) \widehat{\psi} (\xi )$$
ne porte donc que sur les $\xi \notin W_1'$ c'est-à-dire ceux tels que $p_{W_1} (\xi) \neq 0$ et le théorème s'en déduit.
\end{proof}

\begin{proposition} \label{P12}
Supposons $\int_{W_1} \varphi_f$ constante égale à $0$. Il existe alors des constantes strictement positives $C$, $\alpha$ et $\beta$ telles que pour tout $t \geq t_0$, la forme différentielle $E_\psi (\varphi_f )$ soit de norme $\leq C e^{- \alpha t^\beta}$ en restriction à
$$\mathcal{G} (\Q ) \left[ \mathfrak{S}_{W_\bullet} (g ,t, \omega) \times \kappa \right] / \SO_n .$$  
\end{proposition}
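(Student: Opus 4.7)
La preuve consiste à combiner les deux bornes du lemme \ref{L:thetaSiegel} en découpant au point $r=1$ l'intégrale définissant $E_\psi(\varphi_f)$. L'hypothèse $\int_{W_1}\varphi_f = 0$ est exactement ce qui permet, via la partie (2) du lemme (obtenue en appliquant la formule sommatoire de Poisson), d'obtenir une décroissance gaussienne de $\theta_{[r]^*\psi}(\varphi_f)$ quand $r \to 0$, en complément de la décroissance quand $r \to \infty$ déjà donnée par la partie (1). En particulier, l'intégrale
\[
E_\psi(\varphi_f, s) = \int_0^\infty r^s \theta_{[r]^*\psi}(\varphi_f) \frac{dr}{r}
\]
converge absolument et uniformément sur $\mathfrak{S}_{W_\bullet}(g,t,\omega) \times \kappa$ au voisinage de $s=0$; la valeur $E_\psi(\varphi_f)$ obtenue par prolongement méromorphe (proposition \ref{P:Eis2}) y coïncide donc avec l'intégrale directe évaluée en $s=0$.

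Appliquant les majorations du lemme \ref{L:thetaSiegel} aux deux morceaux, on obtient
\[
\|E_\psi(\varphi_f)\|_\infty \leq \int_0^1 C e^{-r^{-2}\alpha t^\beta} \frac{dr}{r} + \int_1^\infty C e^{-r^2\alpha t^\beta} \frac{dr}{r}.
\]
Le changement de variable $u = 1/r$ transforme la première intégrale en la seconde, puis $u = r^2$ fournit
\[
\int_1^\infty e^{-r^2\alpha t^\beta} \frac{dr}{r} = \frac{1}{2}\int_1^\infty e^{-u \alpha t^\beta} \frac{du}{u} \leq \frac{e^{-\alpha t^\beta}}{2\alpha t^\beta},
\]
qui donne bien une majoration de la forme $C' e^{-\alpha' t^{\beta'}}$ pour des constantes $C', \alpha', \beta' > 0$ et tout $t \geq t_0$. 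Cela établit la proposition en restriction à l'ensemble de Siegel $\mathfrak{S}_{W_\bullet}(g,t,\omega) \times \kappa$.

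Pour étendre la majoration à $\mathcal{G}(\Q)[\mathfrak{S}_{W_\bullet}(g,t,\omega) \times \kappa]/\SO_n$, on utilise la relation d'invariance \eqref{E:glnqinv}, qui assure que l'évaluation de $E_\psi(\varphi_f)$ en un point $\mathcal{G}(\Q)$-translaté se ramène, après tiré en arrière, à son évaluation sur le point initial; le contrôle uniforme établi sur l'ensemble de Siegel se propage donc à tout son saturé par $\mathcal{G}(\Q)$. L'obstacle principal de la démonstration a ainsi déjà été surmonté lors de l'établissement du lemme \ref{L:thetaSiegel}~: c'est la partie (2), spécifique à l'hypothèse $\int_{W_1}\varphi_f = 0$ et reposant sur Poisson pour annuler les fréquences du réseau dual dans $W_1'$, qui rend possible l'intégration jusqu'en $r=0$ et assure la décroissance exponentielle en $t$.
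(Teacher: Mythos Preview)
Your proof is correct and follows essentially the same approach as the paper: split the defining integral as $\int_0^1 + \int_1^\infty$ and apply the two parts of Lemma~\ref{L:thetaSiegel} to the respective pieces. You have simply made explicit the elementary estimates and the passage to the $\mathcal{G}(\Q)$-saturate that the paper leaves implicit.
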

\begin{proof}
La forme différentielle $E_\psi (\varphi_f )$ est définie par l'intégrale absolument convergente 
$$E_\psi ( \varphi_f ) = \int_0^{\infty} \theta_{[r]^*\psi}  (\varphi_f ) \frac{dr}{r}.$$
La proposition se démontre en décomposant l'intégrale en une somme $\int_0^1 + \int_1^\infty$ et en appliquant à chacune de ces intégrales le lemme \ref{L:thetaSiegel}. 
\end{proof}

\section{\'Evaluation de $E_\psi (\varphi_f )$ sur les symboles modulaires} \label{S:7.eval}

Soit $\mathbf{q} = (q_0 , \ldots , q_{k})$ un $(k+1)$-uplet de vecteurs non nuls dans $V(\Q)$ avec $k \leq n-1$. 

Rappelons que l'on a associé à $\mathbf{q}$ une application continue \eqref{E:appDelta} de la première subdivision barycentrique $\Delta_k ' $ du $k$-simplexe standard vers la compactification de Tits~:
$$\Delta (\mathbf{q}) : \Delta_k ' \to \overline{X}^T.$$
On considère ici l'application
$$\Delta (\mathbf{q}) \times \mathrm{id}_{\C^n \times \mathcal{G} (\A_f)} : \Delta_k ' \times \C^n \times \mathcal{G} (\A_f) \to \overline{X}^T \times \C^n \times \mathcal{G} (\A_f).$$

Pour tout entier $j \in [0 , k]$ on désigne par $W(\mathbf{q})^{(j)}$ le sous-espace 
$$W(\mathbf{q})^{(j)} = \langle q_0 , \ldots , \widehat{q}_j , \ldots , q_{k} \rangle$$
dans $V$. 

Soit $\varphi_f \in \mathcal{S} (V (\A_f))$ une fonction de Schwartz telle que pour tout entier $j$ dans $[0 , k]$ on ait 
\begin{equation} \label{E:condphi}
\int_{W(\mathbf{q})^{(j)}} \varphi_f =0 \quad \mbox{dans} \quad \mathcal{S} (V (\A_f) / W(\mathbf{q})^{(j)} (\A_f) ).
\end{equation}

La proposition \ref{P33} et la proposition \ref{P12} impliquent que, pour tout sous-ensemble compact $\kappa$ dans  
$$\left\{ (z , (g_f , v_f ) ) \in \C^n \times \mathcal{G} (\A_f )  \; \left| \; \begin{array}{l}\forall \xi \in V (\Q) \mbox{ tel que } \varphi_f (g_f^{-1} (\xi - v_f)) \neq 0, \\ 
z \notin  \bigcup_{j=0}^k (W(\mathbf{q})^{(j)}_\C + \xi)  \end{array} \right. \right\},$$
la forme différentielle fermée $E_\psi (\varphi_f )$ est intégrable sur 
$$\Delta (\mathbf{q}) \times \mathrm{id}_{\C^n \times \mathcal{G} (\A_f)} ( \Delta_k ' \times \kappa  ).$$
On note
\begin{equation}
E_\psi (\varphi_f , \mathbf{q} ) = (\Delta (\mathbf{q}) \times \mathrm{id} )^* E_\psi (\varphi_f);
\end{equation}
son évaluation en $(g_f , v_f) \in \mathcal{G} (\A_f )$ donne une forme fermée dans 
$$A^{2n-1} \left( \Delta_{k}' \times \left( \C^n - \bigcup_{j=0}^k \bigcup_{\xi} (W(\mathbf{q})^{(j)}_\C + \xi) \right) \right),$$
où $\xi$ parcourt l'ensemble des vecteurs de $V(\Q)$ tels que $\varphi_f (g_f^{-1} (\xi - v_f )) \neq 0$. Dans la proposition suivante, on calcule son intégrale partielle sur $\Delta_k '$. 

Supposons maintenant que $k=n$ et que $\mathbf{q}$ soit constitué de vecteurs linéairement indépendants. Soit $g \in \GL_n (\Q)$ tel que $g \cdot \mathbf{e} = \mathbf{q}$. La considération du diagramme commutatif 
$$\xymatrixcolsep{5pc}\xymatrix{
\Delta_n ' \times \C^n \times \mathcal{G} (\A_f) \ar[d]^{\mathrm{id} \times g \times \mathrm{id}} \ar[r]^{\Delta (\mathbf{e}) \times \mathrm{id}} &  \overline{X}^T \times \C^n \times \mathcal{G} (\A_f) \ar[d]^{g} \\
\Delta_n ' \times \C^n \times \mathcal{G} (\A_f) \ar[r]^{\Delta (\mathbf{q}) \times \mathrm{id}} & \overline{X}^T \times \C^n \times \mathcal{G} (\A_f) }$$
et la formule d'invariance \eqref{E:glnqinv} impliquent que  
\begin{equation} \label{E:invformsimpl0}
g^* (E_{\psi} (\varphi_f , \mathbf{q}) (gg_f , gv_f )) = E_\psi ( \varphi_f , \mathbf{e} ) (g_f , v_f ).
\end{equation}
Quitte à remplacer $\varphi_f$ par $\omega (g_f , v_f ) \varphi_f$, les calculs explicites se ramènent au cas où $(g_f , v_f) = (1,0)$. On ne considère dans la suite de ce paragraphe que les formes 
$$E_\psi (\varphi_f , \mathbf{q} )^0 = E_\psi (\varphi_f , \mathbf{q} ) (1,0)$$ 
qui satisfont
\begin{equation} \label{E:invformsimpl}
g^* E_{\psi} ( \omega (g, 0) \varphi_f , \mathbf{q})^0 = E_\psi ( \varphi_f , \mathbf{e} )^0.
\end{equation}

\begin{proposition} \label{P34bis}
Soit $\varphi_f \in \mathcal{S} (V (\A_f))$ une fonction vérifiant \eqref{E:condphi}.

{\rm 1.} Si $\langle q_0 , \ldots , q_{k} \rangle$ est un sous-espace propre de $V$, alors la forme $\int_{\Delta_{k}'} E_\psi (\varphi_f , \mathbf{q} )^0$ est identiquement nulle.

{\rm 2.} Supposons $k=n-1$ et que les vecteurs $q_0 , \ldots , q_{n-1}$ soient linéairement indépendants. On pose $g = (q_0 | \cdots | q_{n-1} ) \in \GL_n (\Q)$ et on fixe $\lambda \in \Q^\times$ tel que la matrice $h=\lambda g$ envoie $V(\Z) = \Z^n$ dans $L_{\varphi_f}$. Alors la $n$-forme $\int_{\Delta_{n-1}'} E_\psi (\varphi_f , \mathbf{q} )^0$ est égale à 
\begin{multline*}
\sum_{v \in \Q^n / L_{\varphi_f} }  \varphi_f (v )  \sum_{\substack{\xi \in \Q^n/\Z^n \\ h \xi  = v \ (\mathrm{mod} \ L_{\varphi_f})}} \mathrm{Re} \left( \varepsilon (\ell_1 - \xi_1 ) d\ell_1 \right) \wedge \cdots \wedge \mathrm{Re} \left( \varepsilon (\ell_{n} - \xi_n)  d \ell_{n} \right),
\end{multline*}
où $\ell_j$ est la forme linéaire sur $\C^n$, de noyau $W(\mathbf{q})^{(j-1)}$, telle que $h^* \ell_j = e_j^*$.  
\end{proposition}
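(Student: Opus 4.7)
Le point 1 est une conséquence directe des propositions \ref{P33} et \ref{P12}. Si $\langle q_0, \ldots, q_k \rangle$ est un sous-espace propre de $V$, l'application $\Delta(\mathbf{q})$ envoie $\Delta_k'$ dans le bord de Tits $\partial \overline{X}^T$. D'après la proposition \ref{P33}, l'image est recouverte par un nombre fini d'adhérences d'ensembles de Siegel généralisés associés à des drapeaux $W_\bullet$ formés de sous-espaces $W(\mathbf{q})_v$. Pour chacun d'eux, le premier espace $W_1$ du drapeau est contenu dans un certain $W(\mathbf{q})^{(j)}$, et l'hypothèse \eqref{E:condphi} entraîne $\int_{W_1} \omega(g_f, v_f) \varphi_f = 0$ pour tout $(g_f, v_f)$. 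La proposition \ref{P12} garantit alors la décroissance exponentielle de $E_\psi(\varphi_f)$ dans ces Siegel, et donc l'annulation de la restriction à l'image de $\Delta_k'$ dans le bord. L'intégrale est donc nulle.

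Pour le point 2, on utilise d'abord l'équivariance \eqref{E:invformsimpl} avec l'élément $h = \lambda g \in \GL_n(\mathbb{Q})$ : en notant $\widetilde{\varphi}_f = \omega(h, 0) \varphi_f$, on est ramené au calcul de $\int_{\Delta_{n-1}'} E_\psi(\widetilde{\varphi}_f, \mathbf{e})^0$ où $\mathbf{e} = (e_1, \ldots, e_n)$. Dans ce cas, $\Delta^\circ(\mathbf{e}) = A \cdot K$ où $A$ est le tore diagonal standard. Via l'identification $S^+ \simeq X \times \mathbb{R}_{>0}$ rappelée après la proposition \ref{P:eta}, l'intégrale sur $\Delta_{n-1}'$ de $E_\psi(\widetilde{\varphi}_f)^0$ se réécrit, en utilisant la relation $\varphi = \alpha - \psi \wedge dr/r$ et une intégration radiale, comme
$$\int_{\mathbb{R}_{>0}^n \cdot K} \theta_\varphi(\widetilde{\varphi}_f),$$
où $\mathbb{R}_{>0}^n$ désigne le tore diagonal de $\GL_n(\mathbb{R})^+$ (on utilise ici la décroissance fournie par le lemme \ref{L:thetaSiegel} pour justifier la convergence absolue et l'interversion).

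Au-dessus de $\mathbb{R}_{>0}^n K$, la métrique hermitienne se scinde orthogonalement en une somme directe de $n$ droites (associées aux coordonnées $z_1, \ldots, z_n$), et la forme de Mathai--Quillen $\varphi$ devient le produit extérieur des formes unidimensionnelles $\varphi^{(j)}(t_j, z_j)$ données par \eqref{E:phiN1}. En découpant la somme $\sum_{\xi \in V(\mathbb{Q})} \widetilde{\varphi}_f(\xi) \cdots$ selon la classe de $\xi$ modulo $L_{\widetilde{\varphi}_f} = h^{-1}(L_{\varphi_f})$ --- qui contient $\mathbb{Z}^n$ par hypothèse sur $\lambda$ --- et en paramétrant les classes par $v \in \mathbb{Q}^n / L_{\varphi_f}$ puis $\xi \in \mathbb{Q}^n/\mathbb{Z}^n$ avec $h\xi \equiv v \pmod{L_{\varphi_f}}$, le théorème de Fubini décompose l'intégrale en sommes de produits d'intégrales unidimensionnelles, une pour chaque indice $j$. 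Chacun de ces facteurs unidimensionnels, par le calcul détaillé au \S 4.3 (formules \eqref{E:phiN1}-\eqref{E:etaN1}) et une régularisation à la Hecke, se calcule explicitement et donne $\mathrm{Re}(\varepsilon(z_j - \xi_j) dz_j)$ (le $\mathrm{Re}$ vient précisément de la forme $-\frac{i}{4\pi}(dz/z - d\bar z/\bar z)$ du cas unidimensionnel de $\eta$, cohomologue à la forme méromorphe $\frac{1}{2i\pi} dz/z$). Enfin, l'identification $h^*\ell_j = e_j^*$ et le changement de variable correspondant transforment $z_j - \xi_j$ en $\ell_j - \xi_j$ dans les coordonnées de $\mathbf{q}$.

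L'obstacle principal est la justification rigoureuse de la scission ``intégrale radiale $\times$ sommation sur $\xi$'' en un produit d'objets unidimensionnels régularisés : la régularisation par $\int_0^\infty r^s \, dr/r$ à $s=0$ des séries d'Eisenstein doit être compatible avec la sommation conditionnellement convergente $\sum^e$ définissant $\varepsilon$, ce qui repose à la fois sur l'hypothèse \eqref{E:condphi} (via le mécanisme de type Poisson du lemme \ref{L:thetaSiegel}) et sur le calcul explicite de la transformée de Mellin en dimension $1$ déjà effectué pour $\eta$ au \S 4.3.
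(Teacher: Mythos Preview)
Your plan follows the paper's proof closely; two points deserve correction or sharpening.

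\textbf{Part 1.} The implication you invoke is backward: from $W_1 \subset W(\mathbf{q})^{(j)}$ together with $\int_{W(\mathbf{q})^{(j)}} \varphi_f = 0$ one \emph{cannot} conclude $\int_{W_1} \varphi_f = 0$. Integrating along the fibres of $V \to V/W$ factors through $V \to V/W_1$ when $W_1 \subset W$, so vanishing of $\int_{W_1}\varphi_f$ is the \emph{stronger} condition, not the weaker one. The paper's own proof is equally laconic here --- it simply invokes the hypothesis on $\varphi_f$ together with Propositions~\ref{P33} and~\ref{P12} --- so you should drop your intermediate claim rather than try to justify it this way.

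\textbf{Part 2.} You have correctly identified both the reduction and the obstacle, but the paper resolves the obstacle by \emph{never} writing the unregularised integral $\int_{\R_{>0}^n K}\theta_\varphi(\widetilde\varphi_f)$. Combining the remark at the end of \S\ref{S:42} with the Mellin definition of $E_\psi$ gives directly (this is \eqref{E:intti})
\[
\int_{A\,\SO_n\,\R_{>0}} E_\psi(\varphi_f(h\cdot))(1,0)
\;=\;
\int_{\{\mathrm{diag}(t_1,\ldots,t_n):t_j>0\}\SO_n}(t_1\cdots t_n)^{s}\,\theta_\varphi(1,0;\varphi_f(h\cdot))\;\Big|_{s=0}.
\]
After the metric splitting, each one-dimensional factor carries the parameter $s$ and equals, up to a factor $\Gamma(1+\tfrac{s}{2})$,
\[
\mathrm{Re}\Bigl(\frac{1}{2i\pi}\sum_{m\in\Z}\frac{dz_j}{(z_j-\xi_j+m)\,|z_j-\xi_j+m|^{s}}\Bigr).
\]
The paper then invokes \cite[VII, \S8, p.~56]{Weil}: this Kronecker-regularised sum has a meromorphic continuation whose value at $s=0$ is exactly $\varepsilon(z_j-\xi_j)$. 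Thus the single parameter $s$ coming from the definition of $E_\psi$ does all the regularising work; there is no separate one-dimensional ``r\'egularisation \`a la Hecke'' to reconcile with it afterward.
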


\medskip
\noindent
{\it Remarque.} Le fait que l'expression soit en fait indépendante du choix de $\lambda$ découle des relations de distribution 
$$\sum_{j=0}^{m -1} \cot (\pi (z+ j /m )) = m \cot (\pi m z ).$$

\medskip

\begin{proof} 1.  Dans ce cas l'image de  $\Delta (\mathbf{q})$ est contenue dans le bord de $\overline{X}^T$ et il résulte de l'hypothèse faite sur $\varphi_f$, de la proposition  \ref{P33} et de la proposition \ref{P12} que l'intégrale $\int_{\Delta_{k}'} E_\psi (\varphi_f , \mathbf{q} )^0$ est nulle sur tout ouvert relativement compact de 
$$\C^n - \bigcup_{j=0}^k \bigcup_{\xi} (W(\mathbf{q})^{(j)}_\C + \xi).$$ 

2. Supposons donc $k=n-1$ et que les vecteurs $q_0 , \ldots , q_{n-1}$ soient linéairement indépendants.

En notant toujours $g = (q_0 | \cdots | q_{n-1} ) \in \GL_n (\Q)$ l'élément (\ref{E:g}), il découle de \eqref{E:invformsimpl} que 
\begin{equation*}
\int_{\Delta_{n-1}'} E_\psi (\varphi_f , \mathbf{q} )^0  = (h^{-1})^* \left(  \int_{\Delta_{n-1}'} E_\psi ( \varphi_f (h \cdot) , \mathbf{e})^0 \right),
\end{equation*}
où $\mathbf{e} = (e_1 , \ldots , e_n )$ et $h=\lambda g$.

On est donc ramené à 
calculer l'intégrale 
$$\int_{A\SO_n \R_{>0}} E_\psi (\varphi_f (h \cdot )) (1,0) ,$$ 
où 
$$A=\{ \mathrm{diag}(t_1,\ldots,t_n ) \in \SL_n (\R) \; : \; t_j \in \R_{>0},  \ t_1 \cdots t_n = 1 \}.$$
D'après la remarque à la fin du paragraphe \ref{S:42}, on a 
\begin{multline} \label{E:intti}
\int_{A\SO_n \R_{>0}} E_\psi (\varphi_f (h \cdot )) (1,0) \\ = \int_{\{ \mathrm{diag}(t_1,\ldots,t_n) \; : \; t_j \in \R_{>0} \} \SO_n} (t_1 \cdots t_n )^{s} \theta_\varphi (1 , 0 ;\varphi_f (h \cdot))  \ \Big|_{s=0}.
\end{multline}
Or, en restriction à l'ensemble des matrices symétriques diagonales réelles, le fibré en $\C^n$ se scinde {\it métriquement} en une somme directe de $n$ fibrés en droites, correspondant aux coordonnées $(z_j )_{j=1 , \ldots , n}$ de $z$ et la forme $\varphi$ se décompose en le produit de $n$ formes associées à ces fibrés en droites et égales, d'après (\ref{E:phiN1}), à
$$
\varphi^{(j)}  = \frac{i}{2\pi} e^{- t_j^2 |z_j |^2 }  \left( t_j^2 dz_j \wedge d \overline{z}_j 
- t_j^2 (z_j d\overline{z}_j - \overline{z}_j dz_j )  \wedge \frac{dt_j}{t_j}  \right), \quad j \in \{ 1 , \ldots , n \}.
$$
Comme $(1 , \xi )^* \widetilde{\varphi} (\xi ) = \varphi$ on obtient que 
\begin{multline*}
\int_{\{ \mathrm{diag}(t_1,\ldots,t_n) \; : \; t_j \in \R_{>0} \} \SO_n} (t_1 \cdots t_n )^s \widetilde{\varphi} (\xi) \\ = \frac{(-i)^n}{(4\pi)^n} \Gamma (1+ \frac{s}{2})^n \wedge_{j=1}^n  \left(  \frac{dz_j}{(z_j - \xi_j) | z_j - \xi_j |^{s} } -\frac{d\overline{z}_j}{\overline{z_j - \xi_j} | z_j - \xi_j |^{s}} \right). 
\end{multline*}
L'intégrale \eqref{E:intti} est donc égale à la valeur en $s=0$ de 
\begin{equation*}
\Gamma (1+ \frac{s}{2})^n \sum_{\xi \in V(\Q)} \varphi_f (h \xi )  \wedge_{j=1}^n  \mathrm{Re} \left( \frac{1}{2i\pi} \frac{dz_j}{(z_j - \xi_j) | z_j - \xi_j |^{s} } \right)   
\end{equation*}
c'est-à-dire,
\begin{equation*}
\Gamma (1+ \frac{s}{2})^n \sum_{\xi \in V(\Q) / V (\Z)} \varphi_f (h \xi)  \wedge_{j=1}^n  \mathrm{Re} \left( \frac{1}{2i\pi}  \sum_{m \in  \Z}  \frac{dz_j}{(z_j - \xi_j +m) | z_j - \xi_j +m |^{s} }  \right) ,
\end{equation*}
où l'on a utilisé que la fonction $\varphi_f ( h \cdot)$ est $V(\Z )$-invariante. Rappelons maintenant que pour tout $z \in \C$, la fonction 
$$s \mapsto \frac{1}{2i\pi} \sideset{}{'} \sum_{m \in \Z} \frac{1}{(z+m) |z +m|^{s}}$$ 
admet un prolongement méromorphe au plan des $s \in \C$, qui est égal à $\varepsilon (z) = \frac{1}{2i}\cot ( \pi z)$ en $s=0$; cf. \cite[VII, \S 8, p.56]{Weil}. On en déduit que l'intégrale \eqref{E:intti} 
n'est autre que 
$$\sum_{\xi \in V(\Q) / V (\Z)} \varphi_f (h \xi)  \wedge_{j=1}^n  \mathrm{Re} \left(\varepsilon (z_j - \xi_j ) dz_j \right).$$
Finalement, comme $(h^{-1})^*  e_{j}^*$ est égale à la forme linéaire $\ell_j$, on  conclut que 
\begin{multline*}
\begin{split}
\int_{\Delta_{n-1}'} E_\psi (\varphi_f , \mathbf{q} )^0  
& = \sum_{\xi \in V(\Q) / V (\Z)} \varphi_f (h \xi) \cdot  (h^{-1})^* \left(  \wedge_{j=1}^n  \mathrm{Re} (\varepsilon (z_j - \xi_j ) dz_j )  \right)  \\
& =  \sum_{\xi \in V(\Q) / V (\Z)} \varphi_f (h \xi)  \wedge_{j=1}^{n}  \mathrm{Re} (\varepsilon ( \ell_j - \xi_{j} ) d\ell_j ) .
\end{split}
\end{multline*}
En rassemblant les vecteurs $\xi$ envoyés par $h$ sur un même vecteur modulo $L_{\varphi_f}$ on obtient la formule annoncée. 
\end{proof}

\chapter{Cocycle multiplicatif du groupe rationnel $\GL_n (\Q)^+$} \label{S:chap9}

\resettheoremcounters

Au chapitre précédent on a défini une forme $E_\psi (\varphi_f)^0$ représentant une classe de cohomologie équivariante \eqref{E:721} qui, d'après le théorème \ref{T:cocycleM}, induit une classe 
\begin{equation*}
S_{\rm mult} [D_{\varphi_f}^0] \in H^{n-1} (\Gamma, \Omega^n_{\rm mer} ( \C^n ) ) .
\end{equation*}
Dans ce chapitre, on détermine explicitement des cocycles qui représentent ces classes de cohomologie. On suit la même démarche qu'au chapitre  \ref{S:6} mais en considérant les séries d'Eisenstein $E_\psi (\varphi_f)$ plutôt que la forme $\eta$. Autrement dit plutôt que la distribution ``évaluation en zéro'' on considère cette fois la distribution theta. On travaille uniquement avec la bordification de Tits car il faut ici prendre garde au fait que la forme $E(\varphi_f)$ ne s'étend pas à tout le bord. 

\section{Forme simpliciale associée à $E_\psi$} \label{S8.1}

L'application (\ref{E:R}) induit une rétraction 
$$[0,1] \times X \times \C^n  \to X \times \C^n ,$$
encore notée $R$, de $X \times \C^n$ sur $\{ x_0 \} \times \C^n$. 

Soit $\Gamma$ un sous-groupe de congruence dans $\SL_n (\Z)$. Il lui correspond un sous-groupe compact ouvert $K$ dans $\GL_n (\mathbf{A}_f )$ tel que
$$\Gamma = K \cap \GL_n (\Q)^+ .$$
Dans ce chapitre on prend $L = \Z^n$; c'est un réseau $\Gamma$-invariant dans $V(\Q)$.

De la même manière qu'au paragraphe \ref{S:61}, la rétraction $R$ induit une suite d'applications
\begin{equation} 
\rho_k : \Delta_k \times E_k\Gamma  \times \C^n/ \Z^n  \longrightarrow X \times \C^n/ \Z^n .
\end{equation}
\'Etant donné un $(k+1)$-uplet 
$$\mathbf{g} = (\gamma_0 , \ldots , \gamma_k ) \in E_k\Gamma$$ 
et un élément $z \in \C^n/ \Z^n$, l'application $\rho_k ( \cdot , \mathbf{g} , z)$ envoie le simplexe  $\Delta_k$ sur le simplexe géodésique dans $X$ de sommets $\gamma_0^{-1} x_0$, $\ldots$, $\gamma_k^{-1} x_0$ défini, par récurrence, en prenant le cône géodésique sur le $(k-1)$-simplexe géodésique de sommets $\gamma_1^{-1} x_0 , \ldots , \gamma_k^{-1} x_0$ depuis $\gamma_0^{-1} x_0$.  

La suite $\rho=(\rho_k )$ est constituée d'applications $\Gamma$-équivariantes et induit donc une application 
$\Gamma$-équivariante
$$\rho^* : A^\bullet (X \times \C^n / \Z^n ) \to \mathrm{A}^\bullet (E\Gamma \times  \C^n /\Z^n ),$$
où l'espace de droite est celui des formes différentielles simpliciales sur la variété simpliciale 
\begin{equation} \label{VS1}
E\Gamma \times  \C^n / \Z^n .
\end{equation}
Le groupe $\Gamma$ opère (diagonalement) sur \eqref{VS1} par 
$$\left(  \mathbf{g} , z  \right) \stackrel{(h,w)}{\longmapsto} \left(  \mathbf{g} h^{-1}, h z  \right).$$

Un élément $D \in \mathrm{Div}_\Gamma$ peut-être vu comme une fonction $\Gamma$-invariante sur $\C^n / \Z^n$ à support dans les points de torsion; il lui correspond donc une fonction $\mathcal{K}$-invariante $\varphi_f \in \mathcal{S} (V (\A_f ))$. On a 
\begin{equation} \label{E:fibresj}
\C^n / \Z^n - \mathrm{supp} \ D  = \C^n/\Z^n - \bigcup_\xi (\xi + \Z^n )/ \Z^n , 
\end{equation}
où $\xi$ parcourt l'ensemble des éléments de $\Q^n / \Z^n$ tels que $\varphi_f (\xi)$ soit non nul.

La proposition suivante découle des définitions.

\begin{proposition} \label{P:50}
La forme simpliciale
$$\mathcal{E}_\psi (\varphi_f ) := \rho^* E_\psi (\varphi_f )^0 \in \mathrm{A}^{2n-1} \left( E \Gamma \times ( \C^n / \Z^n - \mathrm{supp} \ D ) \right)^\Gamma$$
est fermée et représente la classe de cohomologie équivariante $[E_{\psi} (\varphi_f )^{(0)} ]$. 
\end{proposition}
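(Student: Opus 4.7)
The plan is to verify the three parts of the proposition --- well-definedness on the claimed open set, closedness together with $\Gamma$-invariance, and representation of the class $[E_\psi(\varphi_f)^{0}]$ --- by arguments that directly mirror those used for $\rho^*\eta$ in \S\ref{S:61}.

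First I would check that $\mathcal{E}_\psi(\varphi_f)$ is well-defined. The form $E_\psi(\varphi_f)^0$, defined on $\Gamma \backslash (X \times \C^n/\Z^n) - D_{\varphi_f}^0$, lifts to a $\Gamma$-invariant form on the $\Gamma$-invariant open set $X \times (\C^n/\Z^n - \mathrm{supp}\,D)$ of $X \times \C^n/\Z^n$, because $\mathrm{supp}\,D$ is $\Gamma$-invariant and the preimage of $D_{\varphi_f}^0$ in $X \times \C^n/\Z^n$ is precisely $X \times \mathrm{supp}\,D$ in view of \eqref{E:fibresj}. Since each $\rho_k$ acts as the identity on the second factor --- only the $X$-coordinate being modified by iterated geodesic cones based at $x_0$ --- one has $\rho_k^{-1}(X \times (\C^n/\Z^n - \mathrm{supp}\,D)) = \Delta_k \times E_k\Gamma \times (\C^n/\Z^n - \mathrm{supp}\,D)$, so the pullback is well-defined on the open set announced. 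Closedness follows because simplicial pullback commutes with $d$ and $E_\psi(\varphi_f)^0$ is closed by the theorem of \S\ref{SEA7}; $\Gamma$-invariance follows from the $\Gamma$-equivariance $\rho_k(t, \mathbf{g}\gamma^{-1}, \gamma z) = \gamma \cdot \rho_k(t, \mathbf{g}, z)$, which is immediate from \eqref{E:mapr2} via the cancellation $(g_j\gamma^{-1})(g_{j+1}\gamma^{-1})^{-1} = g_j g_{j+1}^{-1}$ combined with the $\Gamma$-invariance of the lift.

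The remaining point --- that $\mathcal{E}_\psi(\varphi_f)$ represents $[E_\psi(\varphi_f)^{0}]$ in equivariant cohomology --- is the only nontrivial one, and it is the exact analogue for this setting of the statement proved for $\rho^*\eta$ in \S\ref{S:61}. I would invoke the framework of Appendix \ref{A:A}, which identifies $H^*_\Gamma(\C^n/\Z^n - \mathrm{supp}\,D)$ both with the cohomology of $\Gamma \backslash (X \times (\C^n/\Z^n - \mathrm{supp}\,D))$ (available because $X$ is a contractible proper $\Gamma$-space) and with the cohomology of $\Gamma$-invariant simplicial forms on $E\Gamma \times (\C^n/\Z^n - \mathrm{supp}\,D)$. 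Under this identification $\rho^*$ realizes the comparison isomorphism, so it sends the class of the $\Gamma$-invariant closed form lifted from $E_\psi(\varphi_f)^0$ to the class of its simplicial pullback $\mathcal{E}_\psi(\varphi_f)$. The $(1)$-eigenspace condition for the dilation operators $[s]_*$ is automatically preserved, since $\rho$ is the identity on the $\C^n/\Z^n$ factor and hence commutes with $[s]$. The only genuine obstacle is this last cochain-level comparison, which is treated in the appendix and which we have already used to handle $\rho^*\eta$.
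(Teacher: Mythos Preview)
Your proposal is correct and matches the paper's approach: the paper simply states ``La proposition suivante découle des définitions'' and gives no further proof. You have spelled out the routine verifications (well-definedness on the open set, closedness and $\Gamma$-invariance inherited from $E_\psi(\varphi_f)^0$, and the comparison via $\rho^*$ with the Borel model as in \S\ref{S:61} and Appendix~\ref{A:A}), which is exactly what ``découle des définitions'' means here.
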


L'ouvert \eqref{E:fibresj} étant de dimension cohomologique $n$, il correspond à la classe de cohomologie équivariante $[E_{\psi} (\varphi_f )^{(0)} ]$ une classe de cohomologie dans 
$$H^{n-1} ( \Gamma , H^n (( \C^n / \Z^n - \mathrm{supp} \ D )^{(1)}).$$
Il découle même de \cite[Theorem 2.3]{Dupont} que l'intégration sur les $(n-1)$-simplexes associe à la forme simpliciale $\mathcal{E}_\psi (\varphi_f )$ un $(n-1)$-cocycle qui à un $n$-uplet d'éléments de $\Gamma$ 
associe une $n$-forme fermée sur \eqref{E:fibresj}. Pour obtenir des cocycles à valeurs dans les formes méromorphes on procède comme dans la démonstration du théorème \ref{T:cocycleM}. Il faut pour cela effacer quelques hyperplans de manière à pouvoir invoquer le théorème \ref{P:Brieskorn}. C'est ce que l'on détaille dans le paragraphe qui vient. 

\section[Les cocycles $\mathbf{S}_{{\rm mult}, \chi_0}$]{Les cocycles $\mathbf{S}_{{\rm mult}, \chi_0}$, démonstration du théorème \ref{T:mult}} \label{S:3.2.2}

Fixons un morphisme primitif $\chi_0 : \C^n /\Z^n \to \C / \Z$. 

Pour tout $(k+1)$-uplet $\mathbf{g} = (\gamma_0 , \ldots , \gamma_k )$ d'éléments de $\Gamma$, on note 
\begin{equation} \label{Vchi0S2}
U(\mathbf{g}) = \C^n / \Z^n -  \bigcup_{\xi } \bigcup_{i=0}^k  (\xi + \mathrm{ker} (\chi_0 \circ \gamma_i) ),
\end{equation}
où $\xi$ décrit les éléments du support de $D$. 
Les ouverts $U(\mathbf{g})$ sont des variétés affines. On peut donc leur appliquer le théorème \ref{P:Brieskorn}. 

On obtient un cocycle
\begin{equation*}
\mathbf{S}_{{\rm mult}, \chi_0} [D] :   \Gamma^n  \longrightarrow   \Omega_{\rm mer}^n (\C^n / \Z^n )
\end{equation*}
qui représente la classe $S_{\rm mult} [D]$ et est à valeurs dans les formes méromorphes qui sont régulières en dehors des hyperplans  affines $\xi + \mathrm{ker} (\chi_0  \circ \gamma )$, avec $\xi \in \mathrm{supp} \ D$ et $\gamma \in \Gamma$. Ce sont les cocycles annoncés dans le théorème \ref{T:mult}; il nous faut encore vérifier les propriétés attendues sous l'action des opérateurs de Hecke.

Considérons donc un sous-monoïde $S$ de $M_n (\Z)^\circ$ contenant $\Gamma$. À toute double classe $\Gamma a \Gamma$, avec $a \in S$, il correspond la fonction caractéristique de $KaK$, elle appartient à $\mathcal{H} (\GL_n (\A_f) , K)$. 

L'application $\GL_n (\A_f) \to \mathcal{G} (\A_f )$ qui à un élément $g_f$ associe $(g_f , 0)$ induit un plongement 
$$K \backslash \GL_n (\A_f)  / K \hookrightarrow \mathcal{K} \backslash \mathcal{G} (\A_f ) / \mathcal{K}$$
et donc une inclusion 
$$\mathcal{H} (\GL_n (\A_f) , K) \hookrightarrow \mathcal{H} (\mathcal{G} (\A_f ) , \mathcal{K})$$
``extension par $0$''. Dans la suite on identifie une fonction $\phi$ dans $\mathcal{H} (\GL_n (\A_f) , K)$ à son image dans $\mathcal{H} (\mathcal{G} (\A_f ) , \mathcal{K})$. 

Il découle alors du \S \ref{algHecke} que $\phi$ induit un opérateur de Hecke $\mathbf{T}_\phi$ sur 
$$H^{n-1} (\Gamma , \Omega_{\rm mer}^n (\C^n / \Z^n ));$$ 
lorsque $\phi$ est la fonction caractéristique de $KaK$, l'opérateur $\mathbf{T}_\phi$ coïncide avec $\mathbf{T}(a)$ du \S~\ref{S:2-2}. 

Une fonction $\phi  \in \mathcal{H} (\GL_n (\A_f) , K)$ induit aussi un opérateur 
\begin{equation} \label{E:applphi}
T_\phi  : \mathcal{S} (V(\mathbf{A}_f ))^K \to \mathcal{S} (V(\mathbf{A}_f ))^K
\end{equation}
sur l'espace $\mathcal{S} (V(\mathbf{A}_f ))^K$ des fonctions de Schwartz $K$-invariantes. On a 
\begin{equation*}
T_\phi ( \varphi_f) =  \sum_{g \in \Gamma \backslash \GL_n (\Q) / \Gamma} \phi (g) \sum_{h \in \Gamma g \Gamma / \Gamma} \varphi_f (h^{-1} \cdot ).
\end{equation*}

La fonction $\phi$ induit finalement une application
\begin{equation} \label{E:applphi}
[\phi ]  : \mathrm{Div}_\Gamma \to \mathrm{Div}_\Gamma  \quad \mbox{avec} \quad [\phi] =   \sum_{g \in \Gamma \backslash \GL_n (\Q) / \Gamma} \phi (g) \sum_{h \in \Gamma g \Gamma / \Gamma}  h .
\end{equation}
Lorsque $\phi$ est la fonction caractéristique de $KaK$, on a $[\phi]=[\Gamma a \Gamma]$ (cf. \S~\ref{S:2-2}). 

Il résulte des définitions que  
\begin{equation} \label{E:DetT}
D_{T_{\phi} \varphi_f} = [\phi]^* D
\end{equation}
et la proposition \ref{P:hecke1} implique~:

\begin{proposition} \label{P:hecke2}
Soit 
$\phi \in \mathcal{H} (\GL_n (\A_f) , K)$. On a 
\begin{equation}
\mathbf{T}_\phi \left[ \mathbf{S}_{{\rm mult}, \chi_0}[D] \right]= \left[ \mathbf{S}_{{\rm mult}, \chi_0 }[[\phi]^* D] \right] .
\end{equation}
\end{proposition}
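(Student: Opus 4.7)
Le plan est d'élever l'identité de la proposition \ref{P:hecke1}, qui a lieu au niveau des formes theta adéliques, à travers toutes les étapes de la construction des cocycles $\mathbf{S}_{{\rm mult},\chi_0}[D]$ détaillées au \S \ref{S:3.2.2}.

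Première étape, je relève l'identité de la proposition \ref{P:hecke1} au niveau des séries d'Eisenstein. Puisque $\mathbf{T}_\phi$ commute avec les tirés en arrière par des éléments de $\mathcal{G}(\R)$ et avec l'intégrale régularisée $\int_0^\infty r^s\,\frac{dr}{r}$ (l'opérateur $\mathbf{T}_\phi$ agit uniquement sur la variable adélique $\mathcal{G}(\A_f)$), on en déduit, à partir de l'identité $\mathbf{T}_\phi\theta_\psi(\varphi_f)=\theta_\psi(T_\phi\varphi_f)$, que
$$\mathbf{T}_\phi E_\psi(\varphi_f)=E_\psi(T_\phi\varphi_f)$$
comme formes différentielles sur $\widehat{U_{\varphi_f,K}}$ --- en particulier, en restriction à la composante connexe $\mathcal{T}_\mathcal{K}$ on a $\mathbf{T}_\phi E_\psi(\varphi_f)^0=E_\psi(T_\phi\varphi_f)^0$ comme classes dans la cohomologie équivariante \eqref{E:721}.

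Deuxième étape, je redescends cette identité au niveau des cocycles $\mathbf{S}_{{\rm mult},\chi_0}$. La construction du \S \ref{S:3.2.2} est fonctorielle en la classe de cohomologie équivariante représentée par $E_\psi(\varphi_f)^0$~: on tire en arrière par la suite d'applications $\rho_k$ pour obtenir la forme simpliciale $\mathcal{E}_\psi(\varphi_f)$ de la proposition \ref{P:50}; on la restreint aux ouverts affines $U(\mathbf{g})$ de \eqref{Vchi0S2}; et on utilise le théorème \ref{P:Brieskorn} pour représenter la classe obtenue par un cocycle à valeurs dans $\Omega^n_{\rm mer}(\C^n/\Z^n)$. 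L'action de $\mathbf{T}_\phi$ sur $H^{n-1}(\Gamma,\Omega^n_{\rm mer}(\C^n/\Z^n))$ étant, par définition, la descente de l'action de $\mathbf{T}_\phi$ sur la cohomologie équivariante sous-jacente, on en déduit que
$$\mathbf{T}_\phi\left[\mathbf{S}_{{\rm mult},\chi_0}[D]\right]=\left[\mathbf{S}_{{\rm mult},\chi_0}[D_{T_\phi\varphi_f}]\right]$$
où l'on a utilisé que $D\mapsto\varphi_f$ est linéaire et fonctoriel.

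Troisième étape, je conclus en invoquant \eqref{E:DetT}~: on a $D_{T_\phi\varphi_f}=[\phi]^*D$ dans $\mathrm{Div}_\Gamma$, ce qui donne immédiatement la proposition. La seule subtilité réelle réside dans la première étape --- à savoir vérifier que l'opérateur de Hecke commute avec la régularisation de Hecke-Eisenstein. Puisque $\mathbf{T}_\phi$ est un opérateur intégral à noyau compactement supporté sur $\mathcal{G}(\A_f)$ qui ne touche pas à la variable archimédienne, et que les estimées uniformes du lemme \ref{L:theta-asympt} assurent la convergence dominée, l'échange $\mathbf{T}_\phi\circ\int_0^\infty=\int_0^\infty\circ\mathbf{T}_\phi$ est licite sur tout compact de $\widehat{U_{\varphi_f,K}}$, ce qui suffit.
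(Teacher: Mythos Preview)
Your proof is correct and follows exactly the approach the paper intends: the paper simply states that the proposition follows from Proposition~\ref{P:hecke1} together with the identity \eqref{E:DetT}, without spelling out the intermediate steps. Your three-step argument (lifting the Hecke equivariance from $\theta_\psi$ to $E_\psi$, descending through the functorial construction of \S\ref{S:3.2.2}, and invoking \eqref{E:DetT}) is precisely the unpacking of that one-line justification.
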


Ceci conclut la démonstration du théorème \ref{T:mult}.

\section{Le cocycle $\mathbf{S}^*_{{\rm mult}}$}

Comme dans le cas additif, on peut obtenir un cocycle explicite en appliquant l'argument de l'annexe \ref{A:A} à la forme simpliciale $\mathcal{E}_\psi (\varphi_f )$ et aux ouverts, indexés par les éléments $\mathbf{g} = (\gamma_0 , \ldots , \gamma_k) $ de $E_k \Gamma$ (avec $k \in \mathbf{N}$),
\begin{equation} \label{E:86} 
U^* (\mathbf{g}) =  \C^n /\Z^n -  \bigcup_\xi \bigcup_{i=0}^k   (W(\mathbf{q})_{\C}^{(i)} + \xi +\Z^n ) / \Z^n ,
\end{equation}
où $\mathbf{q} = ( \gamma_0^{-1} e_1 , \ldots  , \gamma_{k}^{-1} e_1 )$ et $\xi$ parcourt l'ensemble des éléments de $\Q^n / \Z^n$ tels que $\varphi_f (\xi)$ soit non nul.

\subsection{Section simpliciale et homotopie}

Au paragraphe \ref{S:63}, on a défini des applications $\varrho_k$ que l'on restreint maintenant à $E\Gamma$. Pour tout entier $k \in [0 , n-1]$ on note encore 
\begin{equation}
\varrho_k : \Delta _k \times [0,1] \times E_k \Gamma \times \C^n  \to \overline{X}^T \times \C^n / \Z^n
\end{equation}
les applications induites. Les images des projections dans $\overline{X}^T$ sont cette fois contenues dans la bordification rationnelle de Tits obtenue en n'ajoutant que les sous-groupes paraboliques $Q=Q (W_\bullet )$ associés à des drapeaux de sous-espaces rationnels de $V(\Q)= \Q^n$ engendrés par des vecteurs $q=\gamma^{-1} e_1$ avec $\gamma \in \Gamma$. 

La proposition \ref{P33} implique que pour tout entier $k \in [0, n-1]$, pour tout $\mathbf{g} \in E_k \Gamma$, pour tout $z \in U^* (\mathbf{g})$ et pour tout réel strictement positif $t$, l'image  
$$\varrho_k ( \Delta_k \times [0,1] \times \{ \mathbf{g} \} \times \{z \}  ) \subset \overline{X}^T \times \C^n / \Z^n $$
est contenue dans une réunion finie
$$\left( \Omega \cup_{W_\bullet , h , \omega} \overline{\mathfrak{S}_{W_\bullet } (h , t , \omega )} \right) \times \{z \} ,$$
où $\Omega \subset X$ est relativement compact et chaque drapeau $W_\bullet$ est formé de sous-espaces engendrés par certains des vecteurs $\gamma_i^{-1} e_1$ où $\mathbf{g} = (\gamma_0 , \ldots , \gamma_k)$.

La proposition \ref{P12} motive la définition suivante.

\begin{definition} \label{def67}
Soit $\mathcal{S} (V (\A_f))^{\circ}$ le sous-espace de $\mathcal{S} (V(\A_f))$ constitué des fonctions $\varphi_f$ telles que pour tout sous-espace $W$ contenant $e_1$, l'image $\int_W  \varphi_f $ de $\varphi_f$ dans $\mathcal{S} (V (\A_f) / W(\A_f))$ est constante égale à $0$.
\end{definition}

\medskip
\noindent
{\it Remarque.} On réserve la notation $\mathcal{S} (V (\A_f))^{0}$ pour l'espace des fonctions $\varphi_f$ dans $\mathcal{S} (V(\A_f))$ telles que $\widehat{\varphi}_f (0) =0$. Noter que puisque $V$ contient $e_1$ on a 
$$\mathcal{S} (V (\A_f))^{\circ} \subset \mathcal{S} (V (\A_f))^{0}.$$

\medskip

La forme $E_\psi (\varphi_f )^{(0)}$ est $(\Gamma \ltimes \Z^n)$-invariante et la proposition \ref{P12} implique que si $\varphi_f \in \mathcal{S} (V (\A_f ))^{\circ}$ alors pour tout $\mathbf{g} \in E_k \Gamma$ la restriction de la forme différentielle fermée 
$\varrho_k^* E_\psi (\varphi_f )^{(0)}$ à $\Delta_k \times [0,1] \times \mathbf{g} \times U^* ( \mathbf{g})$ est bien définie et fermée. 

\begin{definition}
Supposons $\varphi_f \in \mathcal{S} (V (\A_f ))^{\circ}$. Pour tout entier $k \in [0, n-1]$ et pour tout $(k+1)$-uplet $\mathbf{g} = (\gamma_0 , \ldots , \gamma_k) \in E_k \Gamma$, on pose 
$$\mathcal{H}_k[\varphi_f] (\gamma_0 , \ldots , \gamma_k ) = \int_{\Delta_k \times [0,1]} \varrho_k^*E_\psi (\varphi_f )^{(0)} (\gamma_0 , \ldots , \gamma_k ).$$
C'est une forme différentielle de degré $2n-2-k$ sur $U^* (\mathbf{g})$. 
\end{definition}

\subsection{Calcul du cocycle} 

En remplaçant l'appel à la proposition \ref{P32} par l'utilisation de la proposition  \ref{P12}, la démonstration du théorème \ref{T37} conduit au résultat suivant. On renvoie à l'annexe \ref{A:A} pour les définitions des opérateurs $\delta$ et $d$. 

\begin{theorem} \label{T8.8}
Supposons $\varphi_f \in \mathcal{S} (V(\A_f))^{\circ}$. Pour tout entier $k \in [0 , n-1]$ et pour tout $\mathbf{g} = (\gamma_0 , \ldots , \gamma_k) \in E_k \Gamma$, 
l'intégrale $\int_{\Delta_k } \mathcal{E}_{\psi} (\varphi_f ) (\mathbf{g})$ est égale à 
$$\delta \mathcal{H}_{k-1} [\varphi_f ]  (\mathbf{g}) \pm d \mathcal{H}_k [\varphi_f ]  (\mathbf{g}), \quad \mbox{si } k < n-1,$$
et 
$$\int_{\Delta_{n-1}'} E_\psi (\varphi_f , \mathbf{q} )^{(0)}
+ \delta  \mathcal{H}_{n-2} [\varphi_f ]  (\mathbf{g})  \pm d \mathcal{H}_{n-1} [\varphi_f ]  (\mathbf{g}), \quad \mbox{si } k=n-1,$$
dans $A^{2n-2-k} \left( U^* (\mathbf{g}) ) \right)$ avec toujours $\mathbf{q} = ( \gamma_0^{-1} e_1 , \ldots  , \gamma_{k}^{-1} e_1 )$.
\end{theorem}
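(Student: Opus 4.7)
The plan is to mimic the proof of Theorem \ref{T37}, replacing the form $\eta$ by the Eisenstein series $E_\psi(\varphi_f)^{(0)}$, and replacing the use of Proposition \ref{P32} (rapid decay of $\eta$ near the Tits boundary along compacts of the hyperplane complement) by Proposition \ref{P12} (exponential decay of $E_\psi(\varphi_f)$ along Siegel sets, valid once $\varphi_f \in \mathcal{S}(V(\mathbf{A}_f))^\circ$). The hypothesis $\int_{W(\mathbf{q})^{(j)}} \varphi_f = 0$ required to invoke Proposition \ref{P12} is precisely guaranteed by $\varphi_f \in \mathcal{S}(V(\mathbf{A}_f))^\circ$, because at the simplex $\varrho_k$ sends into the Tits compactification, the only flags $W_\bullet$ appearing are those built out of the subspaces $W(\mathbf{q})_v$, each of which contains one of the vectors $\gamma_i^{-1} e_1$, and via the invariance \eqref{E:glnqinv} this reduces (after applying an element of $\Gamma$) to the hypothesis that $\int_W \varphi_f = 0$ for every subspace $W$ containing $e_1$.

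The computation itself proceeds as follows. First I observe that $E_\psi(\varphi_f)^{(0)}$ is closed on $U^*(\mathbf{g})$ (away from the support of $D$), hence $\varrho_k^* E_\psi(\varphi_f)^{(0)}$ is a closed form on $\Delta_k \times [0,1]$ with values in $A^\bullet(U^*(\mathbf{g}))$. Decomposing its total differential as $d_{\Delta_k \times [0,1]} \pm d$ and integrating over $\Delta_k \times [0,1]$ yields
\[
\int_{\Delta_k \times [0,1]} d_{\Delta_k \times [0,1]} \varrho_k^* E_\psi(\varphi_f)^{(0)}(\mathbf{g}) \pm d \mathcal{H}_k[\varphi_f](\mathbf{g}) = 0.
\]
Stokes' theorem applied to the left-hand integral produces three boundary contributions, corresponding to the three types of faces of the subdivision $[\Delta_k \times [0,1]]'$: the bottom face $\Delta_k \times \{0\}$, the vertical faces $[(\partial \Delta_k) \times [0,1]]'$, and the top subdivided face $\Delta_k' \times \{1\}$. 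By construction of $\varrho_k$ the bottom face gives $\int_{\Delta_k} \rho_k^* E_\psi(\varphi_f)^{(0)}(\mathbf{g}) = \int_{\Delta_k} \mathcal{E}_\psi(\varphi_f)(\mathbf{g})$; the vertical faces give, by the face-compatibility of the family $\varrho_k$ (analogous to diagram \eqref{diag:simpl}), $-\delta \mathcal{H}_{k-1}[\varphi_f](\mathbf{g})$; and the top face gives $\int_{\Delta_k'} E_\psi(\varphi_f, \mathbf{q})^{(0)}$ by the second defining property of $\varrho_k$ restricted to the top.

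The final step is to invoke Proposition \ref{P34bis}(1): as soon as $k < n-1$, the vectors $q_0, \ldots, q_k$ span a proper subspace of $V$, so the top-face integral $\int_{\Delta_k'} E_\psi(\varphi_f, \mathbf{q})^{(0)}$ vanishes identically. For $k = n-1$ no such vanishing holds in general and this term survives, giving the second displayed formula. Rearranging the signs yields exactly the two statements of the theorem.

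The main obstacle is not the Stokes computation itself, which is formal once everything converges, but the justification that each of the three boundary integrals above is absolutely convergent (and in particular that $\mathcal{H}_k[\varphi_f](\mathbf{g})$ defines an honest smooth form on $U^*(\mathbf{g})$). This requires showing that the image $\varrho_k(\Delta_k \times [0,1] \times \{\mathbf{g}\} \times \{z\})$ is asymptotically contained, for $z$ in a compact of $U^*(\mathbf{g})$, in a finite union of Siegel sets $\mathfrak{S}_{W_\bullet}(h, t, \omega) \times \{z\}$ whose flags $W_\bullet$ are all built from the subspaces $W(\mathbf{q})_v$ — this is precisely the content of Proposition \ref{P33} — and then invoking Proposition \ref{P12} on each of them, which is legitimate exactly because $\varphi_f \in \mathcal{S}(V(\mathbf{A}_f))^\circ$ implies (after a $\Gamma$-translation using \eqref{E:glnqinv}) the vanishing condition $\int_{W(\mathbf{q})^{(j)}} \varphi_f = 0$ needed to apply that proposition on the minimal stratum of each Siegel set.
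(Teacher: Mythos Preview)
Your proposal is correct and follows essentially the same route as the paper's own proof: replace $\eta$ by $E_\psi(\varphi_f)^{(0)}$, invoke Proposition~\ref{P12} in place of Proposition~\ref{P32} (the hypothesis $\varphi_f \in \mathcal{S}(V(\A_f))^\circ$ together with the $K$-invariance of $\varphi_f$ supplying the vanishing $\int_{W_1}\varphi_f=0$ on each relevant Siegel set, exactly as you explain), and then run the Stokes computation on $[\Delta_k\times[0,1]]'$ verbatim from Theorem~\ref{T37}, identifying the three boundary contributions and using Proposition~\ref{P34bis}(1) to kill the top-face term when $k<n-1$. The paper in fact records the convergence justification (your final paragraph) separately, just before the statement of the theorem, and keeps only the formal Stokes argument in the proof proper.
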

\begin{proof} Puisque $E_\psi (\varphi_f)^{(0)}$ est fermée on a~:
$$(d_{\Delta_k \times [0,1]} \pm d ) \varrho_k^*E_\psi (\varphi_f )^{(0)} = 0$$
et donc 
$$\int_{\Delta_k \times [0,1]} d_{\Delta_k \times [0,1]} \varrho_k^* E_\psi (\varphi_f )^{(0)} (\gamma_0 , \ldots , \gamma_k ) \pm d \mathcal{H}_k (\gamma_0 , \ldots , \gamma_k ) =0.$$
Maintenant, d'après le théorème de Stokes on a
\begin{multline*}
\int_{\Delta_k \times [0,1]} d_{\Delta_k \times [0,1]} \varrho_k^* E_\psi (\varphi_f )^{(0)} (\gamma_0 , \ldots , \gamma_k ) = \int_{\Delta_k \times \{ 0 \}} \varrho_k^* E_\psi (\varphi_f )^{(0)} (\gamma_0 , \ldots , \gamma_k ) \\ +  \int_{(\partial \Delta_k) \times [0,1]} \varrho_k^* E_\psi (\varphi_f )^{(0)} (\gamma_0 , \ldots , \gamma_k )  -  \int_{\Delta_k \times \{ 1 \}} \varrho_k^* E_\psi (\varphi_f )^{(0)} (\gamma_0 , \ldots , \gamma_k ).
\end{multline*}
La dernière intégrale est égale à $\int_{\Delta_k '} E_\psi (\varphi_f , \mathbf{q} )^{(0)} $ et est donc nulle si $k < n-1$ d'après la proposition \ref{P34bis}.

Finalement, par définition on a 
$$\int_{\Delta_k \times \{ 0 \}} \varrho_k^* E_\psi (\varphi_f )^{(0)} (\gamma_0 , \ldots , \gamma_k ) = \int_{\Delta_k} \rho^* E_\psi (\varphi_f ) (\gamma_0 , \ldots , \gamma_k) $$
et
$$\int_{(\partial \Delta_k ) \times [0,1]} \varrho_k^* E_\psi (\varphi_f )^{(0)} (\gamma_0 , \ldots , \gamma_k ) = - \delta \mathcal{H}_{k-1} (\gamma_0 , \ldots , \gamma_k ).$$
\end{proof}

Le théorème précédent motive les définitions suivantes. 

\begin{definition}
\'Etant donné une fonction $K$-invariante $\varphi_f \in \mathcal{S} (V(\A_f))^{\circ}$ on désigne par
$$\mathbf{S}_{\rm mult}^* [\varphi_f] : \Gamma^{n} \longrightarrow \Omega^{n}_{\rm mer} (\C^n / \Z^n )$$
l'application qui à un $n$-uplet $(\gamma_0 ,  \ldots , \gamma_{n-1})$ associe $0$ si les vecteurs $\gamma_j^{-1} e_1$ sont liés et sinon la forme différentielle méromorphe dans $\Omega^n_{\rm mer} (\C^n / \Z^n )$ d'expression
\begin{equation} \label{E:87} 
\sum_{v \in \Q^n / \Z^n} \varphi_f (v) \\ \sum_{\substack{\xi \in \Q^n/\Z^n \\ h \xi  = v \ (\mathrm{mod} \ \Z^n )}} \varepsilon (\ell_1 - \xi_1 ) \cdots \varepsilon (\ell_{n} - \xi_n)  \cdot d\ell_1 \wedge \cdots \wedge d \ell_{n} ,
\end{equation}
où $h = ( \gamma_0^{-1}  e_1 | \cdots | \gamma_{n-1}^{-1} e_1)$ et $h^* \ell_j = e_{j}^*$.  
\end{definition}

On déduit du théorème \ref{T8.8} le théorème suivant.

\begin{theorem} \label{T8.8cor}
Supposons $\varphi_f \in \mathcal{S} (V(\A_f))^{\circ}$. L'application 
$$\mathbf{S}_{\rm mult}^*[\varphi_f] : \Gamma^{n} \longrightarrow  \Omega^{n}_{\rm mer} (\C^n / \Z^n )$$
définit un $(n-1)$-cocycle homogène non nul du groupe $\Gamma$. Il représente la même classe de cohomologie que $\mathbf{S}_{\rm mult , \chi_0} [\varphi_f]$.   
\end{theorem}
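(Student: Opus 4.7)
The plan is to combine Theorem \ref{T8.8} at top degree $k = n-1$ with the explicit evaluation carried out in Proposition \ref{P34bis}, and then translate the resulting identity from singular cohomology of hyperplane complements into meromorphic forms via the Brieskorn-type Theorem \ref{P:Brieskorn}. Concretely, fix $\mathbf{g} = (\gamma_0, \ldots, \gamma_{n-1}) \in \Gamma^n$ and set $\mathbf{q} = (\gamma_0^{-1}e_1, \ldots, \gamma_{n-1}^{-1}e_1)$. Theorem \ref{T8.8} yields, inside $A^n(U^*(\mathbf{g}))$, the identity
$$\int_{\Delta_{n-1}} \mathcal{E}_\psi(\varphi_f)(\mathbf{g}) = \int_{\Delta_{n-1}'} E_\psi(\varphi_f, \mathbf{q})^{(0)} + \delta \mathcal{H}_{n-2}[\varphi_f](\mathbf{g}) \pm d\mathcal{H}_{n-1}[\varphi_f](\mathbf{g}),$$
and by Proposition \ref{P34bis}(2) the first summand on the right is exactly the sum of products of real parts of the $\varepsilon$-factors appearing in the defining formula \eqref{E:87}, the vanishing clause of Proposition \ref{P34bis}(1) matching the convention taken for degenerate $\mathbf{q}$. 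Proposition \ref{P:50} already guarantees that $\mathbf{g} \mapsto \int_{\Delta_{n-1}}\mathcal{E}_\psi(\varphi_f)(\mathbf{g})$ is a (homogeneous) $(n-1)$-cocycle representing the equivariant class $[E_\psi(\varphi_f)^{(0)}]$.

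The next step is to pass from the real-part expressions to the purely meromorphic ones. In the isotypic component $H^n(U^*(\mathbf{g}))^{(1)}$ — which, by Theorem \ref{P:Brieskorn}, is represented by the algebra generated by $\varepsilon$-factors — one has, at each hyperplane, the cohomological identity $\mathrm{Re}(\varepsilon(\ell - \xi)\,d\ell) \sim \varepsilon(\ell - \xi)\,d\ell$, obtained exactly as in the affine case treated at \S\ref{S:demTSa}: the conjugate factor $\overline{\varepsilon(\ell-\xi)\,d\ell}$ differs from $-\varepsilon(\ell-\xi)\,d\ell$ by an exact regular form, and such regular forms are killed by the Brieskorn isomorphism onto $\Omega_{\rm mer}^n(\C^n/\Z^n)$. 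Applying this factor-by-factor, the explicit formula of the theorem statement represents, in $\Omega_{\rm mer}^n(\C^n/\Z^n)$, the same class as $\int_{\Delta_{n-1}}\mathcal{E}_\psi(\varphi_f)(\mathbf{g})$, modulo the simplicial coboundary $\delta\mathcal{H}_{n-2}[\varphi_f]$ and the exact correction $d\mathcal{H}_{n-1}[\varphi_f]$ allowed by the Brieskorn quotient.

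Applying $\delta$ to the resulting identity in $\Omega_{\rm mer}^n$ and using both $\delta^2 = 0$ and the fact that the topological cocycle is closed under $\delta$, one deduces that $\mathbf{S}_{\rm mult}^*[\varphi_f]$ itself satisfies the cocycle relation; homogeneity à droite follows from the invariance property \eqref{E:glnqinv}, which has been built into every ingredient ($\mathcal{E}_\psi$, $\mathcal{H}_{n-2}$, $\mathcal{H}_{n-1}$, and Proposition \ref{P34bis}). By construction the class obtained coincides with $S_{\rm mult}[D_{\varphi_f}^0]$, hence with the class of $\mathbf{S}_{\rm mult, \chi_0}[\varphi_f]$ produced in \S\ref{S:3.2.2}; non-vanishing already holds for $n=2$ by the introduction, and its extension to general $n$ follows, for instance, from the Dedekind–Rademacher computation of Proposition \ref{P:DRgen}. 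The main obstacle is the delicate verification that the cochains $\mathcal{H}_{k}[\varphi_f]$, a priori only fiberwise integrals over $\Delta_k \times [0,1]$ of a form which \emph{does not} extend continuously to the Tits boundary, define meromorphic forms on the right open sets $U^*(\mathbf{g})$. This rests on the rapid decay of $E_\psi(\varphi_f)$ along Siegel sets (Proposition \ref{P12}), which requires the vanishing of all fiberwise integrals of $\varphi_f$ along subspaces containing $e_1$; this is precisely the condition $\varphi_f \in \mathcal{S}(V(\A_f))^{\circ}$ of Definition \ref{def67} assumed in the theorem.
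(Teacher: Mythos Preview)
Your approach is essentially the paper's: Theorem \ref{T8.8} combined with Proposition \ref{P34bis} produces the real-part cocycle, and one then passes to meromorphic forms via Theorem \ref{P:Brieskorn} exactly as in \S\ref{S:demTSa}. One slip worth correcting: the conjugate $\overline{\varepsilon(\ell-\xi)\,d\ell}$ is cohomologous to $+\varepsilon(\ell-\xi)\,d\ell$, not $-\varepsilon(\ell-\xi)\,d\ell$ (with your sign $\mathrm{Re}(\varepsilon\,d\ell)$ would be exact, contradicting its nonzero period at the pole); the paper checks this via the explicit identity \eqref{E:8.cot}, and also notes that the distribution relations for $\varepsilon$ are what place the resulting class in $H^n(\cdot)^{(1)}$, which is needed before Theorem \ref{P:Brieskorn} can be invoked.
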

\begin{proof} Il découle du théorème \ref{T8.8} que l'application 
$$(\gamma_0 , \ldots , \gamma_{n-1}) \mapsto \int_{\Delta_{n-1}'} E_\psi (\varphi_f , \mathbf{q} )^0$$
définit un $(n-1)$-cocycle à valeurs dans $H^n (U^* (\mathbf{g}))$. 
Or, d'après la proposition \ref{P34bis} la $n$-forme $\int_{\Delta_{n-1}'} E_\psi (\varphi_f , \mathbf{q} )^0$ est nulle si les vecteurs $q_j=\gamma_j^{-1} e_1$ sont liés et elle est égale à 
\begin{multline} \label{E:8.fd} 
\sum_{v \in \Q^n / \Z^n} \varphi_f (v) \\ \sum_{\substack{\xi \in \Q^n/\Z^n \\ h \xi  = v \ (\mathrm{mod} \ \Z^n)}} \mathrm{Re} (\varepsilon (\ell_1 - \xi_1 ) d \ell_1 ) \wedge \cdots \wedge \mathrm{Re} (\varepsilon (\ell_{n} - \xi_n) d \ell_{n})  ,
\end{multline}
sinon.

Remarquons maintenant qu'en posant $q=e^{2i\pi z}$, on a
\begin{equation} \label{E:8.cot}
\varepsilon (z) dz = \frac{1}{2i} \cot (\pi z) dz = \frac{dz}{e^{2i\pi z} -1} +  \frac12 dz = \frac{1}{2i\pi} \left( \frac{dq}{q-1} - \frac{dq}{2q} \right).
\end{equation}
En particulier les $1$-formes différentielles 
$$\mathrm{Re} (\varepsilon (z) dz ) \quad \mbox{ et } \quad \varepsilon (z) dz$$
sur $\C / \Z$ sont cohomologues. On en déduit que la forme \eqref{E:8.fd} est cohomologue à  $\mathbf{S}_{\rm mult}^* [\varphi_f] (\gamma_0 , \ldots , \gamma_{n-1})$. On conclut alors la démonstration, en suivant celle du théorème \ref{T:Sa} au paragraphe \ref{S:demTSa}, mais en remplaçant le théorème de Brieskorn par sa version multiplicative, le théorème \ref{P:Brieskorn}. Celui-ci s'applique car les relations de distribution pour $\varepsilon$ impliquent que la forme \eqref{E:8.fd} représente bien une classe dans le sous-espace caractéristique associé à la valeur propre $1$ dans la cohomologie de la fibre.
\end{proof}

La proposition suivante donne une autre expression, parfois plus maniable, du cocycle $\mathbf{S}_{\rm mult}^*$.

\begin{proposition} 
Supposons toujours $\varphi_f \in \mathcal{S} (V (\A_f ))^{\circ}$. L'expression \eqref{E:87} est égale à 
$$\sum_{v \in \Q^n / \Z^n} \varphi_f (v) \sum_{\substack{\xi \in \Q^n/\Z^n \\ h \xi  = v \ (\mathrm{mod} \ \Z^n)}} \frac{d\ell_1 \wedge \cdots \wedge d \ell_{n}}{(e^{2i\pi (\ell_1 - \xi_1 )} -1) \ldots (e^{2i\pi (\ell_n - \xi_n )} -1)}.$$
\end{proposition}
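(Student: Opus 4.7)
The proof proceeds by direct computation. From equation (\ref{E:8.cot}) we have $\varepsilon(z)\, dz = \frac{dz}{e^{2i\pi z}-1} + \tfrac{1}{2}\, dz$, so setting $b_j := \frac{1}{e^{2i\pi(\ell_j - \xi_j)} - 1}$ we obtain $\varepsilon(\ell_j - \xi_j) = b_j + \tfrac{1}{2}$. Expanding the product of the $\varepsilon$'s produces
\[
\prod_{j=1}^n \varepsilon(\ell_j - \xi_j) \;=\; \sum_{I \subseteq \{1, \ldots, n\}} 2^{-|I|} \prod_{j \notin I} b_j(\xi_j).
\]
The term corresponding to $I = \emptyset$ gives exactly the right-hand side of the proposition; the plan is therefore to show that for each non-empty $I$,
\[
S_I \;:=\; \sum_{\substack{v, \xi \in \Q^n/\Z^n \\ h\xi \equiv v \pmod{\Z^n}}} \varphi_f(v) \prod_{j \notin I} b_j(\xi_j) \;=\; 0.
\]

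For such an $I$, rewrite the double sum as $\sum_{\xi \in \Q^n/\Z^n} \varphi_f(h\xi) \prod_{j \notin I} b_j(\xi_j)$, via the substitution $v = h\xi \bmod \Z^n$. The remaining factor depends only on $\xi_{I^c}$, so by Fubini
\[
S_I \;=\; \sum_{\xi_{I^c} \in (\Q/\Z)^{I^c}} \Bigl( \prod_{j \notin I} b_j(\xi_j) \Bigr) \sum_{\xi_I \in (\Q/\Z)^I} \varphi_f\bigl(v_0 + h_I \xi_I\bigr),
\]
with $v_0 := \sum_{j \notin I} \xi_j q_j$ and $h_I$ the submatrix of $h$ with columns indexed by $I$. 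A standard Poisson-duality argument on the Schwartz--Bruhat space $\mathcal{S}(V(\A_f))$ (applied locally at each finite place) identifies the innermost sum, up to a positive multiplicative constant, with the value at $v_0$ of the function $\int_{W_I} \varphi_f$ in $\mathcal{S}(V(\A_f)/W_I(\A_f))$, where $W_I := \Q\text{-span}(q_j : j \in I)$.

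Hence $S_I$ vanishes as soon as $\int_{W_I} \varphi_f \equiv 0$. The hypothesis $\varphi_f \in \mathcal{S}(V(\A_f))^\circ$ (D\'efinition~\ref{def67}) delivers this vanishing for every subspace $W$ containing $e_1$. Since every column $q_j = \gamma_j^{-1} e_1$ of $h$ belongs to the $\Gamma$-orbit of $e_1$, a $\Gamma$-equivariant change of variable $\varphi_f \rightsquigarrow \omega(\gamma)\varphi_f$, combined with the right-homogeneity of the cocycle, reduces the required vanishing to the hypothesis in the form $\int_W \varphi_f' = 0$ with $W \ni e_1$. The main obstacle in making the argument precise is this last reduction: handling uniformly all the subspaces $W_I$ that occur within a single cocycle evaluation, which is essentially a reformulation of the technical input already used in Th\'eor\`eme~\ref{T8.8cor} to prove that $\mathbf{S}_{\rm mult}^*[\varphi_f]$ is a well-defined cocycle.
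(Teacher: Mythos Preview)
Your approach is essentially the paper's: expand via $\varepsilon = b + \tfrac12$ (the paper goes the other way, writing $b=\varepsilon-\tfrac12$, which is immaterial) and kill each cross-term by showing that the sum of $\varphi_f$ along the subspace $W_I=\langle q_j:j\in I\rangle$ vanishes. The ``Poisson-duality'' you invoke is carried out in the paper by a direct fiber-counting argument: one rewrites the sum according to the projection $V\to \overline V:=V/W_I$, observes that the fibers of $\{\xi:h\xi=v\}\to\{\overline\xi:\overline h\,\overline\xi=\overline v\}$ all have the same cardinality, and is left with the factor $\sum_{\overline v=w}\varphi_f(v)$, which is exactly the discrete avatar of $\int_{W_I}\varphi_f$.

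Where you go astray is in the last paragraph. The step you flag as an ``obstacle'' is not one, and invoking right-homogeneity of the cocycle or Th\'eor\`eme~\ref{T8.8cor} is a detour. The point is simply that $\varphi_f$ is $K$-invariant and each $q_j=\gamma_j^{-1}e_1$ lies in the $\Gamma$-orbit (hence the $K$-orbit) of $e_1$. Pick any $j\in I$; then $\gamma_j W_I\ni e_1$, and $K$-invariance gives $\varphi_f(\gamma_j\,\cdot)=\varphi_f(\cdot)$, so $\int_{W_I}\varphi_f$ vanishes identically if and only if $\int_{\gamma_j W_I}\varphi_f$ does, and the latter is zero by the very definition of $\mathcal S(V(\A_f))^\circ$. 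There is no uniformity issue across $I$: the argument works for each non-empty $I$ separately. This is precisely how the paper concludes (``le noyau de la projection $V\to\overline V$ contient un vecteur qui est un translat\'e de $e_1$ par un \'el\'ement de $\Gamma$\ldots'').
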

\begin{proof} D'après \eqref{E:8.cot}, il suffit de montrer que pour tout sous-ensemble non vide et propre $J \subset \{ 1 , \ldots , n \}$ on a 
\begin{equation} \label{E:8.smoothing}
\sum_{v \in \Q^n / \Z^n} \varphi_f (v)  \sum_{\substack{\xi \in \Q^n/\Z^n \\ h \xi  = v \ (\mathrm{mod} \ \Z^n )}} \wedge_{j \notin J} \varepsilon (\ell_j - \xi_j )  =0.
\end{equation}
Posons $L = \Z^n$ et fixons un sous-ensemble $J \subset \{ 1 , \ldots , n \}$ non vide et propre de cardinal $k$. Soit $\overline{V}$ le quotient de $V$ par la droite engendrée par les vecteurs $\gamma_j^{-1} e_1$ avec $j \in J$. Désignons par $\overline{L}$ l'image de $L$ dans $\overline{V}$. 

En identifiant $\Q^{n-k}$ avec le quotient de $\Q^n$ par l'espace engendré par les vecteurs $e_j$ avec $j \in J$ et $\Z^{n-k}$ avec l'image de $\Z^n$ dans $\Q^{n-k}$, la matrice $h$ induit une application linéaire $\overline{h} : \Q^{n-k} \to \overline{V} $ telle que $\overline{h} (\Z^{n-k} )$ soit contenu dans $\overline{L}$. 

Pour tout vecteur $v \in V$ d'image $\overline{v}$ dans $\overline{V}$, la projection de $\Q^{n}$ sur $\Q^{n-k}$ induit alors une application surjective 
$$\{ \xi \in \Q^n / \Z^n \; : \; h \xi  = v \ (\mathrm{mod} \ L) \} \to \{ \overline{\xi} \in \Q^{n-k}/\Z^{n-k}  \; : \; \overline{h} \overline{\xi}  = \overline{v} \ (\mathrm{mod} \ \overline{L}) \}$$
dont les fibres ont toutes le même cardinal, égal à $\frac{[L : h (\Z^{n})]}{[\overline{L} : \overline{h} (\Z^{n-k})]}$. Le membre de gauche de \eqref{E:8.smoothing} est donc égal à 
\begin{equation*} 
\sum_{w\in \overline{V} / \overline{L}} \Big( \sum_{\substack{v \in V/L \\ \overline{v}=w}}  \varphi_f (v) \Big) 
\frac{[L : h (\Z^{n})]}{[\overline{L} : \overline{h} (\Z^{n-k})]} \sum_{\substack{\overline{\xi} \in \Q^{n-k}/\Z^{n-k} \\ \overline{h} \overline{\xi}  = w \ (\mathrm{mod} \ \overline{L})}} \wedge_{j \notin J} \varepsilon (\ell_j - \xi_j ),
\end{equation*}
où $\overline{\xi} = (\xi_j )_{j \notin J}$. 

Pour conclure, remarquons que le noyau de la projection $V \to \overline{V}$ contient un vecteur qui est un translaté de $e_1$ par un élément de $\Gamma$. L'image de ce vecteur dans $V(\A_f)$ est donc égale à $e_1$ modulo $K$. Comme la fonction $K$-invariante $\varphi_f$ appartient à $\mathcal{S} (V (\A_f ))^{\circ}$, on en déduit que chaque somme 
$$\sum_{\substack{v \in V/L \\ \overline{v}=w}}  \varphi_f (v) $$ 
est égale à zéro et l'on obtient l'identité annoncée \eqref{E:8.smoothing}.
\end{proof}

\subsection{Démonstration du théorème \ref{T:mult2}} \label{S:8.3.3}

Rappelons qu'un élément $D \in \mathrm{Div}_\Gamma$ peut-être vu comme une fonction $\Gamma$-invariante sur $\Q^n / \Z^n$ à support dans un réseau de $\Q^n$. Notons $\varphi_f$ la fonction $K$-invariante correspondante. Sous l'hypothèse que $D \in \mathrm{Div}_\Gamma^{\circ}$ la fonction $\varphi_f$ appartient à $ \mathcal{S} (V (\A_f ))^{\circ}$. 

On pose alors 
$$\mathbf{S}^*_{\rm mult} [D] = \mathbf{S}^*_{{\rm mult}}[\varphi_f].$$
C'est un $(n-1)$-cocycle de $\Gamma$ à valeurs dans $\Omega^n_{\rm mer} (\C^n / \Z^n)$. Les cocycles $\mathbf{S}^*_{\rm mult} [D]$ et $\mathbf{S}_{{\rm mult} , \chi_0 }[D]$ représentent la même classe de cohomologie puisqu'ils proviennent tous les deux de la restriction de $\mathcal{E}_\psi (\varphi_f)$. Cela démontre le premier point du théorème \ref{T:mult2}. 

Le deuxième point du théorème peut se vérifier par un calcul élémentaire; il résulte aussi de la proposition \ref{P:hecke1}. \qed

\medskip

On conclut ce chapitre en remarquant que la proposition \ref{P:DRgen} découle du théorème \ref{T:mult} sauf en ce qui concerne l'intégralité de la classe $d_n [\Phi_\delta ]$. Cette dernière propriété résulte de \cite{Takagi}. Le lecteur attentif aura pourtant noté que la série d'Eisenstein étudiée dans \cite{Takagi} est de degré total $n-1$ alors que la série d'Eisenstein $E_\psi (\varphi_f)$ étudiée ici est de degré $2n-1$. On passe de la deuxième à la première en remarquant que 
le fibré $\C^n$ se scinde métriquement en $\R^n \oplus (i \R^n)$ au-dessus de l'espace symétrique associé à $\SL_n (\R)$. La forme de Mathai--Quillen $\varphi$ associée au fibré $\C^n$ se décompose alors en le produit $\varphi_{\R^n} \wedge \varphi_{i\R^n}$ de deux formes égales à la forme de Thom (de degré $n$) de \cite[\S 5]{Takagi}. Si l'on applique à $\varphi_{\R^n}$ la distribution theta, qu'on évalue $\varphi_{i\R^n}$ en $0$ et que l'on contracte la forme obtenue à l'aide du multivecteur $\partial_{y_1} \wedge \ldots \wedge \partial_{y_n}$, on obtient la série d'Eisenstein étudiée dans \cite{Takagi}. La propriété d'intégralité annoncée résulte alors de \cite[\S 10 Remark p. 354]{Takagi}.

\medskip
\noindent
{\it Remarque.} L'énoncé de \cite[\S 10 Remark p. 354]{Takagi} comporte malheureusement une erreur. En adoptant les notations de \cite{Takagi}, nous affirmions que pour tout entier strictement positif $m$, la classe $(m^n -1) z(\mathbf{v})$ est $\Z_\ell$-entière pour tout $\ell$ premier à $m$. Toutefois la démonstration requiert que la multiplication par $m$ dans les fibres fixe la section de torsion $\mathbf{v}$, autrement que si $\mathbf{v}$ est associée à un vecteur $v$ dans $\Q^n$ alors $mv=v$ dans $\Q^n / \Z^n$. On ne peut donc pas montrer que $d_n z(\mathbf{v})$ est une classe entière comme annoncé dans la remarque. L'énoncé est d'ailleurs faux même lorsque $n=2$, comme le montre par exemple \cite[\'Equation (11.3)]{Takagi}. 

D'un autre côté, la classe $[\Phi_\delta ]$, considérée ici, est obtenue en évaluant le cocycle $\mathbf{S}_{{\rm mult}, e_1^*} [D_\delta ]$ en $0$. La section nulle étant  invariante par multiplication par \emph{tous} les entiers strictement positifs $m$, la démonstration de \cite[\S 10 Remark p. 354]{Takagi} implique bien que  
la classe $d_n [\Phi_\delta ]$ est entière.

\medskip

\chapter{Cocycle elliptique du groupe rationnel $\GL_n (\Q)^+$} \label{S:chap10}

\resettheoremcounters

Dans ce chapitre on explique comment modifier les constructions des deux chapitres précédents pour construire les cocycles  elliptiques des théorèmes \ref{T:ell} et~\ref{T:ellbis}. On commence par détailler le quotient adélique avec lequel il nous faut cette fois travailler. On explique ensuite quelles sont les principales différences avec le cas multiplicatif. Elles sont au nombre de trois~: 
\begin{enumerate}
\item De la même manière que le cycle $D$ doit être supposé de degré $0$ dans le théorème \ref{T:cocycleE}, on doit supposer $\widehat{\varphi}_f (0)=0$ pour construire le cocycle $\mathbf{S}_{{\rm ell}, \chi}$; voir Théorème \ref{T:9.4}.
\item Pour pouvoir appliquer le théorème \ref{P:Brieskorn} ``à la Orlik--Solomon'', on a besoin d'effacer suffisamment d'hyperplans pour que les fibres soient affines. Cela nous contraint à considérer un $n$-uplet $\chi = (\chi_1 , \ldots , \chi_n )$ de morphismes primitifs;  voir \S \ref{S:9.3.3}.
\item Le fait que la série d'Eisenstein $E_1$ ne soit pas périodique nous contraint à considérer la série d'Eisenstein non holomorphe $E_1^*$ lors de la construction de $\mathbf{S}_{{\rm ell}}^*$; voir \S \ref{S:9.4.1}. 
\end{enumerate}

\section{Quotients adéliques}

Les quotients adéliques que l'on considère dans ce chapitre sont associés au groupe (algébrique sur $\Q$)
$$\mathcal{G}  = (\GL_n   \times \SL_2 ) \ltimes M_{n, 2} .$$
\`A l'infini l'espace est 
$$S \times \mathcal{H} \times \C^n \cong [(\GL_n (\R) \times \SL_2 (\R)) \ltimes M_{n, 2} (\R)] / \SO_n \times \SO_2,$$
où $\mathcal{H} = \SL_2 (\R) / \SO_2$
est le demi-plan de Poincaré. L'action de $\mathcal{G} (\R)$ sur $S \times \mathcal{H} \times \C^n$ se déduit des actions suivantes~: 
\begin{itemize}
\item le groupe $\SL_2 (\R)$ opère par
$$(gK, \tau ,  z) \stackrel{B}{\longmapsto} \left( gK , \frac{a\tau+b}{c\tau+d} , \frac{z}{c \tau +d}  \right), \quad \mbox{où } B=\left( \begin{smallmatrix} a & b \\ c & d \end{smallmatrix} \right) \in \SL_2 (\R ),$$
\item le groupe $\GL_n (\R)$ opère par\footnote{Ici le $z =(z_1 , \ldots , z_n )$ est vu comme un vecteur colonne, l'action $z \mapsto hz$ correspond donc à l'action usuelle sur les vecteur colonnes. On pourrait aussi bien considérer l'action $z \mapsto h^{-\top}z$ mais celle-ci parait plus artificielle, une alternative similaire se présente dans l'étude de la représentation de Weil.}
$$(gK , \tau , z) \stackrel{h}{\longmapsto} (hgK, \tau , hz) \quad \left(h \in \GL_n (\R) \right),$$
et 
\item une matrice
$$\left( \begin{array}{cc}
u_1 & v_1 \\ 
\vdots & \vdots \\
u_n & v_n 
\end{array} \right) \in M_{n, 2} (\R)$$ 
opère par translation 
$$(gK , \tau ,  (z_1 , \ldots , z_n)) \mapsto (gK , \tau ,  (z_1 + u_1 \tau + v_1 , \ldots , z_n +u_n \tau + v_n)).$$
\end{itemize}
Noter que la loi de groupe dans le produit semi-direct $$(\GL_n (\R) \times \SL_2 (\R)) \ltimes M_{n, 2} (\R)$$ est donnée par 
$$(g,B , x ) \cdot (g' , B' , x') = (gg' , BB' ,gx' B^{-1} + x ).$$

L'action décrite ci-dessus munit le fibré
\begin{equation} \label{E:9.fibre2}
\xymatrix{
S \times \mathcal{H} \times \C^n \ar[d]   \\
S \times  \mathcal{H} 
}
\end{equation}
d'une structure de fibré $\mathcal{G} (\R)$-équivariant. 

Les structures rationnelles usuelles sur $\GL_n$, $\SL_2$ et $M_{n,2}$ munissent $\mathcal{G}$
d'une structure de groupe algébrique sur $\Q$. Dans la suite de ce chapitre on identifie $M_{n,2}$ au produit $V^2$ avec $V (\Q) = \Q^n$ et on fixe~: 
\begin{itemize}
\item un sous-groupe compact ouvert $K \subset \GL_n (\widehat{\Z})$, et
\item un sous-groupe compact ouvert $L \subset \SL_2 (\widehat{\Z})$.
\end{itemize}
Il correspond à ces données le sous-groupe compact ouvert
$$\mathcal{K} = (K \times L) \ltimes V(\widehat{\Z})^2 \subset \mathcal{G} (\A_f)$$ 
qui préserve le réseau $V(\widehat{\Z})^2$ dans $V (\A_f )^2$. 
Les quotients
$$[\mathcal{G} ] / (L \ltimes V(\widehat{\Z})^2) = \mathcal{G} (\Q ) \backslash  \left[ \mathcal{G} (\R) \cdot \mathcal{G} (\A_f ) \right] / (\SO_n \times \SO_2) Z (\R)^+ (L \ltimes V(\widehat{\Z})^2)$$
et
$$[\mathcal{G} ] / \mathcal{K}$$
sont des fibrés en groupes au-dessus de respectivement $[\GL_n]$ et $[\GL_n] / K$. Les fibres sont des produits (fibrés) $E^n$ de (familles de) courbes elliptiques 
$$E (= E_{L}) = \Lambda \backslash \left[ (\mathcal{H} \times \C) / \Z^2 \right] \quad \mbox{où} \quad \Lambda = L \cap \SL_2 (\Z).$$ 
On notera simplement $Y$ la base de cette famille de courbes elliptiques; c'est une courbe modulaire $Y = \Lambda \backslash \mathcal{H}$. 

Comme dans le cas multiplicatif, on déduit du théorème d'approximation forte que 
\begin{equation} \label{E:9.TK}
Z( \A_f ) \backslash [\mathcal{G} ] / \mathcal{K} \simeq \Gamma \backslash (X \times E^n ),
\end{equation}
où $\Gamma = K \cap \GL_n (\Q)$; c'est un fibré au-dessus de $\Gamma \backslash X$ que, comme dans le cas multiplicatif, nous noterons $\mathcal{T}_\mathcal{K}$. 

\medskip
\noindent
{\it Exemple.} Soit $N$ un entier strictement supérieur à $1$. En pratique on considérera surtout le cas où    
$K=K_{0} (N)$ et $L=L_0 (N)$ de sorte que $Y$ soit égale à la courbe modulaire $Y_0 (N)$ et 
$$\mathcal{T}_{\mathcal{K}} = \Gamma_0 (N) \backslash \left[ X \times E^n \right].$$

\medskip

\section{Fonctions de Schwartz et cycles}

\subsection{Formes de Mathai--Quillen}

Le fibré \eqref{E:9.fibre2} est naturellement muni d'une orientation et d'une métrique hermitienne $\GL_n (\R ) \times \SL_2 (\R)$-invariantes. Le formalisme de Mathai--Quillen s'applique donc encore pour fournir des fonctions de Schwartz $\varphi$ et $\psi$ à l'infini. Les formules sont identiques à ceci près qu'elles dépendent maintenant du paramètre $\tau$. 

Au-dessus d'un point  
$$(gK, \tau ) \in S \times \mathcal{H}$$
la métrique sur la fibre $\C^n$ de \eqref{E:9.fibre2} est en effet égale à   
$$z \mapsto  \frac{1}{\mathrm{Im} \ \tau } | g^{-1} z |^2.$$

La construction de Mathai--Quillen expliquée au chapitre \ref{C:4} appliquée au fibré hermitien équivariant \eqref{E:9.fibre2} conduit cette fois à une forme de Thom qui est $(\GL_n (\R) \times \SL_2 (\R))$-invariante
\begin{equation} \label{9.varphi}
\varphi \in \mathcal{A}^{2n} (S \times \mathcal{H} \times \C^n)^{\GL_n (\R) \times \SL_2 (\R)}
\end{equation}
à décroissance rapide dans les fibres. On définit encore 
\begin{equation} \label{9.psi}
\psi = \iota_X \varphi \in \mathcal{A}^{2n-1} (S \times \mathcal{H} \times \C^n)^{\GL_n (\R) \times \SL_2 (\R)}
\end{equation}
mais il n'est plus vrai dans ce contexte que la forme 
$$\eta = \int_0^{+\infty} [s]^* \psi \frac{ds}{s} \in A^{2n-1} (X \times \mathcal{H} \times \C^n)^{\GL_n (\R) \times \SL_2 (\R)}$$
soit fermée; il découle par exemple de \cite[\S 7.4]{Takagi} que déjà dans le cas $n=1$ on a
\begin{equation} \label{9.eta1}
\eta = \frac{1}{8\pi} \left( \frac{d\tau}{y} + \frac{d \overline{\tau}}{y} \right) - \frac{i}{4\pi} \left( \frac{dz}{z} - \frac{d\overline{z}}{\overline{z}} \right)
\end{equation}
dont la différentielle est égale à la forme d'aire $\frac{dx \wedge dy}{4\pi y^2}$ sur $\mathcal{H}$. 

On appelle maintenant {\it représentation de Weil} la représentation $\omega$ du groupe $\mathcal{G} (\R)$ dans l'espace de Schwartz $\mathcal{S} (M_{n , 2} (\R))$ donnée par 
$$\omega (g , B , x) : \mathcal{S} (M_{n , 2} (\R))  \longrightarrow  \mathcal{S} (M_{n , 2} (\R)); \quad \phi \mapsto \left(y \mapsto \phi \left( g^{-1} (y -x ) B \right) \right) ,$$
avec $(g,B,x) \in (\GL_n (\R) \times \SL_2 (\R)) \ltimes M_{n, 2} (\R)$. 

Il correspond encore à $\varphi$ et $\psi$ des formes
\begin{equation} \label{9.varphi2}
\widetilde{\varphi} \in \mathcal{A}^{2n} \left( S \times \mathcal{H} \times \C^n ; \mathcal{S} (M_{n , 2} (\R)) \right)^{\mathcal{G}(\R)} 
\end{equation}
et
\begin{equation} \label{9.psi2}
\widetilde{\psi} \in \mathcal{A}^{2n-1} \left( S \times \mathcal{H} \times \C^n ; \mathcal{S} (M_{n , 2} (\R)) \right)^{\mathcal{G}(\R)} 
\end{equation}
qui, après évaluation en la matrice nulle, sont respectivement égales à $\varphi$ et $\psi$. Le lemme \ref{L:convcourant} reste valable sauf le dernier point; les formes 
$$\widetilde{[s]^*\varphi} (0) = [s]^* \varphi$$ 
convergent cette fois uniformément sur tout compact vers une forme non-nulle mais qui reste invariante sous l'action de $\mathcal{G} (\R)$.

\subsection{Fonctions de Schwartz aux places finies et cycles invariants}

Soit 
$$\mathcal{S} (M_{n,2} (\A_f)) = \mathcal{S} ( V (\A_f )^2 )$$ 
l'espace de Schwartz des fonctions $\varphi_f : M_{n,2} (\A_f) \to \C$ localement constantes et à support compact. Le groupe $\mathcal{G} (\A_f)$ opère sur $\mathcal{S} (M_{n,2} (\A_f ))$ par la représentation de Weil
$$\omega (g , B ,x) : \mathcal{S} (M_{n,2} (\A_f )) \to \mathcal{S} (M_{n,2} (\A_f )); \quad \phi \mapsto \left( w \mapsto \phi (g^{-1} (w-x) B \right).$$ 

Considérons maintenant l'espace $C^\infty \left( \mathcal{G} (\A_f ) \right)$ des fonctions continues localement constantes; on fait opérer le groupe $\mathcal{G} (\A_f)$ sur $C^\infty \left( \mathcal{G} (\A_f ) \right)$ par la représentation régulière à droite~:
$$( (h ,y , C) \cdot f) (g ,x , B ) = f( g h , g yB^{-1} + x , BC ).$$

L'application 
\begin{equation}
\mathcal{S} (M_{n,2} (\A_f )) \to \mathcal{G} (\A_f); \quad \phi \mapsto f_\phi : ( (g, x ,B ) \mapsto \phi (-g^{-1} x B))
\end{equation}
est $\mathcal{G} (\A_f)$-équivariante relativement aux deux actions définies ci-dessus.

\begin{definition}
Soit $\varphi_f \in \mathcal{S} (V (\A_f )^2)$ une fonction de Schwartz $\mathcal{K}$-invariante. 
\begin{itemize}
\item Soit $D_{\varphi_f}$, resp. $D_{\varphi_f , K}$, l'image du cycle
$$\mathcal{G} (\Q ) \left[ (\GL_n (\R) \times \SL_2 (\R )) \cdot \mathrm{supp} (f_{\varphi_f} ) \right]  \to  [\mathcal{G}] /(L \ltimes V(\Z )^2)  ,$$
resp.
$$\mathcal{G} (\Q ) \left[ (\GL_n (\R) \times \SL_2 (\R ))  \cdot   \mathrm{supp} (f_{\varphi_f} ) \right]  \to  [\mathcal{G}]/ \mathcal{K},$$
induite par l'inclusion du support de $f_{\varphi_f}$ dans $\mathcal{G} (\A_f)$. 
\item Soit 
$$U_{\varphi_f} \subset   [\mathcal{G}] /(L \ltimes V(\Z )^2), \quad \mbox{resp.} \quad U_{\varphi_f, K} \subset  [\mathcal{G}]/ \mathcal{K},$$ 
le complémentaire de $D_{\varphi_f}$, resp. $D_{\varphi_f , K}$.
\end{itemize}
\end{definition}

Comme dans le cas multiplicatif, {\it via} l'isomorphisme \eqref{E:9.TK} la projection de $D_{\varphi_f , K} \subset [\mathcal{G}]/ \mathcal{K}$ est égale à la réunion finie
\begin{equation}  \label{E:9.9}
\bigcup_\xi \Gamma \backslash (X \times \{  [\tau , u \tau + v] \in E^n \; : \; \tau \in \mathcal{H} \} ),
\end{equation}
où $\xi = (u , v)$ parcourt les éléments de $V(\Q)^2 / V (\Z)^2$ tels que $\varphi_f ( \xi )$ soit non nul.

\medskip

L'espace $D_{\varphi_f}$ est donc un revêtement fini de $[\GL_n] \times Y$ et $\varphi_f$ induit une fonction localement constante sur $D_{\varphi_f}$ c'est-à-dire un élément de $H^0 (D_{\varphi_f })$.
Maintenant, l'isomorphisme de Thom implique que l'on a~:
\begin{equation} \label{E:thom}
H^0 (D_{\varphi_f }) \stackrel{\sim}{\longrightarrow} H^{2n} \left( [\mathcal{G}]/ (L \ltimes V(\Z )^2) , U_{\varphi_f } \right);
\end{equation}
on note 
$$[\varphi_f] \in H^{2n}   \left( [\mathcal{G}]/ (L \ltimes V(\Z )^2) , U_{\varphi_f } \right)$$ 
l'image de $\varphi_f$; cette classe est $K$-invariante et l'on désigne par $[\varphi_f]_K$ son image dans 
$$H^{2n}   \left( [\mathcal{G}]/ \mathcal{K}, U_{\varphi_f , K } \right).$$ 

En reprenant la démonstration du lemme \ref{L:40} on obtient le lemme analogue suivant.  

\begin{lemma} \label{L:9.40}
Supposons $\widehat{\varphi}_f (0) =0$. Alors, l'application degré 
$$H^0 (D_{\varphi_f }) \to \Z$$
est égale à $0$. 
\end{lemma}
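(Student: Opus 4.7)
La stratégie est de suivre mot pour mot la démonstration du lemme \ref{L:40}, à ceci près que le réseau $V(\A_f)$ est remplacé par $V(\A_f)^2 = M_{n,2}(\A_f)$ et que le revêtement $D_{\varphi_f}$ est cette fois fini au-dessus de $[\GL_n] \times Y$ plutôt que de $[\GL_n]$ seul.

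Le plan est d'abord de ramener le calcul du degré à une somme finie fibre par fibre. D'après la description \eqref{E:9.9} et l'analogue du théorème d'approximation forte utilisé dans le cas multiplicatif, le degré de $D_{\varphi_f}$ au-dessus de chaque composante connexe de $[\GL_n] \times Y$ indexée par un représentant $g_j$ s'identifie à la somme
\[
\sum_{\xi \in V(\Q)^2 / V(\Z)^2} \varphi_f ( g_j^{-1} \xi ),
\]
chaque $\xi = (u,v)$ apparaissant avec la multiplicité $\varphi_f(g_j^{-1} \xi)$ conformément à l'isomorphisme de Thom \eqref{E:thom}. Quitte à remplacer $\varphi_f$ par $\omega(g_j, 1, 0) \varphi_f$ (ce qui ne modifie pas la condition $\widehat{\varphi}_f(0)=0$, la transformée de Fourier étant $\mathcal{G}(\A_f)$-équivariante), on se ramène au cas où $g_j$ est l'identité.

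Il reste alors à vérifier que
\[
\sum_{\xi \in V(\Q)^2 / V(\Z)^2} \varphi_f ( \xi ) = 0.
\]
On procède comme dans le lemme \ref{L:40} : le réseau $L_{\varphi_f} \subset V(\A_f)^2$ des périodes de $\varphi_f$ est contenu dans $V(\widehat{\Z})^2$ (quitte à le rétrécir), et en regroupant les vecteurs de $V(\Q)^2/V(\Z)^2$ par classes modulo $L_{\varphi_f}$ on obtient
\begin{equation*}
\sum_{\xi \in V(\Q)^2 / V(\Z)^2} \varphi_f (\xi ) = \sum_{ v \in V (\A_f )^2 / L_{\varphi_f} } \varphi_f (v) = \frac{1}{\vol (L_{\varphi_f})}  \int_{V(\A_f )^2} \varphi_f (v) dv = \frac{\widehat{\varphi}_f (0)}{\vol (L_{\varphi_f} )} = 0.
\end{equation*}

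Le seul point qui demande une petite vérification est la description du degré comme somme indexée par $V(\Q)^2 / V(\Z)^2$ : contrairement au cas multiplicatif, la fibre est ici un produit de courbes elliptiques $E^n$ et non un tore complexe $\C^n / L_j$, mais comme variété réelle $E^n \simeq V(\R)^2 / V(\Z)^2$, et les sections de torsion correspondent bien à $V(\Q)^2 / V(\Z)^2$. La présence supplémentaire de la base modulaire $Y$ n'intervient pas dans le comptage fibre par fibre, car la fibration $[\mathcal{G}]/\mathcal{K} \to [\GL_n]/K \times Y$ a des fibres toutes isomorphes à $E^n$ et $D_{\varphi_f}$ rencontre chaque fibre transversalement en un nombre fini de points. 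Ce point mineur devrait être l'unique obstacle à surmonter; une fois acquis, la conclusion découle textuellement de la formule sommatoire ci-dessus.
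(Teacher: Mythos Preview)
Your proposal is correct and follows exactly the approach the paper indicates: the authors write, just before the statement, ``En reprenant la démonstration du lemme \ref{L:40} on obtient le lemme analogue suivant'' and give no separate proof. Your adaptation---replacing $V(\A_f)$ by $V(\A_f)^2 = M_{n,2}(\A_f)$, reducing to the identity component via the Weil representation $\omega(g_j,1,0)$, and concluding with the same Poisson-type identity $\sum_\xi \varphi_f(\xi) = \widehat{\varphi}_f(0)/\mathrm{vol}(L_{\varphi_f})$---is precisely what is intended, and your remark on the r\^ole of the modular base $Y$ correctly disposes of the only new feature.
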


Suivant la définition \ref{def67} on isole finalement pour la suite un espace privilégié de fonctions de Schwartz sur $M_{n , 2} (\A_f)$. 

\begin{definition} 
Soit $\mathcal{S} (M_{n , 2} (\A_f ))^{\circ}$ le sous-espace de $\mathcal{S} (M_{n , 2} (\A_f))$ constitué des fonctions de la forme 
$$\mathbf{1}_{V (\widehat{\Z})} \oplus \overline{\varphi}_f \in \mathcal{S} (V (\A_f) \oplus V (\A_f)) = \mathcal{S} (M_{n,2} (\A_f )),$$
où $\overline{\varphi}_f \in \mathcal{S} (V (\A_f))^\circ$ est $(K \ltimes V (\widehat{\Z}))$-invariante. 
\end{definition}

\section[Séries theta et séries d'Eisenstein adéliques]{Séries theta et séries d'Eisenstein adéliques; \\ démonstration du théorème \ref{T:ell}}

\subsection{Séries theta adéliques}

Comme dans le cas multiplicatif, on associe à toute fonction $\varphi_f \in \mathcal{S} (M_{n , 2} (\A_f))$ des formes différentielles
$$\widetilde{\varphi} \otimes \varphi_f  \quad \mbox{et} \quad \widetilde{\psi} \otimes \varphi_f \in A^{\bullet} \left( S \times \mathcal{H} \times \C^n , \mathcal{S} (M_{n,2} (\A )) \right)^{(\GL_n (\R)  \times \SL_2 (\R)) \ltimes \C^n}.$$

En appliquant la distribution theta dans les fibres, on obtient alors une application
\begin{equation} \label{9.appl-theta}
\theta_\varphi \quad \mbox{et} \quad \theta_\psi : \mathcal{S} (M_{n , 2} (\widehat{\Z})) \longrightarrow \left[ A^{\bullet} (S \times \mathcal{H} \times \C^n)  \otimes C^\infty \left( \mathcal{G} (\A_f ) \right) \right]^{\mathcal{G} (\Q )}
\end{equation}
définie cette fois par 
\begin{equation} \label{9.appl-theta2}
\begin{split}
\theta_\varphi^* (g_f , B_f , x_f ; \varphi_f ) & = \sum_{\xi \in M_{n,2} (\Q )}\widetilde{\varphi} (\xi ) (\omega (g_f , B_f , x_f ) \varphi_f ) (\xi ) \\ 
& = \sum_{\xi \in M_{n,2} (\Q )} \varphi_f \left( g_f^{-1} (\xi -x_f ) B_f \right) \widetilde{\varphi} (\xi )  
\end{split}
\end{equation}
et de même pour $\theta_\psi$.
Rappelons que le groupe $\mathcal{G} (\R)$ opère naturellement sur $S \times \mathcal{H} \times \C^n$; étant donné un élément $(g,B,x) \in \mathcal{G} (\R)$ et une forme $\alpha \in A^{\bullet} (S \times \mathcal{H} \times \C^n)$ on note $(g,B,x)^* \alpha$ le tiré en arrière de $\alpha$ par l'application
$$(g,B,x) :  S \times \mathcal{H}  \times \C^n \to S \times \mathcal{H} \times \C^n.$$
L'invariance sous le groupe $\mathcal{G}(\Q)$ dans \eqref{9.appl-theta} signifie alors que pour tout élément $(g,B,x) \in \mathcal{G} (\Q)$ on a~:
$$(g,B,x)^*\theta_\varphi^*(g g_f , BB_f , gx_f B^{-1} + x ;\varphi_f)=\theta_\varphi^*(g_f, B_f , x_f ; \varphi_f);$$ 
ce qui découle de la $\mathcal{G} (\R)$-invariance of $\widetilde{\varphi}$, cf. \eqref{9.varphi2}. 

L'application $\theta_\varphi$ entrelace les actions naturelles de $\mathcal{G} (\A_f)$ des deux côtés. En particulier, en supposant $\varphi_f$ invariante sous le compact ouvert $\mathcal{K}$ on obtient des formes différentielles
\begin{multline*}
\theta_\varphi (\varphi_f ) \quad \mbox{et} \quad \theta_\psi (\varphi_f ) \in \left[ A^{\bullet} (S \times \mathcal{H} \times \C^n)  \otimes C^\infty \left( \mathcal{G} (\A_f ) \right) \right]^{\mathcal{G} (\Q ) \times \mathcal{K}} \\ = A^\bullet (S \times \mathcal{H}  \times \C^n)^{(\Gamma \times \Lambda) \ltimes M_{n,2} (\Z )}
\end{multline*}
autrement dit des formes différentielles sur $\Gamma \backslash (S \times E^n)$.

Comme au \S \ref{algHecke}, les séries theta $\theta_\varphi$ et $\theta_\psi$ sont équivariantes relativement aux actions naturelles des opérateurs de Hecke et, comme au \S \ref{cohomClass}, la forme différentielle $\theta_\varphi (\varphi_f)$ est fermée et représente $[\varphi_f]_K$.

Finalement les propriétés de décroissance des formes $\theta_{[r]^* \varphi} (\varphi_f )$ sont analogues à celles décrites dans le lemme \ref{L:theta-asympt} à ceci près que cette fois $\theta_{[r]^* \varphi} (\varphi_f )$ ne tend pas nécessairement vers $0$ avec $r$. C'est pour garantir cela que nous supposerons dorénavant que $\widehat{\varphi}_f (0) = 0$; en procédant comme dans la démonstration du lemme \ref{L:thetaSiegel}, il découle en effet alors de la formule de Poisson que sous cette hypothèse $\theta_{[r]^* \varphi} (\varphi_f )$ tend vers $0$ avec $r$. 

\subsection{Séries d'Eisenstein adéliques} 

Soit toujours $\varphi_f \in \mathcal{S} (M_{n , 2} (\A_f ))$ une fonction $\mathcal{K}$-invariante dont on supposera de plus qu'elle vérifie $\widehat{\varphi}_f (0) = 0$. Comme au \S \ref{SEA7} on peut alors associer à $\varphi_f$ les séries d'Eisenstein
$$E_\varphi (\varphi_f , s) = \int_0^\infty r^s \theta_{[r]^* \varphi} (\varphi_f ) \frac{dr}{r} \quad \mbox{et} \quad E_\psi (\varphi_f , s) = \int_0^\infty r^s \theta_{[r]^* \psi} (\varphi_f ) \frac{dr}{r};$$
ce sont des formes différentielles sur la préimage de $U_{\varphi_f}$ dans $S \times \mathcal{H} \times \C^n$ qui sont bien définies, et invariantes par l'action du centre $Z(\R)^+$, en $s=0$. On pose alors 
\begin{equation}
E_\psi (\varphi_f ) = E_\psi (\varphi_f , 0) \in A^{2n-1} \left( [\mathcal{G}]/ \mathcal{K} - D_{\varphi_f , K} \right).
\end{equation}

Comme dans le cas multiplicatif, on obtient le théorème suivant.
\begin{theorem} \label{T:9.4}
Supposons $\widehat{\varphi}_f (0) =0$. Alors la forme différentielle
$$E_\psi (\varphi_f ) \in A^{2n-1} \left( [\mathcal{G}]/ \mathcal{K} - D_{\varphi_f , K} \right)$$ 
est \emph{fermée} et représente une classe de cohomologie qui relève la classe $[\varphi_f]_K$ dans la suite exacte longue
\begin{multline*}
\ldots \to H^{2n-1} \left( [\mathcal{G}]/ \mathcal{K} - D_{\varphi_f , K} \right) \\ \to H^{2n} \left( [\mathcal{G}]/ \mathcal{K} ,  [\mathcal{G}]/ \mathcal{K} - D_{\varphi_f , K} \right) \to H^{2n} \left(  [\mathcal{G}]/ \mathcal{K} \right) \to \ldots 
\end{multline*}
\end{theorem}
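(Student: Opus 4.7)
Mon plan est de reproduire la démonstration du théorème analogue dans le cas multiplicatif donnée au chapitre \ref{C:7}, en adaptant l'analyse asymptotique au contexte elliptique. La différence essentielle est que, comme évoqué à la fin du paragraphe qui précède l'énoncé, la décroissance de $\theta_{[r]^*\varphi}(\varphi_f)$ lorsque $r\to 0$ n'est plus automatique~: c'est ici que l'hypothèse $\widehat{\varphi}_f(0)=0$ intervient de manière essentielle, en lieu et place du point~(3) du lemme \ref{L:convcourant}.

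Premièrement, je commencerai par établir, via la formule sommatoire de Poisson dans les fibres (selon la même stratégie que dans la démonstration du lemme \ref{L:thetaSiegel}), l'analogue du lemme \ref{L:theta-asympt}~: les formes $\theta_{[r]^*\varphi}(\varphi_f)$ et $\theta_{[r]^*\psi}(\varphi_f)$ décroissent exponentiellement vite à la fois lorsque $r\to +\infty$ (uniformément sur tout compact de la préimage de $U_{\varphi_f,K}$, par décroissance rapide de $\widetilde{\varphi}$ et $\widetilde{\psi}$ dans les fibres) et lorsque $r\to 0$. Pour ce dernier point, la formule de Poisson transforme la somme thêta en une somme duale dans laquelle seul survit le terme $\xi=0$ à la limite $r\to 0$~; c'est précisément l'hypothèse $\widehat{\varphi}_f(0)=0$ qui élimine ce terme, et donne la décroissance exponentielle attendue.

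Deuxièmement, j'adapterai les propositions \ref{P:Eis1} et \ref{P:Eis2} : les intégrales définissant $E_\varphi(\varphi_f,s)$ et $E_\psi(\varphi_f,s)$ convergent alors absolument pour tout $s\in\C$ sur tout compact de la préimage de $U_{\varphi_f,K}$, uniformément, et définissent des formes différentielles entières en $s$. La relation $d([s]^*\psi)=s\frac{d}{ds}([s]^*\varphi)$ rappelée en \eqref{E:tddt1} et une intégration par parties fournissent alors, comme au chapitre \ref{C:7},
$$dE_\psi(\varphi_f,s) = -s\,E_\varphi(\varphi_f,s).$$
\'Evaluer en $s=0$ donne la fermeture de $E_\psi(\varphi_f)$ sur $\widehat{U_{\varphi_f,K}}$. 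L'argument d'invariance centrale est identique au cas multiplicatif (on obtient $\lambda^*E_\psi(\varphi_f,s)=\lambda^s E_\psi(\varphi_f,s)$ pour $\lambda\in Z(\R)^+$) et permet de descendre la forme au quotient $[\mathcal{G}]/\mathcal{K}-D_{\varphi_f,K}$.

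Troisièmement, pour montrer que la classe $[E_\psi(\varphi_f)]$ relève $[\varphi_f]_K$ dans la suite exacte longue de la paire, je vérifierai que l'identité précédente s'étend au sens des courants sur $[\mathcal{G}]/\mathcal{K}$ tout entier, de manière analogue à la proposition \ref{P:Eis1bis}. La clef est que $\theta_{[r]^*\varphi}(\varphi_f)$ converge comme courant vers $[\widehat{D_{\varphi_f,K}}]$ quand $r\to+\infty$ (par le point~(2) du lemme \ref{L:convcourant} et l'identification de $[\varphi_f]_K$ via l'isomorphisme de Thom), et vers $0$ quand $r\to 0$ par le lemme \ref{L:9.40}. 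On en déduit l'égalité $dE_\psi(\varphi_f)=[\widehat{D_{\varphi_f,K}}]$ au sens des courants, donc que l'image de $[E_\psi(\varphi_f)]$ dans $H^{2n}([\mathcal{G}]/\mathcal{K}, [\mathcal{G}]/\mathcal{K}-D_{\varphi_f,K})$ est bien $[\varphi_f]_K$.

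Le principal obstacle sera de quantifier correctement la décroissance de $\theta_{[r]^*\varphi}(\varphi_f)$ en $r=0$ par Poisson, afin d'obtenir la convergence absolue et la prolongation holomorphe jusqu'en $s=0$ aussi bien dans l'espace des formes différentielles lisses que dans celui des courants. Dans le cas multiplicatif, ce point était immédiat car $\varphi$ s'annulait déjà en $r=0$ (lemme \ref{L:convcourant} (3))~; ici la forme $\varphi$ tend au contraire vers une forme invariante non nulle, et seule l'hypothèse $\widehat{\varphi}_f(0)=0$, via la dualité de Poisson, assure la décroissance nécessaire.
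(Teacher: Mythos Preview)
Your proposal is correct and follows exactly the approach the paper takes: the paper gives no separate proof of this theorem, merely noting ``comme dans le cas multiplicatif'' after having explained (just before the statement) that the hypothesis $\widehat{\varphi}_f(0)=0$, via the Poisson formula, restores the vanishing of $\theta_{[r]^*\varphi}(\varphi_f)$ as $r\to 0$ which was automatic in the multiplicative setting. One small slip: in your fourth paragraph the convergence to $0$ as $r\to 0$ comes from the Poisson argument you already gave in your second paragraph, not from lemme~\ref{L:9.40}, which concerns the degree of the divisor and not the asymptotics of the theta series.
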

Noter que, sous l'hypothèse $\widehat{\varphi}_f (0) =0$, le degré de $D_{\varphi_f , K}$ est nul et l'image de $[\varphi_f]_K$ dans $H^{2n} \left(  [\mathcal{G}]/ \mathcal{K} \right)$ est égale à $0$.

\medskip

\subsection{Démonstration du théorème \ref{T:ell}} \label{S:9.3.3}

Il correspond à un sous-groupe de congruence $\Gamma$ dans $\SL_n (\Z)$ un sous-groupe compact ouvert $K$ dans $\GL_n (\widehat{\Z})$ tel que $K \cap \SL_n (\Z) = \Gamma$.

Un élément $D \in \mathrm{Div}_\Gamma$ peut-être vu comme une fonction $\Gamma$-invariante sur $E^n$ à support dans les points de torsion de $E^n$; il lui correspond donc une fonction $\mathcal{K}$-invariante $\varphi_f \in \mathcal{S} (M_{n , 2} (\A_f ))$. Si de plus $D$ est de degré $0$ alors $\widehat{\varphi}_f (0) = 0$. 

En procédant comme au \S \ref{S8.1} on associe à $E_\psi (\varphi_f )$ une forme simpliciale 
$$\mathcal{E}_\psi (\varphi_f )  \in \mathrm{A}^{2n-1} \left( E\Gamma \times (E^n - \mathrm{supp} \ D) \right)^{\Gamma}$$
qui est fermée et représente la même classe de cohomologie équivariante que $E_{\psi} (\varphi_f )$. 

Fixons $n$ morphismes primitifs linéairement indépendants
$$\chi_1 , \ldots , \chi_n : \Z^n \to \Z.$$ 
On note encore $\chi_j : \Q^n \to \Q$ les formes linéaires correspondantes et $\chi_j : E^n \to E$ les morphismes primitifs qu'ils induisent. On pose 
$$\chi = (\chi_1 , \ldots , \chi_n).$$

Suivant la démonstration du théorème \ref{T:cocycleE} on restreint maintenant la forme simpliciale $\mathcal{E}_\psi (\varphi_f )$ aux ouverts de $E^n$, indexés par les éléments $\mathbf{g} = (\gamma_0 , \ldots , \gamma_k)$ de $E_k \Gamma$, 
\begin{equation} \label{E:9.860}
U (\mathbf{g}) = E^n - \bigcup_\xi \bigcup_{i=1}^n \bigcup_{j=0}^k  [\mathrm{ker} (\chi_i \circ \gamma_j) +\xi ],
\end{equation}
où $\xi$ parcourt l'ensemble des éléments du support de $D$.

Il découle du lemme \ref{affineb} que les variétés \eqref{E:9.860} sont affines. On peut donc appliquer le théorème \ref{P:Brieskorn}.

Comme dans la démonstration du théorème \ref{T:cocycleE}, l'argument expliqué dans l'annexe \ref{A:A} permet alors d'associer à la forme simpliciale $\mathcal{E}_\psi (\varphi_f )$ un $(n-1)$-cocycle homogène
\begin{equation} 
\mathbf{S}_{{\rm ell}, \chi} [D]  \in C^{n-1} \left( \Gamma  , \Omega_{\rm mer} (E^n )  \right)^{\Gamma}
\end{equation}
qui représente la classe $S_{\rm mult} [D]$ et qui est à valeurs dans les formes méromorphes régulières en dehors des hyperplans  affines $\mathrm{ker} (\chi_i \circ g) +\xi$, avec $\xi$ dans le support de $D$ et $g$ dans $\Gamma$. Cela démontre les points (1), (2) et (3) du théorème \ref{T:ell}.

Comme dans le cas multiplicatif, l'application $\GL_n (\A_f) \to \mathcal{G} (\A_f )$ induit une inclusion entre algèbres de Hecke 
$$\mathcal{H} (\GL_n (\A_f) , K) \hookrightarrow \mathcal{H} (\mathcal{G} (\A_f ) , \mathcal{K})$$
et à une fonction $\phi \in \mathcal{H} (\GL_n (\A_f) , K)$ on associe un opérateur de Hecke $\mathbf{T}_\phi$. L'analogue de la proposition \ref{P:hecke2} se démontre de la même manière, de sorte que 
\begin{equation}
\mathbf{T}_\phi \left[ \mathbf{S}_{{\rm ell}, \chi}[\varphi_f] \right]= \left[ \mathbf{S}_{{\rm ell}, \chi }[ T_{\phi} \varphi_f] \right] .
\end{equation}
En prenant pour $\phi$ la fonction caractéristique d'une double classe $KaK$ avec $a$ dans $M_n (\Z) \cap \GL_n (\Q)$, l'opérateur $\mathbf{T}_\phi$ est égal à l'opérateur $\mathbf{T} (a)$ du chapitre \ref{C:2}. 

L'unicité dans le théorème \ref{T:cocycleE} ne nécessite plus de passer au quotient par les formes invariantes, l'analogue de la proposition \ref{P:hecke2} devient alors 
\begin{equation*}
\mathbf{T} (a) \left[ \mathbf{S}_{{\rm ell}, \chi}[D] \right] = [ \mathbf{S}_{{\rm ell}, \chi}[[\Gamma a \Gamma ]^* D] ] \quad \mbox{dans} \quad  H^{n-1} (\Gamma , \Omega^n_{\rm mer} (E^n)),
\end{equation*}
ce qui démontre le point (4) du théorème \ref{T:ell}.

Finalement, l'application $\SL_2 (\A_f) \to \mathcal{G} (\A_f )$ induit une inclusion entre algèbres de Hecke 
$$\mathcal{H} (\SL_2 (\A_f) , L) \hookrightarrow \mathcal{H} (\mathcal{G} (\A_f ) , \mathcal{K}).$$
Il lui correspond une deuxième famille d'opérateurs de Hecke qui contiennent en particulier ceux notés $T(b)$ au chapitre \ref{C:2}. Le point (5) du théorème \ref{T:ell} se démontre alors de la même manière que le point (4). \qed

\section[\'Evaluation sur les symboles modulaires]{\'Evaluation sur les symboles modulaires et \\ démonstration du théorème \ref{T:ellbis}}

L'étude du comportement à l'infini des séries d'Eisenstein $E_\psi (\varphi_f)$ est similaire à ce que l'on a fait dans le cas multiplicatif au \S \ref{S:7.eval}. On ne détaille pas plus, on calcule par contre l'intégrale de $E_\psi (\varphi_f)$ sur les symboles modulaires.

\subsection{\'Evaluation de $E_\psi (\varphi_f)$ sur les symboles modulaires} \label{S:9.4.1}

Soit 
$$\mathbf{q} = (q_0 , \ldots , q_k)$$ 
un $(k+1)$-uplet de vecteurs non nuls dans $V(\Q)$ avec $k \leq n-1$. 

Soit $\overline{\varphi}_f \in \mathcal{S} (V(\A_f ))$ une fonction de Schwartz $(K \ltimes V( \widehat{\Z} ))$-invariante telle que pour tout entier $j$ dans $[0 , k]$ on ait 
\begin{equation} \label{E:condphi2}
\int_{W(\mathbf{q})^{(j)}} \overline{\varphi}_f =0 \quad \mbox{dans} \quad \mathcal{S} (V (\A_f) / W(\mathbf{q})^{(j)} (\A_f) ).
\end{equation}
Soit 
$$\varphi_f = \mathbf{1}_{V (\widehat{\Z})} \oplus \overline{\varphi}_f \in \mathcal{S} (V (\A_f) \oplus V (\A_f)) = \mathcal{S} (M_{n,2} (\A_f )).$$

On considère cette fois l'application
$$\Delta (\mathbf{q}) \times \mathrm{id}_{\mathcal{H} \times \C^n}  : \Delta_k ' \times \mathcal{H} \times \C^n  \to \overline{X}^T \times \mathcal{H} \times \C^n$$
et l'on note
\begin{equation}
E_\psi (\varphi_f , \mathbf{q} ) = (\Delta (\mathbf{q}) \times \mathrm{id} )^* E_\psi (\varphi_f).
\end{equation}
On obtient ainsi une forme fermée dans 
$$A^{2n-1} \Big( \Delta_{k}' \times \big( E^n - \bigcup_{j=0}^k \bigcup_{\xi} (H_\mathbf{q}^{(j)} + \xi) \big) \Big),$$
où $H_\mathbf{q}^{(j)}$ désigne l'image de 
$$\mathcal{H} \times \langle q_0 , \ldots , \widehat{q_j} , \ldots , q_k \rangle_\C \subset \mathcal{H} \times \C^n$$ 
dans $E^n$, et $\xi$ est comme dans \eqref{E:9.9}

Il s'agit maintenant de calculer les intégrales $\int_{\Delta_{k}'} E_\psi (\varphi_f , \mathbf{q} )$; ce sont des formes différentielles sur des ouverts de $E^n$. Pour simplifier on se contente de calculer, dans la proposition suivante, la restriction de ces formes aux fibres de $E^n \to Y$.

\begin{proposition} \label{9.P34bis}
1. Si $\langle q_0 , \ldots , q_{k} \rangle$ est un sous-espace propre de $V$, alors la forme $\int_{\Delta_{k}'} E_\psi (\varphi_f , \mathbf{q} )$ est identiquement nulle.

2. Supposons $k=n-1$ et que les vecteurs $q_0 , \ldots , q_{n-1}$ soient linéairement indépendants. On pose $g = (q_0 | \cdots | q_{n-1} ) \in \GL_n (\Q)$ et l'on fixe $\lambda \in \Q^\times$ tel que la matrice $h=\lambda g$ soit entière. Alors, en restriction à la fibre $E_\tau^n$ au-dessus d'un point $[\tau] \in Y$, la $n$-forme $\int_{\Delta_{n-1}'} E_\psi (\varphi_f , \mathbf{q} )$ est égale à 
\begin{multline*}
\frac{1}{\det h} \sum_{v \in V(\Q) / V(\Z) }  \overline{\varphi}_f (v ) \cdot \\ \sum_{\substack{\xi \in E^n \\ h \xi  = v }} \mathrm{Re} \left( E_1 ( \tau , \ell_1 - \xi_{1} ) dz_1 \right) \wedge \cdots \wedge \mathrm{Re} \left( E_1 (\tau , \ell_{n} - \xi_{n} ) dz_n \right)  ,
\end{multline*}
où un vecteur $v \in  V(\Q) / V(\Z) = \Q^n / \Z^n$ est identifié à un élément de la courbe elliptique $E_\tau^n = \C^n / (\tau \Z^n + \Z^n )$ et $\ell_j$ est la forme linéaire sur $\C^n$ caractérisée par $h^* \ell_j = e_j^*$.  
\end{proposition}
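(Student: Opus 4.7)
La démarche suit celle de la proposition \ref{P34bis} dans le cas multiplicatif et on en reprend la structure. L'ingrédient asymptotique (analogue elliptique de la proposition \ref{P12}) se démontre de la même manière, en combinant le lemme \ref{L:thetaSiegel} et la décomposition polaire dans chaque ensemble de Siegel~: sous l'hypothèse $\widehat{\varphi}_f(0)=0$, l'intégrale définissant $E_\psi(\varphi_f)$ est absolument convergente aux deux bouts, et si de plus $\int_{W} \overline{\varphi}_f = 0$ pour chaque sous-espace $W = W(\mathbf{q})^{(j)}$, alors en restriction à un ensemble de Siegel généralisé associé à un cusp $W_\bullet$ dont la plus petite strate est contenue dans l'un des $W(\mathbf{q})^{(j)}$, la norme de $E_\psi(\varphi_f)$ est un $O(e^{-\alpha t^\beta})$.

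Pour le point 1, si $\langle q_0,\dots,q_k\rangle$ est un sous-espace propre de $V$, l'image de $\Delta(\mathbf{q})$ est entièrement contenue dans le bord de $\overline{X}^T$. Par la proposition \ref{P33} transportée au cas elliptique, $\Delta(\mathbf{q})$ est recouverte par un nombre fini d'ensembles de Siegel généralisés associés à des drapeaux $W_\bullet$ formés de sous-espaces $W(\mathbf{q})_v$, et chacun d'eux contient l'un des $W(\mathbf{q})^{(j)}$. La décroissance exponentielle ci-dessus implique alors que $\int_{\Delta_k'} E_\psi(\varphi_f,\mathbf{q})$ est identiquement nulle sur tout compact de l'ouvert en jeu.

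Pour le point 2, avec $k = n-1$ et $q_0,\dots,q_{n-1}$ linéairement indépendants, on utilise l'invariance par $\mathcal{G}(\Q)$ de $E_\psi(\varphi_f)$ (analogue de \eqref{E:glnqinv}) pour se ramener au cas $\mathbf{q} = \mathbf{e} = (e_1,\dots,e_n)$ par tiré en arrière par $h = \lambda g$. L'image de $\Delta(\mathbf{e})^\circ$ est l'ensemble des matrices diagonales $A \cdot \SO_n \cdot \R_{>0}$. Au-dessus de ce lieu, le fibré hermitien $\C^n \to S \times \mathcal{H}$ se scinde {\it métriquement} en somme directe de $n$ fibrés en droites (correspondant aux coordonnées $z_j$), et la forme de Thom $\varphi$ se factorise en produit $\varphi^{(1)} \wedge \cdots \wedge \varphi^{(n)}$ de formes associées aux fibrés en droites (comme dans la démonstration de la proposition \ref{P34bis}). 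On obtient ainsi
\[
\int_{\Delta_{n-1}'} E_\psi(\varphi_f,\mathbf{e}) = \lim_{s \to 0}\int_{\{\mathrm{diag}(t_1,\dots,t_n)\}\SO_n} (t_1\cdots t_n)^s \, \theta_\varphi(\varphi_f).
\]
La décomposition en facteurs et la formule sommatoire réduisent le calcul au cas $n=1$, où l'on est ramené, après régularisation de Hecke, à la série d'Eisenstein
\[
E_1(\tau, z) = \frac{1}{2i\pi}\sideset{}{{}^e}\sum_{\omega \in \Z + \Z\tau} \frac{1}{z+\omega},
\]
les facteurs de volume produisant précisément le préfacteur $1/\det h$ et le support des $\xi$ dans $h\xi = v$.

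L'obstacle principal me semble résider dans le contrôle uniforme de la régularisation de Hecke pour l'intégrale itérée au-dessus de $A \cdot \SO_n$, en particulier la justification rigoureuse que les deux régularisations (en $s$ et en la sommation d'Eisenstein sur $\Z + \Z\tau$) peuvent être échangées et donnent bien $E_1$ en $s=0$~: contrairement au cas multiplicatif où $\varepsilon(z) = \frac{1}{2i}\cot\pi z$ s'obtient à partir d'une série absolument convergente à l'intérieur (après une sommation triangulaire unique), la série d'Eisenstein $E_1$ requiert un ordre de sommation précis et l'argument de \cite[VII, \S 8]{Weil} doit être adapté avec soin à la présence des facteurs $(t_1\cdots t_n)^s$. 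Une fois ce point acquis, la combinaison formelle des $n$ calculs unidimensionnels et le regroupement des vecteurs $\xi$ de $V(\Q)$ envoyés sur un même élément de $E^n$ donnent l'expression annoncée.
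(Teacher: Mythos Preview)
Your approach is the same as the paper's up to the crucial final step, but there is a genuine gap in your identification of the $n=1$ limit. The regularised one-variable integral does \emph{not} give $E_1(\tau,z)$ directly. What appears is the Eisenstein--Kronecker series
\[
K_1(z,0,s)=\frac{1}{2i\pi}\sideset{}{'}\sum_{w\in\Z\tau+\Z}(\overline{z}+\overline{w})\,|z+w|^{-2s},
\]
whose value at $s=1$ is the \emph{non-holomorphic} Eisenstein series
\[
E_1^*(\tau,z)=E_1(\tau,z)+\frac{1}{y}\,\mathrm{Im}(z),
\]
cf.\ \cite[p.~81]{Weil}. This is the function that is genuinely $(\Z\tau+\Z)$-periodic and modular of weight~$1$; $E_1$ itself is neither (see \eqref{E1per}--\eqref{E1mod}). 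So the output of the diagonal computation is the formula with $E_1^*$ in place of $E_1$ throughout. Your concern about exchanging regularisations is therefore slightly misplaced: the analytic continuation is standard Eisenstein--Kronecker theory, but it lands on $E_1^*$, not $E_1$.

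The passage from $E_1^*$ to $E_1$ is a separate, algebraic step that uses the hypothesis $\overline{\varphi}_f\in\mathcal{S}(V(\A_f))^\circ$ in an essential way. Writing $\xi=\tau\alpha+\beta$ with $\alpha,\beta\in\Q^n/\Z^n$, the differences
\[
E_1^*(\tau,\ell_j+\alpha_j\tau+\beta_j)-E_1(\tau,\ell_j+\alpha_j\tau+\beta_j)=\frac{1}{y}\,\mathrm{Im}(\ell_j)+\alpha_j
\]
are independent of $v$ and $\beta$. The smoothing argument of \eqref{E:8.smoothing} (which is exactly where the vanishing of $\int_{W(\mathbf{q})^{(j)}}\overline{\varphi}_f$ is used) then kills every cross-term containing such a difference, so one may replace $E_1^*$ by $E_1$ in the full sum. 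Without this step your formula would be false: it is the combination of the non-holomorphic regularisation \emph{and} the $\mathcal{S}(V(\A_f))^\circ$ hypothesis that produces the holomorphic $E_1$ in the final answer.
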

\begin{proof} On explique comment modifier la démonstration de la proposition \ref{P34bis}. La première partie est identique. Supposons donc $k=n-1$ et que les vecteurs $q_0 , \ldots , q_{n-1}$ sont linéairement indépendants. La propriété d'invariance (\ref{E:invformsimpl}) s'étend naturellement et implique cette fois encore que 
\begin{equation} \label{E:9.20-}
\int_{\Delta_{n-1}'} E_\psi (\varphi_f , \mathbf{q} )  = (h^{-1})^* \left(  \int_{\Delta_{n-1}'} E_\psi ( \varphi_f (h \cdot) , \mathbf{e}) \right).
\end{equation}
On en est donc réduit à calculer l'intégrale 
\begin{multline} \label{9.E:intti}
\int_{A\SO_n \R_{>0}} E_\psi (\varphi_f (h \cdot ))  \\ = \int_{\{ \mathrm{diag}(t_1,\ldots,t_n) \; : \; t_j \in \R_{>0} \} \SO_n} (t_1 \cdots t_n )^{s} \theta_\varphi (\varphi_f (h \cdot) (1 , 0) \ \Big|_{s=0}.
\end{multline}
En restriction à l'ensemble des matrices symétriques diagonales réelles, le fibré en $\C^n$ se scinde encore {\it métriquement} en une somme directe de $n$ fibrés en droites, correspondant aux coordonnées $(z_j )_{j=1 , \ldots , n}$ de $z$ et la forme $\varphi$ se décompose en le produit de $n$ formes associées à ces fibrés en droites. Ces formes font cette fois intervenir la variable $\tau$ (cf. \cite[\S 6.1]{Takagi}) mais d'après \eqref{9.eta1}, en restriction à une fibre $E_\tau^n$, on a
\begin{multline*}
\int_{\{ \mathrm{diag}(t_1,\ldots,t_n) \; : \; t_j \in \R_{>0} \} \SO_n} (t_1 \cdots t_n )^s \widetilde{\varphi} (\xi) \\ = \frac{(-i)^n}{(4\pi)^n} \Gamma (1+ \frac{s}{2})^n \wedge_{j=1}^n  \left(  \frac{dz_j}{(z_j - \xi_j) | z_j - \xi_j |^{s} } -\frac{d\overline{z}_j}{\overline{z_j - \xi_j} | z_j - \xi_j |^{s}} \right). 
\end{multline*}
où $\xi \in M_{n,2} (\Q) = V (\Q)^2$ est identifié à un élément de $E_\tau^n$ {\it via} l'application 
$$V(\Q)^2 \to E_\tau^n; \quad (u,v) \mapsto \tau u + v.$$
L'intégrale \eqref{9.E:intti} est donc égale à 
\begin{multline} \label{9.int}
\Gamma (1+ \frac{s}{2})^n \sum_{\xi \in M_{n,2} (\Q)} \varphi_f (h \xi ) \wedge_{j=1}^n \mathrm{Re} \left( \frac{1}{2i\pi}  \frac{dz_j}{(z_j - \xi_j) | z_j - \xi_j |^{s} }  \right)   \\ 
 = \Gamma (1+ \frac{s}{2})^n \sum_{\xi \in M_{n,2} (\Q) / M_{n,2} (\Z)} \varphi_f (h \xi)  \wedge_{j=1}^n  \mathrm{Re} \left( K_1 (z_j , 0 , 1+s/2) dz_j \right),
\end{multline}
où $K_1$ est la série de Eisenstein--Kronecker \cite[Chap. VIII, \ (27)]{Weil} définie, pour $\mathrm{Re} (s) > 3/2$, par\footnote{On prendra garde au fait qu'on a ajouté un facteur $1/2i\pi$.}
$$K_1 (z,0, s) = \frac{1}{2i \pi} \sideset{}{'} \sum_{w \in \Z \tau + \Z} (\overline{z} + \overline{w}) | z + w|^{-2s}.$$
Il découle de \cite[p. 81]{Weil} que pour tout $z \in \C$, cette fonction 
admet un prolongement méromorphe au plan des $s \in \C$ qui, en $s=1$, est égal à la série d'Eisenstein {\it non-holomorphe}
\begin{equation} \label{E*E}
E_1^* (\tau , z) = E_1 (\tau , z) + \frac{1}{y} \mathrm{Im} (z),
\end{equation}
où $y= \mathrm{Im} (\tau)$. Noter que la série $E_1^* (\tau , z)$ est $(\Z \tau + \Z)$-périodique en $z$ et modulaire de poids $1$.
 
En prenant $s=0$ dans l'expression \eqref{9.int} on obtient que 
$$\int_{\Delta_{n-1}'} E_\psi ( \varphi_f (h \cdot) , \mathbf{e}) = \sum_{\xi \in M_{n,2} (\Q) / M_{n,2} (\Z)} \varphi_f (h \xi)  \wedge_{j=1}^n   \mathrm{Re} \left( E_1^* (\tau , z_j - \xi_j ) dz_j \right) $$
et donc que l'intégrale \eqref{E:9.20-} est égale à
\begin{multline*}
\sum_{\xi \in M_{n,2} (\Q) / M_{n,2} (\Z)} \varphi_f (h \xi) \cdot  (h^{-1})^* \left(  \wedge_{j=1}^n  \mathrm{Re} \left( E_1^* (\tau , z_j - \xi_j ) dz_j \right) \right) \\
\begin{split}
& = \frac{1}{\det h} \sum_{\xi \in M_{n,2} (\Q) / M_{n,2} (\Z)} \varphi_f (h \xi)  \wedge_{j=1}^{n}  \mathrm{Re} \left( E_1^* (\tau , \ell_j - \xi_j ) dz_j \right) \\
& = \frac{1}{\det h} \sum_{v \in V(\Q) / V(\Z) }  \overline{\varphi}_f (v ) \cdot \\
& \quad \quad \sum_{\substack{\xi \in E^n \\ h \xi  = v }} \mathrm{Re} \left( E_1^* ( \tau , \ell_1 - \xi_{1} ) dz_1 \right) \wedge \cdots \wedge \mathrm{Re} \left( E_1^* (\tau , \ell_{n} - \xi_{n} ) dz_n \right)  ,
\end{split}
\end{multline*}
où la dernière expression, qui découle de la définition de $\varphi_f$, est obtenue en groupant les $\xi$ envoyés par $h$ sur un même élément dans $E^n$. Rappelons que l'on identifie une classe $\xi $ dans le quotient $M_{n,2} (\Q) / M_{n,2} (\Z)$ à l'élément $\tau \alpha + \beta$ dans $E^n$, où $\alpha$ et $\beta$ sont les vecteurs colonnes de $\xi$. La dernière somme porte donc sur les classes $\alpha$ et $\beta$ dans $V(\Q) / V(\Z)$ telles que $h \alpha$ soit nul modulo $V(\Z)$ et $h \beta$ soit égal à $v$ modulo $V(\Z)$. 

\medskip
\noindent
{\it Remarque.} Le fait que l'expression soit indépendante des choix de $\lambda$ et $h$ découle des relations de distributions pour $E_1^*$
$$\sum_{\xi \in E_\tau [m]} E_1^* (\tau , z- \xi ) = m E_1^* (\tau , m z ).$$

\medskip

Il nous reste à expliquer que dans l'expression ci-dessus on peut partout remplacer $E_1^*$ par $E_1$. Pour cela on commence par remarquer que l'expression
$$\sum_{v \in V(\Q) / V(\Z) }  \overline{\varphi}_f (v ) \sum_{\substack{\xi \in E^n \\ h \xi  = v }} \mathrm{Re} \left( E_1^* ( \tau , \ell_1 - \xi_{1} ) dz_1 \right) \wedge \cdots \wedge \mathrm{Re} \left( E_1^* (\tau , \ell_{n} - \xi_{n} ) dz_n \right)$$
peut se réécrire
\begin{multline} \label{E:sans*}
\sum_{\substack{\alpha \in \Q^n / \Z^n \\ h \alpha = 0 \ (\mathrm{mod} \ \Z^n) }} \sum_{v \in V(\Q) / V(\Z) } \overline{\varphi}_f (v ) \\ \sum_{\substack{\beta \in \Q^n / \Z^n \\ h \beta  = v \ (\mathrm{mod} \ \Z^n)}} \mathrm{Re} \left( E_1^* ( \tau , \ell_1 + \alpha_1 \tau + \beta_1 ) dz_1 \right) \wedge \cdots \wedge \mathrm{Re} \left( E_1^* (\tau , \ell_{n} + \alpha_n \tau + \beta_n ) dz_n \right).
\end{multline}
Puisque d'après \eqref{E*E} les différences
$$E_1^* ( \tau , \ell_j + \alpha_j \tau + \beta_j ) - E_1 (\tau , \ell_j + \alpha_j \tau + \beta_j) = \frac{1}{y} \mathrm{Im} (\ell_j ) + \alpha_j$$
sont indépendantes de $v$ et $\beta$, il résulte bien de la démonstration de (\ref{E:8.smoothing}) que l'on peut partout remplacer $E_1^*$ par $E_1$ dans \eqref{E:sans*}. Cela conclut la démonstration de la proposition \ref{9.P34bis}.
\end{proof}

\subsection{Démonstration du théorème \ref{T:ellbis}} \label{S:9.4.2} 

On procède comme dans la démonstration du théorème \ref{T:mult2}. On utilise cette fois le résultat suivant. 

\begin{lemma}
Les  $1$-formes différentielles  
$$\mathrm{Re}(E_1^*(\tau,z) dz) \quad \textrm{and} \quad E_1^*( \tau,z) dz$$ 
sont cohomologues sur $E_\tau - \{ 0 \}$, où $E_\tau$ est la courbe elliptique $\C/(\tau \Z+\Z)$. Autrement dit, la forme
$\mathrm{Im} (E_1^*( \tau,z) dz)$ est exacte sur $E_\tau - \{ 0 \}$.
\end{lemma}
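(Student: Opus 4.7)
Les deux formulations étant manifestement équivalentes (la différence entre $E_1^*(\tau,z) dz$ et $\mathrm{Re}(E_1^*(\tau,z) dz)$ n'est autre que $i \cdot \mathrm{Im}(E_1^*(\tau,z) dz)$), je me concentre sur la seconde. Le plan consiste en trois étapes~: vérifier que $\mathrm{Im}(E_1^*(\tau,z)dz)$ est bien définie et fermée sur $E_\tau - \{0\}$, constater qu'elle est invariante par l'involution $[-1]: z \mapsto -z$, puis invoquer la nullité de la partie $[-1]$-invariante de $H^1(E_\tau - \{0\}, \R)$.

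La première étape repose sur la relation $E_1^*(\tau,z) = E_1(\tau,z) + \frac{\mathrm{Im}(z)}{\mathrm{Im}(\tau)}$. D'après \eqref{E1per} et l'addition de la correction non-holomorphe, la fonction $E_1^*$ est bien $(\Z\tau + \Z)$-périodique en $z$, donc la forme $E_1^* dz$ descend sur $E_\tau - \{0\}$. Un calcul direct donne $d(E_1^* dz) = -\frac{1}{\mathrm{Im}(\tau)}dx \wedge dy'$ (où $z = x + iy'$), puisque $E_1\, dz$ est holomorphe hors de $0$ donc fermée, et la correction $\frac{y'}{\mathrm{Im}(\tau)} dz$ a pour différentielle $-\frac{1}{\mathrm{Im}(\tau)} dA$. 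Cette différentielle étant à valeurs réelles, sa partie imaginaire est nulle, et donc $d(\mathrm{Im}(E_1^* dz)) = 0$ sur $E_\tau - \{0\}$.

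La deuxième étape est immédiate~: puisque $E_1(\tau, -z) = -E_1(\tau,z)$ (directement sur la série d'Eisenstein) et $\frac{\mathrm{Im}(-z)}{\mathrm{Im}(\tau)} = -\frac{\mathrm{Im}(z)}{\mathrm{Im}(\tau)}$, la fonction $E_1^*$ est impaire en $z$. Par suite $[-1]^*(E_1^* dz) = E_1^*(\tau,-z) d(-z) = E_1^*(\tau,z) dz$, et la forme $\mathrm{Im}(E_1^* dz)$ est $[-1]$-invariante.

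Pour la dernière étape, l'application de restriction $H^1(E_\tau, \R) \to H^1(E_\tau - \{0\}, \R)$ est un isomorphisme (la suite exacte longue de la paire $(E_\tau, E_\tau - \{0\})$ donne $H^1(E_\tau, E_\tau - \{0\}) = 0$ par excision et le morphisme $H^1(E_\tau - \{0\}) \to H^2(E_\tau, E_\tau - \{0\}) = \R$ est nul car composé ensuite par l'isomorphisme $H^2(E_\tau, E_\tau - \{0\}) \to H^2(E_\tau)$ donné par la classe fondamentale). Or $[-1]^*$ opère par $-1$ sur $H^1(E_\tau, \R) \cong \R^2$, donc la partie $[-1]$-invariante de $H^1(E_\tau - \{0\}, \R)$ est nulle. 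La classe de cohomologie fermée et $[-1]$-invariante $[\mathrm{Im}(E_1^* dz)]$ y est donc triviale, autrement dit $\mathrm{Im}(E_1^* dz)$ est exacte. L'étape à surveiller est simplement la vérification de la fermeture (dont le point clef est que la partie non-holomorphe de $E_1^*$ produit une $2$-forme réelle), le reste n'étant qu'algèbre.
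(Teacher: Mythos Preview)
Your argument is correct and takes a genuinely different route from the paper. The paper is entirely constructive: it recalls the identity $2\pi i\,E_1^*(\tau,z) = \tfrac{1}{12}\partial_z\log\theta(\tau,z) - \pi\overline{z}/(2y)$ from de~Shalit, computes $d\log|\theta(\tau,z)|$ directly, and obtains
\[
d\log|\theta(\tau,z)| \;=\; -24\,\mathrm{Im}\bigl(E_1^*(\tau,z)\,dz\bigr),
\]
so that $-\tfrac{1}{24}\log|\theta(\tau,z)|$ is an explicit primitive on $E_\tau-\{0\}$. Your approach is instead structural: you verify closedness (the non-holomorphic correction $\tfrac{\mathrm{Im}(z)}{\mathrm{Im}(\tau)}dz$ has real exterior derivative), observe $[-1]$-invariance from the oddness of $E_1^*$, and then kill the class using the fact that $[-1]$ acts by $-1$ on $H^1(E_\tau,\R)\cong H^1(E_\tau-\{0\},\R)$. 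This is more economical and works for any odd periodic function in place of $E_1^*$, but it does not furnish the primitive; the paper's explicit formula, tied to the Siegel-type theta function, is in the spirit of the Kato--Sharifi--Venkatesh constructions mentioned in the introduction and could in principle be reused for finer statements.
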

\begin{proof}
D'après \cite[Eq. (2) p. 57]{deShalit}, on a
\begin{equation}\label{eqE1*tolog}
2\pi i E_1^* (\tau , z) =\frac 1 {12} \partial_z \log \theta (\tau , z ) - \pi \frac{\overline{z}}{2y},
\end{equation}
où
$$\theta (\tau , z ) =e^{6\pi\frac{z(z-\overline{z})}{y}}q_\tau(q_z^{\frac 12}-q_z^{-\frac 12})^{12}\prod_{n\geq 1}(1-q_\tau^nq_z)^{12}(1-q_\tau^nq_z^{-1})^{12}.$$

La fonction réelle $z \mapsto | \theta (\tau , z)| $ est régulière sur $\C - (\tau \Z + \Z)$ et $(\tau \Z + \Z)$-périodique (cf. \cite[p. 49]{deShalit}). Calculons maintenant sa différentielle~:
\begin{equation*}
\begin{split}
d \log | \theta (\tau , z) | & = \mathrm{Re} (d \log \theta (\tau , z) ) \\
& = \mathrm{Re} \left(\partial_z  \log  \theta (\tau , z) dz + \partial_{\overline{z}}  \log \theta (\tau , z) d\overline{z}\right)\\ 
& = \mathrm{Re} \left(\partial_z   \log \theta (\tau , z) dz -6 \pi  \frac{z}{y} d\overline{z} \right)\\
& = \mathrm{Re} \left( 12 (2\pi i) E_1^*(\tau,z) dz + 6\pi \frac{\overline{z}}{y} dz - 6\pi  \frac{z}{y} d\overline{z} \right),
\end{split}
\end{equation*}
où la dernière ligne se déduit de (\ref{eqE1*tolog}). Comme 
$$\mathrm{Re} \left(\overline{z}dz-  zd\overline{z}\right)=0,$$
on obtient que 
$$d \log | \theta (\tau , z) | = -24 \mathrm{Im} (E_1^*( \tau,z) dz).$$
\end{proof}

En particulier en restriction à $E_\tau$ les formes 
\begin{multline*}
\sum_{v \in V(\Q) / V(\Z) }  \overline{\varphi}_f (v ) \cdot \\ \sum_{\substack{\xi \in E^n \\ h \xi  = v }} \mathrm{Re} \left( E_1^*  ( \tau , \ell_1 - \xi_{1} ) dz_1 \right) \wedge \cdots \wedge \mathrm{Re} \left( E_1^* (\tau , \ell_{n} - \xi_{n} ) dz_n \right)  ,
\end{multline*}
et 
$$
\sum_{v \in V(\Q) / V(\Z) }  \overline{\varphi}_f (v ) \sum_{\substack{\xi \in E^n \\ h \xi  = v }} E_1^* ( \tau , \ell_1 - \xi_{1} )  \cdots E_1^* (\tau , \ell_{n} - \xi_{n} ) dz_1 \wedge \cdots \wedge dz_n  ,
$$
sont cohomologues. La démonstration du théorème \ref{T8.8cor} implique alors que pour toute fonction $\mathcal{K}$-invariante $\varphi_f \in \mathcal{S} (V (\A_f ))^\circ$, l'application 
$$\mathbf{S}_{\rm ell}^* [\varphi_f ] : \Gamma^{n} \longrightarrow \Omega^{n}_{\rm mer} (E^n )$$
qui à un $n$-uplet $(\gamma_0 ,  \ldots , \gamma_{n-1})$ associe $0$ si les vecteurs $\gamma_j^{-1} e_1$ sont liés et sinon la forme différentielle méromorphe 
\begin{equation*}
\frac{1}{\det h} \sum_{v \in V(\Q) / V(\Z) }  \overline{\varphi}_f (v ) \sum_{\substack{\xi \in E^n \\ h \xi  = v }} E_1 ( \tau , \ell_1 - \xi_{1} )  \cdots  E_1 (\tau , \ell_{n} - \xi_{n} ) dz_1 \wedge \cdots \wedge dz_n,
\end{equation*}
où $h = ( \gamma_0^{-1}  e_1 | \cdots | \gamma_{n-1}^{-1} e_1) \in \GL_n (\Q) \cap M_n (\Z )$ et $h^* \ell_j = e_{j}^*$, définit un $(n-1)$-cocycle homogène non nul du groupe $\Gamma$ qui représente la même classe de cohomologie que $\mathbf{S}_{\rm ell , \chi}$.   

Rappelons que, pour chaque élément $D \in \mathrm{Div}_{\Gamma,K}$ il correspond une fonction $\mathcal{K}$-invariante $\varphi_f$ dans $\mathcal{S} (V (\A_f ))$ qui, sous l'hypothèse que $D \in \mathrm{Div}_{\Gamma,K}^{\circ}$, appartient à $ \mathcal{S} (V (\A_f ))^{\circ}$. Il s'en suit finalement que le cocycle 
$$\mathbf{S}^*_{\rm ell} [D] = \mathbf{S}^*_{{\rm ell}}[\varphi_f]$$
vérifie le théorème \ref{T:ellbis}. \qed

\medskip

\newpage

\setcounter{chapter}{1}

\setcounter{equation}{0}

\numberwithin{equation}{chapter}

\begin{appendix}

\chapter{Cohomologie équivariante et complexe de de Rham simplicial} \label{A:A}

\resettheoremcounters

Les résultats de ce volume sont formulés dans le langage de la cohomologie équivariante. Il s'agit d'une théorie cohomologique pour les espaces topologiques munis d'une action de groupe. Comme la cohomologie habituelle, elle peut être calculée à l'aide de chaînes simpliciales ou, alternativement, à l'aide d'une version du complexe de de Rham. Dans cette annexe, nous rassemblons les principales définitions et faits concernant la cohomologie équivariante que nous utilisons. Pour plus de détails, le lecteur peut consulter \cite{DupontBook,Dupont}.

\section{Définition de la cohomologie équivariante} 
Soit $G$ un groupe discret et $X$ un espace topologique raisonnable muni d'une action de $G$. Dans cette situation, on peut définir les groupes de cohomologie équivariante
$$
H^*_G(X).
$$
Ces groupes contiennent des informations à la fois sur la cohomologie usuelle de $X$ et sur l'action de $G$. L'idée derrière la construction est la suivante : si $G$ opère librement sur $X$ alors on peut définir
$$
H^*_G(X) = H^*(X/G),
$$
c'est-à-dire la cohomologie usuelle du quotient $X/G$. Pour des actions plus générales, cependant, beaucoup d'informations sont perdues en passant à l'espace quotient $X/G$. L'idée de la cohomologie équivariante est de remplacer le quotient $X/G$ par un {\it quotient d'homotopie}
$$
EG \times_G X := |EG \times X|/G.
$$
On définit $|EG \times X|$ ci-dessous, retenons pour l'instant que c'est un espace topologique obtenu comme le produit de $X$ avec un espace contractile et sur lequel $G$ opère librement. Ainsi, le remplacement de $X$ par $|EG \times X|$ ne change pas le type d'homotopie sous-jacent, mais maintenant $G$ opère librement sur $|EG \times X|$. Il est donc raisonnable de définir
$$
H^*_G(X) = H^*(|EG \times X|/G).
$$
Par exemple, quand $X$ est réduit à un point (ou contractile), on a
$$
H^*_G(X) = H^*(BG),
$$
où $BG:=|EG|/G$ est l'espace classifiant de $G$ et $H^*(BG)$ est la cohomologie du groupe $G$. D'un autre côté, quand $G$ est trivial la cohomologie équivariante se réduit à la cohomologie habituelle. 

Ci-dessous, nous définissons $EG$ en utilisant des ensembles simpliciaux et le foncteur de réalisation simpliciale. Puis nous décrivons les résultats de Dupont \cite{Dupont} qui permettent de calculer $H^*_G(X)$ en utilisant des formes différentielles lorsque $X$ est une variété.

\section{La construction de Borel} Soit $G$ un groupe discret. Un espace classifiant pour $G$ est un espace topologique dont le groupe fondamental est isomorphe à $G$ et dont les groupes d'homotopie supérieurs sont triviaux. Un tel espace existe toujours. On en donne une construction concrète qui fait naturellement le lien avec la cohomologie des groupes.

Soit $EG_\bullet$ l'ensemble simplicial dont les $m$-simplexes sont les $(m+1)$-uplets ordonnés $(g_0 , \ldots , g_m )$ d'éléments de $g$. On note $EG_m = G^{m+1}$ l'ensemble de ces $m$-simplexes. Un élément de $(g_0 , \ldots , g_m ) \in EG_m$ est recollé aux $(m-1)$-simplexes $(g_0 , \ldots , g_{i-1},g_{i+1}, \ldots , g_m)$ de la même manière qu'un simplexe standard est recollé à ses faces. Les applications de faces et de dégénérescence sont donc
\begin{align}
\partial_i(g_0,\ldots,g_m) &= (g_0,\ldots,g_{i-1},g_{i+1},\ldots,g_m),  \quad \mbox{et} \\
\sigma_i(g_0,\ldots,g_m) &= (g_0,\ldots,g_i,g_i,\ldots,g_m), \quad i=0,\ldots, m.
\end{align}
Le complexe $EG_\bullet$ est contractile. Le groupe $G$ opère librement sur $EG_\bullet$ par 
\begin{equation}
g \cdot (g_0,\ldots,g_m) = (g_0g^{-1},\ldots,g_m g^{-1})
\end{equation}
de sorte que l'application quotient $EG_\bullet \to EG_\bullet / G$ est un revêtement universel. La base est donc un espace classifiant pour $G$.  

L'application
\begin{equation}
(g_0,\ldots,g_m) \mapsto (g_1g_0^{-1},\ldots,g_m g_{m-1}^{-1})
\end{equation}
identifie $EG_\bullet / G$ avec l'ensemble simplicial $BG_\bullet$ défini par $BG_m = G^m$ et  
\begin{align}
\partial_i(g_1,\ldots,g_m) &= \begin{cases} (g_2,\ldots,g_m) & i=0 \\ (g_1,\ldots,g_ig_{i+1},\ldots, g_m) & 0 < i < m  \\  (g_1,\ldots,g_{m-1}) & i=m, \end{cases} \\
\sigma_i(g_1,\ldots,g_m) &= (g_1,\ldots,g_i,1,g_{i+1},\ldots,g_m), \quad i=0,\ldots, m.
\end{align}
Notons $|X_\bullet|$ la réalisation géométrique d'un ensemble simplicial $X_\bullet$ : c'est l'espace topologique défini comme
$$
|X_\bullet | = \bigsqcup_{k \geq 0}(\Delta_k \times X_k)/\sim,
$$
où $\Delta_k$ désigne le $k$-simplexe standard et $\sim$ est la relation d'équivalence donnée par 
$$(t,f^*x) \sim (f_* t, x)$$
pour tous $t \in \Delta_k$, $x \in X_l$ et toute application croissante $f:[k] \to [l]$. Pour les principales propriétés du foncteur de réalisation simpliciale, nous renvoyons le lecteur à \cite{Segal}. L'espace $|BG_\bullet|$ est donc un espace classifiant pour $G$ et $|EG_\bullet| \to |BG_\bullet|$ est le $G$-fibré universel associé (cf. \cite[Prop. 2.7]{Burgos}).

\medskip

Considérons maintenant un espace topologique $M$ sur lequel le groupe $G$ opère par homéomorphismes, autrement dit un $G$-espace. Le groupe $G$ opère alors sur 
$$EG_\bullet  \times M$$
par 
\begin{equation}
g \cdot (g_0,\ldots,g_m, x) = (g_0g^{-1},\ldots,g_mg^{-1}, gx  ).
\end{equation}
La \emph{cohomologie équivariante} de $M$ est définie comme étant la cohomologie ordinaire du quotient 
$$X=|EG_{\bullet} \times_G M|;$$
autrement dit
$$H_G^\bullet (M) = H^\bullet (X) = H^\bullet (|EG_{\bullet} \times_G M|).$$
Si $G$ est le groupe trivial, c'est juste la cohomologie de $M$. Si $M$ est contractile, l'espace $X$ est homotopiquement équivalent à l'espace classifiant $BG_\bullet$ et $H_G^\bullet (M) = H^\bullet (BG_\bullet)$ est la cohomologie du groupe $G$. 

Noter que si $G$ opère librement sur $M$, la projection $EG_{\bullet} \times_G M \to M/G$ est une équivalence d'homotopie et on a $H_G^\bullet (M) = H^\bullet (M/G)$.

Donnons quelques exemples qui apparaissent dans le texte principal.

\begin{example}
Supposons que $G$ soit égal à $\Z^n$ pour un certain entier naturel $n$. Puisque $\Z^n$ opère librement sur $\R^n$, on a
$$
BG = |EG/G| \simeq |(\R/\Z)^n|.
$$
\end{example}
\begin{example}
Soit $\Gamma$ un sous-groupe discret de $\SL_n (\R )$ et supposons que l'image de $\Gamma$ dans $\mathrm{PSL}_n (\R ) $ est sans torsion. De tels groupes $\Gamma$ abondent car on peut montrer que tout groupe arithmétique de $\SL_n (\R )$ contient un tel sous-groupe d'indice fini. Considérons maintenant l'espace symétrique
$$
X=\SL_n (\R )/\SO_n .
$$
Cet espace est contractile et porte une action de $\Gamma$ induite par la multiplication à gauche dans $\mathrm{SL}_n(\R)$. Notre hypothèse sur $\Gamma$ garantit que cette action est libre. Il s'ensuit que
$$
B\Gamma = |E\Gamma /\Gamma| \simeq \Gamma \backslash \mathrm{SL}_n(\R)/\mathrm{SO}_n ,
$$
c'est-à-dire que le quotient de Borel est un espace localement symétrique.
\end{example}

\begin{example}
Les deux exemples ci-dessus peuvent être combinés : considérons le produit semi-direct $\mathrm{SL}_n(\Z) \ltimes \Z^n$ et soit
$$
X = (\mathrm{SL}_n(\mathbf{R})/\mathrm{SO}_n ) \times \mathbf{R}^n.
$$
Supposons que $\Gamma \subset \mathrm{SL}_n(\mathbf{Z})$ opère librement sur $\mathrm{SL}_n(\mathbf{R})/\mathrm{SO}_n$ et soit $L \subset \mathbf{Z}^n$ un sous-réseau de rang $n$ préservé par $\Gamma$. Alors $\Gamma \ltimes L$ opère librement sur l'espace contractile $X$ et donc
$$
B(\Gamma \rtimes L ) \simeq \Gamma \backslash (\mathrm{SL}_n(\mathbf{R})/\mathrm{SO}_n \times \mathbf{R}^n/L)
$$
est un fibré sur l'espace localement symétrique $\Gamma \backslash\mathrm{SL}_n(\mathbf{R})/\mathrm{SO}_n$ dont les fibres sont des tores compacts de dimension $n$.
\end{example}

\section{Formes différentielles simpliciales} 
Supposons maintenant que $M$ est une $G$-variété. Le quotient $X$ est alors une \emph{variété simpliciale}, c'est-à-dire un ensemble semi-simplicial dont les $m$-simplexes 
\begin{equation}
X_m = \left( EG_{\bullet} \times_G  M \right)_m = \left( EG_m \times M \right)/G
\end{equation}
sont des variétés (lisses) et dont les applications de faces et de dégénérescence sont lisses, voir par exemple \cite{Dupont}. 
La projection canonique 
\begin{equation}
EG_{\bullet} \times_G M \to BG_\bullet
\end{equation}
réalise $EG_{\bullet} \times_G M$ comme un fibré simplicial au-dessus de $BG_\bullet$ de fibre $M$.

Dans ce contexte, la cohomologie équivariante $H_G^\bullet(M)$ peut également être calculée à l'aide de formes différentielles, comme l'a montré Dupont \cite{Dupont}. Plus précisément, désignons par $\mathrm{A}^\bullet (X)$ le complexe de de Rham simplicial où, par définition, une $p$-forme simpliciale $\alpha$ sur $X$ est une collection d'applications 
$$
\alpha^{(m)} : G^{m+1} \to A^p (\Delta_m \times M)
$$
définies pour $m \geq 0$ et vérifiant les relations de compatibilité
\begin{equation}\label{eq:app_simp_1}
(\partial_i \times \mathrm{id})^* \alpha^{(m)}(g) = \alpha^{(m-1)}(\partial_i g), \qquad g \in G^{m +1},
\end{equation}
dans $\Delta_{m-1} \times M$ pour tout $i \in \{ 0 , \ldots , m\}$ et pour tout $m \geq 1$, ainsi que la relation de $G$-invariance 
\begin{equation} \label{eq:app_simp_2}
g^*\alpha^{(m)}(g \cdot g') = \alpha^{(m)}(g'), \quad g \in G, \quad g' \in G^{m+1}.
\end{equation}
En d'autres termes, une $p$-forme simpliciale $\alpha$ sur $X$ est une $p$-forme sur la \emph{réalisation grossière}  
$\| X \|$ de $X$, c'est-à-dire le quotient 
$$
\|X\| = \bigsqcup_{k \geq 0} (\Delta_k \times X_k)/\sim,
$$
où la relation d'équivalence est donnée par $(t,f^*x) \sim (f_*t, x)$ pour toutes les applications croissantes \emph{injectives}  $f:[k] \to [l]$ (une telle $f$ est généralement appelée application ``de face''). Sous des hypothèses légères sur $X$ l'application canonique $\|X\| \to |X|$ est une équivalence d'homotopie ; ces hypothèses s'appliquent pour $X=EG \times_G M$ comme le montre Segal \cite[A.1]{Segal}.

Comme dans le cas des variétés usuelles, la différentielle extérieure et le cup-produit usuel font de $\mathrm{A}^\bullet (X)$ une algèbre différentielle graduée et Dupont démontre que la cohomologie de ce complexe est isomorphe à la cohomologie de $\|X \|$. 

Enfin, chaque $\mathrm{A}^p (X)$ se décompose en la somme directe 
$$\mathrm{A}^p (X) = \bigoplus_{k+l = p} \mathrm{A}^{k,l} (X)$$
où $\mathrm{A}^{k,l} (X)$ est constitué des formes dont la restriction à $\Delta_m \times X_m$ est localement somme de formes 
$$a dt_{i_1} \wedge \ldots \wedge dt_{i_k} \wedge dx_{j_1} \wedge \ldots \wedge dx_{j_l}$$ 
où les $x_j$ sont des coordonnées locales de $X_m$ et $(t_0 , \ldots , t_m)$ sont les coordonnées barycentriques de $\Delta_m$. Les différentielles extérieures $d_\Delta$ et $d_X$ relativement aux variables respectives $t$ et $x$ décomposent $(\mathrm{A}^\bullet (X) , d)$ en un double complexe $(\mathrm{A}^{\bullet , \bullet} , d_\Delta , d_X )$.

On peut introduire un autre double complexe de formes différentielles 
$$(\mathcal{A}^{\bullet,\bullet}(X_\bullet),\delta,d_X).$$
Ici $\mathcal{A}^{k, l}(X_\bullet)=A^l(X_k)$ est l'espace les formes différentielles de degré $l$ sur $X_k$. On pose $\delta=\sum_{i}(-1)^i \partial_i^*$. Dupont montre que pour tout $l$, l'application d'intégration sur les simplexes
$$\mathcal{I} : (\mathrm{A}^{\bullet ,l} (X) , d_\Delta) \to (A^l (X_\bullet ) , \delta)$$
définit une équivalence d'homotopie entre complexes de chaines. Ces équivalences induisent des isomorphismes entre les suites spectrales calculant la cohomologie des complexes totaux \cite[Cor. 2.8]{Dupont}.

Dans le texte principal, on utilise ces équivalences pour obtenir des cocycles du groupe $G$ à partir de certaines classes de cohomologie équivariante. Dans la suite de cette annexe nous expliquons cette construction. L'argument consiste à considérer les applications au bord d'une suite spectrale, mais notre objectif est de décrire ces applications explicitement. L'argument dans les cas affine et multiplicatif étant analogue, nous nous concentrerons sur le cas elliptique.

Soit $E$ une courbe elliptique et considérons $E^n$ pour un entier $n \geq 2$. Fixons un sous-groupe d'indice fini $G$ de $\mathrm{SL}_n(\Z)$ et un $0$-cycle $G$-invariant $D$ dans $E^n$ de degré zéro et constitué de points de torsion.  Soit  $M=E^n-|D|$. Au chapitre \ref{C:1} nous expliquons comment $D$ donne naissance à une classe 
$$
E[D]  \in H^{2n-1}_G (M) = H^{2n-1}(\|(EG \times M)/G\|).
$$
Nous expliquons maintenant comment associer à $E[D]$ une classe de cohomologie dans 
$$
H^{n-1}(G,\varinjlim_U H^n(U))
$$
où $U$ décrit l'ensemble des ouverts affines dans $E^n$ obtenus en supprimant un nombre fini d'hyperplans. 

La construction est la suivante. Choisissons une $(2n-1)$-forme simpliciale fermée $\alpha \in A^\bullet(EG \times_G M)$ représentant $E[D]$. Autrement dit, $\alpha$ est une collection d'applications
$$
\alpha^{(m)}: G^{m+1} \to A^{2n-1}(\Delta_m \times M), \qquad m \geq 0,
$$
vérifiant \eqref{eq:app_simp_1}, \eqref{eq:app_simp_2} et
$$
(d_\Delta+d_M)\alpha^{(m)}=0.
$$
Considérons les formes
$$
\mathcal{I}\alpha^{(m)} : G^{m+1} \to A^{2n-1-m}(M), \qquad m \geq 0,
$$
definies par
$$
\mathcal{I}\alpha^{(m)}(g_0,\ldots,g_m) = \int_{\Delta_m} \alpha^{(m)}(g_0,\ldots,g_m).
$$
Il découle du théorème de Stokes que 
\begin{equation}
d_M \mathcal{I}\alpha^{(0)}=0
\end{equation}
et
\begin{equation}\label{eq:app_simp_4}
\delta \mathcal{I}\alpha^{(m)} + d_M \mathcal{I}\alpha^{(m+1)}= 0
\end{equation}
pour tout $m \geq 0$. Considérons maintenant les formes 
$$
\widetilde{\alpha}^{(m)} \in \widetilde{A}^{2n-1-m}:=\varinjlim A^{2n-1-m}(U)
$$
où $\widetilde{\alpha}^{(m)}$ désigne l'image de $\mathcal{I}\alpha^{(m)}$ par l'application naturelle 
$$A^{2n-1-m}(M) \to \varinjlim A^{2n-1-m}(U).$$ 
On construit maintenant une suite d'applications $G$-équivariantes 
\begin{equation} 
\beta^{(m)}:G^{m+1} \to \widetilde{A}^{2n-2-m}, \qquad 0\leq m < n-1,
\end{equation}
vérifiant $d_M \beta^{(0)}=\widetilde{\alpha}^{(0)}$ et 
\begin{equation} \label{eq:app_simp_5}
d_M \beta^{(m)} \pm \delta \beta^{(m-1)} = \widetilde{\alpha}^{(m)}
\end{equation}
pour tout $m \in \{1,\ldots,n-2\}$. Pour ce faire nous procédons comme suit :
notez que tout ouvert affine $U \subset E^n$ satisfait $H^k(U)=0$ pour tout $k>n$. Puisque $d_M \mathcal{I}\alpha^{(0)}(e)=0$, il existe $\beta^{(0)}(e) \in \widetilde{A}^{2n-2} $ tel que
$$
d_M \beta^{(0)}(e) = \widetilde{\alpha}^{(0)}(e).
$$
On pose $\beta^{(0)}(g)=g^*(\beta^{(0)}(e))$. La forme $\widetilde{\alpha}^{(0)}$ étant $G$-invariante, pour tout $g \in G$ on a $d_M \beta^{(0)}(g) = \widetilde{\alpha}^{(0)}(g)$. Il découle alors de \eqref{eq:app_simp_4} que
$$
d_M(\widetilde{\alpha}^{(1)}+\delta\beta^{(0)}) = 0,
$$
et en procédant comme ci-dessus on peut trouver une application $G$-invariante
$$
\beta^{(1)}: G^2 \to \widetilde{A}^{2n-3}
$$
telle que 
$$
d_M \beta^{(1)} = \widetilde{\alpha}^{(1)}+\delta \beta^{(0)}.
$$
En itérant cet argument, on obtient des applications $G$-équivariantes $\beta^{(0)},\ldots, \beta^{(n-2)}$ vérifiant  \eqref{eq:app_simp_5}. Maintenant, il découle de \eqref{eq:app_simp_4} que
$$
d_M(\widetilde{\alpha}^{(n-1)}\pm \delta\beta^{(n-2)})=0
$$
et, en passant aux classes de cohomologie, on obtient une application
$$
c: G^{n} \to \varinjlim H^n(U), \qquad c(g):=[\widetilde{\alpha}^{(n-1)}(g)\pm \delta \beta^{(n-2)}(g)].
$$
Comme $\widetilde{\alpha}$ et $\beta$
sont $G$-équivariantes, l'application $c$ est aussi $G$-équivariante. Noter aussi que $c$ est un $(n-1)$-cocycle :
\begin{equation}
\begin{split}
\delta c(g) &= [\delta(\widetilde{\alpha}^{(n-1)}(g)\pm \delta \beta^{(n-2)}(g))] \\
&= [\delta \widetilde{\alpha}^{(n-1)}(g)] \\
&= [-d_M \widetilde{\alpha}^{(n)}(g)] \\
&= 0.
\end{split}
\end{equation}
Le cocycle $c$ dépend des choix faits dans la construction des applications $\beta^{(k)}$, mais un argument standard montre que sa classe de cohomologie
$$
[c] \in H^{n-1}(G,\varinjlim H^n(U))
$$
est indépendante de ces choix. Elle est aussi indépendante de la forme $\alpha$ sélectionnée pour représenter la classe de cohomologie originale dans $H^{2n-1}_G(M)$. 

\begin{remark}
En pratique on trouve souvent des $\widetilde{\alpha}^{(m)}$ telles que pour tout $k$ dans $\{0,\ldots ,n-2\}$ la forme $\widetilde{\alpha}^{(k)}$ soit identiquement nulle. Dans ce cas on peut prendre $\beta^{(0)},\ldots,\beta^{(n-2)}$ identiquement nulles et l'application 
$$
g \mapsto [\widetilde{\alpha}^{(n-1)}(g)]: G^n \to \varinjlim H^n(U).
$$
est un cocycle qui représente la classe de $[c] $.
\end{remark}

\setcounter{equation}{0}

\chapter{Classe d'Eisenstein affine et théorie de l'obstruction} \label{A:B}

\resettheoremcounters

Soit $F$ un corps, par exemple $\C$, et soit  $\mathbf{T}_n = \mathbf{T}_n (F)$ l'immeuble de Tits de $V=F^n$ avec $n \geq 2$. Le groupe $G = \mathrm{GL}_n(F)$ opère naturellement sur $\mathbf{T}_n$ et on considère le fibré 
\begin{equation}
\begin{tikzcd}
EG \times_G \mathbf{T}_n \arrow["\pi" ',d] \\ BG
\end{tikzcd}
\end{equation}
de fibre $\mathbf{T}_n$. L'immeuble de Tits $\mathbf{T}_n$ étant $(n-3)$-connexe, la première obstruction à l'existence d'une section de $\pi$ est une classe
\begin{equation} \label{E:A1}
\omega_{n-2} \in H^{n-1}(BG,\pi_{n-2}(\mathbf{T}_n )),
\end{equation}
voir par exemple \cite[Obstruction Theory, p. 415]{Hatcher}. Dans \eqref{E:A1} on voit le groupe d'homotopie $\pi_{n-2}(\mathbf{T}_n)$ de la fibre de $\pi$ comme un système local. L'immeuble de Tits $\mathbf{T}_n$ étant $(n-3)$-connexe, on a 
$$\pi_{n-2}(\mathbf{T}_n) = \widetilde{H}_{n-2} (\mathbf{T}_n ) = \mathrm{St} (F^n).$$
On peut donc voir $\omega_{n-2}$ comme un élément de $H^{n-1}(G,\mathrm{St} (F^n))$. 
\begin{proposition}
La classe d'obstruction $\omega_{n-2}$ coincide avec la classe de cohomologie associée au symbole universel de Ash--Rudolph représenté par le cocycle
$$G^n \to \mathrm{St} (F^n ); \quad (g_0 , \ldots , g_{n-1} ) \mapsto [g_0^{-1} e_1 , \ldots , g_{n-1}^{-1} e_1 ] .$$
\end{proposition}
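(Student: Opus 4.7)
La stratégie est classique : on utilise la théorie de l'obstruction et on exhibe explicitement un relèvement sur le $(n-2)$-squelette de $BG$ dont l'obstruction à l'étendre sur le $(n-1)$-squelette coïncide avec le cocycle d'Ash--Rudolph.

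Première étape : relier la classe d'obstruction à un relèvement équivariant. Puisque le fibré $\pi$ s'obtient en quotientant $|EG| \times \mathbf{T}_n \to |EG|$ par l'action libre de $G$, et puisque $|EG|$ est contractile, les sections de $\pi$ au-dessus du $k$-squelette de $BG$ correspondent bijectivement aux applications \emph{$G$-équivariantes}
$$\phi : |EG|^{(k)} \to \mathbf{T}_n.$$
La classe d'obstruction $\omega_{n-2}$ est alors représentée par un cocycle homogène $G^n \to \pi_{n-2} (\mathbf{T}_n) = \mathrm{St} (F^n)$ qui, à une $(n-1)$-cellule $(g_0 , \ldots , g_{n-1})$ de $EG$, associe la classe d'homotopie de la restriction $\phi|_{\partial \Delta_{n-1}}$ une fois que $\phi$ a été étendue au $(n-2)$-squelette.

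Deuxième étape : construire $\phi$ sur le $(n-2)$-squelette. La démarche naturelle, motivée par la définition même des symboles d'Ash--Rudolph, est de passer à la subdivision barycentrique. Un sommet de la subdivision barycentrique d'un simplexe de $EG$ est un sous-simplexe $\tau = (g_{i_0}, \ldots , g_{i_j})$; on pose
$$\phi (\tau) = \langle g_{i_0}^{-1} e_1 , \ldots , g_{i_j}^{-1} e_1 \rangle \subset F^n.$$
Si $\tau$ est un sous-simplexe d'un simplexe de $EG$ de dimension $\leq n-2$, le sous-espace $\phi(\tau)$ est de dimension au plus $n-1$, donc propre, et définit bien un sommet de $\mathbf{T}_n$. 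Les inclusions $\tau \subset \tau'$ entre sous-simplexes donnent les inclusions $\phi(\tau) \subset \phi(\tau')$, de sorte que $\phi$ s'étend en une application simpliciale. L'équivariance $\phi (g \cdot \tau) = g \cdot \phi (\tau)$ est immédiate : $g \cdot (g_{i_0}, \ldots , g_{i_j}) = (g_{i_0} g^{-1}, \ldots , g_{i_j} g^{-1})$, et $(g_{i_l} g^{-1})^{-1} e_1 = g \cdot g_{i_l}^{-1} e_1$.

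Troisième étape : identifier l'obstruction. Sur une $(n-1)$-cellule $\sigma = (g_0 , \ldots , g_{n-1})$ de $EG$, la restriction de $\phi$ à $\partial \sigma'$ (bord de la subdivision barycentrique) envoie le sommet associé à un sous-ensemble propre non vide $I \subsetneq \{0, \ldots , n-1\}$ sur le sous-espace $\langle g_i^{-1} e_1 \; : \; i \in I \rangle$. Par construction, ceci est \emph{exactement} l'application \eqref{app1} utilisée par Ash--Rudolph pour définir le symbole modulaire universel $[g_0^{-1} e_1 , \ldots , g_{n-1}^{-1} e_1] \in \mathrm{St} (F^n)$. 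Le cocycle d'obstruction associe donc à $(g_0 , \ldots , g_{n-1})$ précisément ce symbole, ce qui établit l'égalité des classes de cohomologie dans $H^{n-1} (G , \mathrm{St} (F^n))$.

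L'obstacle principal, plutôt technique que conceptuel, est la compatibilité des conventions (signes, orientations) : il faut s'assurer que l'isomorphisme de Hurewicz $\pi_{n-2} (\mathbf{T}_n ) \cong \widetilde{H}_{n-2} (\mathbf{T}_n) = \mathrm{St} (F^n)$ envoie la classe d'homotopie de l'application $\phi|_{\partial \sigma'}$ sur $[g_0^{-1} e_1, \ldots , g_{n-1}^{-1} e_1]$ avec le bon signe, ce qui se ramène à comparer les orientations standards des subdivisions barycentriques de simplexes.
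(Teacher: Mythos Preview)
Your proposal is correct and follows essentially the same approach as the paper: both construct the section (or equivalently the $G$-equivariant map) on the $(n-2)$-skeleton via the barycentric subdivision, sending a subsimplex $(g_{i_0},\ldots,g_{i_j})$ to the subspace $\langle g_{i_0}^{-1}e_1,\ldots,g_{i_j}^{-1}e_1\rangle$, and both identify the resulting obstruction on $(n-1)$-cells with the Ash--Rudolph symbol by direct comparison with the map \eqref{app1}. The only cosmetic difference is that the paper phrases everything in terms of sections of the bundle $EG\times_G\mathbf{T}_n\to BG$ and works with an arbitrary line $L=\langle v\rangle$ rather than $\langle e_1\rangle$.
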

\begin{proof} On commence par construire une section explicite $s$ de $\pi$ sur le squelette de dimension $n-2$. On étudie ensuite l'obstruction à étendre cette section au $(n-1)$-squelette. 

On définit la section $s$ en commençant par le $0$-squelette $BG_0$ puis on l'étend par récurrence de $BG_{\leq k}$ à $BG_{\leq k+1}$~:
\begin{equation}
\cdots
\begin{tikzcd}
(EG \times_G \mathbf{T}_n )_1 \arrow[d,"\pi_1"' ] \arrow[r, shift left] \arrow[r, shift right] & (EG \times_G \mathbf{T}_n )_0 \arrow[d,"\pi_0"' ] \\
BG_1 \arrow[u, dashed, bend right, "s_1"'] \arrow[r, shift left] \arrow[r, shift right] & BG_0 \arrow[u, dashed, bend right, "s_0"'].
\end{tikzcd}
\end{equation} 
Les $k$-simplexes de $EG \times_G \mathbf{T}_n$ sont les couples $(\mathbf{g} , D_\bullet)$, où $\mathbf{g}$ est un $(k+1)$-uplet $(g_0 , \ldots , g_k) \in G^{k+1}$ et $D_\bullet \in (\mathbf{T}_n )_k$ est un $(k+1)$-drapeau de sous-espaces propres non nuls de $V$, modulo équivalence 
\begin{equation}
((g_0,\ldots,g_k),D_\bullet) \sim ((g_0g^{-1},\ldots,g_kg^{-1}) , gD_\bullet), \quad g \in G.
\end{equation}

Tentons maintenant de définir une section $s$ de $\pi$ en commençant par définir $s$ sur le $0$-squelette $BG_0$ puis en l'étendant par récurrence de $BG_{\leq k}$ à $BG_{\leq k+1}$:
\begin{equation}
\cdots
\begin{tikzcd}
(EG \times_G \mathbf{T}_n )_1 \arrow[d,"\pi_1"' ] \arrow[r, shift left] \arrow[r, shift right] & (EG \times_G \mathbf{T}_n )_0 \arrow[d,"\pi_0"' ] \\
BG_1 \arrow[u, dashed, bend right, "s_1"'] \arrow[r, shift left] \arrow[r, shift right] & BG_0 \arrow[u, dashed, bend right, "s_0"'].
\end{tikzcd}
\end{equation} 
Les $k$-simplexes dans $EG \times_G \mathbf{T}_n $ sont les couples $(\mathbf{g},F_\bullet)$ (où $\mathbf{g} \in G^{k+1}$ et $F_\bullet \in (\mathbf{T}_n )_k$) modulo l'équivalence
\begin{equation}
((g_0,\ldots,g_k),F_\bullet) \simeq ((g_0g^{-1},\ldots,g_kg^{-1}),gF_\bullet), \quad g \in G.
\end{equation}
On note dans la suite $[\mathbf{g},F_\bullet]$ la classe d'équivalence de $(\mathbf{g},F_\bullet)$. Pour définir la section $s_0$ de $\pi_0$ (l'application induite par $\pi$ sur le $0$-squelette) on choisit une droite $L=\langle v \rangle \subset F^n$ et on pose
\begin{equation}
s_0 ( g_0 ) = [g_0 , g_0^{-1} L].
\end{equation}
On pose ensuite
\begin{equation}
s_1(g_0 , g_1) = [(g_0 ,g_1), \Delta(g_0^{-1} L , g_1^{-1} L)]
\end{equation}
où 
\begin{equation}
\Delta(g_0^{-1} L , g_1^{-1}L) = \begin{tikzcd} \stackrel{\langle g_0^{-1} v \rangle}{\bullet} \arrow[r, dash] & \stackrel{\langle g_0^{-1} v , g_1^{-1}v \rangle}{\bullet} & \arrow[l, dash] \stackrel{\langle g_1^{-1} v \rangle}{\bullet}, \end{tikzcd}
\end{equation}
et les deux segments correspondent aux drapeaux\footnote{Ce chemin allant de $g_0^{-1} L$ à $g_1^{-1}L$ dans $\mathbf{T}_n$ n'est pas unique mais tout autre chemin est plus long ou passe par un $0$-simplex correspondant  à un sous-espace de $F^n$ de dimension strictement supérieure à $2$.}  
$$\langle g_0^{-1} v \rangle \subseteq \langle g_0^{-1} v , g_1^{-1}v \rangle \quad \mbox{et} \quad \langle g_1^{-1}v \rangle \subseteq \langle g_0^{-1} v , g_1^{-1}v \rangle.$$
En général, on pose 
\begin{equation}
s_k(g_0 ,\ldots , g_k) = [(g_0 , \ldots , g_k ), \Delta( g_0^{-1} L ,  \ldots, g_k^{-1} L)],
\end{equation}
où $\Delta( g_0^{-1} L ,  \ldots, g_k^{-1} L)$ est le sous-complexe de $\mathbf{T}_n$ isomorphe à la première subdivision barycentrique d'un $k$-simplexe de $( \mathbf{T}_n )_k$ de sommets $g_0^{-1} L, \ldots ,g_k^{-1}L$ et de barycentre le sous-espace $\langle g_0^{-1} v , \ldots , g_k^{-1} v \rangle$ de dimension $k+1$ dans $V$; de sorte que les $k$-simplexes de ce complexe sont associés aux drapeaux  
$$\langle g_{\sigma (0)}^{-1} v \rangle \subset \langle g_{\sigma (0)}^{-1} v, g_{\sigma (1)}^{-1} v \rangle \subset \cdots \subset \langle g_{\sigma (0)}^{-1} v, g_{\sigma (1)}^{-1} v ,\ldots, g_{\sigma (k)}^{-1}v \rangle \quad (\sigma \in \mathfrak{S}_{k+1} ) .$$
Cette construction inductive est possible tant que $k \leq n-2$, c'est-à-dire tant que $k+1$ est strictement inférieur à la dimension de $V$. On définit ainsi section 
\begin{equation}
s_{\leq n-2}:BG_{\leq n-2} \to (EG \times_G \mathbf{T}_n )_{\leq n-2}.
\end{equation}
L'obstruction à étendre $s$ au $(n-1)$-squelette $BG_{\leq n-1}$ est alors représentée par l'application $BG_{n-1} \to \pi_{n-2}(\mathbf{T}_n)$ définie par 
\begin{equation}
(g_0 , \ldots , g_{n-1)} \mapsto \sum_{j=0}^{n-1} (-1)^j [s_{n-2}(g_0 , \ldots , \widehat{g_j} , \ldots , g_{n-1})].
\end{equation}
La théorie de l'obstruction implique que cette application définit un cocycle 
$$c_{n-2} \in C^{n-1}(BG,\pi_{n-2}(\mathbf{T}_n ))$$ 
qui représente $\omega_{n-2}$; de sorte que $[c_{n-2}]$ est indépendant du choix de $L$. Vue comme classe dans $H^{n-1}(G,\mathrm{St}_n(F))$ la classe d'obstruction $\omega_{n-2}$ est finalement donnée par
\begin{equation}
\omega_1(g_0,\ldots,g_{n-1}) = [g_0^{-1} v ,g_1^{-1}v,\ldots ,g_{n-1}^{-1}v],
\end{equation}
où $[\cdot]$ désigne le symbole modulaire universel de Ash--Rudolph, cf. \S \ref{S:ARuniv}.
\end{proof}

\end{appendix}

\bibliographystyle {plain}
\bibliography{Eisenstein}

\end{otherlanguage}

\end{document}